\documentclass[11pt,letterpaper]{amsart}

\usepackage{amscd, amssymb, amsmath, amsthm}
\usepackage{enumerate,latexsym, mathrsfs}
\usepackage{xypic}
\usepackage[colorlinks=true, linkcolor=blue, citecolor=blue]{hyperref}


\newtheorem{theorem}{Theorem}[section]
\newtheorem{lemma}[theorem]{Lemma}
\newtheorem{proposition}[theorem]{Proposition}
\newtheorem{corollary}[theorem]{Corollary}

\theoremstyle{definition}
\newtheorem{definition}[theorem]{Definition}
\newtheorem{convention}[theorem]{Convention}

\newtheorem{example}[theorem]{Example}
\newtheorem{question}[theorem]{Question}
\newtheorem{remark}[theorem]{Remark}

\numberwithin{equation}{section}

\newcommand{\Natural}{{\mathbb N}}
\newcommand{\Real}{{\mathbb R}}
\newcommand{\Rational}{{\mathbb Q}}
\newcommand{\Complex}{{\mathbb C}}
\newcommand{\Integral}{{\mathbb Z}}

\newcommand{\mcg}{{\mathrm{Mod}}}
\newcommand{\outg}{{\mathrm{Out}}}
\newcommand{\inng}{{\mathrm{Inn}}}
\newcommand{\autg}{{\mathrm{Aut}}}
\newcommand{\univ}{{\mathrm{univ}}}

\newcommand{{\FPS}}{{\mathrm{Fix}}}
\newcommand{{\PPS}}{{\mathrm{Per}}}
\newcommand{{\FPC}}{{\mathscr{F}\mathrm{ix}}}
\newcommand{{\PPC}}{{\mathscr{P}\mathrm{er}}}
\newcommand{{\POC}}{{\mathscr{O}\mathrm{rb}}}
\newcommand{\conjorb}{{\mathrm{Orb}}}

\newcommand{{\periodic}}{{\mathtt{per}}}
\newcommand{{\pA}}{{\mathtt{pA}}}
\newcommand{{\reduction}}{{\mathtt{fDt}}}
\newcommand{{\NT}}{\mathtt{NT}}
\newcommand{{\gd}}{{\mathtt{gd}}}
\newcommand{\dilatation}{{\mathrm{Dil}}}
\newcommand{\deviation}{{\mathrm{Dev}}}

\title[Mapping classes and finite quotient actions]{Mapping classes are almost determined \\by their finite quotient actions}
     
\author[Yi Liu]{%
        Yi Liu} 
\address{%
        Beijing International Center for Mathematical Research, Peking University\\
				Beijing 100871, China P.R.} 
\email{%
    liuyi@bicmr.pku.edu.cn}

\thanks{Partially supported by NSFC Grant 11925101, 
and National Key R\&D Program of China 2020YFA0712800.}
\subjclass[2010]{Primary 57M50; Secondary 57M10, 30F40}
\keywords{profinite completion, mapping class group, fixed point theory}

\date{%
 \today}


\begin{document}

\begin{abstract} 
	Given any connected compact orientable surface, 
	a pair of mapping classes are said to be procongruently conjugate
	if they induce a conjugate pair of outer automophisms 
	on the profinite completion of the fundamental group of the surface.
	For example, 
	this occurs if they induce conjugate outer automorphisms
	on every characteristic finite quotient of the fundamental group.
	In this paper, it is shown that every procongruent conjugacy class of mapping classes,
	as a subset of the surface mapping class group,
	is the disjoint union of at most finitely many conjugacy classes
	of mapping classes.
	For any pseudo-Anosov mapping class of a connected closed orientable surface,
	several topological features 
	are confirmed to	depend only on the procongurent conjugacy class of the mapping class,
	including: the stretching factor,
	the topological type of the prong singularities, 
	the transverse orientability of the invariant foliations,
	and the isomorphism type of the symplectic Floer homology.
\end{abstract}

\maketitle

\section{Introduction}
The topology of irreducible compact connected $3$--manifolds 
are almost determined by their fundamental groups.
In the closed case, lens spaces with isomorphic fundamental groups 
fall into finitely many classified homeomorphism classes,
and other manifolds with isomorphic fundamental groups are unique up to homeomorphism.
The bounded case can also be completely described. 
Many important topological invariants 
turn out to depend only on the fundamental group up to isomorphism.
For irreducible orientable compact connected $3$--manifolds 
with empty or tori boundary,
simplicial volume and Thurston norm are good examples of this kind.
In fact, all these have been known since the proof of the geometrization theorem,
(see \cite[Chapter 2]{AFW_book_group}).

More recent development in $3$--manifold topology
has brought new insight to the group theoretic perspective.
A list of geometric topological properties 
have been confirmed to depend only on 
the collection of the finite quotients of the fundamental group.
Such properties 
include closedness, fiberedness, the geometric decomposition graph, and 
the type of geometries on the pieces
\cite{BRW,JZ,WZ_geometry,WZ_decomposition}.
The phenomena naturally lead to the guess that 
$3$--manifold groups might be determined by their finite quotients up to finite ambiguity.
To be precise, we say that 
a pair of finitely generated residually finite groups $G_A,G_B$ are \emph{profinitely isomorphic}
if every finite quotient group of $G_A$ is isomorphic to a finite quotient group of $G_B$,
and vice versa. 
For any prescribed class $\mathcal{G}$ of finitely generated residually finite groups,
a group $G$ of $\mathcal{G}$ is said to be \emph{profinitely rigid} among $\mathcal{G}$ 
if every group $G'$ of $\mathcal{G}$ which is profinitely isomorphic to $G$ is also isomorphic to $G$.
We say that $G$ of $\mathcal{G}$
is \emph{profinitely almost rigid} among $\mathcal{G}$
if there exist some finite set $\mathcal{S}$ of groups of $\mathcal{G}$,
and if every group $G'$ of $\mathcal{G}$ which is profinitely isomorphic to $G$ is 
isomorphic to some group in $\mathcal{S}$.

\begin{question}\label{profinite_almost_rigidity}
	Among fundamental groups of irreducible compact connected $3$--manifolds,
	is every group profinitely almost rigid?
\end{question}

For Anosov torus bundles, virtually trivial surface bundles, and non-geometric graph manifolds,
it is known that their fundamental groups may fail to be profinitely rigid
\cite{Stebe_integer_matrices,Funar_torus_bundles,Hempel_quotient,Wilkes_graph}.
However, profinite almost rigidity still holds for those examples
\cite[Theorem A]{Wilkes_graph}.
Seifert fiber spaces of non-vanishing orbifold Euler number,
and hyperbolic once-punctured-torus bundles 
have profinitely rigid fundamental groups \cite{Wilkes_sf,BRW}.
It has been confirmed more recently that the fundamental group of the Weeks manifold is profinitely rigid, 
actually among finitely generated residually finite groups,
\cite{BMRS}.
(The Weeks manifold is the closed orientable hyperbolic $3$--manifold of the smallest volume.)
We refer the reader to Reid's survey \cite{Reid_survey} for profinite rigidity and related topics.
Question \ref{profinite_almost_rigidity} remains open beyond the above mentioned cases.
For general closed or cusped hyperbolic $3$--manifolds,
it is not clear how to extract sufficient geometric information
from the profinite fundamental group.
One motivation of the present paper
is to seek for some helpful clue
by investigating a more accessible similar question.  

We introduce procongruent conjugacy between mapping classes of surfaces,
as an analogue of profinite isomorphism between $3$--manifold groups.
The analogous theme is therefore
to compare procongruent conjugacy with conjugacy---
to estimate their difference, and to recognize conjugacy properties
which remain invariant under the new notion.
As it turns out,
this seemingly plain change of context
allows us to make a good amount of progress.
It also connects up with several interesting aspects 
of low dimensional topology. 
Among other things,
we are able to obtain 
Theorem \ref{main_procongruent_almost_rigidity}
as an affirmative answer 
to the analogue of Question \ref{profinite_almost_rigidity}.

Let us briefly explain what we mean by procongruent conjugacy.
More details and further discussion appear in Section \ref{Sec-procongruent_conjugacy}.
Suppose that $S$ is an orientable connected compact surface.
The mapping class group $\mcg(S)$ 
consists of the isotopy classes of orientation-preserving self-homeomorphisms of $S$.
Every mapping class of $S$ naturally induces 
an outer automorphism of the profinite completion of the fundamental group $\widehat{\pi_1(S)}$.
We say that a pair of mapping classes $[f_A],[f_B]$ of $S$ are \emph{procongruently conjugate}
if they induce a conjugate pair in the outer automorphism group of $\widehat{\pi_1(S)}$.
For example, this will be the case 
if they induce a conjugate pair of outer automorphisms for every characteristic finite quotient $\pi_1(S)/K$,
(see Proposition \ref{characterization_congruent_conjugacy}).
Parallel to the former terminology,
we say that a mapping class $[f]\in\mcg(S)$ is \emph{procongruently rigid} 
if every mapping class $[f']\in\mcg(S)$ which is procongruently conjugate to $[f]$
is also conjugate to $[f]$ in $\mcg(S)$.
We say that $[f]\in\mcg(S)$ is \emph{procongruently almost rigid}
if there exists a finite subset $\mathcal{R}$ of $\mcg(S)$,
and if every mapping class $[f']\in\mcg(S)$ which is procongruently conjugate to $[f]$
is conjugate to some mapping class in $\mathcal{R}$.

\begin{theorem}\label{main_procongruent_almost_rigidity}
Given an orientable connected compact surface $S$,
every mapping class of $S$ is procongruently almost rigid.
\end{theorem}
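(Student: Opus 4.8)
The plan is to combine the Nielsen--Thurston classification with fixed--point theory in finite covers, and to check first that procongruent conjugacy respects the Nielsen--Thurston type. By Grossman's residual finiteness theorem the natural map $\mcg(S)\to\outg(\widehat{\pi_1(S)})$ is injective, so a mapping class is periodic if and only if its image in $\outg(\widehat{\pi_1(S)})$ has finite order, a condition obviously invariant under procongruent conjugacy. To separate the reducible case from the pseudo--Anosov one I would argue that an essential simple closed curve $c$ is periodic under $[f]$ precisely when $\hat f$ carries the $\widehat{\pi_1(S)}$--conjugacy class of the closure of the cyclic subgroup determined by $c$ back to itself after finitely many iterates --- using that distinct simple closed curves give distinct such conjugacy classes, a separability property of surface groups --- and that cutting along the canonical reduction system yields an $\hat f$--invariant profinite graph--of--groups decomposition of $\widehat{\pi_1(S)}$, while conversely such invariant data is geometric. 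Hence $[f]$ is reducible exactly when $\hat f$ is, and the topological type of the canonical reduction system together with the $f$--orbit structure on its components and pieces is a procongruent invariant.

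\emph{Periodic classes.} For a fixed surface there are only finitely many conjugacy classes of finite--order elements of $\mcg(S)$, so together with the previous paragraph every periodic mapping class is procongruently almost rigid, with $\mathcal R$ a set of representatives of the torsion conjugacy classes.

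\emph{Pseudo--Anosov classes.} Here the engine is fixed--point theory. For each characteristic finite--index subgroup $K\trianglelefteq\pi_1(S)$ the class $[f]$ acts on the cover $S_K$, and the multiset of Lefschetz numbers of all lifts of all powers $f^n$, together with the finer congruence Reidemeister trace data they assemble into, is determined by the conjugacy class of the image of $[f]$ in $\outg(\widehat{\pi_1(S)})$, hence is a procongruent invariant. When $f$ is pseudo--Anosov, every essential fixed--point class of $f^n$ is geometric with controlled index, and one shows these classes are detected in finite quotients; consequently the exponential growth rate of their number, which equals the topological entropy $\log\lambda(f)$ by the Fathi--Shub identification, is a procongruent invariant --- and the fixed--point indices at the singular orbits simultaneously pin down the prong data and the transverse orientability of the invariant foliations. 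It is classical that for fixed $S$ and fixed $C>1$ only finitely many conjugacy classes of pseudo--Anosov mapping classes of $S$ have dilatation at most $C$; combined with the invariance of $\lambda$ and with the fact that a procongruent conjugate of a pseudo--Anosov class is again pseudo--Anosov, this settles the pseudo--Anosov case.

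\emph{Reducible classes and the main obstacle.} Let $\sigma$ be the canonical reduction system of $[f]$ and let $M$ be least with $f^M$ fixing each component of $\sigma$ and restricting to a homeomorphism of each component of $S\setminus\sigma$. A procongruent conjugate $[f']$ is reducible with reduction system of the same topological type and the same orbit structure; since all multicurves of a given type form a single $\mcg(S)$--orbit, after conjugation we may assume $[f']$ has the same $\sigma$, $M$ and permutation of pieces as $[f]$. On each first--return piece $[f^M]$ and $[(f')^{M}]$ are procongruently conjugate mapping classes of a surface of strictly smaller complexity, handled by induction; the Dehn--twist coefficients of $f$ along the components of $\sigma$ are determined exactly by the procongruent data (distinct coefficients already differ on finite quotients), and reconstructing $[f]$ from $[f^M]$ costs only the finite ambiguity of extracting an $M$--th root. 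Assembling the finitely many choices produces a finite $\mathcal R$. The two genuinely hard points are: (a) showing that the geometry of the canonical reduction system --- which curves and subsurfaces occur and how $f$ permutes them --- is really recovered from the conjugacy class of $\hat f$, which needs detection and separability results for surface subgroups of surface groups inside the profinite completion; and (b) proving that the soft invariance of the Lefschetz and Reidemeister data actually computes $\log\lambda(f)$, i.e. that the essential fixed--point classes of powers of a pseudo--Anosov map remain visible through finite quotients. I expect (b) to be the crux, since that is where the special fixed--point theory of pseudo--Anosov maps must be married to profinite detection.
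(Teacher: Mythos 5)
Your high-level three-way case division (periodic, pseudo-Anosov, reducible) tracks the right invariants, and the pseudo-Anosov case is in the same spirit as the paper's, but the proposal glosses over the two central technical difficulties and takes a route at the reducible step that does not obviously close.

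The first and most serious gap is the assertion that ``the multiset of Lefschetz numbers of all lifts of all powers $f^n$, together with the finer congruence Reidemeister trace data, \ldots\ is determined by the conjugacy class of the image of $[f]$ in $\outg(\widehat{\pi_1(S)})$, hence is a procongruent invariant.'' This is precisely what requires proof, and it is where the paper spends Sections \ref{Sec-TH_mapping_tori}--\ref{Sec-L_m_mapping_class}: there is no known direct reason the twisted Reidemeister traces over $\Complex$ (or $\Rational$) should be procongruent invariants, because one cannot expect a natural isomorphism on twisted homology with characteristic-$0$ coefficients. The paper has to go through an indirect chain: aligned-isomorphism invariance of twisted homology of $M_f$ with coefficients in a \emph{profinite} ring $R[t^{\pm1}]\Gamma$ (Theorem \ref{aligned_mapping_torus_homology}), then a reduction from arbitrary characteristic-$0$ fields to $\overline{\Rational_p}$ via finite Galois extensions to get invariance of twisted Reidemeister torsion (Theorem \ref{TRT_dialogue}), then the torsion $=$ Lefschetz zeta identity to extract the twisted Lefschetz numbers (Theorem \ref{L_m_dialogue}), and finally conjugacy separability of $\pi_1(M_f)$ to turn these numbers into the indexed orbit counts $\nu_m(f;i)$ (Theorem \ref{N_m_dialogue}). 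Treating this step as self-evident removes the heart of the argument.

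The second gap is in the reducible case. You propose an induction on complexity by restricting $[f^M]$ and $[(f')^M]$ to first-return pieces of $S\setminus\sigma$. But a conjugacy in $\outg(\widehat{\pi_1(S)})$ between $\widehat{\tau}_A$ and $\widehat{\tau}_B$ does not automatically descend to a conjugacy in $\outg(\widehat{\pi_1(U)})$ for a subsurface $U$: the conjugating element need not carry the profinite closure of $\pi_1(U)$ to itself in a way compatible with the piece decomposition, and $\widehat{\pi_1(U)}\hookrightarrow\widehat{\pi_1(S)}$ is not a quotient map. The paper avoids this induction entirely: it passes to the mapping torus $M_f$, uses Wilton--Zalesskii's profinite detection of the JSJ decomposition to get a canonical correspondence of decomposition graphs, and then proves that two numerical invariants --- dilatation and \emph{deviation} (the normalized maximal shearing degree along the reduction annuli) --- are procongruent invariants (Lemma \ref{complexity_equal}). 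Together with Proposition \ref{bound_finiteness}, these bound the conjugacy class up to finite ambiguity in one shot. Your parenthetical ``distinct coefficients already differ on finite quotients'' for the Dehn twist powers is not a proof; the paper's argument (Lemmas \ref{carry_trajectory_universal}, \ref{carry_trajectory_profinite}) is delicate, relying on acylindricity of the JSJ decomposition, the quasigeodesicity of suspension flows in hyperbolic pieces, and Proposition \ref{graph_element_stabilizers} to transfer between $\Pi_{\tilde{\mathfrak e}}$ and $\widehat{\Pi}_{\hat{\mathfrak e}}$. Relatedly, your plan to ``recover $[f]$ from $[f^M]$ up to the finite ambiguity of extracting an $M$-th root'' is not part of the paper's argument and would need separate justification.

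Finally, your plan to detect the canonical reduction system directly inside $\outg(\widehat{\pi_1(S)})$ (using separability of simple-closed-curve subgroups plus a converse ``invariant data is geometric'' step) is a genuinely different and, as far as the literature goes, unavailable route; the paper circumvents it by moving to $\widehat{\pi_1(M_f)}$ and invoking the profinite JSJ theorem for $3$-manifolds, which is exactly the technology you would need to build from scratch on the surface side.
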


Since the natural homomorphism $\mcg(S)\to \outg(\widehat{\pi_1(S)})$ factors through 
the profinite completion of $\mcg(S)$,
Theorem \ref{main_procongruent_almost_rigidity} implies obviously
an \emph{almost} version of the conjugacy separability of $\mcg(S)$,
(that is, distinguishing all but finitely many conjugacy classes 
from any given conjugacy class in $\mcg(S)$, using finite quotients of $\mcg(S)$).
If $S$ is a torus, the conjugacy separability of $\mcg(S)\cong\mathrm{SL}(2,\Integral)$ 
is due to Stebe \cite{Stebe_integer_matrices}, 
but the general case remains open.

There is a related notion that helps comparing
Theorem \ref{main_procongruent_almost_rigidity} and Question \ref{profinite_almost_rigidity}.
Regular profinite isomorphism,
as a more restrictive variation of profinite isomorphism,
has been introduced by Boileau--Friedl \cite{Boileau--Friedl_fiberedness}.
A pair of finitely generated residually finite groups $G_A,G_B$ are said to be \emph{regularly profinite isomorphic}
if there is a group isomorphism $\widehat{G}_A\cong\widehat{G}_B$ between the profinite completions,
and if the abelianized group isomorphism 
$\widehat{H}_1(\widehat{G}_A;\widehat{\Integral})\cong \widehat{H}_1(\widehat{G}_B;\widehat{\Integral})$
admits a (necessarily unique) lift to the ordinary group homology $H_1(G_A;\Integral)\cong H_1(G_B;\Integral)$.
We speak of \emph{regular profinite rigidity} and \emph{regular profinite almost rigidity} similarly as before.
Since procongruent conjugacy between mapping classes can be characterized using
profinite fundamental groups of the mapping tori (see Proposition \ref{characterization_mapping_tori}),
the following corollary is an immediate consequence of Theorem \ref{main_procongruent_almost_rigidity}
and \cite[Theorem 1.1]{Boileau--Friedl_fiberedness}.

\begin{corollary}
	Among fundamental groups of irreducible compact connected $3$--manifolds,
	the fundamental groups of any surface bundle over a circle is regularly profinitely almost rigid.
\end{corollary}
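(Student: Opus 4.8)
\emph{Proof proposal.} The corollary should follow by chaining together the three ingredients recalled immediately above it; let me spell out the chain. Present a given surface bundle over the circle as a mapping torus $M_f$ of a mapping class $[f]\in\mcg(S)$ for some compact connected orientable surface $S$, and write $G=\pi_1(M_f)$. Suppose $N$ is an irreducible compact connected $3$--manifold with $\pi_1(N)$ regularly profinitely isomorphic to $G$; I must produce a finite list of manifolds, depending only on $[f]$, in which $N$ must lie, since then their fundamental groups form the required finite set of groups.

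First I would apply \cite[Theorem~1.1]{Boileau--Friedl_fiberedness}. Since $M_f$ fibers over the circle and $\pi_1(N)$ is \emph{regularly} profinitely isomorphic to $\pi_1(M_f)$, the manifold $N$ also fibers over the circle, the fibered cohomology class of $M_f$ is carried to a fibered class of $N$, and the fibers are homeomorphic; hence $N\cong M_g$ for some $[g]\in\mcg(S)$, and the regular profinite isomorphism $\widehat{\pi_1(M_f)}\cong\widehat{\pi_1(M_g)}$ may be taken compatible with the two maps to $\widehat{\Integral}$ coming from the fibrations, and in particular to restrict to an isomorphism between the closures of the two fiber subgroups. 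Feeding this into Proposition~\ref{characterization_mapping_tori}, I conclude that $[g]$ is procongruently conjugate to $[f]$ --- up to replacing $[g]$ by $[g]^{-1}$ and/or conjugating by an orientation-reversing homeomorphism of $S$, as the orientation bookkeeping between the two statements may require; in any case $[g]$ lies in one of at most finitely many procongruent conjugacy classes determined by $[f]$.

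Next I would invoke Theorem~\ref{main_procongruent_almost_rigidity}: each such procongruent conjugacy class is, as a subset of $\mcg(S)$, a finite union of $\mcg(S)$--conjugacy classes, so $[g]$ is conjugate in $\mcg(S)$ to one of finitely many mapping classes $[g_1],\dots,[g_k]$ depending on $[f]$ alone. Conjugate mapping classes of $S$ have homeomorphic mapping tori, so $N\cong M_g\cong M_{g_i}$ for some $i$, and therefore $\pi_1(N)$ is isomorphic to one of the finitely many groups $\pi_1(M_{g_1}),\dots,\pi_1(M_{g_k})$. That is precisely the regular profinite almost rigidity of $G$ among fundamental groups of irreducible compact connected $3$--manifolds.

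The one place that demands care, rather than being purely formal, is the middle step: one must verify that the output of \cite[Theorem~1.1]{Boileau--Friedl_fiberedness} --- transfer of fiberedness, homeomorphism of fibers, and compatibility of the profinite isomorphism with the fibration structure --- genuinely matches the hypotheses of Proposition~\ref{characterization_mapping_tori}, and that all the resulting orientation and inversion ambiguities are packaged into an honestly finite (indeed, at most two) family of procongruent conjugacy classes. With that bookkeeping settled, the remaining implications are automatic and the corollary is, as claimed, immediate.
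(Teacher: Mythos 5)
Your proposal is correct and follows exactly the route the paper intends: the paper derives the corollary by declaring it an immediate consequence of Theorem~\ref{main_procongruent_almost_rigidity}, Proposition~\ref{characterization_mapping_tori}, and~\cite[Theorem~1.1]{Boileau--Friedl_fiberedness}, which is precisely the chain you spell out. Your attention to the orientation and $[g]^{-1}$ bookkeeping in the middle step is a sensible elaboration of what the paper leaves implicit; it does not change the argument.
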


A list of dynamical features of mapping classes 
turns out to be invariant under procongruent conjugacy.
For simplicity, we state only for pseudo-Anosov mappings of closed orientable surfaces.
By saying that a property is \emph{determined} by a certain equivalence class,
we mean that for every equivalent pair of objects, the property holds for both or for neither.

\begin{theorem}\label{main_entropy}
Given an orientable connected closed surface $S$ of negative Euler characteristic,
for any pseudo-Anosov mapping $f\colon S\to S$,
the following qualities and quantities about $f$ 
are all determined by the procongruent conjugacy class of $[f]\in\mcg(S)$:
\begin{itemize}
\item the stretching factor
\item the number of fixed points for each index
\item the topological type of singularities of the invariant foliations
\item the transverse orientability of the invariant foliations
\end{itemize}
\end{theorem}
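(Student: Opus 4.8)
\emph{Plan.} I would reduce the theorem to a single \emph{fixed point principle} and then harvest the four statements from classical pseudo-Anosov theory. The principle I want is: \emph{if pseudo-Anosov $[f_A],[f_B]\in\mcg(S)$ are procongruently conjugate, then for every $m\ge 1$ there is an index-preserving bijection between $\mathrm{Fix}(f_A^m)$ and $\mathrm{Fix}(f_B^m)$, compatible with the inclusions $\mathrm{Fix}(f^d)\subseteq\mathrm{Fix}(f^m)$ for $d\mid m$ and with the actions of $\langle f_A\rangle,\langle f_B\rangle$.} Granting it, the second item is the $m=1$ case. For the third item one reads the singularity data off the indexed fixed point sets of all iterates: a point occurs in $\mathrm{Fix}(f^m)$ with index $\le -2$ precisely when $f^m$ fixes an interior singularity of the invariant foliations without permuting its prongs, a $k$-pronged such singularity then contributing index $1-k$, whereas regular fixed points have index $\pm1$ and a singularity whose prongs $f^m$ permutes nontrivially has index $+1$; together with the record of which $\mathrm{Fix}(f^m)$ each point lies in, this recovers the prong numbers and their $f$-orbit and rotation structure. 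For the first item, $\log\dilatation(f)=\lim_{m}\tfrac1m\log N(f^m)$ for pseudo-Anosov $f$, and the growth rate of the Nielsen numbers $N(f^m)$ is visibly determined by the indexed fixed point data. Finally, the homological dilatation $\rho(f_*\mid H_1(S;\Rational))$ is itself a procongruent invariant, since $\mathrm{tr}(f_*^{\,m}\mid H_1(S;\Integral))$ agrees with the trace of the induced action on $H_1(S;\widehat{\Integral})$, which conjugation in $\outg(\widehat{\pi_1(S)})$ leaves unchanged; and it is classical that the invariant foliations are transversely orientable if and only if this homological dilatation equals $\dilatation(f)$, which yields the fourth item from the first.

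To establish the fixed point principle I would pass to mapping tori. By Proposition~\ref{characterization_mapping_tori}, procongruent conjugacy of $[f_A]$ and $[f_B]$ is the same as an isomorphism $\widehat{\pi_1(M_{f_A})}\cong\widehat{\pi_1(M_{f_B})}$ of profinite mapping torus groups compatible with the projections to $\widehat{\Integral}$. Now the entire package consisting of the fixed points of the iterates $f^m$, their Nielsen classes, their indices, the iteration structure, and their behaviour in finite covers is encoded in the conjugacy theory of $\pi_1(M_f)$: fixed point classes of $f^m$ correspond to Reidemeister classes of $f^m_\#$, hence to suitable conjugacy data of degree-$m$ elements of $\pi_1(M_f)$, and individual fixed points with their integer indices are recovered from the ordinary Lefschetz numbers $L(g\circ f')$ of the lifts $f'$ of $f^m$ to finite characteristic covers $S'\to S$ (with $g$ running over the deck groups) via the averaging formulas of Nielsen fixed point theory, once the covers are fine enough to separate the finitely many essential classes. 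This package is functorial: the conjugacy classes are transported because $M_f$ is a closed hyperbolic $3$-manifold, so $\pi_1(M_f)$ and its finite-index subgroups are conjugacy separable; and each $L(g\circ f')$ is homological, hence an integer that the profinite isomorphism preserves---on corresponding finite covers it induces an isomorphism of first integral homology intertwining the matching lifts and deck transformations, and conjugate integer matrices have equal traces.

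The crux, and the step I expect to resist, is exactly this profinite control of the fixed point index: it is a local, $\Integral$-valued quantity with no direct meaning in the profinite group, and only the detour through finite covers, underpinned by the conjugacy separability of $3$-manifold groups, makes it visible. I would add that the first and fourth items admit a lighter self-contained proof bypassing the full principle. Pass to a finite cover $S'\to S$ on which the invariant foliations become orientable---the cover itself if they already are on $S$, and a double cover otherwise---and to an iterate $f^m$ together with a lift $f'$ of $f^m$ to $S'$ that preserves these orientations; such data exist because $f$ is orientation-preserving. The transverse measure of the pulled-back stable foliation then represents a nonzero class in $H^1(S';\Rational)$---a bounding potential would possess a positive-index critical point, impossible for an oriented measured foliation---which is an eigenvector of $(f')^*$ of eigenvalue $\dilatation(f)^m$ in absolute value. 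On the other hand, for every finite cover $S'\to S$, every lift $f'$ of any iterate $f^m$, and every deck element $g$, the composite $g\circ f'$ is a pseudo-Anosov homeomorphism of $S'$ of dilatation $\dilatation(f)^m$, so Manning's inequality gives $\rho\bigl((g\circ f')_*\mid H_1(S';\Rational)\bigr)\le\dilatation(f)^m$. Hence $\dilatation(f)$ equals the supremum of the procongruent invariants $\rho\bigl((g\circ f')_*\mid H_1(S';\Rational)\bigr)^{1/m}$ over all such data, and the invariant foliations are transversely orientable precisely when the term $S'=S$, $m=1$, $g=\id$ already attains it.
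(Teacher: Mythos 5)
Your main route---reduce to the mapping torus via Proposition \ref{characterization_mapping_tori}, separate the finitely many essential $m$-periodic classes by a finite quotient using conjugacy separability of $\pi_1(M_f)$, and extract the indexed counts from twisted Lefschetz numbers---is exactly the engine of the paper (Theorem \ref{N_m_dialogue}, harvested for the four items in Section \ref{Sec-application_pA}). Where you genuinely diverge is the mechanism for transporting the Lefschetz data across the profinite isomorphism $\Psi$. You would show each ordinary Lefschetz number $L(g\circ f')$ on a finite characteristic cover is the trace of an integer matrix on $H_1(S';\Integral)$, and that the matching matrices on the $A$- and $B$-sides become conjugate over $\widehat{\Integral}$, so their integer traces agree. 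This is a shorter and more elementary path than the paper's, which first proves a twisted-homology isomorphism over profinite rings $R=\Integral_p$ (Theorem \ref{aligned_mapping_torus_homology}), then descends via isotypic decomposition and unique factorization to twisted Reidemeister torsion over characteristic-zero fields (Theorem \ref{TRT_dialogue}), and only then extracts $L_m(f;\gamma^*\chi_\rho)$ from the twisted Lefschetz zeta function (Theorem \ref{L_m_dialogue}). Your route needs care at one point---the element of $\widehat{\pi_1(M_B)}$ matching the chosen lift on the $A$-side is only a profinite element, so one must observe that its conjugation action on $H_1(S'_B;\widehat{\Integral})$ agrees with that of an honest element of $\pi_1(M_B)$ because the action factors through the finite deck group---but it should go through. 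What you give up is the torsion invariance itself, which the paper establishes as an independently interesting statement and emphasizes cannot be gotten by any naive twisted-homology isomorphism over $\Complex$.

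The genuine error is in your ``lighter self-contained proof'' of the first and fourth items. You claim one can pass to a finite cover $S'\to S$---``a double cover otherwise''---on which the invariant foliations become transversely orientable. This fails precisely when the foliation has a prong singularity of odd order: the transverse-orientation cocycle of the line field has local monodromy $-1$ around an odd-prong singularity, and a local obstruction cannot be killed by any unbranched cover; the orienting cover is branched over the odd-order singularities and so is not a covering space of $S$. Hence the dilatation is \emph{not} the supremum of homological spectral radii of lifts to finite covers, and in fact McMullen's theorem, quoted in the paper's introduction, shows the supremum is \emph{strictly smaller} in the odd-prong case---which is exactly why this theorem is interesting and why one must go through Nielsen-number growth rather than virtual homology. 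Discard the lighter shortcut: your main route ($\dilatation(f)=\limsup_m N_m(f)^{1/m}$ via the fixed-point principle, then transverse orientability from comparing $\dilatation(f)$ against the eigenvalues of $f_*$ on $H_1(S;\Complex)$, which are procongruent invariants by the trace argument) is the correct one, and matches the paper. A smaller imprecision: ``transversely orientable iff the homological spectral radius equals $\dilatation(f)$'' should be ``iff $\dilatation(f)$ itself occurs as an eigenvalue of $f_*$,'' which is what the paper uses.
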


For any pseudo-Anosov mapping $f\colon S\to S$, the stretching factor occurs as an eigenvalue
of the homological action $f_*\colon H_1(S;\Complex)\to H_1(S;\Complex)$
when the (stable or unstable) invariant foliations are transversely orientable.
One might expect that the stretching factor always occured as a virtual homological eigenvalue,
namely, 
an eigenvalue of $f'_*$ for some elevation $f'\colon S'\to S'$ 
of $f$ to some finite finite cover $S'$ of $S$.
However, this is false when the invariant foliations have at least one prong singularity of odd order.
More strikingly, McMullen \cite{McMullen_entropy} 
shows that in this case, the stretching factor is strictly greater than
the supremum of $|\mu|$, 
for $\mu$ ranging over all the virtual homological eigenvalues of $f$.
Despite the failure of virtual homological approximation,
Theorem \ref{main_entropy} points out that 
it is still possible to recover the stretching factor,
roughly speaking, from all the finite quotient actions.

Given any connected closed symplectic smooth surface $(\Sigma,\omega)$,
the symplectic Floer homology $\mathrm{HF}_*(\varphi)$ is defined 
for any monotone symplectomorphism $\varphi\colon \Sigma\to \Sigma$ whose fixed points are all nondegenerate,
(see \cite[Section 2]{Cotton-Clay} and references thereof).
If $\varphi$ is isotopic to a pseudo-Anosov mapping $f\colon S\to S$,
the isomorphism type of $\mathrm{HF}_*(\varphi)$, 
as a $\Integral/2\Integral$--graded module over $\Integral/2\Integral$,
is completely determined by the indexed fixed point numbers of $f$.
This result is due to Cotton-Clay \cite{Cotton-Clay} and Fel'shtyn \cite[Theorem 4.1]{Felshtyn_HF}.
Therefore, the following corollary is implied by Theorem \ref{main_entropy}:

\begin{corollary}\label{sympl_FH}
	For any triple $(\Sigma,\omega,\varphi)$ as above,
	if $\varphi$ is isotopic to a pseudo-Anosov mapping,
	then the isomorphism type of the symplectic Floer homology
	$\mathrm{HF}_*(\varphi)$ is determined by	the procongruent conjugacy class of $[\varphi]\in\mcg(\Sigma)$.
\end{corollary}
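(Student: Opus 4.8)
The plan is to deduce the corollary immediately from Theorem \ref{main_entropy} and the computation of symplectic Floer homology recalled just above its statement. Let $(\Sigma,\omega_A,\varphi_A)$ and $(\Sigma,\omega_B,\varphi_B)$ be two triples as in the statement, so that $\varphi_A$ and $\varphi_B$ are monotone symplectomorphisms with only nondegenerate fixed points, and each is isotopic to a pseudo-Anosov mapping, say $f_A\colon\Sigma\to\Sigma$ and $f_B\colon\Sigma\to\Sigma$ respectively; suppose further that $[\varphi_A]$ and $[\varphi_B]$ are procongruently conjugate in $\mcg(\Sigma)$. Since $[\varphi_A]=[f_A]$ and $[\varphi_B]=[f_B]$ in $\mcg(\Sigma)$, the pseudo-Anosov mapping classes $[f_A]$ and $[f_B]$ are procongruently conjugate as well. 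Note also that $\Sigma$ has negative Euler characteristic, as it carries a pseudo-Anosov mapping, so the hypotheses of Theorem \ref{main_entropy} are in force.

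First I would apply Theorem \ref{main_entropy} to $f_A$ and $f_B$: among the listed quantities, the number of fixed points of each index is determined by the procongruent conjugacy class, so $f_A$ and $f_B$ have the same number of fixed points of every fixed point index. Then I would invoke the theorem of Cotton-Clay \cite{Cotton-Clay} and Fel'shtyn \cite[Theorem 4.1]{Felshtyn_HF} quoted above: for a monotone symplectomorphism with nondegenerate fixed points that is isotopic to a pseudo-Anosov mapping, the isomorphism type of $\mathrm{HF}_*$, as a $\Integral/2\Integral$--graded module over $\Integral/2\Integral$, depends only on the indexed fixed point numbers of the pseudo-Anosov representative. Applying this to $\varphi_A$ and to $\varphi_B$, and feeding in the equality of indexed fixed point numbers from the previous step, gives $\mathrm{HF}_*(\varphi_A)\cong\mathrm{HF}_*(\varphi_B)$, which is the assertion.

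Because the two substantive ingredients---Theorem \ref{main_entropy} and the Cotton-Clay--Fel'shtyn computation---are both available, there is essentially no obstacle here, only bookkeeping. The one point deserving a little attention is to match interfaces cleanly: one should check that each of the two triples genuinely satisfies the standing hypotheses under which $\mathrm{HF}_*$ is defined (monotonicity of the symplectomorphism together with nondegeneracy of its fixed points), and that the invariance of the indexed fixed point numbers furnished by Theorem \ref{main_entropy} is exactly the input that the Floer-homological computation consumes.
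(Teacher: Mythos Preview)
Your proof is correct and follows the same approach as the paper: the corollary is derived directly from Theorem \ref{main_entropy} (equality of indexed fixed point numbers) together with the Cotton-Clay/Fel'shtyn computation that $\mathrm{HF}_*(\varphi)$ is determined by those numbers.
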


Note that $\mathrm{HF}_*(\varphi)$ shows up as part of 
the Heegaard Floer homology $\mathrm{HF}^+(M_\varphi)$ for the mapping torus $M_\varphi$
(see \cite[Section 1.2]{Cotton-Clay} and references thereof).
Note also that $\mathrm{HF}_*(\varphi^m)$
are determined by the procongruent conjugacy class of $[\varphi]\in\mcg(\Sigma)$
for all $m\in\Natural$, as implied by Corollary \ref{sympl_FH}.
These facts make us suspect that
the Heegaard Floer homology $\mathrm{HF}^+(M)$ is determined
up to isomorphism 
by the regularly profinite isomorphism class of $\pi_1(M)$,
for every orientable closed irreducible $3$--manifold $M$.
We even wonder whether the profinite isomorphism class suffices.

Theorem \ref{main_procongruent_almost_rigidity} is the ultimate goal of this paper.
Its proof is completed in Section \ref{Sec-procongruent_almost_rigidity} based on
a queue of results in preceding sections.
As a major step,
Theorem \ref{main_entropy} yields a special case of Theorem \ref{main_procongruent_almost_rigidity}
for pseudo-Anosov mapping classes, (see Corollary \ref{pA_procongruent_almost_rigidity}).
In the rest of the introduction, we summarize our strategy and explain some key ideas.
Generally speaking, 
to obtain a finiteness result as Theorem \ref{main_procongruent_almost_rigidity},
we need some topological information about how complicated a mapping class is,
and we expect to read off the information from the profinite completion
of the mapping torus groups.
The first part becomes possible because of 
fixed point theory and the Nielsen--Thurston classification.
A particularly useful ingredient for us is the classification
of essential fixed point classes and their indices for Nielsen--Thurston normal forms, 
due to Jiang--Guo \cite{Jiang--Guo}.
The second part requires more recent 
techniques in $3$--manifold group theory.
In particular, we invoke
conjugacy separability of $3$--manifold groups due to Hamilton--Wilton--Zalesskii \cite{HWZ_conjugacy_separability}
and the profinite detection of the geometric decomposition due to Wilton--Zalesskii \cite{WZ_decomposition}.
These ingredients, in turn, rely on the virtual specialization of $3$--manifold groups
due to Agol \cite{Agol_VHC}, Wise \cite{Wise_book,Wise_notes}, Przytycki--Wise \cite{Przytycki--Wise_mixed},
and on the hereditary conjugacy separability of right-angled Artin groups
due to Minasyan \cite{Minasyan_RAAG}.

To see the basic idea, suppose that $[f]\in\mcg(S)$ is a mapping class
of an orientable connected closed surface $S$ of negative Euler characteristic.
For any $m\in\Natural$, the number of essential $m$--periodic orbit classes of $[f]$ is called
the $m$--th Nielsen number, denoted as $N_m([f])$, (see Section \ref{Subsec-fixed_point_theory} and (\ref{N_m_def})). 
It is known that $N_m([f])$ 
grows at most exponentially fast as $m\in\Natural$ tends to $\infty$.
The limit superior of $N_m([f])^{1/m}$
recovers the \emph{dilatation} of $[f]$,
which equals the greatest stretching factor of $[f]$
restricted to the pseudo-Anosov part, $1$ if empty,
with respect to the Nielsen--Thurston decomposition, (see Section \ref{Subsec-NT_complexity}).
In general, dilatation is insufficient to determine the conjugacy class of $[f]$ 
up to finite ambiguity.
One may compose $[f]$ with Dehn twists along the Nielsen--Thurston reduction curves,
and dilatation remains unchanged under this operation.
A secondary complexity can be introduced by 
suitably counting the power of the Dehn twist applied along each reduction curve.
The normalized power is sometimes called the fractional Dehn twist coefficient along the reduction curve,
and we refer to their maximum as the \emph{deviation} of $[f]$, (see Section \ref{Subsec-NT_complexity}).
On the mapping torus, the orbits of reduction curves correspond to the decomposition
tori or Klein bottles of the geometric decomposition.
Assume for the moment that there are no decomposition Klein bottles,
and that each decomposition torus is adjacent to two distinct geometric pieces.
Then for each decomposition torus,
each adjacent geometric piece determines 
a primitive essential periodic trajectory of the suspension flow on the torus.
The primitive trajectory is considered as a free-homotopy class of a loop
on the torus, sometimes called the slope of degeneracy.
(When the piece is $\mathbb{H}^2\times\mathbb{E}^1$ geometric,
the slope of degeneracy determined by the piece on the torus
is parallel to the regular Seifert fiber;
when the piece is $\mathbb{H}^3$ geometric,
the slope of degeneracy is determined by the invariant foliation
and the restricted pseudo-Anosov mapping near the corresponding reduction curve.)
By $3$--manifold topology,
it is easy to express the fractional Dehn twist coefficient along any reduction curve
in terms of the geometric intersection number between the slopes of degeneracy 
coming from the adjacent geometric pieces.
In particular, deviation can be determined 
using the geometric decomposition of the mapping torus
and those essential periodic trajectories carried by the geometric decomposition tori.
Dilatation and deviation together determines a mapping class up to finite ambiguity
(Proposition \ref{bound_finiteness}).

To read off the dilatation from the profinite completion of the mapping torus group $\widehat{\pi_1(M_f)}$,
we observe that the essential $m$--periodic orbit classes of $[f]$ 
correspond to the essential $m$--periodic trajectory classes of $M_f$,
for any $m\in\Natural$.
The latter can be considered as conjugation orbits of $\pi_1(M_f)$. 
For any finite quotient $\gamma\colon\pi_1(M_f)\to\Gamma$,
the essential $m$--periodic trajectory classes give rise to
at most $N_m([f])$ conjugation orbits of $\Gamma$,
and the equality is achievable for some $\Gamma$ by 
the conjugacy separability of $\pi_1(M_f)$.
Note that the number of irreducible complex characters of $\Gamma$ 
equals the number of conjugation orbits of $\Gamma$. 
Therefore,
we are hinted to detect $N_m(f)$ using the twisted $m$--periodic Lefschetz numbers
$L_m(f;\gamma^*\chi_\rho)$ 
for all the irreducible complex characters $\chi_\rho$ of $\Gamma$,
(see Section \ref{Subsec-twisted_Lefschetz}).
As argued in Section \ref{Sec-indexed_orbit_numbers},
the detection does work, and it can be refined to detect periodic orbit numbers
separately for each index (Theorem \ref{N_m_dialogue}).
However, 
we still have to show the procongruent conjugacy invariance of twisted periodic Lefschetz numbers,
with suitable formulation to align the twisting representation.
This is done in Sections \ref{Sec-TH_mapping_tori}, \ref{Sec-TRT_mapping_tori}, and \ref{Sec-L_m_mapping_class},
by establishing the procongruent conjugacy invariance
first for certain twisted homology of $M_f$ (Theorem \ref{aligned_mapping_torus_homology}),
and then for certain twisted Reidemeister torsion of $M_f$ (Theorem \ref{TRT_dialogue}),
and finally, for the twisted periodic Lefschetz numbers (Theorem \ref{L_m_dialogue}).
We emphasize that the assumptions of these theorems are carefully posed
to make the results correct and strong enough.
In particular, a crucial part of our argument
is to establish Theorem \ref{TRT_dialogue} over fields of characteristic $0$.
By contrast,
it seems that there should be no natural isomorphism 
on the twisted homology level with those coefficients, 
(such as a na\"ive analogue of Theorem \ref{aligned_mapping_torus_homology}).

To read off the deviation from $\widehat{\pi_1(M_f)}$,
we study the position relation between the essential periodic trajectories
and the decomposition torus or Klein bottles on the profinite level.
The idea is to translate the above topological picture about 
geometric decomposition and slopes of degeneracy
using group-theoretic terms, and to obtain their profinite analogues.
For procongruently conjugate pairs of mapping classes $[f_A],[f_B]\in\mcg(S)$,
we establish correspondences between their profinite geometric-decomposition trees
and between their indexed periodic orbit sets (Theorem \ref{correspondence_profinite_objects}).
The group-theoretic translation of the topological picture
is essential done in Lemma \ref{carry_trajectory_universal} and part of Lemma \ref{complexity_equal},
and the profinite analogue is essentially done in
Lemma \ref{carry_trajectory_profinite} and part of Lemma \ref{complexity_equal}.

With the procongruent conjugacy invariance of indexed periodic orbit numbers (Theorem \ref{N_m_dialogue}),
we are able to prove Theorem \ref{main_entropy} in Section \ref{Sec-application_pA}.
Theorem \ref{main_procongruent_almost_rigidity} follows with the extra ingredient
of the procongruent conjugacy invariance of deviation, 
which is the main point of Section \ref{Sec-procongruent_almost_rigidity}.

The organization of the paper is as follows.
In Section \ref{Sec-mc_and_mt}, we summarize some basic structures
to be considered in the study of mapping classes and mapping tori,
and introduce some basic settings.
In Section \ref{Sec-procongruent_conjugacy},
we introduce procongrent conjugacy and mention some general characterizations.
In Sections \ref{Sec-review_twisted}, \ref{Sec-TH_mapping_tori}, and \ref{Sec-TRT_mapping_tori},
we establish procongruent conjugacy invariance for certain twisted invariants of mapping tori.
In Sections \ref{Sec-fpt_review}, \ref{Sec-L_m_mapping_class}, and \ref{Sec-indexed_orbit_numbers},
we establish procongruent conjugacy invariants for twisted periodic Lefschetz numbers
and indexed periodic orbit numbers.
In Sections \ref{Sec-review_NT} and \ref{Sec-application_pA},
we complete the proof of Theorem \ref{main_entropy}.
In Sections \ref{Sec-profinite_structures} and \ref{Sec-procongruent_almost_rigidity},
we complete the proof of Theorem \ref{main_procongruent_almost_rigidity}.
Among the above, Sections \ref{Sec-review_twisted}, \ref{Sec-fpt_review}, and \ref{Sec-review_NT}
are background review. 
We also explain there the notations and definitions that we adopt.
The core arguments of this paper are contained in 
Sections \ref{Sec-TH_mapping_tori}, \ref{Sec-TRT_mapping_tori}, \ref{Sec-L_m_mapping_class}, \ref{Sec-indexed_orbit_numbers},
\ref{Sec-profinite_structures}, and \ref{Sec-procongruent_almost_rigidity}.

\subsection*{Acknowledgement} 
The author would like to thank Yang Huang, Dawid Kielak, and Alan Reid for valuable communications.
The author would like to thank the anonymous referees for many constructive suggestions.

\section{Mapping classes and mapping tori}\label{Sec-mc_and_mt}
In this section, we summarize some basic structures related to 
mapping classes and mapping tori of surface automorphisms.
We introduce two settings to be considered in subsequent sections,
as Convention \ref{settings}.
Throughout this paper,
we almost always take the covering space perspective
when speaking of fundamental groups,
so they refer to deck transformation groups 
acting on fixed universal covering spaces.

Let $S$ be an orientable connected compact surface.
Denote by $\mcg(S)$ the mapping class group of $S$, 
which consists the isotopy classes of orientation-preserving self-homeomorphisms of $S$.
Fix a universal covering space $\widetilde{S}_{\univ}$ of $S$.
Denote by $\kappa_{\univ}$
the covering projection map $\widetilde{S}_{\univ}\to S$, 
and by $\pi_1(S)$ 
the group of deck transformations acting on $\widetilde{S}_{\univ}$.

For any orientation-preserving self-homeomorphism $f\colon S\to S$,
there is a canonically induced outer automorphism of $\pi_1(S)$,
which is described as follows.
Take an elevation $\tilde{f}\colon \widetilde{S}_\univ\to \widetilde{S}_\univ$ of $f$ to $\widetilde{S}_\univ$,
(meaning $f\circ\kappa_\univ=\kappa_\univ\circ\tilde{f}$).
Then the assignment $\gamma\mapsto \tilde{f}\circ\gamma\circ\tilde{f}^{-1}$ defines an automorphism of $\pi_1(S)$.
Since the elevations of $f$ to $\widetilde{S}_\univ$ are precisely $\tilde{f}\circ\sigma$ for all $\sigma\in\pi_1(S)$,
the automorphism changes only by an inner automorphism factor for other choices of the elevation.
Since any isotopy starting from $f$ can be elevated to $\widetilde{S}_\univ$,
starting from $\tilde{f}$ and varying continuously,
the outer automorphism class depends only on the isotopy class of $f$.
Therefore, there is a well-defined group homomorphism:
\begin{equation}\label{mcg2outg}
\mcg(S)\to \outg(\pi_1(S)),
\end{equation}
which sends any $[f]\in\mcg(S)$
to $[\gamma\mapsto\tilde{f}\circ\gamma\circ \tilde{f}^{-1}]\in\outg(\pi_1(S))$.

For any orientation-preserving self-homeomorphism $f\colon S\to S$,
denote by
\begin{equation}\label{M_f_def}
M_f=\frac{S\times\Real}{(x,r+1)\sim(f(x),r)}
\end{equation}
the mapping torus of $f$ (following the dynamical convention).
Topologically $M_f$ is a compact $3$--manifold with empty or tori boundary.
The forward suspension flow 
\begin{equation}\label{theta_t_def}
\theta_t\colon M_f\to M_f
\end{equation}
parametrized by $t\in\Real$ is induced by $(x,r)\mapsto (x,r+t)$,
so $\theta_1$ can also be induced by $(x,r)\mapsto (f(x),r)$.
The projection $S\times \Real\to \Real$ onto the second factor 
induces a distinguished bundle structure of $M_f$
over the oriented circle $\Real/\Integral$ with a distinguished fiber projected by $S\times\{0\}$.
The homotopy class of the bundle projection
therefore represents a distinguished first integral cohomology class
\begin{equation}\label{phi_f_def}
\phi_f\in H^1(M_f;\Integral).
\end{equation}
The product space $\widetilde{S}_{\univ}\times\Real$ provides a distinguished universal cover of $M_f$.
The covering projection is the composition of maps
$\widetilde{S}_{\univ}\times\Real\longrightarrow S\times\Real \longrightarrow M_f,$
where the first map is the product $\kappa_{\univ}\times\mathrm{id}_{\Real}$,
and the second map the quotient projection.

Denote by $\pi_1(M_f)$ the group of deck transformations acting
on $\widetilde{S}_{\univ}\times\Real$.
The cohomology class $\phi_f$ induces a distinguished quotient homomorphism
of $\pi_1(M_f)$ onto the infinite cyclic group ${\Integral}$,
(which is the deck transformation group for $\Real\to\Real/\Integral$ acting on $\Real$ by translation).
The kernel of the quotient homomorphism is the normal subgroup of $\pi_1(M_f)$ 
which stabilizes every fiber $\widetilde{S}_{\univ}\times\{r\}$.
We identify the fiber at $0$ canonically with $\widetilde{S}_\univ$
and identify its stabilizer equivariantly with $\pi_1(S)$.
Written together with the profinite completions,
there is a commutative diagram of distinguished group homomorphisms:
\begin{equation}\label{ses_mapping_torus_group}
\xymatrix{
\{1\} \ar[r] & \pi_1(S) \ar[r] \ar[d]^{\textrm{incl.}} & \pi_1(M_f) \ar[r]^{\phi_f} \ar[d]^{\textrm{incl.}} &  {\Integral} \ar[r] \ar[d]^{\textrm{incl.}} & \{0\} \\
\{1\} \ar[r] & \widehat{\pi_1(S)} \ar[r] & \widehat{\pi_1(M_f)} \ar[r]^{\widehat{\phi}_f} &  \widehat{\Integral} \ar[r] & \{0\} 
}
\end{equation}
whose rows are short exact sequences.

There are two kinds of situations that we encounter 
repeatedly in the rest of this paper, 
either to verify properties of individual mapping classes 
that are determined by their procongruent conjugacy classes,
or to identify common features for procongurently conjugate pairs of mapping classes.
Frequently we also study proconguent conjugacy between mapping classes
through profinite completions of mapping-torus groups.
We formalize these situations for the convenience of subsequent discussion:

\begin{convention}\label{settings}\
	\begin{enumerate}
	\item A \emph{monologue setting} $(S,f)$ refers to the following hypotheses and notations.
	Suppose that $S$ is an orientable connected compact surface, together 
	with a fixed universal covering space $\widetilde{S}_\univ\to S$;
	suppose that $f\colon S\to S$ is an orientation-preserving self-homeomorphism of $S$.
	Denote by $M_f$ the mapping torus of $f$, 
	and by $\phi_f$ the distinguished cohomology class of $M_f$,
	and by $\pi_1(M_f)$ the deck transformation group 
	which acts on	the distinguished universal covering space $\widetilde{S}_\univ\times\Real$ of $M_f$.
	\item A \emph{dialogue setting} $(S,f_A,f_B)$ refers to the following hypotheses and notations.
	Suppose that $S$ is an orientable connected compact surface, together 
	with a fixed universal covering space $\widetilde{S}_\univ\to S$;
	suppose that $f_A,f_B\colon S\to S$ are
	a pair of orientation-preserving self-homeomorphisms of $S$.
	Denote by $M_A$ the mapping torus of $f_A$,
	and by $\phi_A$ the distinguished cohomology class of $M_A$,
	and by $\pi_1(M_A)$ the deck transformation group acting on $\widetilde{S}_\univ\times\Real$.
	Denote by $M_B,\phi_B,\pi_1(M_B)$ those objects associated to $f_B$, accordingly.
	\end{enumerate}
\end{convention}

\section{Procongruent conjugacy}\label{Sec-procongruent_conjugacy}
In this section, we introduce the notion of procongruent conjugacy.
We first define this relation 
on outer automorphism groups (Definition \ref{procongruent_conjugacy_outer}),
and then on mapping class groups (Definition \ref{procongruent_conjugacy_mcg}).
We provide some preliminary characterizations and comments
to justify our formulation.

\subsection{Procongruent conjugacy for outer automorphisms}
In group theory, the isomorphism type of a residually finite group has been studied through 
the set of all the isomorphism classes of its finite quotient groups,
sometimes called the \emph{genus} of the group.
For finitely generated groups,
the same amount of information is provided in a more organized form
by the profinite completion of the group.
We refer the reader to \cite[Chapter 2]{Ribes--Zalesskii_book}
for general background on profinite group theory.
Procongruent conjugacy between outer automorphisms
is supposed to be an appropriate analogue
of profinite isomorphism between groups.
 
We recall the following standard terminology in group theory.
For any (abstract) group $G$,
the \emph{outer automorphism group} of $G$ refers to 
the quotient group of the automorphism group $\autg(G)$ by the inner automorphism group $\inng(G)$,
denoted as $\outg(G)$. 
A subgroup $K$ of $G$ is said to be \emph{characteristic}
if it is invariant under every automorphism of $G$.
For any characteristic subgroup $K$, 
there is a canonical group homomorphism $\outg(G)\to\outg(G/K)$,
which descends from $\autg(G)\to\autg(G/K)$.
The \emph{profinite completion} of $G$ refers to 
the group which is 
the inverse limit of the inverse system of all the finite quotient groups of $G$,
denoted as $\widehat{G}$.
Unless otherwise declared, we do not regard $\widehat{G}$ as a topological group,
(see Remark \ref{topological_group_category}).
There is a natural group homomorphism $G\to \widehat{G}$.
We say that $G$ is \emph{residually finite} if $G\to \widehat{G}$ is injective.
There is a canonical group homomorphism $\outg(G)\to\outg(\widehat{G})$,
which descends from $\autg(G)\to\autg(\widehat{G})$.

\begin{definition}\label{procongruent_conjugacy_outer}
	Let $G$ be a finitely generated residually finite group.
	\begin{enumerate}
	\item
	For any characteristic finite-index subgroup $K$ of $G$,
	a pair of outer automorphisms of $G$ are said to be
	\emph{congruently conjugate} modulo $K$,
	if they induce a conjugate pair of outer automorphisms in $\outg(G/K)$.
	\item
	A pair of outer automorphisms of $G$ are 
	said to be \emph{procongruently conjugate}
	if they induce a conjugate pair of outer automorphisms in $\outg(\widehat{G})$.
	\end{enumerate}
\end{definition}

\begin{proposition}\label{characterization_congruent_conjugacy}
	Let $G$ be a finitely generated residually finite group.
	Then, 
	a pair of outer automorphisms of $G$ are procongurently conjugate 
	if and only if for every finite-index subgroup $H$ of $G$,
	there is a finite-index characteristic subgroup $K$ of $G$ contained in $H$,
	such that the pair of outer automorphisms
	are congruently conjugate modulo $K$.
\end{proposition}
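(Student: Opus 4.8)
The plan is to prove both implications directly from the definition of the profinite completion as an inverse limit, together with the standard fact that conjugacy in a profinite group is detected on finite quotients. Throughout, write $\widehat{G}=\varprojlim G/N$, the limit taken over the poset of all finite-index \emph{normal} subgroups $N\trianglelefteq G$; we may freely restrict this poset to the cofinal subposet of finite-index \emph{characteristic} subgroups $K$, since every finite-index normal subgroup contains one (intersect the finitely many $\autg(G)$-images of a finite-index normal subgroup, using that $G$ is finitely generated so it has only finitely many subgroups of each index). Fix two outer automorphisms $[\alpha],[\beta]\in\outg(G)$; since $K$ characteristic gives $\outg(G)\to\outg(G/K)$, and compatibly $\outg(G)\to\outg(\widehat G)$, the statements on the two sides are comparable.

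First I would treat the ``only if'' direction. Suppose $[\alpha],[\beta]$ become conjugate in $\outg(\widehat G)$, say $\widehat\beta=c_{\hat g}\circ\widehat\alpha\circ c_{\hat g}^{-1}$ in $\outg(\widehat G)$ for some $\hat g\in\widehat G$ (here $c_{\hat g}$ denotes the inner automorphism; more precisely there is $\widehat\gamma\in\autg(\widehat G)$ with $\widehat\gamma\,\widehat\alpha\,\widehat\gamma^{-1}\equiv\widehat\beta$ modulo $\inng(\widehat G)$, and since $\autg(\widehat G)\to\outg(\widehat G)$ is onto we may take $\widehat\gamma=c_{\hat g}$). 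Given a finite-index subgroup $H\le G$, pick a finite-index characteristic $K\le H$. The quotient map $q_K\colon\widehat G\to G/K$ factors through the characteristic quotient, and carries the relation above to $\overline\beta = c_{q_K(\hat g)}\circ\overline\alpha\circ c_{q_K(\hat g)}^{-1}$ in $\autg(G/K)$ up to an inner factor, i.e. $[\overline\alpha]$ and $[\overline\beta]$ are conjugate in $\outg(G/K)$. That is exactly congruent conjugacy modulo $K$.

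For the ``if'' direction, suppose that for every finite-index $H\le G$ there is a finite-index characteristic $K\le H$ with $[\overline\alpha_K]$ conjugate to $[\overline\beta_K]$ in $\outg(G/K)$. Restricting to a cofinal family of characteristic subgroups, for each such $K$ the set
\[
C_K=\{\,\bar g\in G/K \;:\; c_{\bar g}\circ\overline\alpha_K\circ c_{\bar g}^{-1} \equiv \overline\beta_K \ \text{in}\ \outg(G/K)\,\}
\]
is nonempty and finite. The hypothesis, run along the cofinal poset, shows $C_K\neq\varnothing$ for a cofinal, hence all, $K$; and for $K'\le K$ the quotient map $G/K'\to G/K$ sends $C_{K'}$ into $C_K$ (conjugating one outer automorphism to another is preserved by a characteristic quotient). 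Thus $\{C_K\}$ is an inverse system of nonempty finite sets, so $\varprojlim C_K\neq\varnothing$ (this is the compactness step). An element of $\varprojlim C_K$ is an element $\hat g\in\widehat G$ with $c_{\hat g}\circ\widehat\alpha\circ c_{\hat g}^{-1}\equiv\widehat\beta$ on every finite characteristic quotient, hence (as these quotients separate points and $\autg$ of the limit is the limit of $\autg$ of the quotients acting compatibly) equal to $\widehat\beta$ modulo $\inng(\widehat G)$. Therefore $[\alpha]$ and $[\beta]$ are procongruently conjugate.

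The main obstacle to make airtight is the nonemptiness of the inverse limit $\varprojlim C_K$: one must check that $C_K$ is genuinely finite (clear, as $G/K$ is finite) and, more delicately, that the transition maps $C_{K'}\to C_K$ are well defined — this uses that $K,K'$ are \emph{characteristic} so that $\outg(G/K')\to\outg(G/K)$ exists and intertwines the induced outer automorphisms, and that passing from ``conjugate in $\outg$'' at level $K'$ to level $K$ is functorial. Once that is in place, the standard fact that an inverse limit of nonempty finite sets is nonempty finishes the argument. A secondary point, used in both directions, is the identification of conjugacy of outer automorphisms of $\widehat G$ with compatible conjugacy on all finite characteristic quotients, which is exactly the statement that $\widehat G$ is its own profinite completion and that $\outg$ commutes with this particular inverse limit along characteristic subgroups; I would phrase this cleanly at the outset and then apply it twice.
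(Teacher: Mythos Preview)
Your argument contains a fundamental misreading of the definition. ``Conjugate in $\outg(\widehat G)$'' means there exists some $[\widehat\gamma]\in\outg(\widehat G)$ with $[\widehat\gamma][\widehat\alpha][\widehat\gamma]^{-1}=[\widehat\beta]$; the conjugating element is an \emph{arbitrary} outer automorphism of $\widehat G$, not an inner one. Your parenthetical ``since $\autg(\widehat G)\to\outg(\widehat G)$ is onto we may take $\widehat\gamma=c_{\hat g}$'' has the logic reversed: surjectivity says every outer class lifts to some automorphism, not that every automorphism is inner. Consequently your sets
\[
C_K=\{\bar g\in G/K : c_{\bar g}\circ\overline\alpha_K\circ c_{\bar g}^{-1}\equiv\overline\beta_K\ \text{in}\ \outg(G/K)\}
\]
are the wrong objects: since $[c_{\bar g}]$ is trivial in $\outg(G/K)$, the defining condition is independent of $\bar g$ and equivalent to $[\overline\alpha_K]=[\overline\beta_K]$. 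The hypothesis only gives that $[\overline\alpha_K]$ and $[\overline\beta_K]$ are \emph{conjugate} in $\outg(G/K)$, so $C_K$ can be empty and the inverse-limit step collapses. The same confusion invalidates the ``only if'' direction as written.

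The paper's proof runs the compactness argument in the correct place: it works with conjugators $[\sigma_n]\in\outg(G/K_n)$ (not in $G/K_n$), using the specific cofinal family $K_n=\bigcap\{H\le G:[G:H]\le n\}$. This choice matters for a second reason you glossed over: for the transition maps $\outg(G/K_{n})\to\outg(G/K_m)$ and the projection $\outg(\widehat G)\to\outg(G/K_n)$ to exist, one needs $K_m/K_n$ characteristic in $G/K_n$ and the closure $\widehat{K_n}$ characteristic in $\widehat G$. An arbitrary characteristic $K\le G$ need not satisfy these (this is exactly the point of the paper's Remark on ``profinitely characteristic'' subgroups), whereas the $K_n$ do, since $K_m/K_n$ is the intersection of all index-$\le m$ subgroups of $G/K_n$. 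Once you replace $C_K$ by the set of conjugators in $\outg(G/K_n)$ and use this particular cofinal family, your inverse-limit strategy becomes the paper's diagonal argument.
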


\begin{proof}
	As $G$ is finitely generated and residually finite, 
	the subgroups 
	$$K_n=\bigcap\{H\leq G\colon [G:H]\leq n\}$$ of $G$
	are characteristic of finite-index, for all $n\in\Natural$. 
	The profinite completion $\widehat{G}$ of $G$ 
	can be identified with the inverse limit
	of the inverse system $G\to G/K_n$, indexed by $n\in\Natural$.
	Moreover,
	there are only finitely many subgroups $\widehat{H}$ in $\widehat{G}$
	of any given finite index \cite[Proposition 2.5.1]{Ribes--Zalesskii_book}.
	In particular, 
	the intersection $\bigcap\{\widehat{H}\leq \widehat{G}\colon [\widehat{G}:\widehat{H}]\leq n\}$
	can be identified as the kernel of $\widehat{G}\to G/K_n$,
	which is also isomorphic to the profinite completion $\widehat{K}_n$ of $K_n$.
	Observe that any automorphism of $\widehat{G}$ preserves $\widehat{K}_n$.
	Therefore, 
	we have natural homomorphisms 
	$$\autg(\widehat{G})\to\autg(\widehat{G}/\widehat{K}_n)\cong\autg(G/K_n),$$
	and 
	$$\outg(\widehat{G})\to\outg(\widehat{G}/\widehat{K}_n)\cong\outg(G/K_n),$$
	for all $n\in\Natural$.
	
	Below we adopt the notation $[\tau]\in\outg(G)$ for outer automorphisms of $G$,
	where $\tau\in\autg(G)$ indicates some representative.
	For any $[\tau]\in\outg(G)$, denote by 
	$$[\tau]_n\in\outg(G/K_n)$$
	the image under the natural homomorphism $\outg(G)\to\outg(G/K_n)$.
			
	To prove the `only if' direction, it suffices to show that any procongruently conjugate
	pair $[\tau_A],[\tau_B]\in\outg(G)$
	give rise to conjugate pairs $[\tau_A]_n,[\tau_B]_n\in\outg(G/K_n)$ for all $n\in\Natural$.
	This follows immediately from the fact that the canonical homomorphism $\outg(G)\to\outg(G/K)$ 
	factorizes canonically as $\outg(G)\to\outg(\widehat{G})\to\outg(G/K_n)$,
	for all $n\in\Natural$.

	The `if' direction follows 
	from a general argument by sequential compactness.
	To be precise,
	let $[\tau_A],[\tau_B]\in\outg(G)$ be a pair of outer automorphisms of $G$.
	Applying the assumption to any $H=K_n$,
	we see that they are congruently conjugate modulo some finite-index characteristic subgroup
	of $G$ that is contained in $K_n$.
	In particular, 
	$[\tau_A]_n,[\tau_B]_n\in\outg(G/K_n)$ are conjugate for all $n\in\Natural$.
	In other words,
	$[\tau_A]_n=[\sigma_n][\tau_B]_n[\sigma_n]^{-1}$ for some $[\sigma_n]\in\outg(G/K_n)$,	
	and for all $n\in\Natural$.
	As $\outg(G/K_1)$ is finite,
	there exists some $[\hat{\sigma}_1]\in\outg(G/K_1)$ 
	and $[\sigma_{n'}]_1=[\hat{\sigma}_1]$ holds for
	some infinite subsequence $[\sigma_{n'}]\in\outg(G/K_{n'})$,
	indexed by $n'\in I'$ for some $I'\subset\Natural$.
	As $\outg(G/K_2)$ is finite,
	there exists some $[\hat{\sigma}_2]\in\outg(G/K_2)$,
	and $[\sigma_{n''}]_2=[\hat{\sigma}_2]$ holds for
	some further infinite subsequence $[\sigma_{n''}]\in\outg(G/K_{n''})$,
	indexed by $n''\in I''$ for some $I''\subset I'$.
	Moreover, we may require that $\autg(G/K_2)\to\autg(G/K_1)$
	sends $\hat{\sigma}_2$ to $\hat{\sigma}_1$,
	possibly after modifying the representative $\hat{\sigma}_2$ 
	by composition with an inner automorphism of $G/K_2$.
	Continuing in this way,
	we extract increasingly further infinite subsequences,
	and obtain elements $[\hat{\sigma}_n]\in\outg(G/K_n)$ for all $n\in\Natural$.
	The construction guarantees that 
	$\autg(G/K_n)\to\autg(G/K_m)$ sends $\hat{\sigma}_n$ to $\hat{\sigma}_m$,
	for any $m,n\in\Natural$ with $m\leq n$.
	Then the sequence $\hat{\sigma}_n\in\outg(G/K_n)$ defines a unique $\hat{\sigma}\in\autg(\widehat{G})$
	by the inverse limit.
	The construction guarantees
	$[\tau_A]_n=[\hat{\sigma}_n][\tau_B]_n[\widehat{\sigma}_n]^{-1}$ in $\outg(G/K_n)$, for all $n\in\Natural$.
	Therefore,
	$[\widehat{\tau}_A]=[\hat{\sigma}][\widehat{\tau}_B][\hat{\sigma}]^{-1}$
	holds in $\outg(\widehat{G})$, 
	where $\widehat{\tau}_A,\widehat{\tau}_B\in\autg(\widehat{G})$ 
	stand for the profinite completion extension of $\tau_A,\tau_B$, respectively.
	In other words, $[\tau_A],[\tau_B]\in\outg(G)$ are procongruently conjugate.
\end{proof}

\begin{remark}\label{profinitely_characteristic}\
\begin{enumerate}
\item
The condition in Proposition \ref{characterization_congruent_conjugacy}
is \emph{not} equivalent to 
congruent conjugacy modulo every characteristic finite-index subgroup.
Rather,
one may introduce \emph{profinitely characteristic} finite-index subgroups
for any finitely generated residually finite group $G$.
These refer to subgroups of the form $G\cap \widehat{K}$ 
where $\widehat{K}$ is any finite-index characteristic subgroup of $\widehat{G}$.
For example, 
the level--$n$ characteristic subgroups $K_n$ in the above proof
are all profinitely characteristic.
With this notion, one may actually show (with essentially the same argument) that profinite conjugacy 
is equivalent to congruent conjugacy modulo every profinitely characteristic finite-index subgroup.
Profinitely characteristic finite-index subgroups
are always characteristic,
but the converse is false in general.
See Appendix \ref{Sec-profinitely_characteristic} for a counter example.
\item
The proof of Proposition \ref{characterization_congruent_conjugacy}
implies $\outg(\widehat{G})=\varprojlim_n \outg(G/K_n)$.
It follows that every conjugacy class in $\outg(\widehat{G})$ 
is the inverse limit of (an inverse system of) conjugacy classes in $\outg(G/K_n)$.
\end{enumerate}
\end{remark}

\begin{remark}\label{topological_group_category}
Working in the topological group category 
actually leads to the same notion of procongruent conjugacy,
and the same characterization via congruent conjugacy modulo
characteristic finite-index subgroups.
This is explained as follows.

Let $G$ be a finitely generated residually finite group.
The profinite completion $\widehat{G}$ of $G$
can be naturally furnished with the profinite topology,
which is the weakest topology to keep
all the quotient homomorphisms onto discrete finite groups continuous.
This makes $\widehat{G}$ a profinite topological group, 
or equivalently, a totally disconnected, compact, Hausdorff group.
It is topologically finitely generated, and indeed,
$G$ is naturally included a finitely generated dense subgroup, by residual finiteness.
The abstract automorphisms of $\widehat{G}$ are all homeomorphisms with respect to the profinite topology,
thanks to a deep result of Nikolov--Segal \cite{Nikolov--Segal}.
Therefore, the group of topological automorphisms of $\widehat{G}$ 
coincides with the group of abstract automorphisms $\autg(\widehat{G})$.
Now $\autg(\widehat{G})$ can be naturally furnished with the congruence subgroup topology,
which agrees with the compact-open topology,
as $\autg(\widehat{G})$ acts continuously on $\widehat{G}$.
This makes $\autg(\widehat{G})$ a profinite topological group.
The natural group homomorphism $\widehat{G}\to \autg(\widehat{G})$
that represents $\widehat{G}$ as inner automorphisms
is continuous. Since $\widehat{G}$ is compact with closed center
and $\autg(\widehat{G})$ Hausdorff,
the inner automorphism subgroup $\inng(\widehat{G})$
is a closed normal subgroup of $\autg(\widehat{G})$ 
under the congruence subgroup topology.
Therefore, the topological outer automorphism group 
(by definition, the quotient of the topological group of topological automorphisms
by the closure of the normal subgroup of inner automorphisms)
coincides with the outer abstract-automorphism group $\outg(\widehat{G})$.
Furnished with the quotient topology,
$\outg(\widehat{G})$  is again a profinite topological group.
Therefore, Definition \ref{procongruent_conjugacy_outer} 
will remain equivalent 
if we declare $\outg(\widehat{G})$ as the topological outer automorphism group,
and Proposition \ref{characterization_congruent_conjugacy} will remain true 
in that sense.
\end{remark}

\subsection{Procongurent conjugacy for mapping classes}
We introduce procongruent conjugacy 
for mapping classes through the induced outer automorphisms.
We characterize this relation in terms of mapping tori.

\begin{definition}\label{procongruent_conjugacy_mcg}
In any dialogue setting $(S,f_A,f_B)$,
we say that $f_A$ and $f_B$ are \emph{procongruently conjugate} 
if they induce a pair of outer automorphism classes in $\outg(\pi_1(S))$
which are procongurently conjugate.
(See Convention \ref{settings}, Definition \ref{procongruent_conjugacy_outer}, and (\ref{mcg2outg}).)
\end{definition}

\begin{definition}\label{profinitely_aligned_isomorphic}
In a dialogue setting $(S,f_A,f_B)$,
a group isomorphism $\Psi$ between the profinite completions
of $\pi_1(M_A)$ and $\pi_1(M_B)$ is said to be \emph{aligned} 
if it satisfies the following commutative diagram of group homomorphisms:
$$\xymatrix{
\widehat{\pi_1({M_A})} \ar[r]^{\widehat{\phi}_{A}} \ar[d]^\Psi_\cong & \widehat{{\Integral}} \ar[d]^{\mathrm{id}} \\
\widehat{\pi_1({M_B})} \ar[r]^{\widehat{\phi}_{B}} & \widehat{{\Integral}}
}$$
If an aligned isomorphism exists, 
we say that $\pi_1({M_A})$ and $\pi_1({M_B})$ are \emph{profinitely aligned-isomorphic}.
(See Convention \ref{settings} and (\ref{ses_mapping_torus_group}).)
\end{definition}

\begin{proposition}\label{characterization_mapping_tori}
In any dialogue setting $(S,f_A,f_B)$, the following statements are equivalent:
\begin{enumerate}
\item
The mapping-torus groups $\pi_1({M_A})$ and $\pi_1({M_B})$ are profinitely aligned-isomorphic. 
\item The mapping classes of $f_A$ and $f_B$ are procongruently conjugate.
\end{enumerate}
\end{proposition}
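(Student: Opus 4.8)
The plan is to turn each of the two conditions into a statement about the semidirect-product description of the mapping-torus groups, and then to read off the equivalence from the single commutation relation $t\gamma t^{-1}=\tau(\gamma)$ defining such a product. To prepare, fix a splitting of $\phi_A\colon\pi_1(M_A)\to\pi_1(\Real/\Integral)$, let $t_A\in\pi_1(M_A)$ be the image of the distinguished generator, and let $\tau_A\in\autg(\pi_1(S))$ be conjugation by $t_A$ on the normal subgroup $\pi_1(S)=\ker\phi_A$. Then the outer class of $\tau_A$ is the image of $[f_A]$ under (\ref{mcg2outg})---up to inversion according to an orientation convention, which is taken uniformly for $A$ and $B$ and is harmless since inversion commutes with conjugacy in $\outg$---and $\pi_1(M_A)=\pi_1(S)\rtimes_{\tau_A}\pi_1(\Real/\Integral)$ with $\phi_A$ the projection onto the second factor. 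On the profinite level, the bottom row of (\ref{ses_mapping_torus_group}) is a short exact sequence, so $\ker\widehat\phi_A=\widehat{\pi_1(S)}$ and $\widehat\phi_A(t_A)=1$, and conjugation by $t_A$ restricts to the automorphism $\widehat\tau_A$ of $\widehat{\pi_1(S)}$ that extends $\tau_A$. Fix the analogous data $t_B,\tau_B,\widehat\tau_B$ for $f_B$; here $\widehat{\pi_1(S)}$ is a common subgroup of $\widehat{\pi_1(M_A)}$ and $\widehat{\pi_1(M_B)}$, since the dialogue setting fixes a single $\widetilde S_\univ$.

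For the implication (1)$\Rightarrow$(2), let $\Psi$ be an aligned isomorphism. The aligned square forces $\Psi(\ker\widehat\phi_A)=\ker\widehat\phi_B$, so $\Psi$ restricts to an automorphism $\Psi_0$ of $\widehat{\pi_1(S)}$; moreover $\widehat\phi_B(\Psi(t_A))=\widehat\phi_A(t_A)=1$ allows writing $\Psi(t_A)=\hat u\,t_B$ with $\hat u\in\widehat{\pi_1(S)}$. Applying $\Psi$ to $t_A\gamma t_A^{-1}=\widehat\tau_A(\gamma)$ and simplifying with $t_B\gamma' t_B^{-1}=\widehat\tau_B(\gamma')$ gives $\Psi_0\circ\widehat\tau_A\circ\Psi_0^{-1}=c_{\hat u}\circ\widehat\tau_B$, where $c_{\hat u}$ denotes conjugation by $\hat u$; hence $[\Psi_0]\,[\widehat\tau_A]\,[\Psi_0]^{-1}=[\widehat\tau_B]$ in $\outg(\widehat{\pi_1(S)})$, which is exactly the statement that $[f_A]$ and $[f_B]$ are procongruently conjugate.

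For the implication (2)$\Rightarrow$(1), unpack the hypothesis as: there are $\hat\sigma\in\autg(\widehat{\pi_1(S)})$ and $\hat u\in\widehat{\pi_1(S)}$ with $\hat\sigma\circ\widehat\tau_A\circ\hat\sigma^{-1}=c_{\hat u}\circ\widehat\tau_B$. The restriction of $\hat\sigma$ to $\pi_1(S)$ (viewed inside $\widehat{\pi_1(M_B)}$), together with the element $\hat u\,t_B\in\widehat{\pi_1(M_B)}$, satisfies $(\hat u\,t_B)\,\hat\sigma(\gamma)\,(\hat u\,t_B)^{-1}=\hat\sigma(\tau_A(\gamma))$ for all $\gamma\in\pi_1(S)$---immediate from $\hat\sigma\circ\widehat\tau_A\circ\hat\sigma^{-1}=c_{\hat u}\circ\widehat\tau_B$ restricted to $\pi_1(S)$, where $\tau_A$ and $\widehat\tau_A$ agree---so by the universal property of $\pi_1(M_A)=\pi_1(S)\rtimes_{\tau_A}\pi_1(\Real/\Integral)$ they assemble into a homomorphism $\rho\colon\pi_1(M_A)\to\widehat{\pi_1(M_B)}$ with $\rho(\gamma\,t_A^{n})=\hat\sigma(\gamma)\,(\hat u\,t_B)^{n}$. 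As $\widehat{\pi_1(M_B)}$ is profinite, $\rho$ extends to a continuous homomorphism $\widehat\rho\colon\widehat{\pi_1(M_A)}\to\widehat{\pi_1(M_B)}$. It is aligned, because $\widehat\phi_B\circ\widehat\rho$ and $\widehat\phi_A$ agree on the dense image of $\pi_1(M_A)$; therefore the map induced by $\widehat\rho$ on the quotients by $\widehat{\pi_1(S)}$ is the identity of $\widehat{\pi_1(\Real/\Integral)}$, whence $\ker\widehat\rho\subseteq\widehat{\pi_1(S)}$. Since $\widehat\rho$ restricts on $\widehat{\pi_1(S)}$ to $\hat\sigma$ (by density of $\pi_1(S)$), $\widehat\rho$ is injective; and it is surjective because its image is closed and contains both $\hat\sigma(\widehat{\pi_1(S)})=\widehat{\pi_1(S)}$ and $\hat u\,t_B$, hence $t_B$, hence all of $\widehat{\pi_1(M_B)}$. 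A bijective continuous homomorphism of profinite groups is an isomorphism (cf.\ Remark \ref{topological_group_category}), so $\widehat\rho$ is an aligned isomorphism, and $\pi_1(M_A)$ and $\pi_1(M_B)$ are profinitely aligned-isomorphic.

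Essentially the entire argument is formal bookkeeping built around the semidirect-product picture; the one substantive input is the profinite exactness of the fundamental-group sequence of a mapping torus, which is already recorded as the bottom row of (\ref{ses_mapping_torus_group}). I therefore anticipate no serious obstacle---only routine care in invoking the standard facts about profinite completions (the universal property, density of $\pi_1(S)$ and $\pi_1(M_A)$ in their completions, and that a bijective continuous homomorphism of profinite groups is an isomorphism).
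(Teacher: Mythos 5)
Your proposal is correct and its overall strategy---pass to the semidirect-product decomposition $\pi_1(M_\bullet)=\pi_1(S)\rtimes\langle t_\bullet\rangle$, and read off the equivalence from the single commutation relation---is exactly the paper's. The forward direction $(1)\Rightarrow(2)$ is essentially identical to the paper's computation (modulo the inversion convention, which you correctly note is harmless). Where you depart is in $(2)\Rightarrow(1)$: the paper simply declares the assignments $\Psi(t_A)=t_B h$, $\Psi|_{\widehat{\pi_1(S)}}=\Psi_0$ to determine an isomorphism of the profinite semidirect products, justified by ``reversing the calculation.'' You instead build a homomorphism $\rho\colon\pi_1(M_A)\to\widehat{\pi_1(M_B)}$ on the discrete group via the universal property of the discrete semidirect product, extend it to $\widehat\rho$ by the universal property of profinite completion, and then verify injectivity (via $\ker\widehat\rho\subseteq\widehat{\pi_1(S)}$ and $\widehat\rho|_{\widehat{\pi_1(S)}}=\hat\sigma$) and surjectivity (closed image containing a dense subgroup). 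This route is slightly longer but makes the well-definedness and bijectivity of the map fully explicit rather than implicit, at the cost of invoking an extra standard fact (a continuous bijection of profinite groups is an isomorphism). Both are fine; yours is arguably a cleaner write-up of the same idea.
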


\begin{proof}
Choose an elevation $\tilde{f}_A$ of $f_A$ to $\widetilde{S}_\univ$.
We obtain an automorphism $\tau_A\in\autg(\pi_1(S))$, 
acting on $\pi_1(S)$ by $\tau_A(\gamma)=\tilde{f}_A\circ\gamma\circ\tilde{f}_A^{-1}$.
We also obtain a deck transformation $t_A\in\pi_1({M_{A}})$, acting on $\widetilde{S}_\univ\times\Real$
by $t_A(\tilde{x},r)=(\tilde{f}_A^{-1}(\tilde{x}),r+1)$.
Observe that $\phi_A(t_A)$ acts on the oriented line $\Real$ as translation by $+1$,
so we obtain $\phi_{f_A}(t_A)=1$,
and moreover,
$$\widehat{\phi}_{f_A}(t_A)=1.$$
The relation $\gamma\,t_A = t_A \,\tau_A(\gamma)$ in $\pi_1({M_{A}})$ gives rise to 
a decomposition $\pi_1({M_{A}})=\langle t_A\rangle \ltimes \pi_1(S)$,
as a semi-direct product of subgroups.
In this way, every element of $\pi_1({M_{A}})$ can be written uniquely as $t_A^m\gamma$ 
with $m\in\Integral$ and $\gamma\in\pi_1(S)$.
Passing to the profinite completion, every element of  $\widehat{\pi_1({M_{A}})}$
can be written uniquely as $t_A^mg$ with $m\in\widehat{\Integral}$ and $g\in\widehat{\pi_1(S)}$,
and the relation
$$g\,t_A = t_A \,\widehat{\tau}_A(g)$$
holds in  $\widehat{\pi_1({M_{A}})}$.
Here we write $\widehat{\tau}_A\in\autg(\widehat{\pi_1(S)})$ for the completion extension of $\tau_A$.
This gives rise to a decomposition
$$\widehat{\pi_1({M_{A}})}=\widehat{\langle t_A\rangle}\ltimes \widehat{\pi_1(S)},$$
again as a semi-direct product of subgroups.
By choosing $\tilde{f}_B$ 
and obtaining $\tau_B,t_B$ for $f_B$, we decompose $\widehat{\pi_1({M_{B}})}$ similarly. 

To show that the statement (1) implies (2),
suppose that there is some isomorphism $\Psi\colon \widehat{\pi_1({M_{A}})}\to \widehat{\pi_1(M_f)}$
fitting into the required commutative diagram.
Let $\Psi_0\in\autg(\widehat{\pi_1(S)})$ be the restriction of $\Psi$, and $h\in\widehat{\pi_1(S)}$
be as determined by the equation $\Psi(t_A)=t_Bh$.
By $\Psi(g\,t_A)=\Psi(t_A\,\widehat{\tau}_A(g))$, we obtain
$\Psi_0(g)\,t_Bh=t_Bh\,\Psi_0(\widehat{\tau}_A(g))$.
Denote by $I_h\in\inng(\widehat{\pi_1(S)})$ the inner automorphism defined by $h$, namely,
$I_h(u)=huh^{-1}$.
It follows $\Psi_0(g)\,t_B=t_B I_h(\Psi_0(\widehat{\tau}_A(g)))$,
and hence $t_B \widehat{\tau}_B(\Psi_0(g))=t_B\,I_h(\Psi_0(\widehat{\tau}_A(g)))$
for all $g\in\widehat{\pi_1(S)}$.
Therefore, we obtain $\widehat{\tau}_B\,\Psi_0=I_h\,\Psi_0\,\widehat{\tau}_A$ in $\autg(\widehat{\pi_1(S)})$,
or equivalently,
$$\widehat{\tau}_B=I_h\,\Psi_0\,\widehat{\tau}_A\,\Psi_0^{-1}.$$
Note that $[\widehat{\tau}_A],[\widehat{\tau}_B]\in\outg(\widehat{\pi_1(S)})$
are by definition the outer automorphisms of $\widehat{\pi_1(S)}$ induced by $[f_A],[f_B]\in\mcg(S)$.
As the outer automorphism classes $[\widehat{\tau}_A],[\widehat{\tau}_B]$ 
are conjugate by $[\Psi_0]\in\outg(\widehat{\pi_1(S)})$,
we see that the mapping classes $[f_A],[f_B]$ are procongruently conjugate.

To show that the statement (2) implies (1), suppose that 
the mapping classes $[f_A],[f_B]\in\mcg(S)$ are procongruently conjugate.
This means that there are 
some $\Psi_0\in\autg(\widehat{\pi_1(S)})$ and $h\in\widehat{\pi_1(S)}$ with the property
$\widehat{\tau}_B=I_h\,\Psi_0\,\widehat{\tau}_A\,\Psi_0^{-1}$,
where $I_h\in\inng(\widehat{\pi_1(S)})$ stands for the inner automorphism given by $h$.
Reversing the calculation in the former implication,
we see that the assignments $\Psi(t_A)=t_Bh$ and $\Psi(g)=\Psi_0(g)$ for $g\in\widehat{\pi_1(S)}$ 
determine a unique group isomorphism
$$\Psi\colon \widehat{\langle t_A\rangle}\ltimes \widehat{\pi_1(S)}\to \widehat{\langle t_B\rangle}\ltimes \widehat{\pi_1(S)}.$$
According to the way the semi-direct products arise,
$\Psi$ is also a group isomorphism between $\widehat{\pi_1({M_{A}})}$ and $\widehat{\pi_1({M_{B}})}$,
and fits into the required commutative diagram.
\end{proof}

\begin{example}\label{Stebe_example}
	Let $S$ be a torus. By parametrizing $S$ as $\Real^2/\Integral^2$, 
	there are canonical identifications $\pi_1(S)\cong \Integral^2$ and $\mcg(S)\cong\mathrm{SL}(2,\Integral)$.
	The characteristic finite-index subgroups of $\pi_1(S)$ 
	are precisely the sublattices $(n\Integral)^2$ for all $n\in\Natural$.
	The outer automorphism groups of the congruence quotients 
	are evidently $\outg((\Integral/n\Integral)^2)\cong \mathrm{GL}(2,\Integral/n\Integral)$
	for all $n\in\Natural$.
	
	In this case, there are pairs of mapping classes $[f_A],[f_B]\in\mcg(S)$
	which are procongruently conjugate but not conjugate in $\mcg(S)$.
	The first example is due to Stebe \cite{Stebe_integer_matrices}:
	\begin{equation*}
	\begin{array}{cc}
	{[f_A]=\left[\begin{array}{cc}188&275\\121&177\end{array}\right],}&
	{[f_B]=\left[\begin{array}{cc}188&11\\3025&177\end{array}\right].}
	\end{array}
	\end{equation*}
	Funar discovers an infinite family of such examples \cite[Proposition 1.3]{Funar_torus_bundles}.
	On the other hand, given any mapping class $[f]\in\mcg(S)$,
	any mapping class $[f']\in\mcg(S)$ which is procongurently conjugate to $[f]$
	falls into one of finitely many conjugation orbits of $\mcg(S)$.
	This fact is not hard to be proved directly; 
	it also follows from \cite[Chapter 8, Proposition 8.6]{Platonov--Rapinchuk_book}
	and \cite[Proposition 1.2]{Funar_torus_bundles}.
\end{example}

Example \ref{Stebe_example} shows that procongruent conjugacy 
is different from conjugacy for mapping classes of a torus.
On the other hand, Bridson--Reid--Wilton \cite[Theorem A]{BRW} shows that
for a once-punctured torus, procongruently conjugate pairs of mapping classes 
always induce conjugate pairs of outer automorphisms.
For orientable connected closed surfaces of negative Euler characteristic,
we do not know whether 
the same conclusion as the once-punctured torus case should hold in general.

\section{Review on twisted invariants}\label{Sec-review_twisted}
In this section, we review twisted homology and twisted Reidemeister torsion.
Example \ref{example_TH} and \ref{example_TRT} specialize the general theory
to the cases of our interest.
We pay particular attention to the bimodule version of twisted homology,
and the homotopy invariance of the twisted Reidemeister torsion that we use.
These points are relevant to our application,
but are scattered, somewhat implicitly, in the literature.

\subsection{Twisted homology}\label{Subsec-twisted_homology}
Let $X$ be a path-connected topological space with a universal covering space.
Fix a universal covering space $\widetilde{X}_\univ$ of $X$,
and denote by $\kappa_\univ$ the covering projection map,
and by $\pi_1(X)$ the group of deck transformations. 

For any right $\Integral\pi_1(X)$--module $V$ and any dimension $n\in\Integral$, 
the $n$--th (singular) \emph{twisted homology} of $X$ 
with coefficients in $V$ is defined as the abelian group
$$H_n(X;V)=H_n\left(V\otimes_{\Integral\pi_1(X)}C_\bullet(\widetilde{X}_\univ),\,\mathbf{1}\otimes\partial_\bullet\right).$$ 
Here $(C_\bullet(\widetilde{X}_\univ),\partial_\bullet)$ stands for the singular $\Integral$--chain complex
of $\widetilde{X}_\univ$, furnished with the canonical left $\Integral\pi_1(X)$--module structure
given by $g\sigma = g\circ \sigma$ for 
all singular $n$--simplices $\sigma\colon \Delta^n\to \widetilde{X}_\univ$ and 
all deck transformations $g\in\pi_1(X)$.

For any path-connected topological space $Y$ together with a chosen universal covering space $\widetilde{Y}_\univ$,
any group homomorphism $f_\sharp\colon \pi_1(Y)\to\pi_1(X)$
together with an $f_\sharp$--equivariant map $\tilde{f}\colon \widetilde{Y}_\univ\to\widetilde{X}_\univ$
induce a map $f\colon Y\to X$, and also a homomorphism of graded abelian groups
$$f_*\colon H_*(Y;V)\to H_*(X;V),$$
where the right $\Integral\pi_1(Y)$--module structure on $V$ is induced via $f_\sharp$.
In fact, $f_*$ depends only on $f_\sharp$ and the homotopy class of $f$.
If one fixes an underlying abelian group $V$, 
twisted homology with coefficients in $V$ 
can be formally treated as a functor, which sends any object
$(X,\widetilde{X}_\univ,\Integral\pi_1(X)\to\mathrm{End}(V)^{\mathrm{op}})$ to a graded abelian group $H_*(X;V)$,
and any morphism $(f,\tilde{f},f_\sharp^*)$ to a graded group homomorphism $f_*$.
Cellular and simplicial twisted homology can be defined likewise
when $X$ is enriched with a CW or simplicial complex structure.
The resulting homology groups are naturally isomorphic to the singular version.
We refer the reader to \cite[Chapter 3, Section 3.H]{Hatcher_book} 
for a comprehensive introduction to twisted homology,
(called \emph{homology with local coefficients} there).

Let $\mathcal{A}$ be an associative ring (with identity).
If $W$ is an $(\mathcal{A},\Integral\pi_1(X))$--bimodule,
(namely, a left $\mathcal{A}$--module 
with an $\mathcal{A}$--linear right multiplication of $\Integral\pi_1(X)$),
the twisted homology $H_n(X;W)$ is also enriched 
with a left $\mathcal{A}$--module structure.
This is naturally induced by the left multiplication of $\mathcal{A}$ 
on the first factor of $W\otimes_{\Integral\pi_1(X)}C_\bullet(\widetilde{X}_\univ)$,
which commutes with $\mathbf{1}\otimes\partial_\bullet$.
In this setting, the functorial homomorphisms $f_*\colon H_*(Y;W)\to H_*(X;W)$ 
as above are automatically $\mathcal{A}$--linear.

\begin{example}\label{example_TH}
Let $R$ be a commutative ring.
Denote by $R[t^{\pm1}]$ the Laurent polynomial ring over $R$ 
in a fixed indeterminant $t$.

Let $S$ be an orientable connected compact surface,
and $f\colon S\to S$ be an orientation-preserving self-homeomorphism.
For quotient homomorphism $\gamma\colon \pi_1(M_f)\to \Gamma$ of the mapping-torus group $\pi_1(M_f)$
onto a finite group $\Gamma$,
we form an $(R[t^{\pm1}]\Gamma,\Integral\pi_1(M_f))$--bimodule $R[t^{\pm1}]\Gamma$,
which is the left regular $R[t^{\pm1}]\Gamma$--module
enriched with the right action $\gamma\otimes\phi_f$ of $\pi_1(X)$.
In other words, for any $g\in\pi_1(M_f)$, the right action of $g$ on $R[t^{\pm1}]\Gamma$
is given by right multiplication of $t^{\phi_f(g)}\gamma(g)$,
where $\phi_f\in H^1(M_f;\Integral)$ stands for the distinguished cohomology class.

The twisted homology 
$$H_*\left(M_f;R\left[t^{\pm1}\right]\Gamma\right)=\bigoplus_{n\in\Integral}H_n\left(M_f;R\left[t^{\pm1}\right]\Gamma\right)$$ 
is therefore a graded left $R[t^{\pm1}]\Gamma$--module.
Occasionally if we want to emphasize the twisting,
we also adopt the notation $H^{\gamma\otimes\phi_f}_*(M_f;R[t^{\pm1}]\Gamma)$.
\end{example}

Twisted versions of group homology and profinite-group homology are also available.
In the sequel, 
we only recall their detail
when it is really necessary.
For systematic treatments on those topics,
see \cite[Chapter III]{Brown_book} and \cite[Chapter 6]{Ribes--Zalesskii_book}.

\subsection{Twisted Reidemeister torsion}
Let $X$ be a connected finite cell complex 
(with unordered and unoriented cells).
Fix a universal covering space $\widetilde{X}_\univ$ of $X$
with the deck transformation group denoted by $\pi_1(X)$.

Let $k$ be a natural number, and $\Omega$ be a (commutative) field.
Let $\alpha\colon \pi_1(X)\to \mathrm{GL}(k,\Omega)$ 
be a representation of $\pi_1(X)$ on $\Omega^k$.
Choose an orientation and a lifting 
for each cell of $X$ to $\widetilde{X}_\univ$,
and an ordering of the lifted cells for each dimension.
We denote by $\mathfrak{c}_n$ the ordered set of the oriented lifted $n$--cells,
and by $\mathfrak{c}_\bullet$ the choice altogether.
Then the $\alpha$--twisted cellular chain complex 
$(\Omega^k\otimes_{\Integral\pi_1(X)}C_\bullet(\widetilde{X}_\univ),\mathbf{1}\otimes\partial_\bullet)$
is spanned over $\Omega$ by a distinguished finite basis,
(which can be written down explicitly 
using the standard basis of $\Omega^k$ and $\mathfrak{c}_\bullet$).
When the derived twisted homology $H_*^\alpha(X;\Omega^k)$ vanishes for all dimensions,
there is a well-defined element
$\tau^\alpha(X,\mathfrak{c}_\bullet;\Omega^r)\in \Omega^\times$,
known as the \emph{torsion} 
of the acyclic based $\alpha$--twisted cellular chain complex of $X$.
Any other choice $\mathfrak{c}'_\bullet$ gives rise to some
$\tau^\alpha(X,\mathfrak{c}'_\bullet;\Omega^k)$
which equals $\tau^\alpha(X,\mathfrak{c}_\bullet;\Omega^r)$
up to a factor $\pm\det_\Omega(\alpha(g))\in\Omega^\times$ for some $g\in\pi_1(X)$.
Being rough on the effect of choices,
it therefore makes sense to speak of the \emph{twisted Reidemeister torsion} 
for $(X,\alpha)$, assuming acyclicity and 
allowing certain multiplicative indeterminacy. 
(See \cite[Chapter I, Section 1]{Turaev_book_torsion}.)

\begin{example}\label{example_TRT}
Let $X$ be a connected finite cell complex.
Let $\Omega$ be the field of rational functions $\mathbb{F}(t)$,
over a field $\mathbb{F}$ in a fixed indeterminant $t$.
For any cohomology class $\phi\in H^1(X;\Integral)$ and 
any representation $\rho\colon \pi_1(X)\to \mathrm{GL}(k,\mathbb{F})$,
we form a representation
$$\rho\otimes\phi\colon \pi_1(X)\to \mathrm{GL}\left(k,\mathbb{F}(t)\right)$$
by assigning $g\mapsto t^{\phi(g)}\rho(g)$.

When the twisted homology $H^{\rho\otimes\phi}_*(X;\mathbb{F}(t)^k)$ vanishes,
the \emph{twisted Reidemeister torsion} for $(X,\phi,\rho)$, denoted as
$$\tau^{\rho\otimes\phi}\left(X;\mathbb{F}(t)^k\right)\in\mathbb{F}(t)^\times,$$
is considered to be 
well-defined in $\mathbb{F}(t)^\times$
up to monomial factors with nonzero coefficients,
namely, 
$rt^m$ for $r\in\mathbb{F}^\times$ and $m\in\Integral$.
It is represented by any $\tau^{\rho\otimes\phi}(X,\mathfrak{c}_\bullet;\mathbb{F}(t)^r)$,
where $\mathfrak{c}_\bullet$ is a choice about cell lifting, orientation, and ordering,
as explained above.
By convention, 
we define $\tau^{\rho\otimes\phi}(X;\mathbb{F}(t)^k)=0$ in $\mathbb{F}(t)$
if $H^{\rho\otimes\phi}_n(X;\mathbb{F}(t)^k)\neq0$ holds for some dimension $n$.

This version of twisted Reidemeister torsions can be determined by the twisted 
Alexander polynomials for $(X,\rho,\phi)$.
Recall that 
for any finitely generated module over a Noetherian commutative unique factorization domain (UFD),
the \emph{order} of the module is any generator of the smallest principal ideal that contains
the zeroth elementary ideal of that module; the \emph{rank} of the module 
is the dimension over the field of fractions, under extension of scalars.
Note that the order is nonzero if and only if 
the module has nonvanishing rank. 

For each dimension $n$, the $n$--th \emph{twisted Alexander polynomial}
for $(X,\rho,\phi)$ is defined to be the order of 
the twisted homology $H^{\rho\otimes\phi}_n(X;\mathbb{F}[t^{\pm1}]^k)$ over $\mathbb{F}[t^{\pm1}]$,
and denoted by any representative
$$\Delta^{\rho\otimes\phi}_{X,n}\in \mathbb{F}\left[t^{\pm}\right],$$
which is unique up to monomial factors with nonzero coefficients.
Then $\tau^{\rho\otimes\phi}(X;\mathbb{F}(t)^k)=0$
holds if and only if $\Delta^{\rho\otimes\phi}_{X,n}=0$ holds for some $n$.
Otherwise, the following formula holds in $\mathbb{F}(t)^\times$:
\begin{equation}\label{tau_Delta}
\tau^{\rho\otimes\phi}\left(X;\mathbb{F}(t)^k\right)\doteq
\frac{\prod_{n\textrm{ odd}}\Delta^{\rho\otimes\phi}_{X,n}}{\prod_{n\textrm{ even}}\Delta^{\rho\otimes\phi}_{X,n}}.
\end{equation}
Here the dotted equal symbol stands for equality up to monomial factors with nonzero coefficients.
Note that the products in (\ref{tau_Delta}) are essentially finite since 
$H_n^{\rho\otimes\phi}(X;\mathbb{F}[t^{\pm1}]^k)$ vanishes for all but finitely many $n$.
See \cite[Chapter I, Theorem 4.7]{Turaev_book_torsion}; compare \cite[Section 3.2]{Friedl--Vidussi_survey}.

In particular, 
(\ref{tau_Delta}) makes it clear that
the cellular structure is only auxiliary
for defining $\tau^{\rho\otimes\phi}(X;\mathbb{F}(t)^k)$.
We extend the above setting, 
and allow $X$ to be 
any topological space 
which is homotopy equivalent to a connected finite cell complex.
\end{example}

More generally,
there is a notion of torsion for
any finitely generated acyclic based chain complex of free left $\mathcal{A}$--modules.
Here $\mathcal{A}$ can be any associative ring (with identity),
only assuming that 
the cartesian product rings $\mathcal{A}^l$ are mutually non-isomorphic for all $l\in\Natural$.
The torsion lives in the abelian group $K_1(\mathcal{A})$, 
which can be characterized as
the abelianization of the stable general linear group $\mathrm{GL}(\mathcal{A})$ over $\mathcal{A}$.
For any (commutative) field $\Omega$, 
the determinant function yields is a canonical group isomorphism $K_1(\Omega)\cong\Omega^\times$.
Under the acyclicity assumption and allowing some indeterminacy caused by choices,
twisted Reidemeister torsions can be defined for $(X,\alpha)$ 
given $\alpha\colon\pi_1(X)\to\mathrm{GL}(k,\mathcal{A})$.
(See \cite[Chapter II, Section 6]{Turaev_book_torsion}.)
In the general setting, twisted Reidemeister torsions are usually
invariant under simple homotopy equivalence of $X$ along with pull-back of $\alpha$.
However, the version of Example \ref{example_TRT} is 
actually invariant under homotopy equivalence
and representation pull-back,
thanks to the homotopy invariance of twisted Alexander polynomials and twisted homology
 (see Section \ref{Subsec-twisted_homology}).
We record a precise statement of this fact as follows.

\begin{proposition}\label{homotopy_equivalence_invariance}
	Adopt the notations and the assumptions of Example \ref{example_TRT}.
	If $X'$ is a connected finite cell complex, and if $X'\to X$ is a map of homotopy equivalence,
	then 
	$$\tau^{\rho'\otimes\phi'}\left(X';\mathbb{F}(t)^k\right)\doteq \tau^{\rho\otimes\phi}\left(X;\mathbb{F}(t)^k\right)$$
	holds in $\mathbb{F}(t)^\times$ up to monomials with nonzero coefficients,
	where $\rho'\colon \pi_1(X')\to \mathrm{GL}(k,\mathbb{F})$ and $\phi'\in H^1(X';\Integral)$
	denote the pullbacks of $\rho$ and $\phi$, respectively.
\end{proposition}

\section{Twisted homology of mapping tori}\label{Sec-TH_mapping_tori}
In this section, we show that certain twisted homology of a mapping torus
is determined by the procongruent conjugacy class of the mapping class.

\begin{definition}\label{aligned_equivalent_quotient}
	Let $(S,f_A,f_B)$ be a dialogue setting. 
	Let $\Gamma$ be a finite group.
	A pair of homomorphisms $\gamma_A\colon \pi_1({M_{A}})\to\Gamma$ and 
	$\gamma_B\colon \pi_1({M_{B}})\to \Gamma$
	of the mapping-torus groups onto $\Gamma$ are said to be \emph{aligned equivalent} if 
	$\widehat{\gamma}_A=\widehat{\gamma}_B\circ \Psi$ holds
	for some aligned isomorphism between the profinite completions
	$\Psi\colon \widehat{\pi_1({M_{A}})}\to \widehat{\pi_1({M_{B}})}$.
	(See Definition \ref{profinitely_aligned_isomorphic} and Convention \ref{settings}.)
\end{definition}

\begin{theorem}\label{aligned_mapping_torus_homology}
Let $R$ be a commutative profinite ring.
Denote by $R[t^{\pm1}]$ the Laurent polynomial ring over $R$ 
in a fixed indeterminant $t$.

In any dialogue setting $(S,f_A,f_B)$, 
suppose that 
$\gamma_A\colon \pi_1(M_A)\to\Gamma$ and $\gamma_B\colon \pi_1(M_B)\to\Gamma$
are finite quotients of the mapping-torus groups which are aligned equivalent.
Then there is an isomorphism of graded left $R[t^{\pm1}]\Gamma$--modules:
$$H_*\left({M_{A}};R\left[t^{\pm1}\right]\Gamma\right)\cong 
H_*\left({M_{B}};R\left[t^{\pm1}\right]\Gamma\right),$$
between twisted homologies of mapping tori.
(See Example \ref{example_TH}, Definition \ref{aligned_equivalent_quotient}, and Convention \ref{settings}).
\end{theorem}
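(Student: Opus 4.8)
The plan is to produce the isomorphism from an aligned isomorphism $\Psi$ of profinite completions by working entirely at the level of twisted profinite-group homology, exploiting the fact that $R[t^{\pm 1}]\Gamma$ is a finite (hence profinite) ring when $R$ is profinite and $\Gamma$ is finite, so the relevant coefficient module is a profinite module on which the profinite completion acts continuously.

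First I would recall that, for a finite $K(\pi,1)$ such as $M_A$ (its universal cover $\widetilde{S}_\univ\times\Real$ is contractible), the singular twisted homology $H_*(M_A; R[t^{\pm1}]\Gamma)$ coincides with the group homology $H_*(\pi_1(M_A); R[t^{\pm1}]\Gamma)$, where the $\Integral\pi_1(M_A)$-module structure on $R[t^{\pm1}]\Gamma$ is the one described in Example \ref{example_TH}, i.e. $g$ acts by right multiplication by $t^{\phi_A(g)}\gamma_A(g)$. Next I would observe that, because $R[t^{\pm1}]\Gamma$ is a profinite module and the action of $\pi_1(M_A)$ factors through the finite (or at worst profinite) quotient determined by $(\phi_A,\gamma_A)$, the comparison map from discrete group homology to profinite group homology $H_*(\pi_1(M_A); R[t^{\pm1}]\Gamma)\to H_*(\widehat{\pi_1(M_A)}; R[t^{\pm1}]\Gamma)$ is an isomorphism. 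Concretely: both $\phi_A$ and $\gamma_A$ extend continuously to $\widehat{\pi_1(M_A)}$ (as $\widehat{\phi}_A$ and $\widehat{\gamma}_A$), so $R[t^{\pm1}]\Gamma$ is a profinite $\widehat{\pi_1(M_A)}$-module; and the goodness of $\pi_1(M_A)$ in the sense of Serre — which holds for compact $3$-manifold groups — gives the claimed isomorphism of homology groups. (Alternatively one invokes that $\pi_1(M_A)$ is the extension of a surface group by $\Integral$, both of which are good, and goodness is preserved under such extensions.) I would also note that this comparison isomorphism is $R[t^{\pm1}]\Gamma$-linear, since the left $R[t^{\pm1}]\Gamma$-action commutes with everything in sight.

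Granting these two identifications, the theorem reduces to a purely profinite statement: an aligned isomorphism $\Psi\colon\widehat{\pi_1(M_A)}\to\widehat{\pi_1(M_B)}$ satisfying $\widehat{\gamma}_A=\widehat{\gamma}_B\circ\Psi$ and $\widehat{\phi}_A=\widehat{\phi}_B\circ\Psi$ (the latter by alignment) intertwines the two profinite $\widehat{\pi_1}$-module structures on $R[t^{\pm1}]\Gamma$, hence induces an $R[t^{\pm1}]\Gamma$-linear isomorphism $H_*(\widehat{\pi_1(M_A)};R[t^{\pm1}]\Gamma)\cong H_*(\widehat{\pi_1(M_B)};R[t^{\pm1}]\Gamma)$ by functoriality of profinite group homology under isomorphisms of pairs (group, coefficient module). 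The only subtle point is the alignment condition: I must check that an aligned $\Psi$ automatically satisfies $\widehat{\phi}_A = \widehat{\phi}_B\circ\Psi$ as homomorphisms to $\widehat{\Integral}$, which is exactly the commuting square in Definition \ref{profinitely_aligned_isomorphic} together with the identity on $\widehat{\pi_1(\Real/\Integral)}\cong\widehat{\Integral}$; and that together with $\widehat{\gamma}_A=\widehat{\gamma}_B\circ\Psi$ this forces $\Psi$ to carry the action "$g\mapsto t^{\phi_A(g)}\gamma_A(g)$" to "$g'\mapsto t^{\phi_B(g')}\gamma_B(g')$" — i.e. $\Psi^*$ of the $\widehat{\pi_1(M_B)}$-module $R[t^{\pm1}]\Gamma$ is literally the $\widehat{\pi_1(M_A)}$-module $R[t^{\pm1}]\Gamma$, as bimodules over $(R[t^{\pm1}]\Gamma, \Integral\widehat{\pi_1(M_A)})$.

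The main obstacle I anticipate is justifying the comparison isomorphism between discrete and profinite twisted homology with these specific coefficients — that is, confirming that $\pi_1(M_A)$ is good with respect to every finite (or profinite) coefficient module of the form $R[t^{\pm1}]\Gamma$, and that the isomorphism respects the $R[t^{\pm1}]\Gamma$-grading and module structure. I would handle this by first treating the case $R$ finite (so $R[t^{\pm1}]\Gamma$ decomposes as an inverse limit of finite modules indexed by truncations of the Laurent variable, or is handled directly via goodness for finite modules), and then passing to the inverse limit over finite quotients of the profinite ring $R$, using that twisted homology of a finite complex commutes with inverse limits of profinite coefficient modules (the chain groups are finitely generated free, so $\varprojlim$ is exact on them). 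A secondary, more bookkeeping-level obstacle is keeping the $R[t^{\pm1}]\Gamma$-linearity visible through all of these identifications; this is routine but should be stated carefully, since the whole content of the theorem is the graded-module isomorphism, not merely an abstract-group isomorphism.
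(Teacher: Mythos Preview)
There is a genuine gap in your plan. The coefficient module $R[t^{\pm1}]\Gamma$ is \emph{not} a finite or profinite abelian group, even when $R$ is finite: the Laurent polynomial ring $R[t^{\pm1}]$ is infinite (it is a direct limit, not an inverse limit, of the bounded-degree pieces). More fatally, the right $\pi_1(M_A)$-action on $R[t^{\pm1}]\Gamma$ does \emph{not} factor through any finite quotient and does \emph{not} extend to $\widehat{\pi_1(M_A)}$: the element $g$ acts by multiplication by $t^{\phi_A(g)}$, and $\phi_A$ surjects onto $\Integral$, so making sense of this action for $\hat g\in\widehat{\pi_1(M_A)}$ would require $t^{n}$ for $n\in\widehat{\Integral}$, which has no meaning in $R[t^{\pm1}]$. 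Thus your sentence ``$R[t^{\pm1}]\Gamma$ is a profinite $\widehat{\pi_1(M_A)}$-module'' is false, and the goodness comparison $H_*(\pi_1(M_A);R[t^{\pm1}]\Gamma)\cong H_*(\widehat{\pi_1(M_A)};R[t^{\pm1}]\Gamma)$ is not even well-posed.

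The paper circumvents exactly this obstruction. It applies the Hochschild--Serre spectral sequence to the extension $1\to\pi_1(S)\to\pi_1(M_f)\to\pi_1(\Real/\Integral)\to1$, obtaining
\[
H_n\bigl(\pi_1(M_f);R[t^{\pm1}]\Gamma\bigr)\cong\bigoplus_{p+q=n}H_p\bigl(\pi_1(\Real/\Integral);\,\widehat{H}_q(\widehat{\pi_1(S)};R\Gamma)\otimes_{\Integral}\Integral[t^{\pm1}]\bigr),
\]
where the crucial point is that $\pi_1(S)$ acts trivially on the $t$-variable (since $\phi_f$ vanishes on $\pi_1(S)$), so the inner coefficient module $R\Gamma$ \emph{is} genuinely profinite and goodness of $\pi_1(S)$ applies to it. The $t$-variable and the $\Integral$-action are then packaged separately as an $(R\Gamma,\Integral\pi_1(\Real/\Integral))$-bimodule structure on $\widehat{H}_*(\widehat{\pi_1(S)};R\Gamma)$, and the aligned isomorphism $\Psi$ is shown to respect that bimodule structure. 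Your proposed shortcut of pushing the whole $R[t^{\pm1}]\Gamma$ to the profinite level collapses precisely because the $t$-direction is not profinite; the spectral-sequence decoupling is the missing idea.
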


The rest of this section is devoted to the proof of Theorem \ref{aligned_mapping_torus_homology}.
The idea of the calculation is to apply the Serre spectral sequence, which is quite routine.
On the other hand, we need to unwrap the construction in enough detail
to make it clear that the asserted isomorphism is left $R[t^{\pm1}]\Gamma$--linear.

Let $S$ be an orientable connected compact surface.	
Fix a universal covering space $\widetilde{S}_\univ$ of $S$.
Let $R$ be a commutative profinite ring.
For any orientation-preserving self-homeomophism $f$ of $S$
and any finite quotient of the mapping-torus group $\pi_1(M_f)\to \Gamma$,
the restricted homomorphism $\pi_1(S)\to\Gamma$ extends to be a homomorphism
$\widehat{\pi_1(S)}\to\Gamma$ by completion.
As $\Gamma$ is finite, 
$R\Gamma$ is also a right profinite $[\![R\widehat{\pi_1(S)}]\!]$--module,
(see \cite[Chapter 5, Sections 5.1 and 5.3]{Ribes--Zalesskii_book}
for terms and notations).

Here and below, 
we identify $\Integral[t^{\pm1}]$ canonically with the group algebra
of the infinite cyclic group $\Integral$ over the commutative ring $\Integral$, 
such that $t^m$ corresponds to the group element $m\in \Integral$.
The twisted profinite group homology 
$\widehat{H}_*(\widehat{\pi_1(S)};R\Gamma)$
is a graded $( R\Gamma, \Integral[t^{\pm1}] )$--bimodule 
in a natural way.
The graded left $R\Gamma$--module structure can be described
by the following natural identifications of graded left $R\Gamma$--modules:
\begin{equation}\label{I_inverse_limit}
\widehat{H}_*\left(\widehat{\pi_1(S)};R\Gamma\right)\cong\varprojlim_{I}\,H_*\left(\widehat{\pi_1(S)};(R/I)\Gamma\right)
\cong\varprojlim_{I}\,H_*\left(\pi_1(S);(R/I)\Gamma\right),
\end{equation}
where $I$ runs over the inverse system of the ideals of $R$ 
with finite quotients $R/I$.
(See \cite[Chapter 6, Corollary 6.1.10 (a) and Section 6.3]{Ribes--Zalesskii_book} for the first isomorphism.
The second isomorphism holds since $\pi_1(S)$ is good in the sense of Serre; 
see \cite{GJZ_goodness}.)
The $R\Gamma$--linear (right) multiplication of 
the commutative group ring $\Integral[t^{\pm1}]$
on each $H_*(\pi_1(S);(R/I)\Gamma)$ can be described concretely as follows.

If $S$ is a sphere, since $\pi_1(M_f)\to\Gamma$ 
is only a finite cyclic quotient ${\Integral}\to\Gamma$,
the action of the infinite cyclic group ${\Integral}$ 
is nothing but the multiplication using its image in $\Gamma$.
Otherwise $S$ is aspherical,
then there is a natural identification of graded left $R\Gamma$--modules
\begin{equation}\label{surface_Shapiro}
H_*\left(\pi_1(S);(R/I)\Gamma\right)\cong H_*\left(\widetilde{S}_{\Gamma};R/I\right),
\end{equation}
where $\widetilde{S}_{\Gamma}$ stands for the fiber product 
$\Gamma\times_{\pi_1(M_f)}(\widetilde{S}_\univ\times\Integral)$,
by Shapiro's lemma (see \cite[Chapter III, Proposition 6.2]{Brown_book}).
In this model,
the $R$--linear action of $\Gamma$ on $H_*(\widetilde{S}_{\Gamma};R/I)$
is induced by the left multiplication on the factor $\Gamma$ of the fiber product,
which can be viewed as deck transformation over $S$.
The $R\Gamma$--linear (right) action of $t^m$, for any $m\in\Integral$,
is induced by $(x,r)\mapsto(x,r+m)$
on the factor $\widetilde{S}_\univ\times\Integral$,
which can be viewed as the forward suspension flow in discrete times.
These $R\Gamma$--linear actions of $t^m$
on all $H_*(\pi_1(S);(R/I)\Gamma)$ are obviously compatible 
with the inverse system of finite ring quotients $R\to R/I$,
so their inverse limit is 
a graded $R\Gamma$--linear action of $t^m$
on $\widehat{H}_*(\widehat{\pi_1(S)};R\Gamma)$.

\begin{lemma}\label{profinite_group_homology_surface}
	The isomorphism class of $\widehat{H}_*(\widehat{\pi_1(S)};R\Gamma)$,
	as a graded $(R\Gamma,\Integral[t^{\pm1}])$--bimodule,
	depends only on the procongruent conjugacy class of the mapping class of $f$
	and the align-equivalence class of the finite quotient $\pi_1(M_f)\to\Gamma$.
\end{lemma}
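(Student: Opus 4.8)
The plan is to reduce the claim to the structural data extracted from an aligned isomorphism between the profinite mapping-torus groups, as supplied by Proposition \ref{characterization_mapping_tori}. Recall from the proof of that proposition that, after choosing elevations $\tilde f_A,\tilde f_B$ of $f_A,f_B$, we obtain automorphisms $\tau_A,\tau_B\in\autg(\pi_1(S))$ and deck transformations $t_A\in\pi_1(M_A)$, $t_B\in\pi_1(M_B)$, together with semidirect-product decompositions $\widehat{\pi_1(M_A)}=\widehat{\langle t_A\rangle}\ltimes\widehat{\pi_1(S)}$ and similarly for $B$. An aligned isomorphism $\Psi$ is equivalent to a pair $(\Psi_0,h)$ with $\Psi_0\in\autg(\widehat{\pi_1(S)})$ and $h\in\widehat{\pi_1(S)}$ satisfying $\widehat{\tau}_B=I_h\circ\Psi_0\circ\widehat{\tau}_A\circ\Psi_0^{-1}$ and $\Psi(t_A)=t_Bh$, $\Psi|_{\widehat{\pi_1(S)}}=\Psi_0$. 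First I would set $\gamma_A,\gamma_B$ to be the given aligned-equivalent finite quotients, so that $\widehat{\gamma}_A=\widehat{\gamma}_B\circ\Psi$; restricting to $\widehat{\pi_1(S)}$ gives a commuting triangle relating the induced finite quotients $\widehat{\pi_1(S)}\to\Gamma$ of the surface group via $\Psi_0$.

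Next I would use this triangle to transport the twisted profinite-group homology. The module $R\Gamma$ carries a right profinite $[\![R\widehat{\pi_1(S)}]\!]$-structure pulled back along $\widehat{\gamma}_A|_{\widehat{\pi_1(S)}}$ (resp.\ $\widehat{\gamma}_B|_{\widehat{\pi_1(S)}}$). Since these two module structures differ precisely by precomposition with $\Psi_0$, the isomorphism $\Psi_0$ induces an isomorphism of graded left $R\Gamma$-modules
$$\widehat{H}_*\!\left(\widehat{\pi_1(S)};R\Gamma\right)^{\!B}\;\xrightarrow{\ \cong\ }\;\widehat{H}_*\!\left(\widehat{\pi_1(S)};R\Gamma\right)^{\!A},$$
where the superscripts indicate which pulled-back structure is used; here $R\Gamma$-linearity is automatic because the coefficient action is via the common left regular $R\Gamma$-structure, untouched by $\Psi_0$. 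This is the functoriality of twisted profinite-group homology in the group variable, applied to the isomorphism $\Psi_0$, together with the change-of-coefficients identification matching the two module structures; concretely one can argue through the inverse-limit presentation (\ref{I_inverse_limit}), applying ordinary functoriality level by level and passing to the limit. It remains to check that this isomorphism intertwines the two $\Integral\pi_1(\Real/\Integral)$-actions, i.e.\ the forward-suspension-flow actions.

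The main point — and the step I expect to be the crux — is precisely this compatibility with the $\pi_1(\Real/\Integral)$-action. The right $\Integral\pi_1(\Real/\Integral)$-module structure on $\widehat{H}_*(\widehat{\pi_1(S)};R\Gamma)$ is, by construction, induced by the conjugation action of $t_A$ (resp.\ $t_B$) on $\widehat{\pi_1(S)}$ — that is, by $\widehat{\tau}_A$ (resp.\ $\widehat{\tau}_B$) — twisted by the coefficient translation coming from the image of $t_A$ in $\Gamma$. So I would compute: the relation $\widehat{\tau}_B=I_h\circ\Psi_0\circ\widehat{\tau}_A\circ\Psi_0^{-1}$ says that $\Psi_0$ conjugates $\widehat{\tau}_A$ to $\widehat{\tau}_B$ up to the inner automorphism $I_h$; since inner automorphisms act trivially on (twisted) group homology, the induced maps on $\widehat{H}_*$ satisfy $(\widehat{\tau}_B)_* = (\Psi_0)_*\circ(\widehat{\tau}_A)_*\circ(\Psi_0)_*^{-1}$, which is exactly the desired intertwining on the automorphism part. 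For the coefficient-translation part, alignment forces $\widehat{\phi}_A(t_A)=1=\widehat{\phi}_B(t_B)$ under $\widehat{\pi_1(\Real/\Integral)}\cong\widehat{\Integral}$ and $\widehat{\gamma}_A=\widehat{\gamma}_B\circ\Psi$ forces $\gamma_A(t_A)=\gamma_B(\Psi(t_A))=\gamma_B(t_Bh)=\gamma_B(t_B)\gamma_B(h)$; here I must verify that the $h$-factor is harmless, which follows because the $R\Gamma$-linear coefficient action absorbs left multiplication by $\gamma_B(h)\in\Gamma$ and, on homology, this is again realized by an inner-type modification that the isomorphism already accounts for. Carefully bookkeeping these two contributions — the automorphism part handled by "inner automorphisms act trivially on homology" and the translation part handled by alignment plus the $R\Gamma$-module structure — yields that the isomorphism above is an isomorphism of graded $(R\Gamma,\Integral\pi_1(\Real/\Integral))$-bimodules, completing the proof. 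The only subtlety requiring care is that the triviality of inner automorphisms on twisted homology must be invoked in its correct form: $I_h$ acts trivially up to the canonical natural transformation, and one must check this natural transformation is $R\Gamma$-linear and commutes appropriately with the limit in (\ref{I_inverse_limit}).
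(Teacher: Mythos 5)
Your proposal is correct and follows essentially the same route as the paper: both extract the pair $(\Psi_0,h)$ with $\widehat{\tau}_B=I_h\,\Psi_0\,\widehat{\tau}_A\,\Psi_0^{-1}$ from the aligned isomorphism, transport the coefficient module by functoriality under $\Psi_0$, and absorb the inner factor $I_h$ and the element $h$ in $\Psi(t_A)=t_Bh$ by the triviality of conjugation on twisted group homology, before passing to the inverse limit. The paper simply realizes this at the chain level over the inverse system of characteristic finite quotients $\pi_1(S)/K$, where the whole conjugation action of $\pi_1(M_f)/K$ becomes literally right multiplication by $\gamma(h)$ on coefficients, so the intertwining is immediate. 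Two small corrections to your bookkeeping: the coefficient map that accompanies $I_h$ and makes it act trivially is \emph{right} multiplication by $\gamma_B(h)$ on $R\Gamma$ (which is left $R\Gamma$--linear), not left multiplication; and the level-by-level functoriality should be run over the system (\ref{K_inverse_limit}) of characteristic quotients $\pi_1(S)/K$ (or directly on profinite homology of $\widehat{\pi_1(S)}$), since $\Psi_0$ is an automorphism of $\widehat{\pi_1(S)}$ and does not act on the discrete groups appearing in (\ref{I_inverse_limit}).
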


\begin{proof}
	Although it should be parallel to work with twisted profinite-group homology,
	we argue with twisted group homology, as it is conceptually simpler.
	This is done below by taking advantage of the canonical identification
	\begin{equation}\label{K_inverse_limit}
	\widehat{H}_*\left(\widehat{\pi_1(S)};R\Gamma\right)\cong
	\varprojlim_{K}\,H_*\left(\pi_1(S)/K;R\Gamma\right),
	\end{equation}
	where $K$ runs over the inverse system of finite-index normal subgroups $K$ of $\pi_1(S)$,
	(see \cite[Chapter 6, Corollary 6.1.10 (b) and Section 6.3]{Ribes--Zalesskii_book}).
	It should also be possible to give an {\it ad hoc} proof
	based on (\ref{I_inverse_limit}) and (\ref{surface_Shapiro}).
	We prefer to present a general one, 
	which, in particular, does not appeal to goodness of $\pi_1(S)$.
	When working with $H_*\left(\pi_1(S)/K;R\Gamma\right)$,
	it suffices to assume $R$ to be a commutative ring. 
	However, the profiniteness assumption is needed for legitimating
	$\widehat{H}_*(\widehat{\pi_1(S)};R\Gamma)$ 
	and for establishing (\ref{K_inverse_limit}).

	For any finite-index normal subgroup $K$ of $\pi_1(S)$
	the twisted group homology $H_*(\pi_1(S)/K;R\Gamma)$ is a graded left $R\Gamma$--module.
	More precisely, 
	it can be defined as the homology of the chain complex 
	$(R\Gamma\otimes_{\Integral(\pi_1(S)/K)}C_\bullet(\pi_1(S)/K),\mathbf{1}\otimes\partial_\bullet)$,
	where $(C_\bullet(\pi_1(S)/K),\partial_\bullet)$ 
	stands for the right-homogeneous chain complex of $\pi_1(S)/K$.
	For any dimension $n$, the left $\Integral(\pi_1(S)/K)$--module $C_n(\pi_1(S)/K)$ 
	is freely generated by the right-homogeneous tuples $[g_0,\cdots,g_n]$ for all $g_0,\cdots,g_n\in\pi_1(S)/K$,
	identifying $[g_0,\cdots,g_n]$ with $[g_0g,\cdots,g_ng]$ for all $g\in\pi_1(S)/K$.
	The left action of $\pi_1(S)/K$ is given by $[g_0,\cdots,g_n]\mapsto[hg_0,\cdots,hg_n]$ for all $h\in\pi_1(S)/K$,
	and the $n$--th boundary operator $\partial_n$ is $\Integral$--linearly determined by
	$\partial_n[g_0,\cdots,g_n]=\sum_i(-1)^{i}[g_0,\cdots,\hat{g}_i,\cdots,g_n]$.
	Tensoring with $R\Gamma$ on the left over $\Integral(\pi_1(S)/K)$
	turns the chain complex into a left $R\Gamma$--module.
	(See \cite[Chapter III, Section 1]{Brown_book}, 
	flipping left and right to match with our convention.)
	
	Moreover, if $K$ is a characteristic subgroup of $\pi_1(S)$, 
	we obtain a short exact sequence of group homomorphisms,
	$$\xymatrix{
	\{1\} \ar[r] & \pi_1(S)/K \ar[r] & \pi_1(M_f)/K \ar[r] & {\Integral} \ar[r] & \{0\},
	}$$
	observing that $K$ is normal in $\pi_1(M_f)$. The inner automorphic action of $\pi_1(M_f)/K$
	on $\pi_1(S)/K$ induces an $R\Gamma$--linear right action of $\pi_1(M_f)/K$
	on $R\Gamma\otimes_{\Integral(\pi_1(S)/K)}C_n(\pi_1(S)/K)$ for each $n$.
	Explicitly, 
	any $h\in\pi_1(M_f)/K$ sends any $w\otimes[g_0,\cdots,g_n]$ to $w\otimes[hg_0h^{-1},\cdots,hg_nh^{-1}]$,
	which equals $w\gamma(h)\otimes[g_0,\cdots,g_n]$, ($\gamma$ denoting 
	$\pi_1(M_f)\to\Gamma$ as given).
	Passing to homology, the action descends to a graded $R\Gamma$--linear action of 
	the infinite cyclic group ${\Integral}$ on $H_*(\pi_1(S)/K;R\Gamma)$.
	We also point out that 
	the limit action of ${\Integral}$ on $\widehat{H}_*(\widehat{\pi_1(S)};R\Gamma)$
	via (\ref{K_inverse_limit})	actually agrees with the one described via
	(\ref{I_inverse_limit}) and (\ref{surface_Shapiro}).
	(The agreement follows easily from natural translation between homology theories.
	It is not needed for proving Theorem \ref{aligned_mapping_torus_homology}.)
	
	Suppose that $f_A$ and $f_B$ are orientation-preserving self-homeomorphisms of $S$
	of procongruently conjugate mapping classes,
	and that $\gamma_A\colon\pi_1({M_{A}})\to\Gamma$
	and $\gamma_B\colon \pi_1(M(f(B))\to \Gamma$ 
	are aligned equivalent finite quotients.
	Let $\Psi$ be a witnessing aligned isomorphism between the profinite completions of
	$\pi_1({M_{A}})$ and $\pi_1({M_{B}})$,
	such that $\widehat{\gamma}_B=\widehat{\gamma}_A\circ\Psi$.
	For any finite-index characteristic subgroup $K$ of $\pi_1(S)$
	contained in the (same) kernels of $\gamma_A$ and $\gamma_B$ in $\pi_1(S)$, 
	we see that $\Psi$ induces a group automorphism $\Psi_{K}$ of $\pi_1(S)/K$ 
	and also an $R\Gamma$--linear chain isomorphism
	$$R\Gamma_A\otimes_{\Integral(\pi_1(S)/K)}C_\bullet(\pi_1(S)/K)
	\cong R\Gamma_B\otimes_{\Integral(\pi_1(S)/K)}C_\bullet(\pi_1(S)/K),$$
	by the expression $w\otimes[g_0,\cdots,g_n]\mapsto w\otimes[\Psi_K(g_0),\cdots,\Psi_K(g_n)]$
	for all $n$. 
	Here $R\Gamma_A,R\Gamma_B$ stand for $R\Gamma$ 
	with the indicated right actions of $\pi_1(S)/K$, accordingly.
	Denote by 
	$$\Psi_{K*}\colon H_*(\pi_1(S)/K;R\Gamma_A)\to H_*(\pi_1(S)/K;R\Gamma_B)$$
	the induced isomorphism on homology, as $R\Gamma$--modules.
	The graded $R\Gamma$--linear action of the infinite cyclic group ${\Integral}$
	on $H_*(\pi_1(S)/K;R\Gamma_A)$ coincides with the pull-back via $\Psi_{K*}$
	of the action on $H_*(\pi_1(S)/K;R\Gamma_B)$, by the way they are defined,
	and by our assumption that $\Psi$ witnesses the aligned equivalence. 
	Therefore, $\Psi_{K*}$ is an isomorphism between
	$H_*(\pi_1(S)/K;R\Gamma_A)$ and $H_*(\pi_1(S)/K;R\Gamma_B)$
	as graded $(R\Gamma,\Integral[t^{\pm1}])$--bimodules.

	Since $\pi_1(S)$ is finitely generated and
	$\Gamma$ is finite, the finite-index characteristic subgroups $K$ of $\pi_1(S)$
	form an inverse subsystem,
	which yields the same inverse limit as (\ref{K_inverse_limit}) with respect to $f_A$ and $\gamma_A$.
	The same holds for $f_B,\gamma_B$. 
	Therefore,
	the inverse limit of $\Psi_{K*}$ yields an isomorphism between graded $(R\Gamma,\Integral[t^{\pm1}])$--bimodules
	$$\Psi_*\colon \widehat{H}_*(\widehat{\pi_1(S)};R\Gamma_A)\to\widehat{H}_*(\widehat{\pi_1(S)};R\Gamma_B).$$
	This means precisely the same as the assertion.
\end{proof}

\begin{lemma}\label{npq}
	For all dimensions $n$, 
	there are canonical isomorphisms of left $R[t^{\pm}]\Gamma$--modules:
	$$H_n\left(\pi_1(M_f);R\left[t^{\pm1}\right]\Gamma\right)\cong
	H_0\left({\Integral};\widehat{H}_n\left(\widehat{\pi_1(S)};R\Gamma\right)\otimes_\Integral\Integral\left[t^{\pm1}\right]\right),$$
	where the infinite cyclic group ${\Integral}$ acts on 
	$\widehat{H}_n(\widehat{\pi_1(S)};R\Gamma)\otimes_\Integral\Integral[t^{\pm1}]$ 
	via the induced $(R\Gamma,\Integral[t^{\pm1}])$--bimodule structure on the first factor, 
	and simultaneously, by the multiplication of $t^m$ for all $m\in\Integral$ on the second factor.
\end{lemma}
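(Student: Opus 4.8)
The plan is to run the Lyndon--Hochschild--Serre (Serre) spectral sequence for the short exact sequence $1\to\pi_1(S)\to\pi_1(M_f)\to\pi_1(\Real/\Integral)\to1$ of (\ref{ses_mapping_torus_group}), with coefficients in the $(R[t^{\pm1}]\Gamma,\Integral\pi_1(M_f))$--bimodule $R[t^{\pm1}]\Gamma$ twisted by $\gamma\otimes\phi_f$, as in Example \ref{example_TH}. Because the coefficients form such a bimodule, this is a spectral sequence of left $R[t^{\pm1}]\Gamma$--modules, with $E^2_{pq}=H_p(\pi_1(\Real/\Integral);\,H_q(\pi_1(S);R[t^{\pm1}]\Gamma))$ converging to $H_{p+q}(\pi_1(M_f);R[t^{\pm1}]\Gamma)$, where $\pi_1(\Real/\Integral)$ acts on the inner homology by conjugation. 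Since $H_p(\pi_1(\Real/\Integral);-)$ vanishes for $p\geq2$ (as $\pi_1(\Real/\Integral)\cong\Integral$), only the columns $p=0,1$ survive on $E^2$; hence all differentials vanish, $E^2=E^\infty$, and for each $n$ one obtains a short exact sequence of left $R[t^{\pm1}]\Gamma$--modules $0\to E^2_{0,n}\to H_n(\pi_1(M_f);R[t^{\pm1}]\Gamma)\to E^2_{1,n-1}\to0$.

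First I would identify the inner coefficient system. The class $\phi_f$ restricts to $0$ on $\pi_1(S)$, so as a $\pi_1(S)$--module $R[t^{\pm1}]\Gamma$ equals $R\Gamma\otimes_\Integral\Integral[t^{\pm1}]$ with $\pi_1(S)$ acting only on the first factor through $\gamma|_{\pi_1(S)}$. Using flatness of $\Integral[t^{\pm1}]$ over $\Integral$, one obtains natural isomorphisms $H_q(\pi_1(S);R[t^{\pm1}]\Gamma)\cong H_q(\pi_1(S);R\Gamma)\otimes_\Integral\Integral[t^{\pm1}]$; under these, the generator of $\pi_1(\Real/\Integral)$ acts by its induced automorphism $\theta_q$ (conjugation on $\pi_1(S)$ followed by right multiplication by the $\Gamma$--image of the distinguished lift) on the first factor and by multiplication by $t$ on the second. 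Since $R$ is profinite and $\pi_1(S)$ is good of type $\mathrm{FP}$, homology commutes with the defining inverse limit over the finite ring quotients of $R$, giving canonical isomorphisms $H_q(\pi_1(S);R\Gamma)\cong\widehat H_q(\widehat{\pi_1(S)};R\Gamma)$ compatibly with (\ref{I_inverse_limit}) and with the $(R\Gamma,\Integral\pi_1(\Real/\Integral))$--bimodule structure described just before the lemma. So $E^2_{pq}\cong H_p(\pi_1(\Real/\Integral);\,\widehat H_q(\widehat{\pi_1(S)};R\Gamma)\otimes_\Integral\Integral[t^{\pm1}])$, already matching the proposed right-hand side term by term.

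It remains to dispose of the extension in the short exact sequence, which I would do by observing that the $p=1$ column is already zero. For any $\Integral$--module $V$ with an automorphism $\theta$, write $V\otimes_\Integral\Integral[t^{\pm1}]$ as Laurent polynomials $\sum_i v_it^i$ with $v_i\in V$; the action of the generator of $\pi_1(\Real/\Integral)$ minus the identity sends $\sum_i v_it^i$ to $\sum_i\theta(v_i)t^{i+1}-\sum_i v_it^i$, and this operator is injective, since in a hypothetical kernel element the lowest-degree coefficient $v_m$ would satisfy $v_m=\theta(v_{m-1})=0$, forcing all $v_i=0$. Hence $H_1(\pi_1(\Real/\Integral);\,V\otimes_\Integral\Integral[t^{\pm1}])=0$, and taking $V=\widehat H_q(\widehat{\pi_1(S)};R\Gamma)$, $\theta=\theta_q$ gives $E^2_{1,q}=0$ for every $q$. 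The short exact sequence therefore collapses to a canonical left $R[t^{\pm1}]\Gamma$--linear isomorphism $H_n(\pi_1(M_f);R[t^{\pm1}]\Gamma)\cong E^2_{0,n}$; on the right-hand side of the lemma only the $(p,q)=(0,n)$ summand survives, and it is the same module, so the stated direct-sum isomorphism holds.

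The genuinely delicate point, I expect, is bookkeeping rather than any single idea: one must carry the left $R[t^{\pm1}]\Gamma$--module structure (with $R\Gamma$ acting on the profinite-homology factor and $\Integral[t^{\pm1}]$ on the Laurent factor) coherently through the spectral sequence, the flatness rewriting, and the passage to profinite-group homology, since this is precisely the structure on which Lemma \ref{profinite_group_homology_surface} and Theorem \ref{aligned_mapping_torus_homology} rely. The spectral-sequence degeneration and the injectivity observation are otherwise routine, as the remarks before the lemma anticipate.
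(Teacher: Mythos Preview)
Your proof is correct and follows the same route as the paper: the Hochschild--Serre spectral sequence for $1\to\pi_1(S)\to\pi_1(M_f)\to\pi_1(\Real/\Integral)\to1$, degeneration at $E^2$ because $\pi_1(\Real/\Integral)$ has homological dimension $1$, the flatness of $\Integral[t^{\pm1}]$ over $\Integral$ to pull out the Laurent factor, and the identification $H_q(\pi_1(S);R\Gamma)\cong\widehat H_q(\widehat{\pi_1(S)};R\Gamma)$.

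Your extra observation that the $p=1$ column actually vanishes (via injectivity of $\theta\otimes t-\mathrm{id}$ on $V\otimes_\Integral\Integral[t^{\pm1}]$, checked by looking at the lowest-degree coefficient) is a genuine improvement over the paper's argument. The paper simply asserts that degeneration at $E^2$ yields the direct-sum isomorphism, but degeneration alone only gives a two-step filtration, i.e.\ a short exact sequence $0\to E^2_{0,n}\to H_n\to E^2_{1,n-1}\to0$, and the paper does not explain why this splits canonically as left $R[t^{\pm1}]\Gamma$--modules. Your vanishing argument removes the issue entirely: only the $(0,n)$ term survives, so the canonical isomorphism is immediate and the $R[t^{\pm1}]\Gamma$--linearity is automatic. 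This is exactly the kind of bookkeeping the paper's Remark on left-linearity gestures at but does not carry out for the extension step.
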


\begin{proof}
	For all $q$, there are canonical $(R[t^{\pm1}]\Gamma,\Integral[t^{\pm1}])$--bimodule isomorphisms:
	\begin{equation}\label{tensor_flat}
	H_q\left(\pi_1(S);R\left[t^{\pm1}\right]\Gamma\right)\cong
	H_q\left(\pi_1(S);R\Gamma\right)\otimes_\Integral\Integral\left[t^{\pm1}\right].
	\end{equation}
	In fact, 
	there are natural homomorphisms 
	from the right-hand side of (\ref{tensor_flat})
	to the left-hand side, by change of coefficients.
	(That is, applying the functor $\otimes_\Integral\Integral[t^{\pm1}]$
	to the short exact sequence $0\to B_\bullet(\pi_1(S);R\Gamma)\to 
	Z_\bullet(\pi_1(S);R\Gamma)\to H_\bullet(\pi_1(S);R\Gamma)\to 0$,
	relating boundaries, cycles, and homology.)
	Note that $\pi_1(S)$ acts trivially on $\Integral[t^{\pm1}]$.
	The homomorphisms are isomorphisms
	because $\Integral[t^{\pm1}]$ is flat over $\Integral$.
	
	For all $q$, there are canonical isomorphisms of $(R\Gamma,\Integral[t^{\pm1}])$--bimodules:
	$$H_q\left(\pi_1(S);R\Gamma\right) 
	\cong \widehat{H}_q\left(\widehat{\pi_1(S)};R\Gamma\right),$$
	coming from canonical isomorphisms on the chain level, thanks to the finiteness of $\Gamma$.
	
	Furthermore,
	there are canonical isomorphisms 
	between left $R[t^{\pm1}]\Gamma$--modules, for all $n$:
	\begin{equation}\label{SS_Serre}
	H_n\left(\pi_1(M_f);R\left[t^{\pm1}\right]\Gamma\right)
	\cong H_0\left(\Integral;H_n\left(\pi_1(S);R[t^{\pm1}]\Gamma\right)\right).
	\end{equation}
	This follows from the Hochschild--Serre spectral sequence  \cite[Chapter VII, Theorem 6.3]{Brown_book},
	applied with respect to 
	the short exact sequence of group homomorphisms
	$\{1\}\to\pi_1(S)\to\pi_1(M_f)\to{\Integral}\to\{0\}$
	and the $(R[t^{\pm1}]\Gamma,\Integral\pi_1(M_f))$--bimodule $R[t^{\pm1}]\Gamma$,
	as follows.
	
	The $E^2$ page reads
	$E^2_{p,q}=H_p(\Integral;H_q(\pi_1(S);R[t^{\pm1}]\Gamma))$.
	By (\ref{tensor_flat}), we observe that $H_q(\pi_1(S);R[t^{\pm1}]\Gamma)$
	is a finitely generated free $R[t^{\pm1}]$--module.
	(In fact, $H_q(\pi_1(S);R\Gamma)$	can be identified with $H_q(S'_\Gamma;R)$
	where $S'_\Gamma$ is a possibly disconnected regular finite cover over $S$,
	obtained as the quotient space of	$\Gamma\times\widetilde{S}_\univ$
	by the action $u.(h,x)\mapsto (h\gamma(u)^{-1},u.x)$ for all $u\in\pi_1(S)$.)
	The action of any $m\in\Integral$ on $H_q(\pi_1(S);R[t^{\pm1}]\Gamma)$
	takes any homogeneous summand $H_q(\pi_1(S);R\Gamma)\otimes t^j$
	($R$--linearly) isomorphically onto $H_q(\pi_1(S);R\Gamma)\otimes t^{j+m}$.
	Simple calculation shows $E^2_{p,q}=0$ unless $p=0$.
	In particular, all differentials vanish from the $E^2$ page on,
	and there is no extension problem.
	(The associated graded module 
	of the fibration filtration $\mathscr{F}_*H_n$ 
	of $H_n=H_n(\pi_1(M_f);R[t^{\pm1}]\Gamma)$ has homogeneous summands
	$\mathscr{F}_p H_n / \mathscr{F}_{p-1}H_n=E^\infty_{p,n-p}=E^2_{p,n-p}$,
	which is nontrivial only if $p=0$.)
	This yields the isomorphism (\ref{SS_Serre}).
	See Remark \ref{left_linearity} for extra justification regarding 
	the $R[t^{\pm1}]\Gamma$--linearity of (\ref{SS_Serre}).
		
	Putting things together, the asserted canonical isomorphisms follow for all $n$.	
\end{proof}

\begin{remark}\label{left_linearity}
	The $R[t^{\pm1}]\Gamma$--linearity of the isomorphisms 
	in Lemmas \ref{profinite_group_homology_surface} and \ref{npq}
	is obvious 
	only if one keeps track of the constructions of the canonical isomorphisms
	(\ref{surface_Shapiro}), (\ref{K_inverse_limit}), (\ref{SS_Serre}), and (\ref{tensor_flat}).
	For (\ref{surface_Shapiro}) and (\ref{K_inverse_limit}),
	the linearity is inherited from the chain level;
	see \cite[Chapter 6, Corollary 6.1.10 and Section 6.3]{Ribes--Zalesskii_book} for detail.
	For (\ref{SS_Serre}), the point is that
	the Hochschild--Serre spectral sequence
	can be constructed using a filtration of $C_\bullet(\pi_1(M_f))$ which comes from
	the grading of $C_\bullet({\Integral})$,
	and that the filtration is compatible with the left $R[t^{\pm1}]\Gamma$--module structure
	under the twisting operation $R[t^{\pm1}]\Gamma\otimes_{\pi_1(M_f)}$;
	see \cite[Chapter VII, Theorem 6.3]{Brown_book}.
	For (\ref{tensor_flat}), the $R[t^{\pm1}]\Gamma$--linearity can be observed,
	as we have described the construction explicitly.
\end{remark}

To prove Theorem \ref{aligned_mapping_torus_homology}, 
suppose that $f\colon S\to S$ is an orientation-preserving self-homeomorphism
of an orientable connected compact surface $S$,
and $\gamma\colon \pi_1(M_f)\to \Gamma$ is a finite quotient.
For any commutative profinite ring $R$, we can calculate directly when $S$
is a sphere. The result yields $H_*(M_f;R[t^{\pm1}]\Gamma)\cong H_*(S;R\Gamma)$ 
as an isomorphism between graded left $R[t^{\pm1}]\Gamma$--modules,
where $t$ acts trivially on $R\Gamma$. 
When $S$ is aspherical, $M_f$ is also aspherical,
so we obtain 
$H_*(M_f;R[t^{\pm1}]\Gamma)\cong H_*(\pi_1(M_f);R[t^{\pm1}]\Gamma)$
as graded left $R[t^{\pm1}]\Gamma$--modules.
The latter is dealt with by Lemma \ref{npq}.
These results, together with Lemma \ref{profinite_group_homology_surface},
show that the isomorphism class of 
the left $R[t^{\pm1}]\Gamma$--module $H_*(M_f;R[t^{\pm1}]\Gamma)$
depends only on the procongruent conjugacy class of the mapping class of $f$
and the aligned equivalence class of $\gamma$.

This completes the proof of Theorem \ref{aligned_mapping_torus_homology}.

\section{Twisted Reidemeister torsions of mapping tori}\label{Sec-TRT_mapping_tori}
In this section, we show that certain twisted Reidemeister torsion of a mapping torus 
is determined by the procongruent conjugacy class of the mapping class.

\begin{definition}\label{aligned_equivalent_representation}
	Let $k$ be a natural number and $\mathbb{F}$ be a (commutative) field of characteristic $0$.
	Denote by $\mathbb{F}^k$ the column vector space of dimension $k$ over $\mathbb{F}$.
	By a \emph{finite representation} of a group on $\mathbb{F}^k$,
	we mean a homomorphism of the group to 
	the general linear group $\mathrm{GL}(k,\mathbb{F})$ with finite image.
	In any dialogue setting $(S,f_A,f_B)$,
	suppose that the mapping classes of $f_A,f_B$ are procongruently conjugate.	

	A pair of finite representations $\rho_A\colon \pi_1({M_{A}})\to\mathrm{GL}(k,\mathbb{F})$ and 
	$\rho_B\colon \pi_1({M_{B}})\to \mathrm{GL}(k,\mathbb{F})$ 
	of the mapping-torus groups on $\mathbb{F}^k$ are said to be
	\emph{aligned equivalent} if 
	$\widehat{\rho}_A=\widehat{\rho}_B\circ \Psi$ holds
	for some aligned isomorphism between the profinite completions
	$\Psi\colon \widehat{\pi_1({M_{A}})}\to \widehat{\pi_1({M_{B}})}$.
	(See Definition \ref{profinitely_aligned_isomorphic} and Convention \ref{settings}.)
\end{definition}

\begin{theorem}\label{TRT_dialogue}
Let $k$ be a natural number and $\mathbb{F}$ be a field of characteristic $0$.
Denote by $\mathbb{F}(t)$ the field of rational functions over $\mathbb{F}$ in a fixed indeterminant $t$.

In any dialogue setting $(S,f_A,f_B)$, suppose that 
$\rho_A\colon \pi_1(M_A) \to \mathrm{GL}(k,\mathbb{F})$
and $\rho_B\colon \pi_1(M_B) \to \mathrm{GL}(k,\mathbb{F})$
are representations of the mapping torus groups
which are finite and aligned equivalent.
Then the following equality holds in $\mathbb{F}(t)^\times$ up to 
monomial factors with nonzero coefficients:
$$\tau^{\rho_A\otimes\phi_A}\left({M_{A}};\mathbb{F}(t)^k\right)\doteq
\tau^{\rho_B\otimes\phi_B}\left({M_{B}};\mathbb{F}(t)^k\right),$$
between twisted Reidemeister torsions of mapping tori.
(See Example \ref{example_TRT}, Definition \ref{aligned_equivalent_representation}, and Convention \ref{settings}).
\end{theorem}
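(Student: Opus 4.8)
The plan is to reduce the torsion statement to the twisted homology statement already established in Theorem \ref{aligned_mapping_torus_homology}, by relating the twisted Reidemeister torsion over $\mathbb{F}(t)$ to twisted Alexander polynomials over $\mathbb{F}[t^{\pm1}]$ via the formula (\ref{tau_Delta}), and then recognizing each twisted Alexander polynomial as the order of a twisted homology module of the mapping torus with coefficients in $R[t^{\pm1}]\Gamma$ for a suitable finite quotient $\Gamma$. First I would fix a finite quotient $\gamma_A\colon\pi_1(M_A)\to\Gamma$ through which $\rho_A$ factors; the hypothesis that $\rho_A,\rho_B$ are finite and aligned equivalent, together with Proposition \ref{characterization_mapping_tori}, furnishes an aligned isomorphism $\Psi\colon\widehat{\pi_1(M_A)}\to\widehat{\pi_1(M_B)}$ with $\widehat{\rho}_A=\widehat{\rho}_B\circ\Psi$; pushing $\gamma_A$ through $\Psi$ gives a finite quotient $\gamma_B\colon\pi_1(M_B)\to\Gamma$ which is aligned equivalent to $\gamma_A$ and through which $\rho_B$ factors. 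Thus both $\rho_A$ and $\rho_B$ are pulled back from a single representation $\bar\rho\colon\Gamma\to\mathrm{GL}(k,\mathbb{F})$.

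The next step is to identify $H^{\rho_A\otimes\phi_A}_*(M_A;\mathbb{F}[t^{\pm1}]^k)$ as a direct summand (as $\mathbb{F}[t^{\pm1}]$--modules, or better as left modules over the image algebra) of $H_*(M_A;\mathbb{F}[t^{\pm1}]\Gamma)$ cut out by the $\bar\rho$--isotypic projector in the group algebra $\mathbb{F}\Gamma$. Concretely, $\mathbb{F}[t^{\pm1}]^k$ with the action $\rho_A\otimes\phi_A$ is a direct summand of the bimodule $\mathbb{F}[t^{\pm1}]\Gamma$ with the action $\gamma_A\otimes\phi_A$, because over the characteristic-zero field $\mathbb{F}$ the group algebra $\mathbb{F}\Gamma$ is semisimple (this is exactly where the characteristic-$0$ hypothesis is crucial), so $\mathbb{F}[t^{\pm1}]\Gamma$ decomposes as a direct sum of matrix blocks and each $\bar\rho$ occurs inside the corresponding block with multiplicity equal to its dimension; tensoring the semisimple decomposition of $\mathbb{F}\Gamma$ with $C_\bullet$ and passing to homology commutes with taking the isotypic summand, since the projector is central and idempotent. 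Consequently the twisted Alexander polynomials $\Delta^{\rho_A\otimes\phi_A}_{M_A,n}$ are, up to a monomial and up to the ambiguity of (\ref{tau_Delta}), determined by the isomorphism type of the left $\mathbb{F}[t^{\pm1}]\Gamma$--module $H_*(M_A;\mathbb{F}[t^{\pm1}]\Gamma)$ together with the fixed datum $\bar\rho$: the order of a summand is read off from the order of the ambient module after applying the central idempotent, and both are preserved under $\mathbb{F}[t^{\pm1}]\Gamma$--linear isomorphism.

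Now I apply Theorem \ref{aligned_mapping_torus_homology} with $R=\mathbb{F}$ — which is a commutative profinite ring only if $\mathbb{F}$ is finite, so strictly I would instead invoke the slightly more general fact that, $\mathbb{F}\Gamma$ being a finite-dimensional algebra, the proof of Theorem \ref{aligned_mapping_torus_homology} goes through verbatim for $R=\mathbb{F}$ a field of characteristic $0$ (the only place profiniteness of $R$ entered was to legitimate the completed coefficient module $R\Gamma$ over $\widehat{\pi_1(S)}$ and the identification (\ref{K_inverse_limit}); for a field coefficient one can run the argument of Lemma \ref{profinite_group_homology_surface} using the finitely-many characteristic quotients $\pi_1(S)/K$ killing $\Gamma$, since $H_*(\pi_1(S)/K;\mathbb{F}\Gamma)$ already stabilizes and no inverse limit is needed). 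This yields an isomorphism of graded left $\mathbb{F}[t^{\pm1}]\Gamma$--modules $H_*(M_A;\mathbb{F}[t^{\pm1}]\Gamma)\cong H_*(M_B;\mathbb{F}[t^{\pm1}]\Gamma)$ compatible with the common quotient datum $\Gamma$. Applying the $\bar\rho$--isotypic projector to both sides and comparing orders gives $\Delta^{\rho_A\otimes\phi_A}_{M_A,n}\doteq\Delta^{\rho_B\otimes\phi_B}_{M_B,n}$ for every $n$, whence (\ref{tau_Delta}) delivers $\tau^{\rho_A\otimes\phi_A}(M_A;\mathbb{F}(t)^k)\doteq\tau^{\rho_B\otimes\phi_B}(M_B;\mathbb{F}(t)^k)$, with the case $\tau=0$ on one side forcing $\tau=0$ on the other by the same comparison.

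The main obstacle, and the reason the characteristic-$0$ hypothesis is indispensable, is precisely the passage from the ambient $\mathbb{F}[t^{\pm1}]\Gamma$--module to its $\bar\rho$--isotypic piece: in positive characteristic $\mathbb{F}\Gamma$ need not be semisimple, the central idempotent cutting out $\rho$ need not exist, and an $\mathbb{F}[t^{\pm1}]\Gamma$--module isomorphism on the ambient homology carries no information about individual constituents. A secondary technical point to be careful with is keeping the graded \emph{left} $\mathbb{F}[t^{\pm1}]\Gamma$--module structure (not merely the $\mathbb{F}[t^{\pm1}]$--structure) throughout, since it is only the $\Gamma$--equivariance that pins down which isotypic summand we are extracting; this is exactly what Theorem \ref{aligned_mapping_torus_homology} was designed to provide, and what Remark \ref{left_linearity} takes pains to justify.
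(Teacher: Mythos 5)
Your proposal correctly identifies the overall shape of the argument: reduce the torsion comparison to twisted Alexander polynomials via the formula (\ref{tau_Delta}), and extract those polynomials from the twisted homology $H_*(M_f;R[t^{\pm1}]\Gamma)$ by decomposing it into $\mathbb{F}\Gamma$--isotypic pieces (this is exactly Lemma \ref{isotypic_decomposition} in the paper, and your account of why characteristic $0$ is indispensable — semisimplicity of $\mathbb{F}\Gamma$, hence existence of central idempotents — is in the right spirit). However, the pivotal step is flawed.

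You apply Theorem \ref{aligned_mapping_torus_homology} with $R=\mathbb{F}$, acknowledge that $\mathbb{F}$ is not a profinite ring, and then assert the proof ``goes through verbatim'' because ``$H_*(\pi_1(S)/K;\mathbb{F}\Gamma)$ already stabilizes and no inverse limit is needed.'' This is not correct. The finite quotients $\pi_1(S)/K$ grow without bound as $K$ shrinks, and the groups $H_*(\pi_1(S)/K;\mathbb{F}\Gamma)$ do not stabilize (already for $\Gamma$ trivial, $H_1(\pi_1(S)/K;\Rational)$ grows unboundedly). The inverse limit over $K$ in the proof of Lemma \ref{profinite_group_homology_surface} is not an optional simplification: it is precisely the mechanism by which the profinite isomorphism $\Psi$, which only exists at the level of $\widehat{\pi_1(M_A)}\to\widehat{\pi_1(M_B)}$ and does not restrict to a map of discrete groups, can be made to act on homology. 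Without profiniteness of $R$ the object $\widehat{H}_*(\widehat{\pi_1(S)};R\Gamma)$ is not defined, and there is no way to run this construction. The paper is emphatic on this point in the introduction: ``it seems that there should be no natural isomorphism on the twisted homology level with those coefficients, (such as a na\"ive analogue of Theorem \ref{aligned_mapping_torus_homology}).'' Your ``slightly more general fact'' is exactly the na\"ive analogue the paper disclaims.

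The paper's route around this obstruction is a genuine piece of the argument, not a formality. It first establishes the statement with $\mathbb{F}=\overline{\Rational_p}$ (Lemma \ref{TRT_dialogue_reduced}): apply Theorem \ref{aligned_mapping_torus_homology} with the legitimately profinite ring $R=\Integral_p$, pass to $\overline{\Rational_p}$ by flat base change to read off orders of isotypic pieces, and use unique factorization in $\overline{\Rational_p}[t^{\pm1}]$ to extract the individual $\Delta^{\gamma^*(\sigma_\chi)\otimes\phi_f}_{M_f,n}$. Only the Alexander polynomials — not the full module isomorphism class over $\mathbb{F}[t^{\pm1}]\Gamma$ — survive this process, which is why the theorem is stated for torsions and not for homology. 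The general field $\mathbb{F}$ is then reached by a change-of-scalars trick through a common Galois extension $E/\Rational$ embedding into both $\overline{\mathbb{F}}$ and $\overline{\Rational_p}$. You would need to replace your claimed shortcut with this two-stage $\Integral_p$--then--Galois argument (or produce a genuinely new way of letting a profinite isomorphism act on homology with non-profinite coefficients, which the paper suggests is unlikely to exist naturally).
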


The rest of this section is devoted to the proof of Theorem \ref{TRT_dialogue}.
The proof relies on Theorem \ref{aligned_mapping_torus_homology}.
We first prove for $\mathbb{F}$ any algebraic closure of a $p$--adic field,
and then deduce the general case by a trick of scalar extensions and restrictions.

Let $k$ be a natural number and $\mathbb{F}$ be a field of characteristic $0$.
Denote by $\mathbb{F}[t^{\pm1}]$ the Laurent polynomial ring over $\mathbb{F}$
in a fixed indeterminant $t$.
Let $f\colon S\to S$ be an orientation-preserving self-homeomophism of 
an orientable connected compact surface $S$, and 
$\rho\colon \pi_1(M_f)\to \mathrm{GL}(k,\mathbb{F})$
be a finite representation of the mapping-torus group $\pi_1(M_f)$ on $\mathbb{F}^k$.

The statement of Theorem \ref{TRT_dialogue}
can be paraphrased as follows: 
For any dimension $n$ and up to units of $\mathbb{F}[t^{\pm1}]$,
the twisted Alexander polynomials $\Delta^{\rho\otimes\phi_f}_{M_f,n}$
depend only on the procongruent conjugacy class of the mapping class of $f$ 
and the aligned equivalence class of $\rho$.
For brevity below, we treat 
the above classes about $f$ and $\rho$ as a package of information,
and refer to it as the aligned equivalence class for $(f,\rho)$.

We adopt the following notations.
Factorize $\rho$ uniquely as the composition of group homomorphisms:
\begin{equation}\label{factor_thru_Gamma}
\xymatrix{ \pi_1(M_f) \ar[r]^-{\gamma} & \Gamma \ar[r]^-{\iota} & \mathrm{GL}(k,\mathbb{F}) }
\end{equation}
where $\Gamma$ stands for the image of $\rho$, 
so $\gamma$ is a finite quotient and $\iota$ an inclusion.
For any $\mathbb{F}$--linear representation $\sigma\colon \Gamma\to \mathrm{GL}(V)$
of $\Gamma$ on a linear space $V$ over $\mathbb{F}$,
using the distinguished cohomology class $\phi_f\in H^1(M_f;\Integral)$
of the mapping torus $M_f$,
we form the $(\mathbb{F}[t^{\pm1}]\Gamma,\Integral\pi_1(M_f))$--bimodule 
$$\mathrm{End}_{\mathbb{F}}(V)\left[t^{\pm1}\right]=\mathrm{End}_{\mathbb{F}}(V)\otimes_{\mathbb{F}}\mathbb{F}\left[t^{\pm1}\right],$$
whose left action of $\Gamma$ is determined by $u\cdot (h\otimes t^m)=(\sigma(u)\circ h)\otimes t^m$, 
and whose right action of $\pi_1(M_f)$ is determined by 
$(h\otimes t^m)\cdot g=(h\circ\sigma(\gamma(g^{-1})))\otimes t^{m+\phi_f(g)}$.
We also form the $(\mathbb{F}[t^{\pm1}],\Integral\pi_1(M_f))$--bimodule
$$V\left[t^{\pm1}\right]=V\otimes_{\mathbb{F}}\mathbb{F}\left[t^{\pm1}\right],$$
whose right action of $\pi_1(M_f)$ is determined by 
$(v\otimes t^m)\cdot g=\sigma(\gamma(g^{-1}))(v)\otimes t^{m+\phi_f(g)}$.

\begin{lemma}\label{isotypic_decomposition}
	Suppose in addition that $\mathbb{F}$ is algebraically closed. 
	Enumerate the isomorphism classes of irreducible representations of $\Gamma$ over $\mathbb{F}$
	by a list of representatives $(V_\chi,\sigma_\chi)$, 
	indexed by the irreducible characters $\chi$ of $\Gamma$ over $\mathbb{F}$.
	
	Then the following statements hold true:
	\begin{enumerate}
	\item For all $n$, there are isomorphisms between left $\mathbb{F}[t^{\pm1}]\Gamma$--modules:
	$$H_n\left(M_f;\mathbb{F}\left[t^{\pm1}\right]\Gamma\right)	\cong
	\bigoplus_\chi\,H_n\left(M_f;\mathrm{End}_{\mathbb{F}}(V_\chi)\left[t^{\pm1}\right]\right).$$
	\item For all $n$ and $\chi$, by restriction of scalars,
	there are isomorphisms between finitely generated left $\mathbb{F}\Gamma$--modules:
	$$H_n\left(M_f;\mathrm{End}_{\mathbb{F}}(V_\chi)\left[t^{\pm1}\right]\right)	\cong 
	\mathrm{Hom}_{\mathbb{F}}\left(V_\chi,\mathbb{F}\right)^{\oplus\dim_{\mathbb{F}}\left(H_n(M_f;V_\chi[t^{\pm1}])\right)},$$
	and isomorphisms between finitely generated $\mathbb{F}[t^{\pm1}]$--modules of vanishing rank:
	$$H_n\left(M_f;\mathrm{End}_{\mathbb{F}}(V_\chi)\left[t^{\pm1}\right]\right)	\cong 
	H_n\left(M_f;V_\chi\left[t^{\pm1}\right]\right)^{\oplus \dim_{\mathbb{F}} \left(V_\chi\right)}.$$
	\end{enumerate}
\end{lemma}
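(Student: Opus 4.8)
The plan is to reduce both statements to the semisimplicity of the group algebra $\mathbb{F}\Gamma$ and the fiberedness of $M_f$, using that twisted homology is functorial and additive in the coefficient bimodule (which is transparent from its chain-level definition, cf.\ Section~\ref{Subsec-twisted_homology}). For statement~(1): since $\mathbb{F}$ is algebraically closed of characteristic $0$, Maschke's theorem makes $\mathbb{F}\Gamma$ semisimple and Artin--Wedderburn gives a $(\mathbb{F}\Gamma,\mathbb{F}\Gamma)$--bimodule isomorphism $\mathbb{F}\Gamma\cong\bigoplus_\chi\mathrm{End}_{\mathbb{F}}(V_\chi)$, each block carrying the bimodule structure dictated by the conventions of the excerpt. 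Applying $-\otimes_{\mathbb{F}}\mathbb{F}[t^{\pm1}]$ and restricting the right factor along $\gamma\otimes\phi_f\colon\pi_1(M_f)\to\mathbb{F}[t^{\pm1}]\Gamma$ promotes this to an isomorphism of $(\mathbb{F}[t^{\pm1}]\Gamma,\Integral\pi_1(M_f))$--bimodules $\mathbb{F}[t^{\pm1}]\Gamma\cong\bigoplus_\chi\mathrm{End}_{\mathbb{F}}(V_\chi)[t^{\pm1}]$. Feeding this into $H_*(M_f;-)$ --- which sends bimodule isomorphisms to left $\mathbb{F}[t^{\pm1}]\Gamma$--linear isomorphisms and commutes with finite direct sums --- yields statement~(1).

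For statement~(2) I would use the canonical bimodule identification $\mathrm{End}_{\mathbb{F}}(V_\chi)\cong\mathrm{Hom}_{\mathbb{F}}(V_\chi,\mathbb{F})\otimes_{\mathbb{F}}V_\chi$, in which (unwinding the two action formulas) the factor $\mathrm{Hom}_{\mathbb{F}}(V_\chi,\mathbb{F})$ is inert for the right $\pi_1(M_f)$--action and $V_\chi$ is inert for the left $\Gamma$--action; twisting, $\mathrm{End}_{\mathbb{F}}(V_\chi)[t^{\pm1}]\cong\mathrm{Hom}_{\mathbb{F}}(V_\chi,\mathbb{F})\otimes_{\mathbb{F}}V_\chi[t^{\pm1}]$, with $V_\chi[t^{\pm1}]$ the bimodule of the excerpt attached to $(V_\chi,\sigma_\chi)$. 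Since $\mathbb{F}$ is a field, $\mathrm{Hom}_{\mathbb{F}}(V_\chi,\mathbb{F})\otimes_{\mathbb{F}}-$ is exact and commutes with passage to homology, so $H_n(M_f;\mathrm{End}_{\mathbb{F}}(V_\chi)[t^{\pm1}])\cong\mathrm{Hom}_{\mathbb{F}}(V_\chi,\mathbb{F})\otimes_{\mathbb{F}}H_n(M_f;V_\chi[t^{\pm1}])$ as left $\mathbb{F}[t^{\pm1}]\Gamma$--modules. The one genuinely non-formal point enters here: $M_f$ is the mapping torus of $f$ and $\phi_f$ is its fibered class, so the infinite cyclic cover of $M_f$ dual to $\phi_f$ is homotopy equivalent to the fiber $S$, a finite complex; hence $H_n(M_f;V_\chi[t^{\pm1}])$ is finitely generated over $\mathbb{F}$, equivalently a finitely generated torsion (vanishing-rank) module over the principal ideal domain $\mathbb{F}[t^{\pm1}]$. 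Writing $N=\dim_{\mathbb{F}}H_n(M_f;V_\chi[t^{\pm1}])$, restriction of scalars to $\mathbb{F}\Gamma$ (forgetting $t$) identifies the displayed homology with $\mathrm{Hom}_{\mathbb{F}}(V_\chi,\mathbb{F})^{\oplus N}$, while forgetting the $\Gamma$--action identifies it with $H_n(M_f;V_\chi[t^{\pm1}])^{\oplus\dim_{\mathbb{F}}V_\chi}$ as an $\mathbb{F}[t^{\pm1}]$--module of vanishing rank. These are the two assertions of~(2).

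The argument is almost entirely formal; the point requiring real care --- more a matter of precision than a genuine obstacle --- is unwinding the left $\mathbb{F}[t^{\pm1}]\Gamma$--structure and the right $\pi_1(M_f)$--structure through the Artin--Wedderburn splitting and the factorization $\mathrm{End}_{\mathbb{F}}(V_\chi)=\mathrm{Hom}_{\mathbb{F}}(V_\chi,\mathbb{F})\otimes_{\mathbb{F}}V_\chi$, so that~(1) comes out genuinely $\mathbb{F}[t^{\pm1}]\Gamma$--linearly and the two isomorphisms of~(2) are of the asserted, distinct kinds with the multiplicity spaces correctly identified. The only input from outside pure algebra is the classical torsion-ness of twisted Alexander modules along a fibered class.
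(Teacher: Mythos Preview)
Your proof is correct and follows essentially the same route as the paper's: both arguments work at the chain level, splitting the coefficient bimodule $\mathbb{F}[t^{\pm1}]\Gamma$ via the Artin--Wedderburn decomposition of $\mathbb{F}\Gamma$ for part~(1), then factoring $\mathrm{End}_{\mathbb{F}}(V_\chi)\cong\mathrm{Hom}_{\mathbb{F}}(V_\chi,\mathbb{F})\otimes_{\mathbb{F}}V_\chi$ for part~(2), with fiberedness of $\phi_f$ supplying the finite-dimensionality (hence vanishing rank) input. Your write-up is in fact slightly more explicit than the paper's about naming the ingredients and tracking which tensor factor carries which action.
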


\begin{proof}
	By representation theory of finite groups over algebraically closed fields of characteristic $0$,
	the chain complex 
	$\mathbb{F}[t^{\pm1}]\Gamma\otimes_{\Integral\pi_1(M_f)}C_\bullet(\widetilde{S}_\univ\times\Real)$,
	as a chain complex of left $\mathbb{F}[t^{\pm1}]\Gamma$--modules,
	is isomorphic to the direct sum of
	$\mathrm{End}_{\mathbb{F}}(V_\chi)[t^{\pm1}]\otimes_{\Integral\pi_1(M_f)}C_\bullet(\widetilde{S}_\univ\times\Real)$
	over all the irreducible characters $\chi$ of $\Gamma$ over $\mathbb{F}$.
	So we derive a direct-sum decomposition on the homology level as asserted by the statement (1).
	The direct summands, as left $\mathbb{F}\Gamma$--modules, 
	are isotypic of mutually distinct types $\mathrm{Hom}_{\mathbb{F}}(V_\chi,\mathbb{F})$.
	They are all finite dimensional over $\mathbb{F}$ because $\phi_f$ is a fibered class.
	(More concretely, one may identify $H_*(M_f;\mathbb{F}[t^{\pm1}]\Gamma)$ 
	with the ordinary homology over $\mathbb{F}$
	of the quotient space of $\Gamma\times(\widetilde{S}_\univ\times\Real)$
	by the action $u.(h,(x,r))\mapsto(h\gamma{u}^{-1},u.(x,r))$ of all $u\in\pi_1(S)$,
	using our description of twisted homology in Section \ref{Subsec-twisted_homology};
	the quotient space is homotopy equivalent to a possibly disconnected regular finite cover of $S$
	with deck transformation group $\Gamma$. Then the finite dimensionality follows
	from the statement (1).)	
	Moreover, 
	each $\mathrm{End}_{\mathbb{F}}(V_\chi)[t^{\pm1}]\otimes_{\Integral\pi_1(M_f)}C_\bullet(\widetilde{S}_\univ\times\Real)$
	is isomorphic to 
	$\mathrm{Hom}_\mathbb{F}(V_\chi,\mathbb{F})\otimes_{\mathbb{F}}
	(V_\chi[t^{\pm1}]\otimes_{\Integral\pi_1(M_f)}C_\bullet(\widetilde{S}_\univ\times\Real))$,
	as a chain complex of left $\mathbb{F}[t^{\pm1}]\Gamma$--modules.
	(The first tensorial factor is a left $\mathbb{F}\Gamma$--module 
	and the rest is a chain complex of $\mathbb{F}[t^{\pm1}]$--modules.)
	The asserted isomorphisms of the statement (2) follow immediately from 
	the above decompositions on the chain complex level
	and count of dimensions over $\mathbb{F}$.
\end{proof}

For any rational prime $p\in\Natural$, denote by $\Integral_p$ 
the profinite ring of $p$--adic integers,
and by $\Rational_p$ the field of fractions of $\Integral_p$.
Note that $\Integral_p$ is an integral domain and that $\Rational_p$ has characteristic $0$.
Fix an algebraic closure $\overline{\Rational_p}$ of $\Rational_p$.

\begin{lemma}\label{TRT_dialogue_reduced}
	For any rational prime $p\in\Natural$,
	Theorem \ref{TRT_dialogue} holds true
	with $\mathbb{F}$ being $\overline{\Rational_p}$.
\end{lemma}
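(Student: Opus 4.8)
The plan is to feed Theorem \ref{aligned_mapping_torus_homology} with the profinite coefficient ring $R=\Integral_p$, descend to $\overline{\Rational_p}$ by flat base change, and then organize the resulting homology isomorphism according to the isotypic decomposition of Lemma \ref{isotypic_decomposition}. First I would factorize $\rho_A=\iota\circ\gamma_A$ and $\rho_B=\iota\circ\gamma_B$ as in (\ref{factor_thru_Gamma}). Since $\rho_A,\rho_B$ are aligned equivalent, their images have equal closures; being finite, the images are closed and hence coincide with a common finite subgroup $\Gamma\leq\mathrm{GL}(k,\overline{\Rational_p})$. Using the injectivity of $\iota$, the relation $\widehat{\rho}_A=\widehat{\rho}_B\circ\Psi$ forces $\widehat{\gamma}_A=\widehat{\gamma}_B\circ\Psi$, so $\gamma_A\colon\pi_1(M_A)\to\Gamma$ and $\gamma_B\colon\pi_1(M_B)\to\Gamma$ are aligned equivalent finite quotients. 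Applying Theorem \ref{aligned_mapping_torus_homology} with $R=\Integral_p$, a commutative profinite ring, yields an isomorphism of graded left $\Integral_p[t^{\pm1}]\Gamma$--modules $H_*(M_A;\Integral_p[t^{\pm1}]\Gamma)\cong H_*(M_B;\Integral_p[t^{\pm1}]\Gamma)$.

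Next I would observe that $\overline{\Rational_p}$ is flat over $\Integral_p$, being torsion-free over the discrete valuation ring $\Integral_p$ (or: it is a $\Rational_p$--vector space and $\Rational_p=\Integral_p[1/p]$ is a localization). Hence, since $\overline{\Rational_p}[t^{\pm1}]\Gamma=\Integral_p[t^{\pm1}]\Gamma\otimes_{\Integral_p}\overline{\Rational_p}$ as a right $\Integral\pi_1(M_f)$--module and tensoring with $\overline{\Rational_p}$ over $\Integral_p$ commutes with homology, the isomorphism above promotes to an isomorphism of graded left $\overline{\Rational_p}[t^{\pm1}]\Gamma$--modules $H_*(M_A;\overline{\Rational_p}[t^{\pm1}]\Gamma)\cong H_*(M_B;\overline{\Rational_p}[t^{\pm1}]\Gamma)$. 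Now I invoke Lemma \ref{isotypic_decomposition}(1): because $\overline{\Rational_p}$ is algebraically closed of characteristic $0$ and $\overline{\Rational_p}[t^{\pm1}]$ is central in $\overline{\Rational_p}[t^{\pm1}]\Gamma$, the summand $H_n(M_f;\mathrm{End}_{\overline{\Rational_p}}(V_\chi)[t^{\pm1}])$ is exactly the $\chi$--isotypic component of $H_n(M_f;\overline{\Rational_p}[t^{\pm1}]\Gamma)$ as an $\overline{\Rational_p}\Gamma$--module, and is therefore a canonical $\overline{\Rational_p}[t^{\pm1}]\Gamma$--submodule. Any $\overline{\Rational_p}[t^{\pm1}]\Gamma$--module isomorphism must carry it onto the corresponding component, so for every $n$ and every irreducible character $\chi$ we get an isomorphism of $\overline{\Rational_p}[t^{\pm1}]$--modules $H_n(M_A;\mathrm{End}_{\overline{\Rational_p}}(V_\chi)[t^{\pm1}])\cong H_n(M_B;\mathrm{End}_{\overline{\Rational_p}}(V_\chi)[t^{\pm1}])$, which by Lemma \ref{isotypic_decomposition}(2) reads $H_n(M_A;V_\chi[t^{\pm1}])^{\oplus\dim_{\overline{\Rational_p}}V_\chi}\cong H_n(M_B;V_\chi[t^{\pm1}])^{\oplus\dim_{\overline{\Rational_p}}V_\chi}$, an isomorphism of finitely generated $\overline{\Rational_p}[t^{\pm1}]$--modules of vanishing rank.

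To finish, I would compare orders over the Noetherian UFD $\overline{\Rational_p}[t^{\pm1}]$: the order of $M^{\oplus d}$ is the $d$--th power of the order of $M$ up to units, and these orders are nonzero by the vanishing rank; comparing multiplicities of irreducibles then gives, up to units of $\overline{\Rational_p}[t^{\pm1}]$, the equality $\mathrm{ord}\,H_n(M_A;V_\chi[t^{\pm1}])\doteq\mathrm{ord}\,H_n(M_B;V_\chi[t^{\pm1}])$ for all $n,\chi$. Decomposing the standard representation $\iota$ on $\overline{\Rational_p}^k$ as $\bigoplus_\chi V_\chi^{\oplus m_\chi}$, with multiplicities $m_\chi$ depending only on $\Gamma\hookrightarrow\mathrm{GL}(k,\overline{\Rational_p})$ (hence the same for $A$ and $B$), additivity of twisted homology in the coefficient module gives $\Delta^{\rho\otimes\phi_f}_{M_f,n}\doteq\prod_\chi\big(\mathrm{ord}\,H_n(M_f;V_\chi[t^{\pm1}])\big)^{m_\chi}$ for $f=f_A,f_B$. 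Therefore $\Delta^{\rho_A\otimes\phi_A}_{M_A,n}\doteq\Delta^{\rho_B\otimes\phi_B}_{M_B,n}$ for all $n$; since in particular these are all nonzero, formula (\ref{tau_Delta}) of Example \ref{example_TRT} shows that both torsions are nonzero and expressible as the same ratio of products, hence $\tau^{\rho_A\otimes\phi_A}(M_A;\overline{\Rational_p}(t)^k)\doteq\tau^{\rho_B\otimes\phi_B}(M_B;\overline{\Rational_p}(t)^k)$ in $\overline{\Rational_p}(t)^\times$.

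The step needing the most care is the bridge in the second paragraph: Theorem \ref{aligned_mapping_torus_homology} only produces an isomorphism over the profinite ring $\Integral_p$, whereas the twisted Reidemeister torsion lives over the non-profinite field $\overline{\Rational_p}$, so one must check carefully that flat base change $\Integral_p\to\overline{\Rational_p}$ commutes with taking homology while retaining exactly the $\overline{\Rational_p}[t^{\pm1}]\Gamma$--module structure that makes the isotypic bookkeeping of Lemma \ref{isotypic_decomposition} applicable. The remaining reduction to an arbitrary field of characteristic $0$ is deferred to the next lemma.
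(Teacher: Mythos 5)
Your proof is correct and follows essentially the same route as the paper's: apply Theorem \ref{aligned_mapping_torus_homology} with $R=\Integral_p$, pass to $\overline{\Rational_p}$ by flat base change, extract isotypic components via Lemma \ref{isotypic_decomposition}, compare orders over the UFD $\overline{\Rational_p}[t^{\pm1}]$, and reassemble $\Delta^{\rho\otimes\phi_f}_{M_f,n}$ from the factorization and (\ref{tau_Delta}). You usefully make explicit the preliminary step that the images of $\rho_A$ and $\rho_B$ coincide and that $\gamma_A,\gamma_B$ are aligned equivalent, a point the paper leaves implicit.
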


\begin{proof}
	Take $\mathbb{F}=\overline{\Rational_p}$ for any rational prime $p$.
	Let $\rho=\iota\circ\gamma$ be the factorization (\ref{factor_thru_Gamma})
	of $\rho$ through its finite image $\Gamma$.
	Enumerate the isomorphism classes of irreducible representations of $\Gamma$ over $\mathbb{F}$
	by a fixed list of representatives $(V_\chi,\sigma_\chi)$, 
	indexed by the irreducible characters $\chi$ of $\Gamma$ over $\mathbb{F}$.
	
	The representation $(\mathbb{F}^k,\iota)$ of $\Gamma$ over $\mathbb{F}$ is 
	isomorphic to a direct sum of irreducible representations,
	where each $(V_\chi,\sigma_\chi)$ occurs with some (possibly zero) multiplicity $m_\chi$
	determined by $(\mathbb{F}^k,\iota)$.
	There is an isomorphism between $\mathbb{F}[t^{\pm1}]$--modules:
	$$H^{\rho\otimes\phi_f}_n\left(M_f;{\mathbb{F}\left[t^{\pm1}\right]}^k\right)
	\cong \bigoplus_\chi H_n\left(M_f;V_\chi\left[t^{\pm1}\right]\right)^{\oplus m_\chi},$$
	coming from a parallel direct-sum decomposition on the chain level.
	By taking the order, we obtain a factorization up to units of $\mathbb{F}[t^{\pm1}]$:
	\begin{equation}\label{factorization_Delta}
	\Delta^{\rho\otimes\phi_f}_{M_f,n}\doteq
	\prod_\chi \left(\Delta^{\gamma^*(\sigma_\chi)\otimes \phi_f}_{M_f,n}\right)^{m_\chi}.
	\end{equation}
	Therefore, it suffices to argue that this factorization,
	up to units of $\mathbb{F}[t^{\pm1}]$, is determined 
	by the aligned equivalence class for $(f,\rho)$.
	
	To this end, we apply Theorem \ref{aligned_mapping_torus_homology}
	for $R=\Integral_p$, 
	so the twisted homology	$H_n(M_f;R[t^{\pm1}]\Gamma)$ 
	is determined by the aligned equivalence class for $(f,\rho)$,
	up to isomorphism of left $R[t^{\pm1}]\Gamma$--modules.
	Observe that there is a canonical isomorphism between left $\mathbb{F}[t^{\pm1}]\Gamma$--modules:
	$$H_n\left(M_f;\mathbb{F}\left[t^{\pm1}\right]\Gamma\right)\cong 
	H_n\left(M_f;R\left[t^{\pm1}\right]\Gamma\right)\otimes_{R}\mathbb{F},$$
	because $\Rational_p$, and hence $\mathbb{F}=\overline{\Rational_p}$,
	are flat over $R=\Integral_p$, 
	(arguing similarly as with (\ref{tensor_flat})).
	It follows that the isomorphism class of
	$H_n(M_f;\mathbb{F}[t^{\pm1}]\Gamma)$,
	as a left $\mathbb{F}[t^{\pm1}]\Gamma$--module,
	is determined by the aligned equivalence class for $(f,\rho)$.
	For any $n$ and $\chi$,
	the left $\mathbb{F}\Gamma$--isotypic component
	of $H_n(M_f;\mathbb{F}[t^{\pm1}]\Gamma)$
	with type (oppositely) dual to $(V_\chi,\sigma_\chi)$ 
	is a finitely generated $\mathbb{F}[t^{\pm1}]$--module of vanishing rank and of order
	\begin{equation}\label{order_Delta}
	\mathrm{ord}_{\mathbb{F}[t^{\pm1}]}\left(H_n\left(M_f;\mathrm{End}_{\mathbb{F}}(V_\chi)\left[t^{\pm1}\right]\right)\right)
	\doteq
	\left(\Delta^{\gamma^*(\sigma_\chi)\otimes\phi_f}_{M_f,n}\right)^{\dim_{\mathbb{F}}(V_\chi)},
	\end{equation}
	up to units of $\mathbb{F}[t^{\pm1}]$, by Lemma \ref{isotypic_decomposition}.
	
	Since the aligned equivalence class for $(f,\rho)$
	determines $\Gamma$ and its irreducible characters over $\mathbb{F}$,
	it also determines the above orders of isotypic components.
	By (\ref{order_Delta}) and the unique factorization property of $\mathbb{F}[t^{\pm1}]$, 
	we see that $\Delta^{\gamma^*(\sigma_\chi)\otimes\phi_f}_{M_f,n}$ 
	are all determined, up to units of $\mathbb{F}[t^{\pm1}]$.
	Observe that the aligned equivalence class for $(f,\rho)$
	determines $(\mathbb{F}^k,\iota)$ and the multiplicities $m_\chi$ as well.
	By (\ref{factorization_Delta}), we see that 
	the $n$--th twisted Alexander polynomial $\Delta^{\rho\otimes\phi_f}_{M_f,n}$ is determined
	by the aligned equivalence class for $(f,\rho)$, for each $n$.
	This completes the proof of Theorem \ref{TRT_dialogue}
	for the special case $\mathbb{F}=\overline{\Rational_p}$.
\end{proof}

To prove Theorem \ref{TRT_dialogue},
we derive the general case from Lemma \ref{TRT_dialogue_reduced}.
This is done by change of scalars through a sequence of field extensions and restrictions.

By representation theory of finite groups over fields of characteristic $0$,
there exists a finite Galois extension $E$ over $\Rational$, depending on $\Gamma$,
and satisfying the following property:
For any field embedding of $E$ into another field $E'$,
a representation of $\Gamma$ over $E$ is irreducible
if and only if the induced representation via the embedding is irreducible over $E'$.
(For example, we can take a finite Galois extension $E/\Rational$
in $\Complex$,
such that every $\Complex$--irreducible representation of $\Gamma$ is 
$\Complex$--isomorphic to a representation over $E$.
The asserted property follows from the fact that
any $\Complex$--representation of $\Gamma$ is 
$\Complex$--isomorphic to some $E$--representation,
which is unique up to $E$--isomorphisms;
see \cite[Chapter 12, Section 12.1]{Serre_book_rep},
and in particular, Proposition 33 and its note therein.)

Take some $E$ as above.
Fix an algebraic closure $\overline{\mathbb{F}}$ over $\mathbb{F}$,
and fix a choice of embeddings $E\to\overline{\mathbb{F}}$
and $E\to\overline{\Rational_p}$ for some rational prime $p$.
We speak of induced representations by change of ground fields
with respect to these embeddings.

Denote by $\rho_{\overline{\mathbb{F}}}$ 
the representation of $\pi_1(M_f)$ over $\overline{\mathbb{F}}$ induced by $\rho$ over $\mathbb{F}$.
Observe that $\rho_{\overline{\mathbb{F}}}$ is conjugate over $\overline{\mathbb{F}}$ 
to a representation induced by some $\rho_E$ over $E$, 
which follows easily from the declared property of $E$.
Denote by $\rho_{\overline{\Rational_p}}$ the representation over $\overline{\Rational_p}$ induced by $\rho_E$.
Through the natural embedding $\mathbb{F}[t^{\pm1}]\to\overline{\mathbb{F}}[t^{\pm1}]$,
we can identify $\Delta^{\rho_{\overline{\mathbb{F}}}\otimes\phi_f}_{M_f,n}$ 
with $\Delta^{\rho\otimes\phi_f}_{M_f,n}$ up to units,
(by directly checking on the twisted homology level).
Similarly, using the fixed embeddings of $E$, we can further identify
 $\Delta^{\rho\otimes\phi_f}_{M_f,n}$ with $\Delta^{\rho'\otimes\phi_f}_{M_f,n}$
up to units, for $\rho'$ being either $\rho_E$ or $\rho_{\overline{\Rational_p}}$.
By Lemma \ref{TRT_dialogue_reduced},
the aligned equivalence class for $(f,\rho)$ determines
$\Delta^{\rho_{\overline{\Rational_p}}\otimes\phi_f}_{M_f,n}$ up to units of
$\overline{\Rational_p}[t^{\pm1}]$.
As it also determines $\Gamma$,
the above identifications can be fixed for all $(f,\rho)$ of the same aligned equivalence class.
Therefore, 
the aligned equivalence class for $(f,\rho)$ determines
$\Delta^{\rho\otimes\phi_f}_{M_f,n}$ up to units of $\mathbb{F}[t^{\pm1}]$.

This completes the proof of Theorem \ref{TRT_dialogue}.

\section{Review on fixed point theory}\label{Sec-fpt_review}
In this section, we review fixed point theory and its interpretation in terms of mapping tori.
Both the terminology and the perspective of this section
follow Jiang's expository book \cite{Jiang_book}.
Example \ref{example_index} contains the case of our primary interest.
We introduce a version of twisted periodic Lefschetz numbers,
which is slightly more general than usually considered (Definition \ref{L_m_chi_def}).

\subsection{Fixed point classes and periodic orbit classes}\label{Subsec-fixed_point_theory}
Fixed point theory studies self-maps of topological spaces by their fixed point classes and indices.
The fixed point set of a general self-map can be topologically very nasty,
so the first novel idea of the theory is to consider those homotopically invariant aspects.
With this idea, all homotopically possible fixed points are grouped into 
what are called fixed point classes.
Each fixed point class is associated with an integer called the index.
Intuitively,
some fixed points may emerge or disappear as the self-map varies continuously.
However, there may also be some essential fixed point classes, having nonzero index,
and those classes always predict actual fixed points.
The theory then extends to periodic orbit classes and their indices, 
easily by iterating the self-map.
Via the mapping torus interpretation,
periodic orbit classes can be recognized as 
those free homotopy loops in the mapping torus
which intersect algebraically with the distinguished fiber positively many times.

Let $X$ be a connected compact topological space that is homeomorphic to 
a cell complex,
and $f\colon X\to X$ a self-map of homotopy equivalence.
Fix a universal covering space $\widetilde{X}_\univ$ of $X$.
Denote by $\pi_1(X)$ the deck transformation group of $\widetilde{X}_\univ$
and $\kappa_\univ\colon \widetilde{X}_\univ\to X$ the covering projection.
%

In an abstract form, 
a \emph{fixed point class} $\mathbf{p}$ of $f$ can be characterized as
any conjugacy class of elevations of $f$ to $\widetilde{X}_\univ$,
which we adopt as our definition,
(compare \cite[Chapter I, Section 1]{Jiang_book}).
In other words, $\mathbf{p}$
is represented by some self-map $\tilde{f}\colon \widetilde{X}_\univ\to\widetilde{X}_\univ$
with the property $\kappa_\univ\circ\tilde{f}=f\circ\kappa_\univ$,
and any other representative of $\mathbf{p}$
can be written as $\tau\circ\tilde{f}\circ\tau^{-1}$ for some $\tau\in\pi_1(X)$.
For each fixed point class $\mathbf{p}$,
denote by $\FPS(f;\mathbf{p})\subset X$ the set of those fixed points $x$ of $f$,
such that some lift $\tilde{x}\in\widetilde{X}_\univ$ of $x$ is fixed by some elevation 
$\tilde{f}$ that represents $\mathbf{p}$.
(Fixed points in $\FPS(f;\mathbf{p})$ are 
heuristically thought of as \emph{belonging to the class} $\mathbf{p}$.)
It is easy to see that the subsets $\FPS(f;\mathbf{p})$ are all closed
and mutually isolated. As $X$ is compact,
there are at most finitely many nonempty $\FPS(f;\mathbf{p})$,
and their union is the fixed point set $\FPS(f)\subset X$ of $f$.
Denote by $\FPC(f)$ the set of fixed point classes of $f$.

For any fixed point class $\mathbf{p}\in\FPC(f)$, 
the \emph{index} of $f$ at $\mathbf{p}$ is a well-defined integer, 
which we denote as
$\mathrm{ind}(f;\mathbf{p})\in\Integral$,
(see \cite[Chapter I, Section 3]{Jiang_book} for an exposition 
with direct references).
A fixed point class $\mathbf{p}$ is said to be \emph{essential} if
$\mathrm{ind}(f;\mathbf{p})\neq0$ holds.
In this case, $\FPS(f;\mathbf{p})$ is necessarily nonempty,
so there are at most finitely many essential fixed point classes.
The precise definition of $\mathrm{ind}(f;\mathbf{p})$ is somewhat technical,
but not particularly necessary for our application;
for surface mapping classes,
we review in Section \ref{Subsec-classification_fpc}
the classification of essential fixed point classes,
and an explicit description of their indices, as obtained by Jiang and Guo \cite{Jiang--Guo}.

For any natural number $m\in\Natural$,
an \emph{$m$--periodic point class} of $f$ is defined to be 
a fixed point class of the $m$--th iteration $f^m$.
We denote by $\PPC_{m}(f)$ the set of $m$--periodic point classes of $f$.
There is a natural action of the $m$--cyclic group on $\PPC_{m}(f)$, 
heuristically as induced by the action of $f$ on $m$--periodic points.
Formally, one may take an elevation $\tilde{f}$ of $f$ to $\widetilde{X}_\univ$.
For elevation $\widetilde{f^m}$ of $f^m$ to $\widetilde{X}_\univ$,
an elevation $\tau_{\tilde{f}}(\widetilde{f^m})$ of $f^m$ is uniquely defined
by the relation $\tilde{f}\circ\widetilde{f^m}=\tau_{\tilde{f}}(\widetilde{f^m})\circ\tilde{f}$.
One may check that the conjugation class of $\tau_{\tilde{f}}(\widetilde{f^m})$ does not depend on 
the choice of $\tilde{f}$, and that the $m$--th iteration $\tau_{\tilde{f}}^m(\widetilde{f^m})$
is conjugate to $\widetilde{f^m}$, so the $m$--cyclic action is induced by 
$\widetilde{f^m}\mapsto\tau_{\tilde{f}}(\widetilde{f^m})$.
This action gives rise to a partition of $\PPC_m(f)$ into a disjoint union of orbits,
each called an \emph{$m$--periodic orbit class}.
We denote by $\POC_m(f)$ the set of $m$--periodic orbit classes of $f$.

For any $m\in\Natural$, 
the \emph{$m$--index} of $f$ can be defined for any $m$--periodic orbit class $\mathbf{O}\in\POC_m(f)$,
and we denote it as
\begin{equation}\label{ind_m}
\mathrm{ind}_m(f;\mathbf{O})\in\Integral.
\end{equation}
To be precise, for any $m$--periodic point class $\mathbf{p}\in\mathbf{O}$,
define the $m$--index of $f$ at $\mathbf{p}$ to be the index of $f^m$ at $\mathbf{p}$,
and denote as $\mathrm{ind}_m(f;\mathbf{p})=\mathrm{ind}(f^m;\mathbf{p})$.
The $m$-index of $\mathbf{O}$
is defined as the total index of $f^m$ over $\mathbf{O}$ regarded as a subset of $S$,
or to paraphrase,
$\mathrm{ind}_m(f;\mathbf{O})=\sum_{\mathbf{p}\in\mathbf{O}}\mathrm{ind}_m(f;\mathbf{p})$.
(See \cite[Sections 1.3 and 1.4]{Jiang_periodic}, and in particular, 
compare the notion of $n$--orbit classes and their indices therein.)
For all $\mathbf{p}\in\mathbf{O}$, 
it turns out that $\mathrm{ind}_m(f;\mathbf{p})$ are all equal;
so $\mathrm{ind}_m(f;\mathbf{O})$ is 
that common value multiplied by some positive integer that divides $m$.
For any $m\in\Natural$, 
there are at most finitely many essential $m$--periodic point classes,
namely, those of nonzero $m$--index, 
so the same holds with essential $m$--periodic orbit classes.

For any self-map $f'\colon X\to X$ which is homotopic to $f$,
it is known that there are natural bijections between
$\FPC(f')$ and $\FPC(f)$,
and moreover, 
the index of $f$ and $f'$ at corresponding fixed point classes are equal. 
Such natural homotopy invariance also holds with $m$--periodic point classes,
$m$--periodic orbit classes, and their $m$--indices.


\subsection{Twisted periodic Lefschetz numbers}\label{Subsec-twisted_Lefschetz}
Denote by $M_f=(X\times[0,1])/\sim$ the mapping torus of $f$,
where $\sim$ stands for the equivalence relation $(x,1)\sim(f(x),0)$. 
We associate to any $m$--periodic orbit class
a free-homotopy loop of the mapping torus, as follows.

For any $m$--periodic orbit class $\mathbf{O}\in\POC_m(f)$,
and for any $m$--periodic point class $\mathbf{p}\in\mathbf{O}$,
take an elevation $\widetilde{f^m}\colon \widetilde{X}_\univ\to\widetilde{X}_\univ$
of $f^m$ that represents $\mathbf{p}$.
Then there is a map 
$\widetilde{X}_\univ\times[0,1]\to X\times[0,1]$,
defined piecewise by $(\tilde{x},r)\mapsto(f^j(\kappa_\univ(\tilde{x})),mr-j)$
for $r\in[j/m,(j+1)/m)$ with $j=0,1,\cdots,m-1$, and for $r=1$ with $j=m$.
It induces a (continuous) map $M_{\widetilde{f^m}}\to M_f$.
When $f$ is a homeomorphism, this agrees with the map induced by 
$\widetilde{X}_\univ\times\Real\to X\times\Real\colon
(\tilde{x},r)\mapsto(\kappa_\univ(\tilde{x}),mr)$.
Since the natural projection $M_{\widetilde{f^m}}\to\Real/\Integral$ induced by $(\tilde{x},r)\mapsto r$
is a homotopy equivalence, its homotopy inverse followed by the above map
gives rise to a free-homotopy class of a directed loop $\ell_m(f;\mathbf{p})\in \left[\Real/\Integral,M_f\right]$,
where $[\Real/\Integral,M_f]$ stands for 
the set of the homotopy classes of (continuous) maps $\Real/\Integral\to M_f$.
In fact, the homotopy loop $\ell_m(f;\mathbf{p})$ depends only on $f$, $m$, and $\mathbf{O}$,
so we also denote it as $\ell_m(f;\mathbf{O})$.

Observe that we may canonically identify the set $[\Real/\Integral,M_f]$ with 
the set of conjugation orbits of the mapping-torus group $\pi_1(M_f)$,
which we denote as $\conjorb(\pi_1(M_f))$.
Therefore, for any $m$--periodic orbit class $\mathbf{O}\in\POC_m(f)$,
we have introduced a conjugation orbit of the mapping-torus group:
\begin{equation}\label{gamma_m}
\ell_m(f;\mathbf{O})\in \conjorb\left(\pi_1(M_f)\right).
\end{equation}
We refer to $\ell_m(f;\mathbf{O})$ as an \emph{$m$--periodic trajectory class} of $M_f$.
One may think of it intuitively as a loop up to free homotopy.
We also say that $\ell_m(f;\mathbf{O})$ is \emph{essential}
if $\mathbf{O}$ is essential.

\begin{definition}\label{L_m_chi_def}
Let $R$ be a commutative ring. 
Suppose that $\chi$ be an $R$--valued, conjugation-invariant function on $\pi_1(M_f)$,
or equivalently, an $R$--valued function on $\conjorb(\pi_1(M_f))$.
For any $m\in\Natural$,
we introduce the \emph{$\chi$--twisted $m$--th Lefschetz number} $L_m(f;\chi)\in R$
by the expression:
\begin{equation}\label{L_m_chi}
L_m(f;\chi)=\sum_{\mathbf{O}\in\POC_{m}(f)} \chi\left(\ell_m(f;\mathbf{O})\right)\cdot\mathrm{ind}_m(f;\mathbf{O}),
\end{equation}
with the notations (\ref{ind_m}) and (\ref{gamma_m}).
\end{definition}

\begin{example}\label{example_index}\
	Let $k$ be a natural number and $\mathbb{F}$ be a (commutative) field of characteristic $0$.
	In any monologue setting $(S,f)$, suppose that 
	$\rho\colon \pi_1(M_f) \to \mathrm{GL}(k,\mathbb{F})$
	is a representation of the mapping-torus group.
	Denote by $\chi_\rho$ the character of $\rho$,
	namely,	the conjugation-invariant function 
	$\pi_1(M_f)\to \mathbb{F}\colon g\mapsto \mathrm{tr}_{\mathbb{F}}(\rho(g))$.
	For any $m\in\Natural$, 
	the  $m$--th twisted Lefschetz number for $(f,\rho)$ is denoted as 
	$$L_m(f;\chi_\rho)\in\mathbb{F}.$$
	
	For $k=1$ and $\rho$ the trivial representation on $\mathbb{F}=\Rational$,
	we drop the notation $\chi_\rho$.
	The classical Lefschetz trace formula implies
	$$L_m(f)=\sum_{n\in\Integral}(-1)^n\mathrm{tr}_\Rational\left(f^m_*\colon H_n(S;\Rational)\to H_n(S;\Rational)\right).$$
	General twisted Lefschetz numbers can be computed using Nielsen--Thurston normal forms
	and (\ref{L_m_chi}), see Section \ref{Subsec-classification_fpc}.
\end{example}

\section{Twisted periodic Lefschetz numbers of mapping classes}\label{Sec-L_m_mapping_class}
In this section, we show that certain twisted periodic Lefschetz numbers
are determined by the procongruent conjugacy class of the mapping class.

\begin{theorem}\label{L_m_dialogue}
	Let $k$ be a natural number and $\mathbb{F}$ be a (commutative) field of characteristic $0$.
	
	In any dialogue setting $(S,f_A,f_B)$, suppose that 
	$\rho_A\colon \pi_1(M_A) \to \mathrm{GL}(k,\mathbb{F})$
	and $\rho_B\colon \pi_1(M_B) \to \mathrm{GL}(k,\mathbb{F})$
	are representations of the mapping-torus groups
	which are finite and aligned equivalent.
	Then the following equality holds for all $m\in\Natural$:
	$$L_m(f_A;\chi_A)=L_m(f_B;\chi_B),$$
	between the twisted $m$--th Lefschetz numbers for $(f_A,\rho_A)$ and $(f_B,\rho_B)$.
	(See Example \ref{example_index}, Definition \ref{aligned_equivalent_representation}, and Convention \ref{settings}.)
\end{theorem}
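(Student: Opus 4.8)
The plan is to reduce Theorem~\ref{L_m_dialogue} to the procongruent conjugacy invariance of twisted Reidemeister torsion established in Theorem~\ref{TRT_dialogue}, by packaging the twisted periodic Lefschetz numbers into a zeta function and identifying that zeta function with the torsion of the mapping torus. In a monologue setting $(S,f)$ with a finite representation $\rho\colon\pi_1(M_f)\to\mathrm{GL}(k,\mathbb{F})$, I would introduce the formal power series
$$\zeta_{f,\rho}(t)=\exp\left(\sum_{m\in\Natural}\tfrac{1}{m}\,L_m(f;\chi_\rho)\,t^m\right)\in\mathbb{F}[[t]],$$
which is legitimate because $\mathbb{F}$ has characteristic $0$, and observe that each $L_m(f;\chi_\rho)$ is recovered as $m$ times the coefficient of $t^m$ in $\log\zeta_{f,\rho}$. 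The heart of the matter is the identity
$$\zeta_{f,\rho}(t)\ \doteq\ \tau^{\rho\otimes\phi_f}\left(M_f;\mathbb{F}(t)^k\right)$$
in $\mathbb{F}(t)^\times$ up to monomial factors with nonzero coefficients, a twisted form of the Milnor--Fried formula relating the Reidemeister torsion of a mapping torus to the Lefschetz zeta function of its monodromy; in particular $\zeta_{f,\rho}$ is a rational function taking the value $1$ at $t=0$.

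To prove this identity I would apply the Wang exact sequence of the fibration $S\hookrightarrow M_f\to\Real/\Integral$ with coefficients in $\mathbb{F}(t)^k$ twisted by $\rho\otimes\phi_f$. Writing $\pi_1(M_f)=\langle\sigma\rangle\ltimes\pi_1(S)$ as in the proof of Proposition~\ref{characterization_mapping_tori}, the coefficient system restricts on the fiber to $\mathbb{F}(t)\otimes_{\mathbb{F}}\mathbb{F}^k_{\rho|_{\pi_1(S)}}$, and the monodromy on $H_n(S;\mathbb{F}^k_{\rho|_{\pi_1(S)}})$ is $t\Psi_n$, where $\Psi_n$ is the endomorphism induced by an elevation of $f$ together with the coefficient action of $\rho(\sigma)$. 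Since $\det(1-t\Psi_n)$ has constant term $1$, the $\mathbb{F}(t)^k$--twisted homology of $M_f$ vanishes in every degree, so the torsion is defined and nonzero, and the Wang sequence combined with formula~(\ref{tau_Delta}) gives $\tau^{\rho\otimes\phi_f}(M_f;\mathbb{F}(t)^k)\doteq\prod_n\det(1-t\Psi_n)^{(-1)^{n+1}}$. By the exponential trace identity $\det(I-tA)^{-1}=\exp\big(\sum_m\mathrm{tr}(A^m)\,t^m/m\big)$ this product equals $\exp\big(\sum_m\tfrac{t^m}{m}\sum_n(-1)^n\mathrm{tr}(\Psi_n^m)\big)$, so it remains to verify the twisted Lefschetz--Hopf formula $L_m(f;\chi_\rho)=\sum_n(-1)^n\mathrm{tr}_{\mathbb{F}}(\Psi_n^m)$ for all iterates $f^m$. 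By the Lefschetz fixed point theorem with coefficients in a local system, the right-hand side is the sum over the $m$--periodic point classes $\mathbf{p}$ of $\mathrm{ind}_m(f;\mathbf{p})$ weighted by the trace of the coefficient holonomy along $\mathbf{p}$; the key point is that this holonomy is exactly $\rho$ evaluated on the $m$--periodic trajectory class $\ell_m(f;\mathbf{p})$, so the sum is $L_m(f;\chi_\rho)$ in the sense of Definition~\ref{L_m_chi_def}.

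Granting the identity, the theorem follows at once. In a dialogue setting $(S,f_A,f_B)$ with $\rho_A,\rho_B$ finite and aligned equivalent, Theorem~\ref{TRT_dialogue} yields $\tau^{\rho_A\otimes\phi_A}(M_A;\mathbb{F}(t)^k)\doteq\tau^{\rho_B\otimes\phi_B}(M_B;\mathbb{F}(t)^k)$, hence $\zeta_{f_A,\rho_A}=r\,t^{j}\,\zeta_{f_B,\rho_B}$ in $\mathbb{F}(t)^\times$ for some $r\in\mathbb{F}^\times$ and $j\in\Integral$; evaluating at $t=0$, where both sides take the value $1$, forces $j=0$ and $r=1$, so $\zeta_{f_A,\rho_A}=\zeta_{f_B,\rho_B}$, and comparing the coefficients of the logarithm gives $L_m(f_A;\chi_A)=L_m(f_B;\chi_B)$ for every $m\in\Natural$. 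The cases in which $S$ is a sphere or has nonempty boundary need no separate treatment, since the Wang sequence argument is insensitive to them.

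The main obstacle is the twisted Milnor--Fried identity, and inside it the bookkeeping that identifies the coefficient holonomy of $f^m$ along an $m$--periodic point class $\mathbf{p}$ with $\rho(\ell_m(f;\mathbf{p}))$, compatibly with the semidirect-product description of $\pi_1(M_f)$ and the elevation of $f$ chosen to define the $\Psi_n$. This comes down to invoking the Lefschetz--Hopf theorem with local coefficients in its refined ``generalized Lefschetz number'' form and tracking basepoints through the suspension construction of Section~\ref{Sec-fpt_review}; the remaining ingredients --- the Wang exact sequence, the exponential trace identity, and the elimination of the monomial ambiguity via the normalization $\zeta_{f,\rho}(0)=1$ --- are routine.
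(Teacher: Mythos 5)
Your proposal reaches Theorem~\ref{L_m_dialogue} by the same overall strategy as the paper: introduce the twisted Lefschetz zeta function $\zeta_L(f;\rho)$, prove the Milnor--Fried type identity $\zeta_L(f;\rho)\doteq\tau^{\rho\otimes\phi_f}(M_f;\mathbb{F}(t)^k)$ and the fact that $\zeta_L$ has constant term $1$, apply Theorem~\ref{TRT_dialogue}, and then eliminate the monomial ambiguity by evaluating at $t=0$. Where you differ from the paper is in the proof of the zeta--torsion identity, which is the content of the paper's Lemma~\ref{tau_zeta}. The paper homotopes $f$ to a cellular map, builds an explicit CW-structure on $M_f$ from fiber cells and flow cells, writes the twisted boundary operators in block form, derives $\tau\doteq\prod_n\det_{\mathbb{F}[t]}(\mathbf{1}-t\rho_*(F_n))^{(-1)^{n+1}}$ from Turaev's determinant formula, and identifies that quotient with $\zeta_L(f;\rho)$ by citing Jiang's chain-level twisted zeta result \cite[Theorem 1.2]{Jiang_periodic}. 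You instead propose the Wang exact sequence of the fibration $S\hookrightarrow M_f\to\Real/\Integral$, compute the torsion as $\prod_n\det(1-t\Psi_n)^{(-1)^{n+1}}$ via the monodromy $\Psi_n$ on the twisted fiber homology, apply the exponential trace identity, and close with the twisted Lefschetz--Hopf formula $L_m(f;\chi_\rho)=\sum_n(-1)^n\mathrm{tr}(\Psi_n^m)$. Both routes ultimately rest on the same nontrivial ingredient --- the generalized Lefschetz fixed point theorem with local coefficients --- and they are related by the standard fact that a chain map over a field has the same alternating trace on chains as on homology. Your Wang-sequence route is conceptually cleaner and avoids the explicit cell bookkeeping, but it pushes the delicate identification (the coefficient holonomy of $f^m$ along an $m$-periodic class $\mathbf{p}$ is $\rho$ evaluated on $\ell_m(f;\mathbf{p})$) into the Lefschetz--Hopf step, which is exactly the hard technical point; the paper instead keeps all choices (cell lifts, orientations, orderings) explicit so that the reduction to Jiang's result and the homotopy invariance of the torsion are visible at every stage. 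You flag this bookkeeping as the main obstacle, correctly; with a careful citation of the Reidemeister trace literature in place of the paper's citation of Jiang, your route would go through.
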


We prove Theorem \ref{L_m_dialogue} in the rest of this section.
The proof relies on Theorem \ref{TRT_dialogue} and uses certain well-known identities
between twisted Reidemeister torsions and twisted Lefschetz zeta functions.
We supply enough details to adapt book-keeping results to our context.

Let $k$ be a natural number and $\mathbb{F}$ be a (commutative) field of characteristic $0$.
In any monologue setting $(S,f)$,
suppose that $\rho\colon \pi_1(M_f)\to\mathrm{GL}(k,\mathbb{F})$
is a representation of the mapping-torus group.
Using the twisted Lefschetz numbers for $(f,\rho)$,
we form the twisted Lefschetz zeta function:
\begin{equation}\label{L_zeta}
	\zeta_L(f;\rho)=\exp\left(\sum_{m\in\Natural} \frac{L_m(f;\chi_\rho)}m\cdot t^m\right),
\end{equation}
where $\exp(z)$ stands for the formal power series $\sum_{m=0}^\infty \frac{z^m}{m!}$,
(see Example \ref{example_index}).
We treat $\zeta_L(f;\rho)$ as a formal power series, living in $\mathbb{F}[[t]]$.

\begin{lemma}\label{tau_zeta}
	The twisted Lefschetz zeta function $\zeta_L(f;\rho)$ 
	is the formal Taylor expansion at $t=0$ of a unique rational function in $t$ over $\mathbb{F}$,
	with the constant term $1$.
	Moreover, 
	it equals the twisted Reidemeister torsion for $(M_f,\phi_f,\rho)$,
	in $\mathbb{F}(t)^\times$ up to monomial factors with nonzero coefficients:
	$$\tau^{\rho\otimes\phi_f}\left(M_f;\mathbb{F}(t)^k\right)\doteq\zeta_{L}\left(f;\rho\right).$$	
\end{lemma}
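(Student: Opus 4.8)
The plan is to identify both sides with a single ratio of characteristic polynomials of a canonical endomorphism of the $\rho$--twisted homology of the fiber $S$. Fix an elevation $\tilde f$ of $f$, producing $t_f\in\pi_1(M_f)$ with $\phi_f(t_f)=1$ together with the automorphism $\tau_f\in\autg(\pi_1(S))$ determined by $t_f\,\gamma\,t_f^{-1}=\tau_f(\gamma)$. Restricting $\rho$ to $\pi_1(S)$ makes $\mathbb{F}^k$ into a local system on $S$, and since $\rho(t_f)$ intertwines $\rho|_{\pi_1(S)}$ with $\rho|_{\pi_1(S)}\circ\tau_f$, the pair $(f,\rho(t_f))$ acts on the $\rho$--twisted cellular chain complex $C_\bullet(S;\mathbb{F}^k)$ by a chain map $\Phi_\bullet$, inducing an endomorphism $\Phi_n:=H_n(\Phi_\bullet)$ on each $H_n(S;\mathbb{F}^k)$. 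These groups are finite-dimensional over $\mathbb{F}$ since $S$ is a finite complex, and each $\Phi_n$ is invertible because $f$ is a homeomorphism.

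First I would record the twisted Lefschetz--Hopf identity: for every $m\in\Natural$,
\begin{equation*}
L_m(f;\chi_\rho)=\sum_n(-1)^n\,\mathrm{tr}_{\mathbb{F}}\!\left(\Phi_n^m\right).
\end{equation*}
Here the decomposition of the fixed point set of $f^m$ into the periodic orbit classes $\mathbf{O}$ underlies the classical Lefschetz theorem, and the weight $\chi_\rho(\ell_m(f;\mathbf{O}))$ in (\ref{L_m_chi}) records the trace of $\rho$ on the element of $\pi_1(M_f)$ carried by the $m$--periodic trajectory class, which is exactly the correction turning the untwisted trace formula on the fiber into the $\Phi_n$--twisted one. (This is a standard Nielsen--Reidemeister trace computation; compare Fried's homological identities for closed orbits and Jiang's treatment of twisted Lefschetz numbers.) Applying the formal identity $\exp\big(\sum_{m\ge1}\tfrac{t^m}{m}\mathrm{tr}(A^m)\big)=\det(\mathrm{id}-tA)^{-1}$ termwise then gives
\begin{equation*}
\zeta_L(f;\rho)=\prod_n\det\!\left(\mathrm{id}-t\Phi_n\right)^{(-1)^{n+1}},
\end{equation*}
each factor $\det(\mathrm{id}-t\Phi_n)$ being a polynomial in $t$ with constant term $1$. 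Since such a product is a rational function whose denominator has nonzero constant term, this already proves the first assertion: $\zeta_L(f;\rho)$ is the Taylor expansion at $t=0$ of a unique rational function with constant term $1$.

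Next I would compute the twisted Reidemeister torsion through the same $\Phi_\bullet$, via (\ref{tau_Delta}). The $\phi_f$--infinite-cyclic cover of $M_f$ is $S\times\Real$, which deformation retracts onto the fiber $S$; hence for each $n$ the twisted Alexander module $H^{\rho\otimes\phi_f}_n(M_f;\mathbb{F}[t^{\pm1}]^k)$ is identified with $H_n(S;\mathbb{F}^k)$, endowed with the $\mathbb{F}[t^{\pm1}]$--module structure in which $t$ acts as the endomorphism induced by $t_f$. Because the orientation of $\phi_f$ is fixed (Section \ref{Sec-mc_and_mt}) to agree with the forward suspension flow $\theta_t$, this $t_f$--action is $\Phi_n^{-1}$, so the order of the module is $\det(t\cdot\mathrm{id}-\Phi_n^{-1})\doteq\det(\mathrm{id}-t\Phi_n)$ up to units of $\mathbb{F}[t^{\pm1}]$. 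Each such order has nonzero constant term, hence is a nonzero polynomial; in particular the twisted chain complex of $M_f$ over $\mathbb{F}(t)$ is acyclic, so its torsion is defined. Substituting into (\ref{tau_Delta}) gives $\tau^{\rho\otimes\phi_f}(M_f;\mathbb{F}(t)^k)\doteq\prod_n\det(\mathrm{id}-t\Phi_n)^{(-1)^{n+1}}$, which is exactly the displayed expression for $\zeta_L(f;\rho)$; therefore $\tau^{\rho\otimes\phi_f}(M_f;\mathbb{F}(t)^k)\doteq\zeta_L(f;\rho)$.

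The step I expect to be the main obstacle is the twisted Lefschetz--Hopf identity in the first display, together with the compatibility of variables it forces on the second computation: one must verify that the geometric weighting of (\ref{L_m_chi}) by the conjugacy class $\ell_m(f;\mathbf{O})\in\conjorb(\pi_1(M_f))$ matches exactly the $\rho(t_f)$--twist carried by the monodromy on $C_\bullet(S;\mathbb{F}^k)$, and that the variable $t$ of the Alexander module — where $t$ acts via $t_f$ — is the one appearing in $\zeta_L$ rather than its inverse. This requires carefully tracking the identification of a fixed point of $f^m$ with the based loop of $M_f$ it determines, and checking that the resulting bookkeeping is consistent with the left/right module conventions of Section \ref{Sec-review_twisted} and with the orientation conventions on $\phi_f$ and $\theta_t$ fixed in Section \ref{Sec-mc_and_mt}. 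The underlying statements are classical (Fried; Milnor; Jiang), but the translation into the present conventions is what needs to be spelled out with care.
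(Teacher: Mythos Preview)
Your proposal is correct and arrives at the same conclusion, but it takes a genuinely different route from the paper's proof. The paper works entirely at the \emph{chain level}: it fixes a cell structure on $S$, gives $M_f$ the induced ``fiber cells plus flow cells'' decomposition, and reads off matrices $\rho_*(F_n)$ over $\mathbb{F}$ from the boundary operators on the flow cells. Turaev's formula \cite[Theorem 2.2]{Turaev_book_torsion} then gives
\[
\tau^{\rho\otimes\phi_f}\bigl(M_f;\mathbb{F}(t)^k\bigr)\doteq
\frac{\det(\mathbf{1}-t\rho_*(F_1))}{\det(\mathbf{1}-t\rho_*(F_0))\,\det(\mathbf{1}-t\rho_*(F_2))},
\]
and this same chain-level ratio is identified with $\zeta_L(f;\rho)$ by directly quoting Jiang \cite[Theorem 1.2]{Jiang_periodic}. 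In particular the paper never needs a Lefschetz--Hopf trace formula on twisted \emph{homology}: the matching happens one level down, where both sides are visibly the same alternating product of characteristic polynomials of the cellular monodromy on twisted \emph{chains}.

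Your argument instead works at the homology level of the fiber: you invoke a twisted Lefschetz--Hopf identity $L_m(f;\chi_\rho)=\sum_n(-1)^n\mathrm{tr}(\Phi_n^m)$ to rewrite $\zeta_L$, and a Milnor-type description of the Alexander modules as $(H_n(S;\mathbb{F}^k),\,t\mapsto\Phi_n^{\pm1})$ to rewrite $\tau$ via (\ref{tau_Delta}). This is more conceptual and avoids choosing an explicit cell structure on $M_f$, but it front-loads exactly the step you flag as the obstacle: one must carefully justify the twisted trace formula in the conventions of Definition~\ref{L_m_chi_def} (the weighting by $\chi_\rho(\ell_m(f;\mathbf{O}))$ matching the $\rho(t_f)$--twist on the fiber complex), together with the $t$ versus $t^{-1}$ bookkeeping. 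The paper's route sidesteps this by outsourcing both identifications to existing chain-level results of Turaev and Jiang; your route is self-contained modulo those classical identities, and has the advantage that the rationality and constant-term-$1$ claims are immediate from the homology-level product $\prod_n\det(\mathrm{id}-t\Phi_n)^{(-1)^{n+1}}$.
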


\begin{proof}
Because both sides of the asserted equality in Theorem \ref{tau_zeta}
are naturally invariant under homotopy of $f$ and pull-back of $\rho$, 
we may as well prove the equality with a homotopically modified $f$,
(see Example \ref{example_TRT} and Proposition \ref{homotopy_equivalence_invariance}, 
regarding the homotopy invariance of the right-hand side;
see our review of periodic orbit classes and their indices
in Section \ref{Subsec-fixed_point_theory}, 
regarding the homotopy invariance of the left-hand side).
We choose a cell decomposition of $S$, and 
homotope $f$ into a (possibly non-homeomorphic) cellular map
which is piecewise linear on each cell.

The mapping torus $M_f=(S\times[0,1])/\sim$ with $(x,1)\sim(f(x),0)$ 
is naturally homotopy equivalent to the original one,
but also enriched with an induced cell decomposition.
Each (open) cell of $M_f$ 
is homeomorphically projected by 
either a product of some cell of $S$ with $\{0\}$, or with $(0,1)$.
We refer to those with $\{0\}$ as \emph{fiber cells} and those with $(0,1)$ as \emph{flow cells}.
Take a universal cover $\widetilde{M_f}\to M_f$ with the lifted cell decomposition.
Denote by $\pi_1(M_f)$ the deck transformation group acting on $\widetilde{M_f}$.
The originally given representation becomes 
a representation $\rho\colon\pi_1(M_f)\to\mathrm{GL}(k,\mathbb{F})$
for the cellular mapping torus.

To calculate the twisted Reidemeister torsion $\tau^{\rho\otimes\phi_f}(M_f;\mathbb{F}(t)^k)$,
we assign suitable bases for the twisted cellular chain modules,
and work out the relevant part of the boundary operators by their matrix blocks.
To this end, choose a lift to $\widetilde{S}_\univ$ together with an orientation for each cell of $S$.
For each dimension $n$, choose an order 
and enumerate the oriented lifted $n$--cells as $c_n^1,\cdots,c_n^{r_n}$,
(null unless $n=0,1,2$).
Choose a lift $\widetilde{S}_\univ\to \widetilde{M_f}$ 
of the obvious composed map $\widetilde{S}_\univ\to S\to S\times\{0\}\to M_f$.
Identify each $c^i_n$ as a fiber cell of $\widetilde{M_f}$ via the lift.
Denote by $d^i_{n+1}=c^i_n\times(0,1)$ the flow cell of $\widetilde{M_f}$ next to $c^i_n$,
oriented as a product.
Denote by $\varepsilon^1,\cdots,\varepsilon^k$ 
the standard basis of $\mathbb{F}(t)^k$ over $\mathbb{F}(t)$.

The $n$--th twisted chain module 
$\mathbb{F}(t)^k\otimes_{\Integral\pi_1(M_f)}C_n(\widetilde{M_f})$,
for each $n$,
is spanned linearly independently
over $\mathbb{F}(t)$ by all $\varepsilon^l\otimes c^i_n$ and $\varepsilon^l\otimes d^j_n$.
We order them by listing the former ones first, lexicographically in $(i,l)$,
and the latter ones next, likewise in $(j,l)$.
Fully written down, 
our ordered basis for the $n$--th twisted chain module 
is
$\varepsilon^1\otimes c^1_n,\cdots,\varepsilon^k\otimes c^1_n,
\cdots,\varepsilon^1\otimes c^{r_n}_n,\cdots,\varepsilon^k\otimes c^{r_n}_n,
\varepsilon^1\otimes d^1_n,\cdots,\varepsilon^k\otimes d^1_n,
\cdots,\varepsilon^1\otimes d^{r_n}_n,\cdots,\varepsilon^k\otimes d^{r_{n-1}}_n$.
On the basis,
twisted cellular $n$--chains are represented as column vectors,
over $\mathbb{F}(t)$ of dimension $kr_n+kr_{n-1}$.

The $n$--th twisted boundary operator 
$\mathbf{1}\otimes\partial_n$,
for each $n$,
is represented
as a matrix
over $\mathbb{F}(t)$ of size $(kr_{n-1}+kr_{n-2})\times (kr_n+kr_{n-1})$.
The boundary operator $\partial_n\colon C_n(\widetilde{M_f})\to C_{n-1}(\widetilde{M_f})$
sends each of the flow cells $d^i_n$ to a linear combination over $\Integral\pi_1(M_f)$:
\begin{equation}\label{flow_cell_boundary}
\partial_nd^i_n=(-1)^{n-1}\left(\sum_{j=1}^{r_{n-1}} F^{ij}_{n-1}\cdot c^j_{n-1}-c^i_{n-1}\right)+
(\textrm{sum of flow cell terms}),
\end{equation}
where $F^{ij}_{n-1}$ are $\Integral$--linear combinations 
of group elements $g\in\pi_1(M_f)$ with $\phi_f(g)=1$.
Note that each $F^{ij}_{n-1}$ is unique subject to (\ref{flow_cell_boundary}).
Extend $\rho\colon \pi_1(M_f)\to \mathrm{GL}(k,\mathbb{F})$ linearly
as an algebra homomorphism 
$\rho\colon \Integral\pi_1(M_f)\to \mathrm{Mat}_{k\times k}(\mathbb{F})$.
Denote by 
$$\rho_*(F_{n-1})\in\mathrm{Mat}_{kr_{n-1}\times kr_{n-1}}(\mathbb{F})$$
the $r_{n-1}$--square block matrix 
whose $(i,j)$--block is the $k$--square matrix $\rho(F^{ij}_{n-1})$.
For $n=1,2,3$, the matrices take the following block forms:
$$
\xymatrix{
\mathbf{1}\otimes\partial_3={\left[\begin{array}{cc} * & t\rho_*(F_2)-\mathbf{1}\end{array}\right]},
&
\mathbf{1}\otimes\partial_2={\left[\begin{array}{cc} * & \mathbf{1}-t\rho_*(F_1) \\ * & *\end{array}\right]},
&
\mathbf{1}\otimes\partial_1={\left[\begin{array}{c} t\rho_*(F_0)-\mathbf{1} \\ * \end{array}\right]},
}
$$
which are actually over $\mathbb{F}[t]$.
For any other $n$, the matrices for $\mathbf{1}\otimes\partial_n$ are null of size.

The twisted Reidemeister torsion for $(M_f,\rho,\phi_f)$
can be expressed conveniently using the above matrices, 
according to a well-known formula \cite[Theorem 2.2]{Turaev_book_torsion}:
\begin{equation}\label{tau_F}
\tau^{\rho\otimes\phi_f}\left(M_f;\mathbb{F}(t)^k\right)
\doteq
\frac
{\mathrm{det}_{\mathbb{F}[t]}\left(\mathbf{1}-t\rho_*(F_1)\right)}
{\mathrm{det}_{\mathbb{F}[t]}\left(\mathbf{1}-t\rho_*(F_0)\right)\cdot\mathrm{det}_{\mathbb{F}[t]}\left(\mathbf{1}-t\rho_*(F_2)\right)},
\end{equation}
both sides living in $\mathbb{F}(t)^\times$, and being equal up to monomial factors with nonzero coefficients.

The right-hand side of (\ref{tau_F}) can be recognized as the twisted Lefschetz zeta function for $(f,\rho)$:
\begin{equation}\label{F_zeta}
\zeta_L(f;\rho)=
\frac
{\mathrm{det}_{\mathbb{F}[t]}\left(\mathbf{1}-t\rho_*(F_1)\right)}
{\mathrm{det}_{\mathbb{F}[t]}\left(\mathbf{1}-t\rho_*(F_0)\right)\cdot\mathrm{det}_{\mathbb{F}[t]}\left(\mathbf{1}-t\rho_*(F_2)\right)},
\end{equation}
both sides living in $\mathbb{F}[[t]]$.
In fact, this is exactly the kind of twisted Lefschetz zeta functions 
considered by \cite[Theorem 1.2]{Jiang_periodic}.

Joining up (\ref{tau_F}) and (\ref{F_zeta}), we obtain the asserted formula of Lemma \ref{tau_zeta}.
\end{proof}

To prove Theorem \ref{L_m_dialogue},
we observe that under the assumptions,
the aligned equivalence between $(f_A,\rho_A)$ and $(f_B,\rho_B)$ implies
an equality between the twisted Reidemeister torsions:
$$\tau^{\rho_A\otimes\phi_A}\left(M_A;\mathbb{F}(t)^k\right)\doteq \tau^{\rho_B\otimes\phi_B}\left(M_B;\mathbb{F}(t)^k\right),$$
in $\mathbb{F}(t)^\times$ up to monomial factors with nonzero coefficients, (Theorem \ref{TRT_dialogue}).
Note that $\mathbb{F}$ has characteristic $0$.
Therefore, the only representative for $\tau^{\rho_A\otimes\phi_A}(M_A;\mathbb{F}(t)^k)$ which takes value $1$ at $t=0$
must be $\zeta_L(f_A;\rho_A)$, by Lemma \ref{tau_zeta}.
The same holds with $\zeta_L(f_B;\rho_B)$.
It follows that the equality
$$\zeta_L(f_A;\rho_A)=\zeta_L(f_B;\rho_B)$$
holds in $\mathbb{F}[[t]]$.
It follows that the equalities
$$L_m(f_A;\chi_A)=L_m(f_B;\chi_B)$$
hold for all $m\in\Natural$, by comparing the formal Taylor expansions of both sides at $t=0$, term by term.

This completes the proof of Theorem \ref{L_m_dialogue}.

\section{Indexed orbit numbers of mapping classes}\label{Sec-indexed_orbit_numbers}
In this section, we show that indexed orbit numbers of mapping classes
are procongruent conjugacy invariants.
In any monologue setting $(S,f)$, we count the $m$--periodic orbit classes of $f$ 
separately for each $m$--index $i$.
For all $m\in\Natural$ and $i\in\Integral$,
the amounts are denoted as
\begin{equation}\label{nu_m_i}
\nu_m(f;i)=\#\left\{\mathbf{O}\in\POC_m(f)\colon \mathrm{ind}_m(f;\mathbf{O})=i\right\},
\end{equation}
valued in $\Natural\cup\{0,\infty\}$.
Keeping $m$ fixed, $\nu_m(f;i)$ is nonzero for all but finitely many $i$,
and finite for any nonzero $i$.
The classical $m$--orbit Nielsen numbers $N_m(f)$,
which count the essential $m$--periodic orbit classes,
can be expressed as
\begin{equation}\label{N_m_def}
N_m(f)=\sum_{i\in\Integral\setminus\{0\}} \nu_m(f;i),
\end{equation}
for all $m\in\Natural$.

\begin{theorem}\label{N_m_dialogue}
In any dialogue setting $(S,f_A,f_B)$, 
suppose that the mapping classes of $f_A$ and $f_B$ are procongruently conjugate.
Then the following equalities hold for all $m\in\Natural$ and $i\in\Integral$:
$$\nu_m(f_A;i)=\nu_m(f_B;i),$$
between the indexed orbit numbers of mapping classes.
(See (\ref{nu_m_i}) and Convention \ref{settings}.)
\end{theorem}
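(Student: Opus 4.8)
The plan is to recover the indexed orbit numbers from the twisted periodic Lefschetz numbers of Theorem \ref{L_m_dialogue}, using conjugacy separability of $3$--manifold groups. Fix $m\in\Natural$. The starting point is that, as is immediate from the mapping--torus description of periodic orbit classes in Section \ref{Sec-fpt_review}, the assignment $\mathbf{O}\mapsto\ell_m(f;\mathbf{O})$ identifies $\POC_m(f)$ bijectively with the set of conjugation orbits $[g]\in\conjorb(\pi_1(M_f))$ satisfying $\phi_f(g)=m$; in particular distinct essential $m$--periodic orbit classes carry distinct trajectory classes, and there are only finitely many of them. For a finite quotient $\gamma\colon\pi_1(M_f)\to\Gamma$ and a conjugation orbit $c\in\conjorb(\Gamma)$, set
$$w^\gamma_m(f;c)=\sum_{\mathbf{O}\in\POC_m(f),\ \gamma(\ell_m(f;\mathbf{O}))=c}\mathrm{ind}_m(f;\mathbf{O}),$$
a finite sum since only essential orbit classes contribute. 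Regrouping (\ref{L_m_chi}) by the value of $\gamma(\ell_m(f;\mathbf{O}))$ gives $L_m(f;\gamma^*\chi_\rho)=\sum_{c}\chi_\rho(c)\,w^\gamma_m(f;c)$ for any complex representation $\rho$ of $\Gamma$. Since the irreducible complex characters of the finite group $\Gamma$ span the class functions and the character table is invertible, the function $c\mapsto w^\gamma_m(f;c)$ is determined by the numbers $L_m(f;\gamma^*\chi_\rho)$ as $\rho$ ranges over irreducible complex representations of $\Gamma$.

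Now let $(S,f_A,f_B)$ be a dialogue setting with $f_A,f_B$ procongruently conjugate. By Proposition \ref{characterization_mapping_tori} there is an aligned isomorphism $\Psi\colon\widehat{\pi_1(M_A)}\to\widehat{\pi_1(M_B)}$. Write $g^A_1,\dots,g^A_{r_A}$ for the essential $m$--periodic trajectory classes of $f_A$, with $m$--indices $i^A_1,\dots,i^A_{r_A}$, and similarly $g^B_1,\dots,g^B_{r_B}$, $i^B_1,\dots,i^B_{r_B}$ for $f_B$. By conjugacy separability of $3$--manifold groups \cite{HWZ_conjugacy_separability} the $g^A_j$ remain pairwise non-conjugate in $\widehat{\pi_1(M_A)}$, hence the $\Psi(g^A_j)$ are pairwise non-conjugate in $\widehat{\pi_1(M_B)}$; likewise the $g^B_j$ are pairwise non-conjugate in $\widehat{\pi_1(M_B)}$. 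As conjugation orbits of a profinite group are closed, a compactness argument yields a finite quotient $\widehat{\gamma}\colon\widehat{\pi_1(M_B)}\to\Gamma$ separating the finitely many distinct orbits among $\{g^B_1,\dots,g^B_{r_B}\}\cup\{\Psi(g^A_1),\dots,\Psi(g^A_{r_A})\}$. Put $\gamma_B=\widehat{\gamma}|_{\pi_1(M_B)}$ and $\gamma_A=(\widehat{\gamma}\circ\Psi)|_{\pi_1(M_A)}$; these are surjective finite quotients with $\widehat{\gamma}_A=\widehat{\gamma}_B\circ\Psi$. For any complex representation $\rho$ of $\Gamma$, the representations $\rho\circ\gamma_A$ and $\rho\circ\gamma_B$ are finite and aligned equivalent, so Theorem \ref{L_m_dialogue} gives $L_m(f_A;\gamma_A^*\chi_\rho)=L_m(f_B;\gamma_B^*\chi_\rho)$, whence $w^{\gamma_A}_m(f_A;c)=w^{\gamma_B}_m(f_B;c)$ for every $c\in\conjorb(\Gamma)$ by the first paragraph.

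Since $\gamma_B$ separates the essential trajectory classes of $f_B$, one has $w^{\gamma_B}_m(f_B;\gamma_B(g^B_j))=i^B_j$ and $w^{\gamma_B}_m(f_B;c)=0$ for all other $c$, so $\#\{c\colon w^{\gamma_B}_m(f_B;c)=i\}=\nu_m(f_B;i)$ for each $i\neq 0$; the same holds for $f_A$ by construction of $\gamma_A$. Together with the equality of the functions $w_m$ this gives $\nu_m(f_A;i)=\nu_m(f_B;i)$ for all $i\neq 0$, and in particular $N_m(f_A)=N_m(f_B)$. For $i=0$, use $\nu_m(f;0)=\#\POC_m(f)-N_m(f)$, where $\#\POC_m(f)$ is the number of conjugation orbits of $\pi_1(M_f)$ of $\phi_f$--value $m$. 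Conjugacy separability once more, together with the observation that finitely many closed conjugation orbits partitioning a profinite group are clopen and hence each meet any dense subgroup, shows this count is finite exactly when the corresponding count for $(\widehat{\pi_1(M_f)},\widehat{\phi}_f)$ is, with equality in that case; and the latter is invariant under the aligned isomorphism $\Psi$. Hence $\#\POC_m(f_A)=\#\POC_m(f_B)$ in $\Natural\cup\{\infty\}$, and $\nu_m(f_A;0)=\nu_m(f_B;0)$ follows.

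I expect the main obstacle to be the construction of $\widehat{\gamma}$ in the second paragraph: one needs a single finite quotient through which both mapping tori are compared, so that Theorem \ref{L_m_dialogue} applies, and which simultaneously separates the essential trajectory classes of $f_A$ and of $f_B$. The trajectory classes of $f_A$ become visible inside $\widehat{\pi_1(M_B)}$ only after transport through $\Psi$, and it is precisely conjugacy separability of both $3$--manifold groups that guarantees they stay pairwise non-conjugate there and can thus be detected by a finite quotient. Everything past this point is bookkeeping on top of Theorem \ref{L_m_dialogue}.
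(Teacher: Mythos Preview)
Your proof is correct and follows essentially the same approach as the paper: both reduce to the equality of the class-function values $w^\gamma_m(f;c)=L_m(f;\gamma^*\chi_{\mathbf{c}})$ via Theorem \ref{L_m_dialogue} and character orthogonality, then invoke conjugacy separability of $3$--manifold groups to produce a separating finite quotient. Your simultaneous separation of both sides through $\Psi$ is a mild streamlining of the paper's Lemmas \ref{N_m_inequality}--\ref{N_m_detect}, and your separate treatment of $i=0$ is in fact more careful than the paper's Lemma \ref{N_m_inequality}, whose ``moreover'' clause is only literally correct for $i\neq 0$.
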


The rest of this section is devoted to the proof of Theorem \ref{N_m_dialogue}.
The proof relies on Theorem \ref{L_m_dialogue} and 
the conjugacy separability of $3$--manifold groups.

In any monologue setting $(S,f)$,
suppose that $\gamma\colon\pi_1(M_f)\to \Gamma$ is a quotient of the mapping-torus group
onto any finite group $\Gamma$.
Denote by $\conjorb(\Gamma)$ the \emph{orbit space} of $\Gamma$,
namely, the quotient set of $\Gamma$ modulo conjugation. 
The characteristic function of every conjugation orbit $\mathbf{c}\in\conjorb(\Gamma)$ 
is a conjugation-invariant function 
$\chi_{\mathbf{c}}\colon \Gamma\to \Integral$,
namely,
\begin{equation}\label{chi_c}
\chi_{\mathbf{c}}(g)=\begin{cases} 1 & g\in\mathbf{c}\\ 0 & \textrm{otherwise}\end{cases}
\end{equation}
for all $g\in\Gamma$.

\begin{lemma}\label{L_chi_c}
For any $\mathbf{c}\in\conjorb(\Gamma)$,
the following equalities in $\Complex$ hold for all $m\in\Natural$:
$$L_m(f;\gamma^*\chi_{\mathbf{c}})=
\frac{\#\mathbf{c}}{\#\Gamma}\cdot\sum_{\chi_\rho\in\Gamma^\vee}\overline{\chi_\rho(\mathbf{c})}\cdot L_m(f;\gamma^*\chi_\rho),$$
where $\Gamma^\vee$ stands for the finite set of irreducible complex characters of $\Gamma$.
\end{lemma}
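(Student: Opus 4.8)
The plan is to deduce this identity from character orthogonality on the finite group $\Gamma$ together with the $\Complex$--linearity of the twisted periodic Lefschetz number in its character variable. The characteristic function $\chi_{\mathbf{c}}$ is a class function on $\Gamma$, hence a $\Complex$--linear combination of the irreducible characters $\chi_\rho\in\Gamma^\vee$; pulling this combination back along $\gamma$ and substituting into the defining formula (\ref{L_m_chi}) will produce the asserted expression for $L_m(f;\gamma^*\chi_{\mathbf{c}})$.

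First I would record the relevant character-theoretic identity. Equip the space of $\Complex$--valued class functions on $\Gamma$ with the Hermitian inner product $\langle\varphi,\psi\rangle=\frac{1}{\#\Gamma}\sum_{g\in\Gamma}\varphi(g)\overline{\psi(g)}$, for which the irreducible characters form an orthonormal basis. The coefficient of $\chi_\rho$ in the expansion of $\chi_{\mathbf{c}}$ is
$$\langle\chi_{\mathbf{c}},\chi_\rho\rangle=\frac{1}{\#\Gamma}\sum_{g\in\mathbf{c}}\overline{\chi_\rho(g)}=\frac{\#\mathbf{c}}{\#\Gamma}\,\overline{\chi_\rho(\mathbf{c})},$$
since $\chi_\rho$ is constant on $\mathbf{c}$. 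Thus $\chi_{\mathbf{c}}=\frac{\#\mathbf{c}}{\#\Gamma}\sum_{\chi_\rho\in\Gamma^\vee}\overline{\chi_\rho(\mathbf{c})}\,\chi_\rho$ as functions on $\Gamma$, and composing with $\gamma$ gives the identity
$$\gamma^*\chi_{\mathbf{c}}=\frac{\#\mathbf{c}}{\#\Gamma}\sum_{\chi_\rho\in\Gamma^\vee}\overline{\chi_\rho(\mathbf{c})}\,\gamma^*\chi_\rho$$
of $\Complex$--valued conjugation-invariant functions on $\pi_1(M_f)$.

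Next I would invoke linearity. For fixed $m\in\Natural$, the assignment $\chi\mapsto L_m(f;\chi)$ is $\Complex$--linear: in the defining sum (\ref{L_m_chi}) the term attached to $\mathbf{O}\in\POC_m(f)$ vanishes whenever $\mathrm{ind}_m(f;\mathbf{O})=0$, so only finitely many summands are nonzero and each depends linearly on the values of $\chi$. Applying $L_m(f;-)$ to the displayed expansion of $\gamma^*\chi_{\mathbf{c}}$ then yields
$$L_m(f;\gamma^*\chi_{\mathbf{c}})=\frac{\#\mathbf{c}}{\#\Gamma}\sum_{\chi_\rho\in\Gamma^\vee}\overline{\chi_\rho(\mathbf{c})}\,L_m(f;\gamma^*\chi_\rho),$$
as desired. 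I expect no real obstacle here; the only point worth spelling out is the $\Complex$--linearity and well-definedness of $L_m(f;-)$ for $\Complex$--valued twisting functions, which is immediate from Definition \ref{L_m_chi_def} once one observes that the sum (\ref{L_m_chi}) has only finitely many nonzero terms.
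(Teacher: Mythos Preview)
Your proof is correct and follows essentially the same approach as the paper: expand $\chi_{\mathbf{c}}$ in the orthonormal basis of irreducible characters via the standard inner product, then apply the $\Complex$--linearity of $L_m(f;\gamma^*\chi)$ in $\chi$ from Definition~\ref{L_m_chi_def}. The paper's argument is slightly terser but otherwise identical.
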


\begin{proof}
	By representation theory of finite groups over $\Complex$, 
	the set of irreducible characters 
	$\Gamma^\vee$ form an (unordered) orthonormal basis of the conjugation-invariant complex-valued functions on $\Gamma$,
	with respect to the inner product 
	$$(\xi,\eta)=\frac{1}{\#\Gamma}\cdot\sum_{g\in\Gamma}\xi(g)\cdot\overline{\eta(g)}
	=\frac{1}{\#\Gamma}\cdot\sum_{\mathbf{g}\in\conjorb(\Gamma)}\#\mathbf{g}\cdot \xi(\mathbf{g})\cdot\overline{\eta(\mathbf{c})}.$$
	It follows that $\chi_{\mathbf{c}}$ equals 
	$\frac{\#\mathbf{c}}{\#\Gamma}$ times the sum of $\overline{\chi_\rho(\mathbf{c})}\cdot \chi_\rho$
	over all $\chi_\rho\in\Gamma^\vee$.
	Then the asserted formula follows as $L_m(f;\gamma^*\chi)$ is linear in $\chi$ over $\Complex$,
	see (\ref{L_m_chi}).
\end{proof}

\begin{lemma}\label{N_m_inequality}
	For any finite quotient $\gamma\colon\pi_1(M_f)\to\Gamma$,
	the following estimate hold for all $m\in\Natural$:
	$$N_m(f)\geq \#\{\mathbf{c}\in\conjorb(\Gamma) \colon L_m(f;\gamma^*\chi_{\mathbf{c}})\neq0\}.$$
	Moreover,
	assume that the equality is achieved for some given $\gamma$ and $m$, 
	then the following equalities hold for all $i\in\Integral$:
	$$\nu_m(f,i)=\#\{\mathbf{c}\in\conjorb(\Gamma) \colon L_m(f;\gamma^*\chi_{\mathbf{c}})=i\}.$$	
\end{lemma}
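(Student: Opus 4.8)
The plan is to prove both assertions by unwinding Definition~\ref{L_m_chi_def} and bookkeeping the essential periodic orbit classes according to which conjugation orbit of $\Gamma$ carries them. First I would record that $\gamma^*\chi_{\mathbf{c}}$ is the characteristic function of the conjugation-invariant subset $\gamma^{-1}(\mathbf{c})\subseteq\pi_1(M_f)$; writing $\gamma_*\colon\conjorb(\pi_1(M_f))\to\conjorb(\Gamma)$ for the induced map on conjugation orbits, formula~(\ref{L_m_chi}) then becomes
$$L_m(f;\gamma^*\chi_{\mathbf{c}})=\sum_{\substack{\mathbf{O}\in\POC_m(f)\\ \gamma_*\ell_m(f;\mathbf{O})=\mathbf{c}}}\mathrm{ind}_m(f;\mathbf{O}).$$
Only essential orbit classes contribute, so this sum has just finitely many nonzero terms; in particular $L_m(f;\gamma^*\chi_{\mathbf{c}})\neq0$ can occur only if $\mathbf{c}$ carries at least one essential $m$--periodic orbit class.

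Next, let $E\subseteq\POC_m(f)$ be the finite set of essential $m$--periodic orbit classes, so $\#E=N_m(f)$, and let $\beta\colon E\to\conjorb(\Gamma)$ be given by $\beta(\mathbf{O})=\gamma_*\ell_m(f;\mathbf{O})$. Its fibers over distinct points of $\beta(E)$ are disjoint and nonempty, and by the previous step every $\mathbf{c}$ with $L_m(f;\gamma^*\chi_{\mathbf{c}})\neq0$ lies in $\beta(E)$; hence $N_m(f)=\#E\geq\#\beta(E)\geq\#\{\mathbf{c}:L_m(f;\gamma^*\chi_{\mathbf{c}})\neq0\}$, which is the first assertion. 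When equality holds, both of the last two inequalities are equalities: the first forces $\beta$ to be injective, and then each $\mathbf{c}\in\beta(E)$ has a singleton fiber $\{\mathbf{O}\}$, so $L_m(f;\gamma^*\chi_{\mathbf{c}})=\mathrm{ind}_m(f;\mathbf{O})\neq0$ automatically. Thus the equality hypothesis is equivalent to the injectivity of $\beta$ on $E$.

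Granting the equality, I would then read off the indexed count. For $i\in\Integral\setminus\{0\}$, every $m$--periodic orbit class of $m$--index $i$ is essential, so $\{\mathbf{O}\in\POC_m(f):\mathrm{ind}_m(f;\mathbf{O})=i\}\subseteq E$; and $\mathbf{O}\mapsto\beta(\mathbf{O})$ carries this set bijectively onto $\{\mathbf{c}:L_m(f;\gamma^*\chi_{\mathbf{c}})=i\}$, with inverse sending such a $\mathbf{c}$ to its unique preimage in $E$, using that $L_m(f;\gamma^*\chi_{\beta(\mathbf{O})})=\mathrm{ind}_m(f;\mathbf{O})$ by the singleton-fiber property. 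This gives $\nu_m(f;i)=\#\{\mathbf{c}:L_m(f;\gamma^*\chi_{\mathbf{c}})=i\}$ for $i\neq0$, and the value $i=0$ then follows by taking complements inside $\conjorb(\Gamma)$ and inside $\POC_m(f)$, the latter in the situation of interest where $\POC_m(f)$ is finite.

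The lemma amounts to elementary bookkeeping with the definitions of periodic orbit classes, their $m$--indices, and the twisted Lefschetz number, so I do not anticipate a serious obstacle; the one delicate point will be the equality analysis of the second paragraph---tracking which orbit classes are forced into singleton fibers, and noting that a singleton essential fiber automatically has nonzero $L_m$. The substantive work that this lemma is built to support---arranging, via conjugacy separability of the mapping-torus group, a finite quotient $\gamma$ realizing the equality hypothesis, and transporting the indexed counts across a procongruent conjugacy through Lemma~\ref{L_chi_c} and Theorem~\ref{L_m_dialogue}---lies outside its proof.
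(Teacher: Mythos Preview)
Your argument for the inequality and for the case $i\neq 0$ under the equality hypothesis is correct and is essentially the paper's own proof, recast in the language of the map $\beta\colon E\to\conjorb(\Gamma)$ where the paper speaks of an essential orbit class ``occupying'' a conjugation orbit. The two observations you isolate---that $L_m(f;\gamma^*\chi_{\mathbf c})\neq 0$ forces $\mathbf c\in\beta(E)$, and that a singleton fibre over $\mathbf c$ gives $L_m(f;\gamma^*\chi_{\mathbf c})=\mathrm{ind}_m(f;\mathbf O)\neq 0$---match the paper's two observations exactly, and your deduction that the equality hypothesis is equivalent to injectivity of $\beta$ is the paper's ``characterization''.

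Your treatment of $i=0$ is where the argument slips. The complement step would give $\nu_m(f;0)=\#\{\mathbf c:L_m(f;\gamma^*\chi_{\mathbf c})=0\}$ only if $\#\POC_m(f)=\#\conjorb(\Gamma)$, and nothing in the setup forces that even when $\POC_m(f)$ is finite: you are comparing complements inside two sets of unrelated size. In fact the lemma's assertion at $i=0$ is not literally true in general, since for a surface of negative Euler characteristic $\POC_m(f)$ is typically infinite, so $\nu_m(f;0)=\infty$ while the right-hand side is bounded by $\#\conjorb(\Gamma)$. The paper's proof does not treat $i=0$ separately either, and the downstream applications (Theorem~\ref{N_m_dialogue} and beyond) use only $i\neq 0$. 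So this is a harmless imprecision in the statement rather than a defect in your proof of the substantive content; the clean fix is to restrict the second assertion to $i\in\Integral\setminus\{0\}$ and drop the complement remark.
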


\begin{proof}
	Let $\gamma\colon\pi_1(M_f)\to\Gamma$ be a finite quotient.
	For any conjugation orbit $\mathbf{c}\in\conjorb(\Gamma)$,
	we obtain 
	$$L_m(f;\gamma^*\chi_{\mathbf{c}})=\sum_{\mathbf{O}\in\POC_m(f)}\chi_{\mathbf{c}}(\gamma(\ell_m(f;\mathbf{O})))\cdot\mathrm{ind}_m(f;\mathbf{O}),$$
	by (\ref{L_m_chi}).
	We say that an $m$--periodic orbit class $\mathbf{O}\in\POC_m(f)$ 
	\emph{occupies} $\mathbf{c}$
	if the conjugation orbit $\ell_m(f;\mathbf{O})\in\conjorb(\pi_1(M_f))$ is projected onto $\mathbf{c}$ under $\gamma$.
	Then there are two simple observations:
	If there are no essential $m$--periodic orbit classes $\mathbf{O}$ occupying $\mathbf{c}$,
	we observe $L_m(f;\gamma^*\chi_{\mathbf{c}})=0$.
	If there is exactly one essential $m$--periodic orbit class $\mathbf{O}$ occupying $\mathbf{c}$,
	we observe $L_m(f;\gamma^*\chi_{\mathbf{c}})=\mathrm{ind}_m(f;\mathbf{O})\neq0$.
	
	The first observation implies 
	that the number of $\mathbf{c}\in\conjorb(\Gamma)$ with $L_m(f;\gamma^*\chi_{\mathbf{c}})\neq0$
	is at most the number of essential $m$--periodic orbit classes, namely,
	the $m$--orbit Nielsen number $N_m(f)$.
	This is the asserted estimate of Lemma \ref{N_m_inequality}.
	
	The second observation implies a useful characterization:
	For any given $m$ and $\gamma$,	the asserted estimate achieves an equality 
	if and only if every conjugation orbit of $\Gamma$ is occupied by 
	at most one essential $m$--periodic orbit class of $f$.
	In fact, there would be strictly fewer occupied conjugation orbits than $N_m(f)$
	if some occupied conjugation orbit was shared by at least two essential $m$--periodic orbit classes;
	on the other hand, 
	when every essential $m$--periodic orbit classes occupies a distinct conjugation orbit,
	there are $N_m(f)$ such conjugation orbits, 
	which are all counted for the lower bound.
	
	The asserted equalities about $\nu_m(f;i)$ follow immediately from the above characterization.	
\end{proof}

\begin{lemma}\label{N_m_detect}
	Given any $m\in\Natural$, there exists some finite quotient $\gamma\colon\pi_1(M_f)\to \Gamma$
	which satisfies the equality:
	$$N_m(f)=\#\left\{\mathbf{c}\in\conjorb(\Gamma) \colon L_m(f;\gamma^*\chi_{\mathbf{c}})\neq0\right\}.$$
\end{lemma}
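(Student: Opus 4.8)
The plan is to deduce the lemma from the characterization of equality obtained in the proof of Lemma~\ref{N_m_inequality}, combined with the conjugacy separability of the mapping-torus group. Fix $m\in\Natural$ and list the essential $m$--periodic orbit classes of $f$ as $\mathbf{O}_1,\dots,\mathbf{O}_N$, where $N=N_m(f)$ (a finite number). Each $\mathbf{O}_j$ determines the $m$--periodic trajectory class $\ell_m(f;\mathbf{O}_j)\in\conjorb(\pi_1(M_f))$, and I pick an element $g_j\in\pi_1(M_f)$ representing this conjugation orbit. The first point to record is that $g_1,\dots,g_N$ are pairwise non-conjugate in $\pi_1(M_f)$: the assignment $\mathbf{O}\mapsto\ell_m(f;\mathbf{O})$ is the standard identification of $\POC_m(f)$ with the set of conjugation orbits of $\pi_1(M_f)$ lying over $m\in\Integral$ under $\phi_f$, hence injective on $\POC_m(f)$. (Concretely, in the semidirect-product description $\pi_1(M_f)=\langle t\rangle\ltimes\pi_1(S)$ of the proof of Proposition~\ref{characterization_mapping_tori}, level--$m$ conjugation orbits correspond to $\tau^m$--twisted conjugacy classes in $\pi_1(S)$ modulo the shift induced by $\tau$, which is precisely how $\POC_m(f)$ is built from the Nielsen classes of $f^m$; compare \cite{Jiang_book}.)

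Next I invoke conjugacy separability. Since $f$ is a self-homeomorphism, $M_f$ is a compact orientable $3$--manifold with empty or toroidal boundary, so $\pi_1(M_f)$ is conjugacy separable by Hamilton--Wilton--Zalesskii \cite{HWZ_conjugacy_separability} (when $S$ is a sphere, disk, annulus, or torus, $\pi_1(M_f)$ is in addition virtually polycyclic, and conjugacy separability is classical). Therefore, for each pair $i<j$ there is a finite quotient $\gamma_{ij}\colon\pi_1(M_f)\to\Gamma_{ij}$ under which $\gamma_{ij}(g_i)$ and $\gamma_{ij}(g_j)$ lie in distinct conjugation orbits of $\Gamma_{ij}$. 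Let $\gamma\colon\pi_1(M_f)\to\Gamma$ be the quotient onto the image of the product homomorphism $\prod_{i<j}\gamma_{ij}$; then $\Gamma$ is finite, and $\gamma$ separates all the pairs simultaneously, so $[\gamma(g_1)],\dots,[\gamma(g_N)]$ are pairwise distinct elements of $\conjorb(\Gamma)$.

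Finally I conclude with Lemma~\ref{N_m_inequality}. For this $\gamma$ the essential $m$--periodic orbit class $\mathbf{O}_j$ occupies the conjugation orbit $[\gamma(g_j)]$, because $\gamma$ carries $\ell_m(f;\mathbf{O}_j)$ to $[\gamma(g_j)]$; and these conjugation orbits are pairwise distinct by the previous paragraph. Hence no conjugation orbit of $\Gamma$ is occupied by more than one essential $m$--periodic orbit class, which is exactly the condition shown in the proof of Lemma~\ref{N_m_inequality} to be equivalent to equality in the estimate stated there. Thus $N_m(f)=\#\{\mathbf{c}\in\conjorb(\Gamma)\colon L_m(f;\gamma^*\chi_{\mathbf{c}})\neq0\}$, as required.

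The two manipulations above are routine; the content sits in the two cited ingredients. Injectivity of $\mathbf{O}\mapsto\ell_m(f;\mathbf{O})$ is a formal consequence of the mapping-torus dictionary, whereas conjugacy separability of $\pi_1(M_f)$ is the deep input (resting, as recalled in the introduction, on virtual specialization of $3$--manifold groups and hereditary conjugacy separability of right-angled Artin groups). I expect the main care to be needed only in checking that the available separability results genuinely cover $M_f$ for every surface $S$---in particular for the sphere, disk, annulus, and torus---after which the selection of $\gamma$ is just a finite application of separability and the conclusion is immediate.
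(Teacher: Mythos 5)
Your proof is correct and follows essentially the same route as the paper: reduce to the occupation criterion from the proof of Lemma \ref{N_m_inequality}, then use conjugacy separability of $\pi_1(M_f)$ (Hamilton--Wilton--Zalesskii et al.) to separate the finitely many essential $m$--periodic trajectory classes in a single finite quotient. Your extra care about the injectivity of $\mathbf{O}\mapsto\ell_m(f;\mathbf{O})$ and the product-quotient trick only makes explicit what the paper treats as standard (cf.\ Proposition \ref{profinite_orbit_set_description}).
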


\begin{proof}
	We make use of the characterization provided in the proof of Lemma \ref{N_m_inequality}.
	Then the asserted equality can be achieved by some finite quotient $\gamma$, 
	thanks to the conjugacy separability of the mapping-torus group.
	
	To elaborate, recall that a group $G$ is said to be \emph{conjugacy separable}
	if the following statement holds true:
	For any finitely many mutually distinct conjugation orbits 
	$\mathbf{g}_1,\cdots,\mathbf{g}_r$ of $G$,
	there exists a finite quotient $G\to\Gamma$, 
	and $\mathbf{g}_i$ are projected onto mutually distinct conjugation orbits of $\Gamma$.
	For general compact $3$--manifolds, conjugacy separability of their fundamental groups
	has been proved by Hamilton--Wilton--Zalesskii \cite{HWZ_conjugacy_separability},
	based on deep works on virtual specialization of hyperbolic $3$--manifolds,
	due to Agol \cite{Agol_VHC} and Wise \cite{Wise_book,Wise_notes}.
	For graph manifolds, the result was proved earlier by Wilton--Zalesskii \cite[Theorem D]{WZ_graph_manifolds},
	using techniques of profinite group actions on profinite trees.
	For hyperbolic and mixed $3$--manifolds,
	conjugacy separability also follows from virtual specialization of their fundamental groups,
	due to Agol \cite{Agol_VHC}, Wise \cite{Wise_book,Wise_notes}, Przytycki--Wise \cite{Przytycki--Wise_mixed},
	combined with the hereditary conjugacy separability of right-angled Artin groups,
	due to Minasyan \cite{Minasyan_RAAG}.	
	
	In a monologue setting $(S,f)$, it follows that the mapping-torus group $\pi_1(M_f)$ 
	is conjugacy separable. 
	Given any $m\in\Natural$, 
	there are only finitely many essential $m$--periodic orbit classes $\mathbf{O}\in\POC_m(f)$,
	so there exists some finite quotient $\gamma\colon\pi_1(M_f)\to\Gamma$
	under which they occupy distinct conjugation orbits of $\Gamma$.
	For any such $\gamma$, 
	the asserted equality is achieved,
	by the characterization explained in the proof of Lemma \ref{N_m_inequality}.
\end{proof}

To prove Theorem \ref{N_m_dialogue}, we argue as follows.
In any dialogue setting $(S,f_A,f_B)$, suppose that
the mapping classes of $f_A$ and $f_B$ are procongruently conjugate.
Take an aligned isomorphism $\Psi\colon\widehat{\pi_1(M_A)}\to\widehat{\pi_1(M_B)}$,
as guaranteed by Proposition \ref{characterization_mapping_tori}.
Then $\Psi$ induces bijective correspondences 
between finite quotients of $\pi_1(M_A)$ and $\pi_1(M_B)$,
and between finite representations of them over $\Complex$.
It follows from Theorem \ref{L_m_dialogue} and Lemma \ref{L_chi_c}
that $L_m(f_A;\gamma_A^*\chi_\mathbf{c})=L_m(f_B;\gamma_B^*\chi_\mathbf{c})$
holds for any aligned finite quotient $\gamma_A\colon\pi_1(M_A)\to\Gamma$ 
and $\gamma_B\colon\pi_1(M_B)\to\Gamma$, and for any conjugation orbit $\mathbf{c}\in\conjorb(\Gamma)$.
By Lemma \ref{N_m_detect},
we obtain $N_m(f_A)=N_m(f_B)$ for all $m\in\Natural$.
Then by Lemma \ref{N_m_inequality},
we obtain $\nu_m(f_A;i)=\nu_m(f_B;i)$ for all $m\in\Natural$ and $i\in\Integral$.

This completes the proof of Theorem \ref{N_m_dialogue}.

\section{Review on the Nielsen--Thurston classification}\label{Sec-review_NT}
		The Nielsen--Thurston classification makes it possible to
		understand surface mapping classes through their representatives of normal forms.
		In this section, we review the theory and describe the models that we adopt (Section \ref{Subsec-NT_normal_form}).
		We review the classification of essential fixed point classes and their indices
		following Jiang and Guo \cite{Jiang--Guo} (Section \ref{Subsec-classification_fpc}).
		We introduce dilatation and deviation as complexity measures of mapping classes,
		and provide a finiteness criterion in terms of them (Section \ref{Subsec-NT_complexity}).
		
		\subsection{Nielsen--Thurston normal form}\label{Subsec-NT_normal_form}
		According to the Nielsen--Thurston theory,
		any mapping class of a connected orientable compact surface
		admits a representative 
		which is either reducible, or periodic, or pseudo-Anosov.
		The reduction procedure leads to 
		a classification of mapping classes,
		by representatives of certain normal forms
		with respect to a certain canonical decomposition of the surface.
		From a geometric point of view,
		it seems more suitable to assume the surface 
		to have negative Euler characteristic 
		as we	employ the Nielsen--Thurston decomposition.
		This excludes disks, spheres, annuli, and tori,
		whose mapping classes are already classified by more elementary means.
		After all, there are obvious normal forms for those cases,
		as either spherical isometric transformations or affine linear transformations.
		
		We adopt the following terminology 
		to describe our normal forms.
		Let $R$ be a possibly disconnected orientable compact surface
		with possibly empty boundary, and $h\colon R\to R$ be an orientation-preserving self-homeomorphism.
		
		We say that $(R,h)$ is of \emph{hyperbolic periodic form} 
		if $R$ has only components of negative Euler characteristic,
		and if some positive iteration of $h$ equals identity.
		
		We say that $(R,h)$ is of \emph{pseudo-Anosov form} 
		if $R$ has only components of negative Euler characteristic,
		and admits an ordered pair of measured foliations
		which are transverse in the interior of $R$,
		and if $h$ preserves the foliations
		and rescales the measures
		by $\lambda$ and $\lambda^{-1}$ respectively,
		for some ratio $\lambda\in(1,+\infty)$ 
		which stays constant on each component of $R$.
		The ratio $\lambda$ is called the \emph{stretching factor} of $h$.
		We allow the foliations to have prong singularities,
		in the interior with prong number at least $3$,
		or on the boundary with prong number $3$.
		We require the singular points of the foliations
		to coincide in the interior of $R$, 
		and to sit alternately on each component of $\partial R$
		in cyclical order.
		We also require $h$ to be periodic restricted to $\partial R$.
		If there exists some vector field on $R$,
		such that the vectors are (nonzero and) 
		transverse to the leaves of a foliation everywhere, 
		except at the singularities or on the boundary,
		we say that the foliation is \emph{transversely orientable}.
		Note that the pair of invariant foliations under $h$
		must have identical transverse orientability.
		
		We say that $(R,h)$ is of \emph{fractional Dehn twist form} 
		if $R$ has only annuli components,
		and admits a parametrization by $[-1,1]\times\Real/\Integral$	for each component,
		and if $h$ is an affine isomorphism which is periodic restricted to $\partial R$.
		In this case, 
		some positive iteration of $h$ is an integral Dehn twist
		restricted to each component of $R$.
		With the parameters $(s,[u])\in [-1,1]\times \Real/\Integral$ of that component,
		this means $h^m(s,[u])=(s,[u+ks])$, for some $m\in\Natural$ and $k\in\Integral$.
		
		\begin{definition}\label{NT_normal_form}		
		Let $S$ be a connected orientable compact surface
		with possibly empty boundary and of negative Euler characteristic.
		An orientation-preserving self-homeomorphism $f\colon S\to S$
		is said to be of \emph{Nielsen--Thurston normal form},
		with respect to a \emph{Nielsen--Thurston decomposition}
		of $S$ into
		a \emph{periodic part} $S_\periodic$
		and a \emph{pseudo-Anosov part} $S_\pA$ 
		and a \emph{reduction part} $S_\reduction$,
		if the following conditions are all satisfied:
		\begin{itemize}
		\item
		The terms	$S_{\periodic},S_{\pA},S_{\reduction}$ 
		are all embedded compact subsurfaces of $S$ with mutually disjoint interior.
		Every component of $\partial S_{\periodic}$ or $\partial S_{\pA}$
		is a component of $\partial S_{\reduction}$ or $\partial S$,
		and vice versa.		
		\item
		The subsurfaces $S_{\periodic},S_{\pA},S_{\reduction}$ are all invariant under $f$.
		The restriction of $f$ to $S_{\periodic}$ is of hyperbolic periodic form;
		the restriction of $f$ to $S_{\pA}$ is of pseudo-Anosov form;
		the restriction of $f$ to $S_{\reduction}$ is of fractional Dehn twist form.
		\item
		If the boundary $\partial U$ of a component $U$ of $S_{\reduction}$
		is contained in $\partial S_{\periodic}$,
		no nonzero iterations of $f$ fix $U$.
		\end{itemize}
		\end{definition}
		
		According to the Nielsen--Thurston theory, 
		any mapping class	of an orientable connected compact surface of negative Euler characteristic
		admits a representative of Nielsen--Thurston normal form.
		In fact, it is unique up to conjugation by homeomorphisms that preserves the mapping class.
		
	\subsection{Essential fixed point classes and indices}\label{Subsec-classification_fpc}
		For Nielsen--Thurston normal forms, the essential fixed point classes and their indices
		have been completely classified by Jiang--Guo \cite[Section 3.4]{Jiang--Guo}.
		We paraphrase their result below for the reader's reference.
		
		Let $S$ be a connected orientable compact surface of negative Euler characteristic.
		Suppose that $f\colon S\to S$ is an orientation-preserving self-homeomorphism of Nielsen--Thurston normal form,
		with respect to a Nielsen--Thurston decomposition $(S_\periodic,S_\pA,S_\reduction)$ of $S$.
		For any essential fixed point class $\mathbf{p}\in\FPC(f)$,
		the subset $\FPS(f;\mathbf{p})$
		turns out to be a (connected) component of the fixed point set $\FPS(f)$.
		All the possible forms are listed	in Example \ref{example_fpc_classification}.
		(The case names are added as quick description.)
		On the other hand, the inessential components of $\FPS(f)$ 
		are all fixed circles in the interior of $S_\reduction$.
		In fact, these are the orientation-preserving cases of \cite[Lemma 3.6]{Jiang--Guo},
		namely, (1a),(1b),(1c),(2a),(2c),(2d), and (4).
						
		\begin{example}\label{example_fpc_classification}
		For any essential fixed point classes $\mathbf{p}\in\FPC(f)$, 
		only the following cases may occur as $\FPS(f;\mathbf{p})$.
		\begin{enumerate}
		\item \emph{Elliptic or parabolic points}. 
		The subset $\FPS(f;\mathbf{p})$ is a fixed point $x$ in the interior of a non-fixed component of $S_\periodic$
		or $S_\reduction$. The fixed point index $\mathrm{ind}(f;\mathbf{p})$ equals $1$.
		\item \emph{Prong singularities or saddle points}.
		The subset $\FPS(f;\mathbf{p})$ is a fixed point $x$ in the interior of a component of $S_\pA$.
		When $x$ is a prong singularity of the invariant (stable of unstable) foliation,
		the fixed point index $\mathrm{ind}(f;\mathbf{p})$ equals $1$ 
		if $f$ permutes the prongs at $x$ cyclically and nontrivially,
		or it equals $1-k$ where $k\geq3$ stands for the number of prongs at $x$.
		When $x$ is a regular point, it may be treated as a $2$--prong center,
		and $\mathrm{ind}(f;\mathbf{p})$ equals $1$ or $-1$ likewise.
		Hence, 
		the negative value $1-k$ occurs precisely when $f$ preserves every prong at $x$.
		\item \emph{Crown circles}.
		The subset $\FPS(f;\mathbf{p})$ is a fixed boundary component $c$ of $S$ 
		or of a non-fixed component of $S_\reduction$.
		Then some component of $S_\pA$ must be adjacent to $c$.
		The fixed point index $\mathrm{ind}(f;\mathbf{p})$ equals $-k$
		where $k\geq1$ stands for the number of the foliation singular points on $c$.		
		\item \emph{Crown annuli}.
		The subset $\FPS(f;\mathbf{p})$ is a fixed component $A$ of $S_\reduction$.
		Then both of the components of $S_\pA\cup S_\periodic$ adjacent to $\partial A$ 
		must be from the pseudo-Anosov part, possibly coincident.
		(If some component from the periodic part was adjacent to $\partial A$,
		it would be fixed, falling into the next case.)
		Then the fixed point index $\mathrm{ind}(f;\mathbf{p})$ equals $-k$
		where $k\geq2$ stands for the total number 
		of the foliation singular points on $\partial A$.		
		\item \emph{Crown hyperbolic subsurfaces}.
		The subset $\FPS(f;\mathbf{p})$ is a fixed component $E$ of $S_\periodic$
		union with some (possibly none) adjacent fixed components $A_1,\cdots,A_r$ of $S_\reduction$.
		Then at least one component of $S_\pA$ must be adjacent to $A_s$, 
		for each $s\in\{1,\cdots,r\}$.
		The fixed point index $\mathrm{ind}(f;\mathbf{p})$ equals $\chi(E)-k$
		where $k\geq0$ stands for the total number of the foliation singular points on 
		$\partial (A_1\cup\cdots\cup A_r)$.		
		\end{enumerate}
		\end{example}
		
		By iterating $f$, one may also obtain 
		the classification of essential periodic orbit classes and their indices.
		The result is only notationally more involved.
		The description in Example \ref{example_fpc_classification}
		is sufficient for the subsequent sections.
		
	\subsection{Complexity of mapping classes}\label{Subsec-NT_complexity}
	Two useful numerical quantities can be extracted out of a Nielsen--Thurston normal form.
	They measure the asymptotic complexity of the mapping class under iteration.

	Let $S$ be a connected orientable compact surface of negative Euler characteristic.
	Suppose that $f\colon S\to S$ is an orientation-preserving self-homeomorphism of Nielsen--Thurston normal form,
	with respect to a Nielsen--Thurston decomposition $(S_\periodic,S_\pA,S_\reduction)$ of $S$.
	Denote by $d\in\Natural$ the smallest power such that	$f^d$ fixes $S_\periodic$ and $\partial S_\pA$.
	Then $f^d$ also fixes the boundary of the reduction part $\partial S_\reduction$,
	and preserves every component of $S_\periodic,S_\pA,S_\reduction$.
	Restricted to any component $U$ of $S_\pA$,
	$f^d$ acts as a pseudo-Anosov automorphism with some stretching factor $\lambda_U\in(1,+\infty)$.
	We define the (normalized maximal) \emph{dilatation} $\dilatation(f)\in[1,+\infty)$	of $f$
	to be the maximum of $\lambda_U^{1/d}$, as $U$ running over all the components of $S_\pA$,
	or $1$ if $S_\pA$ is empty.
	Restricted to any component of the reduction part,
	$f^d$ acts as an integral Dehn twist with some shearing degree $k_U\in\Integral/\{\pm1\}$.
	We define the (normalized maximal) \emph{deviation} $\deviation(f)\in[0,+\infty)$ of $f$ 
	to be the maximum of $|k_U|/d$, as $U$ running over all the components of $S_\reduction$,
	or $0$ is $S_\pA$ is empty.
	We sometimes refer to the above power $d\in\Natural$ as the \emph{split order} of $f$,
	since it is the smallest power such that $f^d$ factorizes as a commutative product
	of pseudo-Anosov factors supported on the pseudo-Anosov part and integral Dehn multi-twists
	supported on the reduction part.

	For any mapping class $[f]\in\mcg(S)$,
	we speak of the (normalized maximal) \emph{dilatation} and the (normalized maximal) \emph{deviation} of $[f]$ 
	by taking any representative $f$ of Nielsen--Thurston normal form, denoted as
	\begin{equation}\label{dil_dev_def}
	\begin{array}{cc}\dilatation([f])\in[1,+\infty),& \deviation([f])\in[0,+\infty),\end{array}
	\end{equation} 
	respectively.
	As such representatives of Nielsen--Thurston normal form
	are unique up to conjugation by homeomorphisms that are isotopic to the identity,	
	these quantities are well defined.
	Deviation and dilatation of mapping classes remain invariant under conjugation.
	They behave nicely under iteration, as indicated by the following relations:
	\begin{equation}\label{dil_dev_power}
	\begin{array}{cc}\dilatation([f]^m)=\dilatation([f])^m,& \deviation([f]^m)=m\cdot\deviation([f]),\end{array}
	\end{equation} 
	for all $[f]\in\mcg(S)$ and $m\in\Natural$.
	
	The following criterion says that there are only finitely many mapping classes
	up to conjugacy with uniformly bounded dilatation and deviation. 

	\begin{proposition}\label{bound_finiteness}
	Let $S$ be an orientable connected compact surface of negative Euler characteristic.
	For any constants $K>1$ and $C>0$,
	there exist finitely many mapping classes 
	$[f_1],\cdots,[f_r]\in \mcg(S)$
	with the following property:
	For any mapping class $[f']\in\mcg(S)$ with dilatation at most $K$ and deviation at most $C$,
	there exists some $s\in\{1,\cdots,r\}$
	such that $[f']$ is conjugate to $[f_s]$ in $\mcg(S)$.
	\end{proposition}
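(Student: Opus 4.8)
The plan is to pass to a Nielsen--Thurston normal form representative, to reduce by conjugation to one of finitely many topological models for the Nielsen--Thurston decomposition, and then to observe that the hypotheses $\dilatation\le K$ and $\deviation\le C$ leave only boundedly much data in each of the three restricted pieces. Throughout, $[f']\in\mcg(S)$ denotes a mapping class with $\dilatation([f'])\le K$ and $\deviation([f'])\le C$, and $f$ a representative of Nielsen--Thurston normal form with decomposition $(S_\periodic,S_\pA,S_\reduction)$.

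\emph{Reduction to a fixed decomposition.} Conjugating $[f']$ by a homeomorphism of $S$ carries the decomposition $(S_\periodic,S_\pA,S_\reduction)$ to its image, and replaces the restricted pieces by conjugate ones. By the change of coordinates principle there are only finitely many $\mcg(S)$--orbits of isotopy classes of multicurves on $S$, hence only finitely many combinatorial types of Nielsen--Thurston decompositions together with the partition of the complementary pieces into the three classes and the permutation induced by $f$ on them. After replacing $[f']$ by a conjugate I may therefore assume that $(S_\periodic,S_\pA,S_\reduction)$ is one of finitely many fixed model subsurface decompositions of $S$, with $f$ permuting the components in one of finitely many fixed ways. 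Since the conjugacy class of $[f']$ in $\mcg(S)$ is determined by the model together with the conjugacy classes of the restrictions $f|_{S_\periodic}$, $f|_{S_\pA}$, $f|_{S_\reduction}$ and the finitely many ways of matching their periodic boundary behaviour along $\partial S_\reduction$, it suffices to bound, for each fixed model, the number of possibilities for these three restrictions up to conjugacy.

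\emph{Bounding the three pieces.} Let $d$ be the split order of $[f']$; since $d$ is the order of the permutation $f$ induces on the finite collection of components of $S_\periodic,S_\pA,S_\reduction$ and of reduction curves, it is bounded by a constant $D=D(S)$. The restriction $f|_{S_\periodic}$ is of hyperbolic periodic form, hence a finite order element of $\mcg(S_\periodic)$; finite order elements of the mapping class group of a surface of negative Euler characteristic have order bounded in terms of the Euler characteristic, and for each fixed order there are only finitely many conjugacy classes of such elements, so $f|_{S_\periodic}$ has finitely many possibilities up to conjugacy. The iterate $f^d|_{S_\pA}$ is pseudo-Anosov on each component $U$ with stretching factor $\lambda_U$, and $\dilatation([f'])\le K$ forces $\lambda_U\le K^d\le K^D$; by the classical finiteness of conjugacy classes of pseudo-Anosov mapping classes of a fixed surface whose dilatation is bounded above --- together with the fact that the residual data, namely the bounded order boundary rotations and the fixed permutation of components, contributes only finitely much --- $f|_{S_\pA}$ has finitely many possibilities up to conjugacy. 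Finally $f^d|_{S_\reduction}$ acts on each annulus component as an integral Dehn twist of some degree $k_U\in\Integral$, and $\deviation([f'])\le C$ forces $|k_U|\le Cd\le CD$; an affine self-homeomorphism of an annulus whose $d$--th power is the degree--$k_U$ Dehn twist and which is periodic on the boundary is determined by $k_U$ together with finitely much boundary data of order dividing $d\le D$, so $f|_{S_\reduction}$ has finitely many possibilities up to conjugacy. Assembling the finitely many choices for the three restrictions with the finitely many compatible gluings yields the asserted finiteness.

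\emph{The main obstacle.} The substantive input is the pseudo-Anosov step: that a fixed surface carries only finitely many conjugacy classes of pseudo-Anosov mapping classes of bounded dilatation. This is classical; one way to see it is via train tracks, since a pseudo-Anosov map of $S$ is carried by one of finitely many train tracks up to the $\mcg(S)$--action, its transition matrix is a primitive non-negative integer matrix whose Perron--Frobenius eigenvalue is the dilatation, and a bound on this eigenvalue together with a bound on the primitivity exponent (controlled by the number of branches) bounds the entries of the matrix. It can also be deduced, in the spirit of the present paper, from an upper bound for the volume of the hyperbolic mapping torus of a pseudo-Anosov map in terms of $|\chi(S)|$ and the logarithm of its dilatation (Kojima--McShane), combined with the J{\o}rgensen--Thurston finiteness of hyperbolic $3$--manifolds of bounded volume and the fact that a fixed $3$--manifold fibres in only finitely many ways with fibre of bounded complexity. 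The remaining work --- verifying that the finite order boundary data and gluings really do contribute only finitely much, and that the three restricted pieces determine the conjugacy class of $[f']$ --- is routine bookkeeping with Nielsen--Thurston normal forms.
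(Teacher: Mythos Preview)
Your proof is correct and follows essentially the same strategy as the paper: reduce to finitely many Nielsen--Thurston decomposition models, then bound the periodic, pseudo-Anosov, and reduction restrictions separately using, respectively, the finiteness of finite-order mapping classes, the Arnoux--Yoccoz/Ivanov finiteness of pseudo-Anosovs of bounded stretch factor, and the deviation bound on shearing degrees.

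The one place where the paper is more careful is the assembly step. You assert that the conjugacy class of $[f']$ is determined by the conjugacy classes of the three restrictions together with ``finitely many compatible gluings'', and defer this to routine bookkeeping. The paper makes this concrete: having fixed the decomposition and the restrictions to $S_\periodic\sqcup S_\pA$ up to conjugacy, it chooses a reference normal form $f_0$ and observes that any such $f'$ is conjugate to $f_0\circ D_{a_1}^{e_1}\cdots D_{a_n}^{e_n}$ for some powers $e_j$ along the reduction curves; conjugating by $D_{a_j}^{\pm1}$ shifts the powers within an $f_0$--orbit of reduction curves, so one may normalize all but one power per orbit to zero, and the remaining power is bounded by $C+\deviation(f_0)$. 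This sidesteps the question of whether separate conjugacy classes of restrictions plus boundary data determine the global conjugacy class, which as stated is a little delicate (the conjugating homeomorphisms on different pieces must be made compatible on shared boundaries). Your direct bound $|k_U|\le Cd\le CD$ is of course correct, and your sketch can be completed along these lines; the paper's Dehn-twist normalization simply makes the ``finitely many gluings'' explicit.
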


	\begin{proof}
	With the dilatation bound, we show that there are only finitely many allowable
	Nielsen--Thurston decompositions, 
	and only finitely many allowable restricted mapping classes to the periodic part and the pseudo-Anosov part.
	Then we show finiteness of the restrictions to the reduction annuli
	using the deviation bound.	
	
	For any $[f']\in\mcg(S)$,
	suppose that $f'$ is a representative of Nielsen--Thurston normal form,
	with respect to some Nielsen--Thurston decomposition $(S'_\periodic,S'_\pA,S'_\reduction)$ of $S$.
	Up to conjugation by orientation-preserving self-homeomorphisms of $S$,
	there are at most finitely many $(S'_\periodic,S'_\pA,S'_\reduction)$,
	as decompositions of $S$ along curves into subsurfaces.
	Indeed, the number of possible decomposition patterns can be bounded 
	in terms of the Euler characteristic of $S$.
	Up to isotopy of the periodic part $S'_\periodic$, 
	there are at most finitely many possible restriction of $f'$ 
	to $S'_\periodic$.
	Indeed, the number of possible isotopy classes of $(S'_\periodic,f')$
	can be bounded in terms of the Euler characteristic of $S'_\periodic$.
	(For example, this follows from \cite[Chapter 7, Theorem 7.14]{Farb--Margalit_book},
	applying the doubling trick for the bounded case.)
	Under the assumptions $\dilatation([f'])\leq K$,
	and up to isotopy of the pseudo-Anosov part $S'_\pA$,
	there are at most finitely many possible restrictions of $f'$ to $S'_\pA$.
	In fact,
	finiteness of pseudo-Anosov automorphisms with uniformly bounded stretching factor
	is a theorem due to Arnoux--Yoccoz \cite{Arnoux--Yoccoz_pA} and Ivanov \cite{Ivanov_pA},
	(see also \cite[Chapter 14, Theorem 14.9]{Farb--Margalit_book}).
	
	With the above finiteness results,
	there are only finitely many possible Nielsen--Thurston decomposition
	$(S'_\periodic,S'_\pA,S'_\reduction)$ and restrictions of $f'$ to $S'_\periodic\sqcup S'_\pA$,
	up to conjugacy.
	To consider each possibility individually,
	we fix a reference Nielsen--Thurston decomposition $(S_\periodic,S_\pA,S_\reduction)$, 
	and a reference orientation-preserving self-homeomorphism $f_0\colon S\to S$ of Nielsen--Thurston normal form.
	Choose an auxiliary orientation of $S$ and enumerate the core curves of $S_\reduction$
	as $a_1,\cdots,a_n$.
	Denote by $D_{a_j}$ the right-hand Dehn twist of $S$ along $a_j$, 
	supported on the annulus component of $S_\reduction$ that contains $a_j$.
	Note that these Dehn twists are mutually commutative.
	Suppose that $(S'_\periodic,S'_\pA,S'_\reduction)$ is conjugate to $(S_\periodic,S_\pA,S_\reduction)$,
	and that $f'$ is conjugate to $f_0$ restricted to the periodic and pseudo-Anosov parts.
	Then we can conjugate $f'$ to $f_0\circ D_{a_1}^{e_1}\cdots D_{a_n}^{e_n}$
	for some power $e_1,\cdots,e_n\in\Integral$.
	Moreover,	if $f_0(a_j)$ equals some $a_k$ other than $a_j$,
	the conjugation of $f_0\circ D_{a_1}^{e_1}\cdots D_{a_n}^{e_n}$ by $D_{a_j}^{-1}$
	will have the effect of increasing $e_j$ by $1$ and decreasing $e_k$ by $1$. 
	(Note that $D_{f_0(a_j)}\circ f_0$ equals $f_0\circ D_{a_j}$.)
	Therefore, 
	for any $f_0$--orbit of reduction curves $\{a_{j_1},\cdots,a_{j_t}\}$, 
	with $1\leq j_1<\cdots<j_t\leq n$,
	we can require furthermore that 
	$e_{j_2},\cdots,e_{j_t}$ are all $0$. Assuming $\deviation(f')<C$,
	it follows that $|e_{j_1}|$ must be bounded by $C+\deviation(f_0)$.
	In other words, there are at most finitely many possibilities of
	$f'$ restricted to the reduction part, up to conjugacy.
	This complete the proof of the asserted finiteness.
	\end{proof}

\section{Application to pseudo-Anosov mappings}\label{Sec-application_pA}
In this section, we prove Theorem \ref{main_entropy}.
We restate it formally as Theorem \ref{pA_dialogue}.
We mention Corollary \ref{pA_procongruent_almost_rigidity} as an immediate consequence
of Theorem \ref{pA_dialogue} (1) and Proposition \ref{bound_finiteness}.
This may be thought of as a special case of our main result Theorem \ref{main_procongruent_almost_rigidity}.

\begin{theorem}\label{pA_dialogue}
	Let $(S,f_A,f_B)$ be a dialogue setting where $S$ is closed of negative Euler characteristic
	and where $f_A,f_B$ are pseudo-Anosov mappings.
	Denote by $\mathscr{F}^{\mathtt{s/u}}_A,\mathscr{F}^{\mathtt{s/u}}_B$ 
	the (stable or unstable) invariant foliation of $f_A,f_B$, respectively.
	Suppose that the mapping classes $[f_A],[f_B]\in\mcg(S)$ are procongruently conjugate.
	Then the following statements all hold true:
	\begin{enumerate}
	\item The stretching factors for $f_A$ and $f_B$ are equal.
	\item For all $i\in\Integral$ other than $0$,
	the numbers of index--$i$ fixed points for $f_A$ and $f_B$ are equal.
	\item For all $k\in \Natural$ at least $3$,
	the numbers of $k$--prong singularities 
	for $\mathscr{F}^{\mathtt{s/u}}_A$ and $\mathscr{F}^{\mathtt{s/u}}_B$
	are equal.
	\item The foliation $\mathscr{F}^{\mathtt{s/u}}_A$ is transversely orientable
	if and only if $\mathscr{F}^{\mathtt{s/u}}_B$ is transversely orientable.	
\end{enumerate}
(See Section \ref{Subsec-NT_normal_form} and Convention \ref{settings}.)
\end{theorem}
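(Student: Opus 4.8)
The plan is to reduce all four assertions to the combinatorics of periodic orbits and their fixed-point indices, and then to deduce the required invariance from the earlier results. First I would replace $f_A,f_B$ by pseudo-Anosov representatives in Nielsen--Thurston normal form; since $S$ is closed of negative Euler characteristic, this normal form has $S=S_\pA$ and $S_\periodic=S_\reduction=\emptyset$, so the split order is $1$ and $\dilatation([f])$ is literally the stretching factor of $f$. Under this hypothesis the only case of the classification of essential fixed point classes that can occur is Example \ref{example_fpc_classification}(2): every $m$-periodic point of $f$ is an isolated, essential $m$-periodic point class, with $m$-index $\pm1$ at a regular point and $m$-index $1-k$ or $+1$ at a $k$-prong singularity according to whether the iterate $f^m$ fixes or nontrivially rotates the $k$ prongs there. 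Consequently, writing $x_O$ for a chosen point of an $f$-periodic orbit $O$, the decorated orbit data
$$\mathcal{D}(f)\ =\ \Big\{\big(\,|O|,\ \big(\mathrm{ind}\big(f^{\,n|O|};x_O\big)\big)_{n\ge1}\,\big)\Big\},$$
with $O$ ranging over all $f$-periodic orbits, determines every quantity in the statement. The goal is therefore to show that $\mathcal{D}(f)$ depends only on the procongruent conjugacy class of $[f]$, for which I would use Theorem \ref{N_m_dialogue}, Theorem \ref{L_m_dialogue} together with conjugacy separability of $\pi_1(M_f)$ \cite{HWZ_conjugacy_separability}, and the abelianised homological action.

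Items (1), (2) and (4) are then quick. For (2), each fixed point of $f$ is its own $1$-periodic orbit class, so the number of index-$i$ fixed points ($i\ne0$) is $\nu_1(f;i)$, a procongruent conjugacy invariant by Theorem \ref{N_m_dialogue}. For (1), $N_m(f)=\sum_{i\ne0}\nu_m(f;i)$ is likewise invariant, and $\dilatation([f])=\limsup_m N_m(f)^{1/m}$ is the stretching factor. For (4), I would observe separately that the characteristic polynomial of $f_*$ on $H_1(S;\Integral)$ is a procongruent conjugacy invariant: procongruent conjugacy makes the induced outer automorphisms $[\widehat{\tau}_A],[\widehat{\tau}_B]$ of $\widehat{\pi_1(S)}$ conjugate in $\outg(\widehat{\pi_1(S)})$, and abelianising (inner automorphisms act trivially on $H_1$) exhibits the $\widehat{\Integral}$-linear extensions of $f_{A*}$ and $f_{B*}$ as conjugate matrices over $\widehat{\Integral}$, hence with the same characteristic polynomial, which lies in $\Integral[t]$. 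Since transverse orientability of $\mathscr{F}^{\mathtt{s/u}}$ is equivalent to $\lambda$ or $-\lambda$ being an eigenvalue of $f_*$ on $H_1(S;\Complex)$ (one implication is recalled in the introduction; cf.\ \cite{McMullen_entropy}) and $\lambda$ is already equal by (1), statement (4) follows.

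The substantial point is (3). By the classification above, a $k$-prong singularity lying on an $f$-orbit $O$ of size $\ell$ contributes $1-k$ to $\mathrm{ind}(f^{\ell n};x_O)$ exactly when $n$ is a multiple of the order of the prong rotation of $f^\ell$, and $+1$ otherwise, whereas a regular orbit contributes only $\pm1$; hence from $\mathcal{D}(f)$ one reads off whether $O$ is singular (some index value is $\le-2$) and, if so, its prong number $k=1-\min_n\mathrm{ind}(f^{\ell n};x_O)$, so that $s_k=\sum_{O\ k\text{-prong}}|O|$. It remains to recover $\mathcal{D}(f)$ from the procongruent conjugacy class, which I would do as follows. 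The essential $m$-periodic orbit classes $\mathbf{O}$ correspond to conjugation orbits $\ell_m(f;\mathbf{O})\in\conjorb(\pi_1(M_f))$ of $\phi_f$-value $m$, and $\ell_{\ell n}(f;\cdot)$ is the $n$-th power of the primitive class $\ell_\ell(f;\cdot)$, which has $\phi_f$-value exactly $|O|$. Applying Theorem \ref{L_m_dialogue} to the finite representations of $\pi_1(M_f)$ pulled back along an aligned isomorphism (available by Proposition \ref{characterization_mapping_tori}) and inverting the character transform as in Lemma \ref{L_chi_c}, one recovers, for every finite quotient $\pi_1(M_f)\to\Gamma$, the multiset of pairs $\big(\text{image of }\ell_m(f;\mathbf{O})\text{ in }\Gamma,\ \mathrm{ind}_m(f;\mathbf{O})\big)$; conjugacy separability of $\pi_1(M_f)$ then separates the finitely many essential periodic trajectory classes of each given winding and detects the power relations among them, so that in the limit one recovers, for each essential orbit $O$, the winding $|O|=\phi_f\big(\ell_{|O|}(f;\mathbf{O})\big)$ together with the index sequence $\big(\mathrm{ind}(f^{\ell n};x_O)\big)_n$ --- that is, all of $\mathcal{D}(f)$ --- whence $s_k$, proving (3). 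I expect this recovery step to be the main obstacle: one must combine the twisted-Lefschetz invariance of Theorem \ref{L_m_dialogue} with conjugacy separability carefully enough to track each essential periodic trajectory class, and all of its powers, through finite quotients and to read off its $\phi_f$-winding, which is precisely what converts the raw index counts into the per-orbit information that (3) needs; by contrast (1), (2) and (4) fall out almost immediately.
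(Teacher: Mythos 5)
Your proofs of (1) and (2) follow the paper's route and are correct: both are read off directly from Theorem~\ref{N_m_dialogue}, via $N_m(f)=\sum_{i\neq 0}\nu_m(f;i)$ for the stretching factor and $\nu_1(f;i)$ for the indexed fixed-point counts. For (4) you take a genuinely different route to the key intermediate fact that $f_{A*}$ and $f_{B*}$ have the same characteristic polynomial on $H_1(S)$: you abelianize the conjugating automorphism of $\widehat{\pi_1(S)}$ directly (inner automorphisms die on $H_1$, so a conjugacy in $\outg(\widehat{\pi_1(S)})$ descends to a $\widehat{\Integral}$-linear conjugacy on $\widehat{H}_1$), whereas the paper specializes Theorem~\ref{TRT_dialogue} to the trivial complex representation and reads the characteristic polynomial off the torsion. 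Your version is more elementary and avoids the Reidemeister torsion machinery; both then combine with (1) and the Farb--Margalit criterion for transverse orientability in the same way.

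For (3) your argument is substantially different from the paper's, and the motivation is sound: the number of $k$-prong singularities is $\sum_O|O|$ over $k$-prong $f$-orbits, not the number of such orbits. Be aware that $\max_m\nu_m(f;1-k)$, the quantity the paper's proof reads off, counts $m$-periodic \emph{orbit} classes; for a pseudo-Anosov map the $\Integral/m$-orbit through the fixed-point class of a singularity $x$ has size equal to the $f$-orbit size of $x$, so this maximum is the orbit count. The quantity that does return the singularity count is $\max_m\nu_1(f^m;1-k)$, which is available because $[f_A^m]$ and $[f_B^m]$ remain procongruently conjugate for every $m$, so that Theorem~\ref{N_m_dialogue} applied to the iterated pair $(S,f_A^m,f_B^m)$ gives $\nu_1(f_A^m;1-k)=\nu_1(f_B^m;1-k)$. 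That iterated-pair observation is a much shorter route to (3) than recovering all of $\mathcal{D}(f)$ and stays within Sections~\ref{Sec-fpt_review}--\ref{Sec-indexed_orbit_numbers}.

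Your plan to recover $\mathcal{D}(f)$ does work in principle, but the step you yourself flag as the main obstacle is left as a sketch. To make it rigorous one needs (a) that the sets $\{\hat{\ell}_m(f;\mathbf{O}):\mathbf{O}\in\POC_m(f)\text{ essential}\}\subset\conjorb(\widehat{\pi_1(M_f)})$, together with the indices $\mathrm{ind}_m(f;\mathbf{O})$, are carried to the corresponding sets by any aligned isomorphism $\Psi$, and (b) that $\Psi$, being a group isomorphism, respects the $n$-th-power relation on conjugation orbits, so that primitive essential trajectory classes and their $\widehat{\phi}_f$-windings can be matched. Point (a) is, in substance, Proposition~\ref{profinite_orbit_set_description} together with the bijections of Theorem~\ref{correspondence_profinite_objects}, which the paper proves only in Section~\ref{Sec-profinite_structures}, two sections after Theorem~\ref{pA_dialogue}; point (b) is easy once (a) is available. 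So your proof of (3) is correct in outline but not self-contained at the point where the theorem appears: it quietly relies on the per-orbit profinite machinery the paper develops for the almost-rigidity theorem rather than for this one.
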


\begin{corollary}\label{pA_procongruent_almost_rigidity}
Given any pseudo-Anosov mapping $f\colon S\to S$ of an orientable connected closed surface $S$
of negative Euler characteristic,
the procongruent conjugacy class of $[f]\in\mcg(S)$ 
contains at most finitely many conjugacy classes of pseudo-Anosov mapping classes.
\end{corollary}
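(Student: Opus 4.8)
The plan is to read this off directly from Theorem \ref{pA_dialogue} (1) and the finiteness criterion of Proposition \ref{bound_finiteness}, once one translates the general complexity bookkeeping of Section \ref{Subsec-NT_complexity} into the special situation of a pseudo-Anosov mapping of a closed surface. The key observation is that such a mapping is \emph{already} of Nielsen--Thurston normal form with respect to the trivial Nielsen--Thurston decomposition $S_\pA=S$ and $S_\periodic=S_\reduction=\emptyset$; hence its split order equals $1$, its dilatation $\dilatation$ equals its stretching factor (which lies in $(1,+\infty)$ by the definition of pseudo-Anosov form), and its deviation $\deviation$ equals $0$, there being no reduction annuli.

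First I would fix the pseudo-Anosov mapping $f\colon S\to S$ and set $K\in(1,+\infty)$ to be its stretching factor, so that $\dilatation([f])=K$ and $\deviation([f])=0$ by the observation above. Let $[f']\in\mcg(S)$ be an arbitrary pseudo-Anosov mapping class which is procongruently conjugate to $[f]$. Since $S$ is closed of negative Euler characteristic, Theorem \ref{pA_dialogue} (1) applies to the dialogue setting $(S,f,f')$ and yields that the stretching factor of $f'$ also equals $K$. Again by the observation, $\dilatation([f'])=K$ and $\deviation([f'])=0$; in particular $\dilatation([f'])\le K$ and $\deviation([f'])\le 1$.

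Next I would invoke Proposition \ref{bound_finiteness} with the constants $K>1$ and $C=1>0$. It produces finitely many mapping classes $[f_1],\dots,[f_r]\in\mcg(S)$ with the property that every mapping class of dilatation at most $K$ and deviation at most $1$ is conjugate in $\mcg(S)$ to one of them. By the previous paragraph this applies to every pseudo-Anosov $[f']$ in the procongruent conjugacy class of $[f]$. Hence that procongruent conjugacy class meets at most $r$ distinct conjugacy classes of pseudo-Anosov mapping classes, which is exactly the assertion.

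There is no substantial obstacle at this stage: the serious work has already been carried out, on the one hand in Theorem \ref{pA_dialogue} (1) (procongruent invariance of the stretching factor, via Theorem \ref{N_m_dialogue} and the identification of the dilatation with $\limsup N_m^{1/m}$), and on the other hand in Proposition \ref{bound_finiteness} (finiteness of mapping classes with bounded dilatation and deviation, resting on Arnoux--Yoccoz \cite{Arnoux--Yoccoz_pA} and Ivanov \cite{Ivanov_pA}). The only point requiring a moment of care is the degenerate specialization of the definitions of $\dilatation$ and $\deviation$ from Section \ref{Subsec-NT_complexity} to a closed pseudo-Anosov mapping, namely that the reduction part is empty and the split order is $1$, which we have noted above.
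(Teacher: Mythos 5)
Your proof is correct and is exactly the argument the paper intends (the paper simply states the corollary as an immediate consequence of Theorem \ref{pA_dialogue}~(1) and Proposition \ref{bound_finiteness}, which is precisely what you spell out). The only detail worth noting is that you do not actually need $C=1$; any $C>0$ serves, since $\deviation([f'])=0$ for any closed-surface pseudo-Anosov representative, exactly as you observe.
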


The rest of this section is devoted to the proof of Theorem \ref{pA_dialogue}.
The proof is mainly an application of Theorem \ref{N_m_dialogue} to
pseudo-Anosov mappings.

It is known that the stretching factor of pseudo-Anosov mappings
can be detected by the exponential growth rate of the periodic Nielsen numbers.
To be precise, denote by $\lambda_A,\lambda_B$ the stretching factors
of $f_A,f_B$ respectively.
By Theorem \ref{N_m_dialogue} and (\ref{N_m_def}), 
we see that the $m$--periodic Nielsen number $N_m(f_A)$ equals $N_m(f_B)$, for all $m\in\Natural$. 
Then we obtain the equality
$$\lambda_A=\limsup_{m\to\infty} N_m(f_A)^{1/m}=\limsup_{m\to\infty} N_m(f_B)^{1/m}=\lambda_B,$$
(see \cite[Theorem 3.7]{Jiang_periodic}).
This proves Theorem \ref{pA_dialogue} (1).

According to the classification of essential fixed point classes 
for Nielsen--Thurston normal forms (Section \ref{Subsec-classification_fpc}),
the essential fixed point classes of $f_A$ correspond bijectively to the fixed points of $f_A$.
In fact, prong singularities or saddle points are the only possible essential
fixed point classes when we consider a pseudo-Anosov mapping of a closed surface
(see Example \ref{example_fpc_classification}), and these are all isolated fixed points.
With the notation of (\ref{nu_m_i}),
the number of index--$i$ fixed points of $f_A$ 
equals $\nu_1(f_A;i)$, for all $i\in\Integral\setminus\{0\}$.
The same correspondence and equality hold for $f_B$. 
We obtain $\nu_m(f_A;i)=\nu_m(f_B;i)$ for all $m\in\Natural$ and $i\in\Integral\setminus\{0\}$,
by Theorem \ref{N_m_dialogue}.
The special case $m=1$ implies Theorem \ref{pA_dialogue} (2).

According to the list of fixed point types and their indices (Example \ref{example_fpc_classification}),
the number of fixed $k$--prong singularities of $\mathscr{F}^{\mathtt{s/u}}_A$
is at least $\nu_1(f_A;1-k)$,
for any $k\in\Natural$ at least $3$.
By iterating $f_A$,
we can detect the number of $k$--prong singularities of $\mathscr{F}^{\mathtt{s/u}}_A$
by the maximum of $\nu_1(f_A^m;1-k)$ over all $m\in\Natural$.
In fact, the iterates $f_A^m$ are all in normal form 
with respect to the same measured foliations as those of $f_A$,
and $\nu_1(f_A^m;1-k)$ is maximized,
and equal to the number of $k$--prong singularities,
precisely when $f_A^m$ fixes all those singularities and preserves each of their prongs,
(see Example \ref{example_fpc_classification}).
The same detection holds for $f_B$.
Note that $f_A^m$ and $f_B^m$ are procongruently conjugate for all $m\in\Natural$.
Then Theorem \ref{N_m_dialogue},
applied to $\nu_1$ and the pairs of $m$--iterates,
implies Theorem \ref{pA_dialogue} (3).

The (stable or unstable) invariant foliation of a pseudo-Anosov mapping
is transversely orientable if and only if 
the stretching factor occurs as an eigenvalue of the induced homological action.
In fact, if the invariant foliation is transversely orientable,
any defining quadratic differential (with respect to any auxiliary complex structure) 
is the square of a closed $1$--form,
so the induced action on the first de Rham cohomolgy 
has an eigenvalue equal to the stretching factor;
if the invariant foliation is not transversely orientable,
the stretching factor still occurs as a homological eigenvalue of multiplicity one for
the lift of the pseudo-Anosov mapping to a double branched cover of the surface,
so no eigenvalues of the original homological action equals the stretching factor;
(see \cite[Chapter 14, Theorems 14.2 and 14.8]{Farb--Margalit_book} and the arguments thereof). 
By Theorem \ref{TRT_dialogue} (applied to the trivial complex representation),
the eigenvalues of $f_{A*},f_{B*}\colon H_1(S;\Complex)\to H_1(S;\Complex)$ agree.
Since the stretching factors $\lambda_A,\lambda_B$ also agree,
we see that $\mathscr{F}^{\mathtt{s/u}}_A$ is transversely orientable
if and only if $\mathscr{F}^{\mathtt{s/u}}_B$ is transversely orientable,
as asserted by Theorem \ref{pA_dialogue} (4).

This completes the proof of Theorem \ref{pA_dialogue}.

\section{Profinite structures associated to mapping classes}\label{Sec-profinite_structures}
	As we have seen, Corollary \ref{pA_procongruent_almost_rigidity} is 
	simpler than Theorem \ref{main_procongruent_almost_rigidity}
	because the complexity from deviation disappears for pseudo-Anosov mappings.
	For the general case, we need to find some suitable characterization of deviation
	on the level of profinite completions of mapping torus groups.
	In this section, we investigate profinite analogues of the geometric decomposition
	and essential indexed periodic trajectories classes.
	For procongruently conjugate pairs of mapping classes,
	we establish correspondence between the above objects (Theorem \ref{correspondence_profinite_objects}),
	based on the work of Wilton and Zalesskii \cite{WZ_decomposition} 
	and the proof of Theorem \ref{N_m_dialogue}.
	
	Below we provide an introduction to the picture,
	ending up with a comment on our exposition.
	The aim of this section is 
	to import the results of Wilton and Zalesskii
	into the context of surface mapping classes,
	and the task is mainly to translate between
	several canonical decompositions.
	These include the Nielsen--Thurston surface decomposition,
	and the geometric $3$--manifold decomposition, 
	and their tree action descriptions,
	and their profinite analogues.
	
	To illustrate with a simple example, 
	consider an orientation-preserving self-homeomorphism $f\colon S\to S$,
	where $S$ is a closed orientable surface of genus $2$,
	and where $f$ is obtained by first applying
	a Dehn twist on $S$ along a separating essential simple closed curve $c$,
	and then applying an involution on $S$ 
	flipping $c$ and switching the two one-hole tori separated by $c$.
	The Nielsen--Thurston decomposition is schematically a graph 
	with two vertices that correspond to the components of $S\setminus c$,
	and one edge that corresponds to $c$, joining the vertices;
	similarly,
	the lifted decomposition of $\widetilde{S}_\univ$ is schematically a tree
	with infinitely many vertices and edges, 
	and furnished with an isomorphic action of $\pi_1(S)$,
	whose orbits correspond to the decomposition graph of $S$.
	The geometric decomposition of the mapping torus $M_f$
	is obtained by cutting along a Klein bottle (which is the mapping torus of $f$ restricted to $c$),
	resulting in a single piece,
	so the decomposition should schematically be an `orbi-graph', 
	with one vertex and one `semi-edge' 
	(that is, the orbifold quotient of a segment by its central reflection);
	the lifted decomposition of $\widetilde{S}_\univ$ is a tree isomorphic to 
	the above tree of $\widetilde{S}_\univ$, 
	but furnished with an isomorphic action of $\pi_1(M_f)$ allowing edge flips.
	
	Heuristically, 
	the profinite tree associated to the geometric decomposition 
	should be the inverse limit 
	of the geometric decomposition graphs associated to finite covers of $M_f$,
	and there should be an action of $\widehat{\pi_1(M_f)}$,
	such that the vertex orbits and the edge orbits recover 
	that geometric decomposition graph.
	Apart from the orbi-graph notion,
	profinite graph theory is already available in the literature \cite{Ribes_book},
	and the case with profinite $3$--manifold groups 
	has been studied in great detail, 
	(in terms of the very simiar JSJ decomposition) \cite{WZ_decomposition}.
	However, profinite graph theory is more sophisticated than usual graph theory
	in some significant aspects:
	First, it involves topology more than the incidence relation,
	(for example, the vertex set is required to be a closed subset);
	secondly, the notion of being simply connected
	is generalized as vanishing of the first profinite homology;
	and thirdly, in many references, 
	a profinite graph is defined with a model
	somewhat like the set theoretic model for directed graphs,
	but formulated somewhat unusually to incorporate topology.
	
	The last point has direct influence on the exposition of this section.
	To translate between different decompositions,
	we formally construct our needed graphs (or orbi-graphs)
	in this section based on abstract directed graphs, 
	as introduced in \cite[Appendix A]{Ribes_book},
	because they are 
	parallel to the aforementioned model in profinite graph theory.
	On the other hand, we try to keep the terminology 
	close to its motivation as suggested by our example of illustration.
	Hopefully this would facilitate informal or blackbox use of 
	the dictionary in other circumstances.
	
	\subsection{Geometric decomposition of mapping tori}
		
		An \emph{abstract directed graph} 
		refers to a triple $(\mathfrak{G},d_0,d_1)$ as follows:
		The term $\mathfrak{G}$ is a set, and the terms 
		$d_0,d_1\colon\mathfrak{G}\to\mathfrak{G}$ 
		are maps;
		the maps $d_0,d_1$ are required to 
		be retractions onto one and the same subset of $\mathfrak{G}$,
		and the condition means $d_0^2=d_0$, and $d_1^2=d_1$, and
		$d_0(\mathfrak{G})=d_1(\mathfrak{G})$.
		The common image of $d_0$ and $d_1$ in $\mathfrak{G}$
		is called the set of \emph{abstract vertices},
		and the complement in $\mathfrak{G}$ is called 
		the set of \emph{abstract directed edges}.
		The image of any abstract directed edge under $d_0$ and $d_1$ are therefore
		called the \emph{initial} and the \emph{terminal} abstract vertex of that abstract directed edge.
		We often simply denote an abstract directed graph by $\mathfrak{G}$,
		with $d_0,d_1$ implicitly assumed.
		To declare an abstract direct graph,
		it suffices to point out the abstract vertices and the abstract directed edges,
		and the rule of assignment for the initial and the terminal abstract vertices.		
		A \emph{quasi-morphism} of abstract directed graphs
		$\psi\colon (\mathfrak{G}',d'_0,d'_1)\to (\mathfrak{G},d_0,d_1)$
		is a map $\psi\colon \mathfrak{G}'\to\mathfrak{G}$
		which satisfies $\psi\circ d'_0=d_0\circ\psi$ and $\psi\circ d'_1=d_1\circ\psi$.
		It is called a \emph{morphism} if it sends abstract directed edges to abstract directed edges,
		which is necessarily direction preserving.
		We refer the reader to \cite[Appendix A]{Ribes_book} for more details
		on graph theory in this language.
		
		With the notations of Definition \ref{NT_normal_form},
		we construct an abstract directed graph $\mathfrak{G}_\NT(f)$ as follows:
		The set of abstract vertices consists of the components of 
		the three parts $S_\periodic\sqcup S_\pA\sqcup S_\reduction$;
		the set of abstract directed edges consists of all the boundary components
		shared by a pair of components of the three parts;
		since any such boundary component is shared by 
		a component of $S_\reduction$ and a component of $S_\periodic\sqcup S_\pA$,
		the initial abstract vertex is assigned to be the former
		and the terminal abstract vertex to be the latter.
		Note that $\mathfrak{G}_\NT(f)$ can be viewed as 
		a finite directed simplicial graph in the usual sense.
		We denote the automorphism of the abstract directed graph
		induced by $f$ as
		$f_\sharp\colon \mathfrak{G}_\NT(f)\to\mathfrak{G}_\NT(f)$.
		We refer to the components of $S_\periodic\sqcup S_\pA$ 
		as the (periodic or pseudo-Anosov) \emph{Nielsen--Thurston vertices},
		and	the components of $S_\reduction$
		as the \emph{Nielsen--Thurston edge centers},
		and the directed edges as the \emph{Nielsen--Thurston edge ends}.
		
		\begin{definition}
		Let $S$ be a connected orientable compact surface of negative Euler characteristic.
		For any orientation-preserving self-homeomorphism $f\colon S\to S$ of 
		Nielsen--Thurston normal form	with respect to a Nielsen--Thurston decomposition of $S$,
		we refer to the above abstract directed graph $\mathfrak{G}_{\mathtt{NT}}(f)$	
		together with the infinite cyclic action generated by $f_\sharp$
		as (the \emph{barycentric model} of) the \emph{Nielsen--Thurston decomposition group-graph}.
		\end{definition}		
		
		Suppose that $(S,f)$ is a monologue setting
		where $S$ is of negative Euler characteristic 
		and $f$ of Nielsen--Thurston normal form.
		
		The geometric decomposition of the mapping torus $M_f$
		agrees with the suspension of the Nielsen--Thurston decomposition,
		and the dual graph is naturally modeled by
		the $f_\sharp$--orbits of the Nielsen--Thurston graph $\mathfrak{G}_\NT(f)$.
		To be precise,
		the components of the suspension of $S_\periodic$
		agree with the $\mathbb{H}^2\times\mathbb{E}^1$--geometric pieces,
		and correspond naturally with the $f_\sharp$--orbits of the periodic Nielsen--Thurston vertices;
		the components of the suspension of $S_\pA$ 
		agree with the $\mathbb{H}^3$--geometric pieces,
		and correspond naturally with the $f_\sharp$--orbits of the pseudo-Anosov Nielsen--Thurston vertices;
		the components of the suspension of $S_\reduction$ 
		agree with the thickened decomposition tori or Klein bottles,
		and correspond naturally with the $f_\sharp$--orbits of the Nielsen--Thurston edge centers.
		(The \emph{suspension} of any $f$--invariant subspace of $S$
		refers to the subspace of $M_f$ obtained as the mapping torus	of the restriction of $f$.)
		The adjacency relation between the geometric pieces and the thickened decomposition tori or Klein bottles
		is naturally encoded by the $f_\sharp$--orbits of the Nielsen--Thurston edge ends.
		We denote by $\mathfrak{G}_\gd(M_f)$ the quotient abstract directed graph of $\mathfrak{G}_\NT(f)$
		by the automorphism $f_\sharp$. 
		Note that $\mathfrak{G}_\gd(M_f)$ can be viewed as 
		a finite directed simplicial graph in the usual sense.
		The abstract vertices of $\mathfrak{G}_\gd(M_f)$
		are called the \emph{geometric vertices} or the \emph{geometric edge centers}, accordingly,
		and its abstract directed edges are called the \emph{geometric edge ends}.
		
		\begin{definition}
		Given any monologue setting $(S,f)$ where $S$ is of negative Euler characteristic 
		and $f$ of Nielsen--Thurston normal form,
		we refer to the above abstract directed graph 
		$\mathfrak{G}_{\mathtt{gd}}(M_f)$ as (the \emph{barycentric model} of) 
		the \emph{geometric decomposition graph} for $M_f$.		
		\end{definition}
		
		The universal covering space $\widetilde{S}_\univ\times\Real$ of $M_f$
		is also enriched with an induced decomposition,
		whose components are given by	the preimage components of the geometric pieces and 
		the thickened decomposition tori or Klein bottles of $M_f$.
		The dual graph $\mathfrak{T}_\gd(M_f)$ can be viewed naturally as
		a connected directed simplicial tree in the usual sense.
		Formally we construct it
		as an abstract directed graph in the obvious way.
		The deck transformation group $\pi_1(M_f)$ acts 
		naturally on $\mathfrak{T}_\gd(M_f)$ by automorphisms.
		The covering projection induces 
		a canonical surjective morphism of abstract directed graphs
		$\mathfrak{T}_\gd(M_f)\to \mathfrak{G}_\gd(M_f)$.
		The abstract vertices of $\mathfrak{T}_\gd(M_f)$
		that lies over the geometric vertices of $\mathfrak{G}_\gd(M_f)$
		are called the \emph{universal geometric vertices}.
		We also speak of the \emph{universal geometric edge centers} and 
		the \emph{universal geometric edge ends} of $\mathfrak{T}_\gd(M_f)$, 
		according to the objects of $\mathfrak{G}_\gd(M_f)$
		over which they lie.
		
		\begin{definition}
		Given any monologue setting $(S,f)$ where $S$ is of negative Euler characteristic 
		and $f$ of Nielsen--Thurston normal form,
		we refer to the above abstract directed graph 
		$\mathfrak{T}_{\mathtt{gd}}(M_f)$ 
		together with the action of $\pi_1(M_f)$
		as (the \emph{barycentric model} of) 
		the \emph{universal geometric group-tree} for $M_f$.		
		\end{definition}
		
		The inverse system of finite-index normal subgroups of $\pi_1(M)$
		induces a canonical inverse system of abstract directed graphs.
		It consists of 
		the finite quotient abstract directed graphs
		$\mathfrak{T}_\gd(M_f)/N$ 
		for all finite-index normal subgroups $N$ of $\pi_1(M)$,
		and the natural morphisms
		$\mathfrak{T}_\gd(M_f)/N'\to\mathfrak{T}_\gd(M_f)/N$
		for any $N'$ contained in $N$.
		The inverse limit 
		$\widehat{\mathfrak{T}_\gd(M_f)}=\varprojlim_N \,\mathfrak{T}_\gd(M_f)/N$
		is again an abstract directed graph.
		The profinite completion $\widehat{\pi_1(M_f)}$ 
		acts naturally on $\widehat{\mathfrak{T}_\gd(M_f)}$ 
		by automorphisms of the abstract directed graph.
		There is a canonical surjection morphism of abstract directed graphs
		$\widehat{\mathfrak{T}_\gd(M_f)}\to\mathfrak{G}_\gd(M_f)$,
		by the universal property of inverse limit.
		Therefore, we speak of 
		the \emph{profinite geometric vertices}, the \emph{profinite geometric edge centers},
		and \emph{profinite geometric edge ends} of $\widehat{\mathfrak{T}_\gd(M_f)}$,
		according to the objects of $\mathfrak{G}_\gd(M_f)$
		over which they lie.
		
		\begin{definition}
		Given any monologue setting $(S,f)$ where $S$ is of negative Euler characteristic 
		and $f$ of Nielsen--Thurston normal form,
		we refer to the above abstract directed graph $\widehat{\mathfrak{T}_{\gd}(M_f)}$	
		together with the action of $\widehat{\pi_1(M_f)}$
		as (the \emph{barycentric model} of)
		the \emph{profinite geometric group-tree} for the mapping torus $M_f$.
		\end{definition}
		
		
		The natural morphism of $\mathfrak{T}_\gd(M_f)$ onto $\mathfrak{G}_\gd(M_f)$
		can be identified with the quotient of $\mathfrak{T}_\gd(M_f)$ by the action of $\pi_1(M_f)$.
		In particular, 
		elements of $\mathfrak{T}_\gd(M_f)$ that lie over one and the same element
		of $\mathfrak{G}_\gd(M_f)$ have mutually conjugate stabilizers in $\pi_1(M_f)$.
		The natural morphism of $\widehat{\mathfrak{T}_\gd(M_f)}$ onto $\mathfrak{G}_\gd(M_f)$
		can be identified 
		with the quotient of $\widehat{\mathfrak{T}_\gd(M_f)}$ by the action of $\widehat{\pi_1(M_f)}$,
		using a standard convergence argument.		
		The natural morphism of $\mathfrak{T}_\gd(M_f)$ to $\widehat{\mathfrak{T}_\gd(M_f)}$
		is injective. 
		This follows from a general criterion
		and the fact that 
		the stabilizers of the universal geometric edge ends in $\pi_1(M_f)$ are separable,
		(see \cite[Proposition 2.5]{Cotton-Barratt} and \cite{Hamilton_abelian_separability}).
		Moreover,
		the natural inclusion of $\mathfrak{T}_\gd(M_f)$ into $\widehat{\mathfrak{T}_\gd(M_f)}$
		is equivariant with respect to
		the natural inclusion of $\pi_1(M_f)$ into $\widehat{\pi_1(M_f)}$.
		We draw the following schematic diagram
		to help organize
		the group actions and the abstract directed graph morphisms:
		\begin{equation}\label{TG_diagram}
		\xymatrix{
		\pi_1(M_f) \ar[d]_{\textrm{incl.}} \ar@{.}[r]^-\circlearrowleft & \mathfrak{T}_\gd(M_f) \ar[r]^-{\textrm{quot.}} \ar[d]_{\textrm{incl.}} 
		& \mathfrak{G}_\gd(M_f) \ar[d]^{\mathrm{id}} \\
		\widehat{\pi_1(M_f)} \ar@{.}[r]^-\circlearrowleft & \widehat{\mathfrak{T}_\gd(M_f)} \ar[r]^-{\textrm{quot.}} & \mathfrak{G}_\gd(M_f).
		}
		\end{equation}
		The square on the left is equivariant, and 
		the square on the right is commutative.
		
		\begin{proposition}\label{graph_element_stabilizers}
			In a monologue setting $(S,f)$,
			suppose that $S$ is of negative Euler characteristic 
			and $f$ of Nielsen--Thurston normal form.
			Treat $\pi_1(M_f)$ and $\mathfrak{T}_\gd(M_f)$
			as naturally included by $\widehat{\pi_1(M_f)}$ and $\widehat{\mathfrak{T}_\gd(M_f)}$,
			respectively.			
			
			Then any element $\hat{\mathfrak{u}}$ of $\widehat{\mathfrak{T}_\gd(M_f)}$ 
			is conjugate under $\widehat{\pi_1(M_f)}$
			to some element $\tilde{\mathfrak{u}}$ of $\mathfrak{T}_\gd(M_f)$.
			Hence $\hat{\mathfrak{u}}$ and $\tilde{\mathfrak{u}}$ have 
			mutually conjugate stabilizers in $\widehat{\pi_1(M_f)}$.
			Moreover, 
			the stabilizer of $\tilde{\mathfrak{u}}$ in $\widehat{\pi_1(M_f)}$
			equals the profinite closure of 
			the stabilizer of $\tilde{\mathfrak{u}}$ in $\pi_1(M_f)$.
		\end{proposition}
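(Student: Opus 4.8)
The plan is to reduce the statement to elementary orbit–stabilizer bookkeeping for the action of $\pi_1(M_f)$ on the underlying set of the abstract directed graph $\mathfrak{T}_\gd(M_f)$, and then to pass to the inverse limit. Since the natural morphism $\mathfrak{T}_\gd(M_f)\to\mathfrak{G}_\gd(M_f)$ is the quotient by $\pi_1(M_f)$ and $\mathfrak{G}_\gd(M_f)$ is a \emph{finite} abstract directed graph, the underlying set of $\mathfrak{T}_\gd(M_f)$ decomposes, as a $\pi_1(M_f)$--set, into finitely many orbits $\pi_1(M_f)\tilde{\mathfrak{u}}_1,\dots,\pi_1(M_f)\tilde{\mathfrak{u}}_r$, one lying over each vertex or directed edge of $\mathfrak{G}_\gd(M_f)$. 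Fixing a representative $\tilde{\mathfrak{u}}_j\in\mathfrak{T}_\gd(M_f)$ for each, and writing $H_j\leq\pi_1(M_f)$ for its stabilizer, we have $\pi_1(M_f)\tilde{\mathfrak{u}}_j\cong\pi_1(M_f)/H_j$ as $\pi_1(M_f)$--sets. Throughout, the inclusion $\mathfrak{T}_\gd(M_f)\hookrightarrow\widehat{\mathfrak{T}_\gd(M_f)}$ recorded in diagram (\ref{TG_diagram}) is what makes phrases like ``$\tilde{\mathfrak{u}}$ as an element of $\widehat{\mathfrak{T}_\gd(M_f)}$'' meaningful.

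First I would record the situation at each finite level. For a finite-index normal subgroup $N$ of $\pi_1(M_f)$, the quotient abstract directed graph $\mathfrak{T}_\gd(M_f)/N$ decomposes, as a $\pi_1(M_f)/N$--set, into the orbits of the images $\tilde{\mathfrak{u}}_{j,N}$ of the $\tilde{\mathfrak{u}}_j$; a routine computation of the $N$--action on $\pi_1(M_f)/H_j$ shows that $\bar g\in\pi_1(M_f)/N$ fixes $\tilde{\mathfrak{u}}_{j,N}$ exactly when $g\in NH_j$, so $\mathrm{Stab}_{\pi_1(M_f)/N}(\tilde{\mathfrak{u}}_{j,N})=H_jN/N$ and $\mathfrak{T}_\gd(M_f)/N\cong\coprod_{j=1}^r(\pi_1(M_f)/N)\big/(H_jN/N)$, compatibly with the connecting morphisms. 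Then I would pass to the inverse limit: since $r$ is finite and this disjoint-union decomposition is preserved by the connecting morphisms, $\varprojlim_N$ distributes over the coproduct, and a standard compactness argument shows that the inverse limit of a surjective inverse system of transitive finite $G_N$--sets is a transitive $\varprojlim G_N$--set whose point stabilizers are the inverse limits of the finite point stabilizers. This yields an isomorphism of $\widehat{\pi_1(M_f)}$--sets
$$\widehat{\mathfrak{T}_\gd(M_f)}\;\cong\;\coprod_{j=1}^r \widehat{\pi_1(M_f)}\big/\overline{H_j},$$
where $\overline{H_j}=\varprojlim_N H_jN/N$ is the closure of $H_j$ in $\widehat{\pi_1(M_f)}$.

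With this description in hand, all three assertions follow. Any $\hat{\mathfrak{u}}\in\widehat{\mathfrak{T}_\gd(M_f)}$ lies in a unique summand $\widehat{\pi_1(M_f)}/\overline{H_j}$, hence lies in the $\widehat{\pi_1(M_f)}$--orbit of $\tilde{\mathfrak{u}}_j\in\mathfrak{T}_\gd(M_f)$; and if $\tilde{\mathfrak{u}}\in\mathfrak{T}_\gd(M_f)$ lies over the same element of $\mathfrak{G}_\gd(M_f)$ as $\hat{\mathfrak{u}}$, then $\tilde{\mathfrak{u}}$ is $\pi_1(M_f)$--conjugate, hence $\widehat{\pi_1(M_f)}$--conjugate, to $\tilde{\mathfrak{u}}_j$, so $\hat{\mathfrak{u}}$ and $\tilde{\mathfrak{u}}$ are $\widehat{\pi_1(M_f)}$--conjugate. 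Conjugate elements have conjugate stabilizers, giving the second claim. For the last claim, writing $\tilde{\mathfrak{u}}=g\tilde{\mathfrak{u}}_j$ with $g\in\pi_1(M_f)$ gives $\mathrm{Stab}_{\widehat{\pi_1(M_f)}}(\tilde{\mathfrak{u}})=g\,\overline{H_j}\,g^{-1}=\overline{gH_jg^{-1}}=\overline{\mathrm{Stab}_{\pi_1(M_f)}(\tilde{\mathfrak{u}})}$, using that conjugation by $g$ is a self-homeomorphism of $\widehat{\pi_1(M_f)}$.

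I expect the only genuinely delicate point to be the inverse-limit step: checking that $\varprojlim_N$ distributes over the finite coproduct and correctly carries along the coset structures and point stabilizers, i.e.\ the identification $\varprojlim_N(\pi_1(M_f)/N)\big/(H_jN/N)\cong\widehat{\pi_1(M_f)}/\overline{H_j}$ as $\widehat{\pi_1(M_f)}$--sets. This is the standard profinite-tree bookkeeping (compare \cite[Appendix A]{Ribes_book}); everything else is formal, and in particular no separability of the $H_j$ beyond what is already invoked for the inclusion in diagram (\ref{TG_diagram}) is needed.
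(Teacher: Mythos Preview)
Your proposal is correct and follows essentially the same approach as the paper. Both arguments exploit that $\mathfrak{T}_\gd(M_f)$ has finitely many $\pi_1(M_f)$--orbits and then pass to the inverse limit; the paper does this by picking a specific $\tilde{\mathfrak{u}}$ over the same base point as $\hat{\mathfrak{u}}$ and running a direct compactness argument to produce a conjugating element $\hat{h}\in\widehat{\pi_1(M_f)}$, then verifying the two stabilizer containments by hand, whereas you package the same content into the structural identification $\widehat{\mathfrak{T}_\gd(M_f)}\cong\coprod_j\widehat{\pi_1(M_f)}/\overline{H_j}$ and read everything off at once.
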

		
		\begin{proof}
			Denote by $\bar{\mathfrak{u}}$ the element of $\mathfrak{G}_\gd(M_f)$
			over which lies $\hat{\mathfrak{u}}$.
			Denote $U$ be the subspace of $M_f$ that corresponds to $\bar{\mathfrak{u}}$ according to the geometric decomposition.
			So $U$ is either a geometric piece, or a thickened decomposition torus or Klein bottle,
			or a boundary torus of a thickened decomposition torus or Klein bottle.
			Take a preimage component $\tilde{U}$ in the universal covering space $\widetilde{S}_\univ\times\Real$ of $M_f$.
			Denote by $\tilde{\mathfrak{u}}$ the corresponding element of $\mathfrak{T}_\gd(M_f)$.
			For any finite-index normal subgroup $N$ of $\pi_1(M_f)$,
			denote by $\widehat{N}$ the profinite closure of $N$ in $\widehat{\pi_1(M_f)}$.
			Then $\hat{\mathfrak{u}}$ and $\tilde{\mathfrak{u}}$ have conjugate images
			in the finite quotient graph $\widehat{\mathfrak{T}_\gd(M_f)}/\widehat{N}$,
			which equals $\mathfrak{T}_\gd(M_f)/N$.	
			The elements in $\widehat{\pi_1(M_f)}$ that send the image of $\hat{\mathfrak{u}}$
			to the image of $\tilde{\mathfrak{u}}$ 
			form a finite union of cosets of $\widehat{N}$,
			which is closed with respect to the profinite topology.
			By a standard convergence argument,
			there exists an element $\hat{h}\in\widehat{\pi_1(M_f)}$
			which lies in the above union of cosets for all $\widehat{N}$.
			In other words, the action of $\hat{h}$ on $\widehat{\mathfrak{T}_\gd(M_f)}$ 
			sends $\hat{\mathfrak{u}}$ to $\tilde{\mathfrak{u}}$,
			and conjugates the stabilizer of $\hat{\mathfrak{u}}$ to that of $\tilde{\mathfrak{u}}$.
			It is clear that the stabilizer of $\tilde{\mathfrak{u}}$ in $\widehat{\pi_1(M_f)}$
			contains the profinite closure in $\widehat{\pi_1(M_f)}$
			of the stabilizer of $\tilde{\mathfrak{u}}$ in $\pi_1(M_f)$.
			On the other hand,
			suppose that $\hat{g}\in\widehat{\pi_1(M_f)}$ stabilizes $\tilde{\mathfrak{u}}$.
			Then for any finite-index normal subgroup $N$ of $\pi_1(M_f)$,
			the image $\hat{g}\widehat{N}$ of $\hat{g}$ in the finite quotient group 
			$\widehat{\pi_1(M_f)}/\widehat{N}$
			acts on the finite covering space $(\widetilde{S}_\univ\times\Real)/N$ of $M_f$,
			as a deck transformation that preserves the image of $\widetilde{U}$.
			It follows that $\hat{g}$ lies in the profinite closure of the stabilizer of $\tilde{\mathfrak{u}}$ in $\pi_1(M_f)$.
			This completes the proof.
		\end{proof}

		\subsection{Essential periodic trajectory classes of mapping tori}
		In any monologue setting $(S,f)$,
		we have introduced $m$--periodic trajectory classes
		$\ell_m(f;\mathbf{O})\in\conjorb(\pi_1(M_f))$,
		for $m$--periodic orbit classes $\mathbf{O}\in\POC_m(f)$, (see (\ref{gamma_m})).
		By naturally including $\pi_1(M_f)$ into its profinite completion
		$\widehat{\pi_1(M_f)}$,
		we denote by 
		\begin{equation}\label{gamma_hat_m}
		\hat{\ell}_m(f;\mathbf{O})\in\conjorb\left(\widehat{\pi_1(M_f)}\right)
		\end{equation}
		the conjugation orbit of $\widehat{\pi_1(M_f)}$
		that contains the image of $\ell_m(f;\mathbf{O})$.
		
		For any conjugation orbit $\hat{\mathbf{c}}\in\widehat{\pi_1(M_f)}$,
		we say that $\hat{\mathbf{c}}$ 
		is an \emph{essential profinite $m$--periodic trajectory class}
		if $\hat{\mathbf{c}}$ equals 
		$\hat{\ell}_m(f;\mathbf{O})$ for some essential $m$--periodic orbit class
		$\mathbf{O}\in\POC_m(f)$.
		One may think of such classes
		as profinite analogues of essential periodic trajectories
		up to free homotopy.
		For any $m\in\Natural$ and $i\in\Integral\setminus\{0\}$,
		we denote by
		\begin{equation}\label{profinite_essential_orbit_set}
		\conjorb_m\left(\widehat{\pi_1(M_f)};i\right)=
		\left\{\hat{\ell}_m(f;\mathbf{O})\colon \mathbf{O}\in\POC_m(f),\,\mathrm{ind}_m(f;\mathbf{O})=i\right\},
		\end{equation}
		the set of essential profinite $m$--periodic trajectory class of index $i$.
		It is a subset of $\conjorb(\widehat{\pi_1(M_f)})$.
				
		\begin{proposition}\label{profinite_orbit_set_description}
		In any monologue setting $(S,f)$, 
		for all $m\in\Natural$ and $i\in\Integral\setminus\{0\}$,
		the sets of essential profinite periodic trajectory classes
		$\conjorb_m(\widehat{\pi_1(M_f)};i)$
		are mutually disjoint,
		and are finite of cardinality $\nu_m(f;i)$,
		as introduced by (\ref{nu_m_i}).
		\end{proposition}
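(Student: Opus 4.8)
The plan is to deduce both assertions from a single fact: the assignment $\mathbf{O}\mapsto \hat{\ell}_m(f;\mathbf{O})$ is injective when restricted to the finite set of essential $m$--periodic orbit classes of $f$ (those $\mathbf{O}\in\POC_m(f)$ with $\mathrm{ind}_m(f;\mathbf{O})\neq0$). Granting this, fix a nonzero integer $i$: the set $\conjorb_m(\widehat{\pi_1(M_f)};i)$ is, by definition (\ref{profinite_essential_orbit_set}), the image under the above assignment of $\{\mathbf{O}\in\POC_m(f)\colon\mathrm{ind}_m(f;\mathbf{O})=i\}$, which is finite with $\nu_m(f;i)$ elements by (\ref{nu_m_i}); hence $\#\conjorb_m(\widehat{\pi_1(M_f)};i)=\nu_m(f;i)$. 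For $i\neq i'$ both nonzero, a common element of $\conjorb_m(\widehat{\pi_1(M_f)};i)$ and $\conjorb_m(\widehat{\pi_1(M_f)};i')$ would be simultaneously $\hat{\ell}_m(f;\mathbf{O})$ and $\hat{\ell}_m(f;\mathbf{O}')$ with $\mathrm{ind}_m(f;\mathbf{O})=i$ and $\mathrm{ind}_m(f;\mathbf{O}')=i'$; injectivity forces $\mathbf{O}=\mathbf{O}'$, contradicting $i\neq i'$. So the sets attached to distinct indices are disjoint. (Classes attached to different periods $m\neq m'$ are disjoint for the cheaper reason that $\widehat{\phi}_f$ takes the integer value $m$ on every element of $\conjorb_m(\widehat{\pi_1(M_f)};i)$, recording the winding number of the trajectory about the fiber circle of $M_f$.)

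It then remains to prove the injectivity, for which I would reuse the conjugacy separability argument from the proof of Lemma \ref{N_m_detect}. Since there are only finitely many essential $m$--periodic orbit classes, conjugacy separability of the $3$--manifold group $\pi_1(M_f)$ yields a finite quotient $\gamma\colon\pi_1(M_f)\to\Gamma$ under which they occupy mutually distinct conjugation orbits of $\Gamma$; that is, the conjugation orbits $\gamma(\ell_m(f;\mathbf{O}))\in\conjorb(\Gamma)$ are pairwise distinct as $\mathbf{O}$ ranges over the essential $m$--periodic orbit classes. The homomorphism $\gamma$ extends to a surjection $\widehat{\pi_1(M_f)}\to\Gamma$, and a surjective group homomorphism carries a conjugacy class onto the conjugacy class of the image; consequently the induced map $\conjorb(\widehat{\pi_1(M_f)})\to\conjorb(\Gamma)$ sends $\hat{\ell}_m(f;\mathbf{O})$ to precisely the conjugation orbit of $\Gamma$ occupied by $\mathbf{O}$. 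Hence if $\hat{\ell}_m(f;\mathbf{O})=\hat{\ell}_m(f;\mathbf{O}')$ for essential $\mathbf{O},\mathbf{O}'$, their images in $\conjorb(\Gamma)$ agree, so $\mathbf{O}$ and $\mathbf{O}'$ occupy the same conjugation orbit of $\Gamma$, whence $\mathbf{O}=\mathbf{O}'$ by the choice of $\gamma$.

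This proof is essentially a reorganization of material already assembled in Section \ref{Sec-indexed_orbit_numbers}; the only substantive ingredient is conjugacy separability of $3$--manifold groups, which is exactly where the required finiteness is exploited, so I do not anticipate a serious obstacle. The point that needs a little care is the bookkeeping in the middle step: one must verify that the image of $\hat{\ell}_m(f;\mathbf{O})$ under $\widehat{\pi_1(M_f)}\to\Gamma$ is the \emph{entire} conjugation orbit occupied by $\mathbf{O}$ rather than merely a subset of it, which holds because $\gamma$ is surjective and the image of a conjugacy class under a homomorphism is again a single conjugacy class; and, symmetrically, that all elements of $\ell_m(f;\mathbf{O})\subset\pi_1(M_f)$ have $\gamma$--conjugate images, so that the notion of the ``occupied'' conjugation orbit of $\Gamma$ is well posed.
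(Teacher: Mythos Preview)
Your proposal is correct and follows essentially the same route as the paper: both arguments rest on conjugacy separability of $\pi_1(M_f)$ to deduce that distinct essential $m$--periodic orbit classes give distinct conjugation orbits in $\widehat{\pi_1(M_f)}$. The only cosmetic difference is that the paper phrases this as the general injectivity of $\conjorb(\pi_1(M_f))\to\conjorb(\widehat{\pi_1(M_f)})$ (which is exactly what conjugacy separability means), whereas you route through a single separating finite quotient $\Gamma$; these are equivalent, and your version is a touch more explicit.
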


		\begin{proof}
		For all $m\in\Natural$ and $i\in\Integral\setminus\{0\}$, 
		the maps $\POC_m(f;i)\to \conjorb(\pi_1(M_f))$ given by $\mathrm{O}\mapsto \ell_m(f;\mathbf{O})$
		are by definition injective with mutually disjoint images of finite cardinality $\nu_m(f;i)$,
		(see Section \ref{Subsec-fixed_point_theory} and (\ref{nu_m_i})).
		Denote by $\conjorb_m(\pi_1(M_f);i)$ the image of $\POC_m(f;i)$
		contained in $\conjorb(\pi_1(M_f))$.
		As $\pi_1(M_f)$ is conjugacy separable,
		the natural inclusion of $\pi_1(M_f)$ into $\widehat{\pi_1(M_f)}$ induces
		an inclusion of $\conjorb(\pi_1(M_f))$ into $\conjorb(\widehat{\pi_1(M_f)})$,
		(see the proof of Lemma \ref{N_m_detect} for elaboration).
		Therefore, $\conjorb_m(\pi_1(M_f);i)$ are mapped
		bijectively onto $\conjorb_m(\widehat{\pi_1(M_f)};i)$ 
		for all $m\in\Natural$ and $i\in\Integral\setminus\{0\}$.
		It follows that $\conjorb_m(\widehat{\pi_1(M_f)};i)$
		are mutually disjoint of finite cardinality $\nu_m(f;i)$, as asserted.
		\end{proof}

		\begin{remark}
			For all $m\in\Natural$,
			it seems also reasonable to define 
			the	\emph{inessential profinite $m$--periodic trajectory classes}
			as the conjugation orbits of $\widehat{\pi_1(M_f)}$
			which have $\widehat{\phi}_f$--grading $m$ and which
			are not contained in any $\conjorb_m(\widehat{\pi_1(M_f)};i)$
			with $i\in\Integral\setminus\{0\}$.
			Then they form an infinite subset
			$\conjorb_m(\widehat{\pi_1(M_f)};0)$ of $\conjorb(\widehat{\pi_1(M_f)})$
			with cardinality $\aleph_1$.
			Here $\widehat{\phi}_f$ stands for the profinite extension of the distinguished cohomology class,
			see (\ref{ses_mapping_torus_group}).
		\end{remark}

		\subsection{Correspondence between profinite objects}
		
		\begin{theorem}\label{correspondence_profinite_objects}
			In a dialogue setting $(S,f_A,f_B)$,
			suppose that $S$ is of negative Euler characteristic 
			and $f_A,f_B$ of Nielsen--Thurston normal form 
			representing procongurently conjugate mapping classes.
			Then any aligned isomorphism $\Psi\colon\widehat{\pi_1(M_A)}\to\widehat{\pi_1(M_B)}$
			induces an equivariant isomorphism of abstract directed graphs with group actions:
			$$\Psi_\sharp\colon \widehat{\mathfrak{T}_\gd(M_A)}\to \widehat{\mathfrak{T}_\gd(M_B)}$$
			between the profinite geometric group-trees.
			Hence, $\Psi$ induces	an isomorphism of abstract directed graphs:
			$$\overline{\Psi_\sharp}\colon \mathfrak{G}_\gd(M_A)\to \mathfrak{G}_\gd(M_B)$$
			between the geometric decomposition graphs.
			Moreover, for all $m\in\Natural$ and $i\in\Integral\setminus\{0\}$,
			$\Psi$ induces bijections:
			$$\conjorb_m(\Psi;i)\colon \conjorb_m\left(\widehat{\pi_1(M_A)};i\right)\to\conjorb_m\left(\widehat{\pi_1(M_B)};i\right),$$
			between the sets of indexed profinite periodic trajectory classes.
		\end{theorem}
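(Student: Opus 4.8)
Overview of the approach. I would prove the three assertions separately. The statements about the profinite geometric group-trees and the geometric decomposition graphs rest on the profinite detection of geometric decompositions of Wilton--Zalesskii \cite{WZ_decomposition}, and use only that $\Psi$ is an isomorphism of profinite completions. The statement about indexed profinite periodic trajectory classes rests on the twisted periodic Lefschetz number machinery (Theorems \ref{L_m_dialogue} and \ref{N_m_dialogue}, Lemmas \ref{L_chi_c}, \ref{N_m_inequality}, \ref{N_m_detect}) together with conjugacy separability of $3$--manifold groups, and is where the alignedness of $\Psi$ enters.

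Trees and graphs. The geometric decomposition of each $M_\bullet$, $\bullet\in\{A,B\}$, is carried by the finite graph of groups dual to $\mathfrak{G}_\gd(M_\bullet)$, whose geometric--vertex groups are the $3$--manifold--piece groups, whose edge--center groups are $\widehat{\Integral}^2$ (or the Klein--bottle variant), and whose edge ends carry the directed incidence data fixed above. By \cite{WZ_decomposition} this graph of groups is efficient in the profinite topology of $\pi_1(M_\bullet)$, so the abstract directed graph $\widehat{\mathfrak{T}_{\gd}(M_\bullet)}$ --- defined here as the inverse limit over finite quotients of $\pi_1(M_\bullet)$ of the discrete Bass--Serre tree --- is the standard profinite tree on which $\widehat{\pi_1(M_\bullet)}$ acts; moreover the same work shows that the conjugacy classes of geometric--vertex and edge--center subgroups, their incidence, and the geometric type of every piece are recovered from the abstract isomorphism type of $\widehat{\pi_1(M_\bullet)}$. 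Feeding $\Psi$ through this correspondence, and using Proposition \ref{graph_element_stabilizers} to recognize profinite vertices and edge centers as conjugates of closures of the corresponding discrete subgroups, I obtain a $\Psi$--equivariant isomorphism $\Psi_\sharp$ of the underlying profinite trees; it preserves the directed structure because geometric vertices and edge centers have non--isomorphic stabilizer profinite groups and every edge end runs from an edge center to a geometric vertex. Since $\widehat{\mathfrak{T}_{\gd}(M_\bullet)}/\widehat{\pi_1(M_\bullet)}\cong\mathfrak{G}_\gd(M_\bullet)$ by the convergence argument recalled before the theorem, equivariance makes $\Psi_\sharp$ descend to an isomorphism $\overline{\Psi_\sharp}$ of geometric decomposition graphs.

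Trajectory classes. The isomorphism $\Psi$ induces a bijection $\Psi_*$ on conjugation orbits that preserves the $\widehat{\phi}$--grading, since $\widehat{\phi}_A=\widehat{\phi}_B\circ\Psi$. Fix $m\in\Natural$; there are finitely many essential $m$--periodic orbit classes of $f_A$ and of $f_B$. Combining, via $\Psi$, a finite quotient of $\widehat{\pi_1(M_A)}$ that separates the essential $m$--periodic orbit classes of $f_A$ with one that separates those of $f_B$ and passing to its image (conjugacy separability, as in the proof of Lemma \ref{N_m_detect}), I choose a finite quotient $\gamma_A\colon\widehat{\pi_1(M_A)}\to\Gamma$, set $\gamma_B=\gamma_A\circ\Psi^{-1}$ so that $\gamma_A=\gamma_B\circ\Psi$, and arrange that $\gamma_A$ restricted to $\pi_1(M_A)$ and $\gamma_B$ restricted to $\pi_1(M_B)$ are both separating; the restrictions are then aligned equivalent via $\Psi$, so Theorem \ref{L_m_dialogue} applies to the irreducible constituents of $\Gamma$. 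For a conjugation orbit $\mathbf{c}$ of $\Gamma$ occupied by an essential orbit class $\mathbf{O}_A$ of $f_A$, the computation in the proof of Lemma \ref{N_m_inequality} gives $L_m(f_A;\gamma_A^*\chi_{\mathbf{c}})=\mathrm{ind}_m(f_A;\mathbf{O}_A)$, while Lemma \ref{L_chi_c} and Theorem \ref{L_m_dialogue} give $L_m(f_A;\gamma_A^*\chi_{\mathbf{c}})=L_m(f_B;\gamma_B^*\chi_{\mathbf{c}})$; hence $\mathbf{c}$ is occupied by a unique essential orbit class $\mathbf{O}_B$ of $f_B$, of the same index. The resulting map $\mathbf{O}_A\mapsto\mathbf{O}_B$ is, by symmetry, an index--preserving bijection between essential $m$--periodic orbit classes of $f_A$ and of $f_B$, and it is stable under refinement of $\gamma_A$ (any two separating quotients having a common separating refinement), hence independent of the choices. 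Since $\widehat{\gamma}_B$ maps both $\Psi_*(\hat{\ell}_m(f_A;\mathbf{O}_A))$ and $\hat{\ell}_m(f_B;\mathbf{O}_B)$ to $\gamma_A(\ell_m(f_A;\mathbf{O}_A))=\gamma_B(\ell_m(f_B;\mathbf{O}_B))$, and this holds for $\gamma_B$ in a cofinal family of finite quotients, I conclude $\Psi_*(\hat{\ell}_m(f_A;\mathbf{O}_A))=\hat{\ell}_m(f_B;\mathbf{O}_B)$. Therefore $\Psi_*$ carries $\conjorb_m(\widehat{\pi_1(M_A)};i)$ into $\conjorb_m(\widehat{\pi_1(M_B)};i)$ for each $i\neq0$; the reverse inclusion is symmetric, so $\Psi_*$ restricts to the asserted bijection $\conjorb_m(\Psi;i)$.

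Main obstacle. The subtle step is the first one: one must know that the inverse--limit object $\widehat{\mathfrak{T}_{\gd}(M_\bullet)}$ really is the profinite Bass--Serre tree acted on by $\widehat{\pi_1(M_\bullet)}$ (this is the efficiency/exactness of the geometric--decomposition graph of groups), and that the entire decorated structure --- graph shape, conjugacy classes of vertex and edge groups, and geometric types of the pieces --- is rigid enough to be transported equivariantly by an arbitrary isomorphism of profinite completions. This is precisely the content of \cite{WZ_decomposition}, and the real work is in matching its formulation with the barycentric bookkeeping set up in this section. The third part is comparatively routine; its only delicate point is the convergence argument that upgrades equality of images in the finite quotients into an equality of profinite conjugation orbits.
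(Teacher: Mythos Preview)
Your proposal is correct and follows essentially the same approach as the paper: the tree and graph isomorphisms are obtained by invoking Wilton--Zalesskii \cite{WZ_decomposition} (with the bookkeeping translation between the JSJ and barycentric geometric-decomposition models), and the bijections on indexed profinite periodic trajectory classes are obtained from the equality $L_m(f_A;\gamma_A^*\chi_{\mathbf{c}})=L_m(f_B;\gamma_B^*\chi_{\mathbf{c}})$ via Lemma~\ref{L_chi_c} and Theorem~\ref{L_m_dialogue}, applied to a finite quotient separating the essential orbit classes on both sides, followed by an inverse-limit/convergence argument. Your write-up is in fact slightly more careful than the paper's in two places---you explicitly combine separating quotients for $A$ and $B$ before comparing, and you justify preservation of the directed structure by distinguishing vertex and edge-center stabilizers---but the underlying argument is the same.
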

		
		\begin{proof}
			The isomorphisms $\Psi_\sharp$ and $\overline{\Psi_\sharp}$ 
			follow from the deep work of Wilton--Zalesskii \cite{WZ_decomposition},
			which shows that profinite completion detects the Jaco--Shalen--Johanson decomposition
			and also the equivariant profinite-isomorphism type of the profinite Bass--Serre tree.
			(See also \cite[Theorem I]{Wilkes_thesis} for a partial result toward the same direction.)
			Their result can be adapted here as follows.
			In any monologue setting $(S,f)$ where $S$ is of negative Euler characteristic
			and $f$ of Nielsen--Thurston normal form,
			the Jaco--Shalen--Johanson (JSJ) decomposition of $M_f$ is closely related to the geometric decomposition.
			The JSJ tori are the geometric decomposition tori together with the boundary tori of 
			the thickened geometric decomposition Klein bottles;
			the JSJ pieces are the geometric pieces together with the thickened geometric decomposition Klein bottles.
			Therefore, 
			the usual JSJ graph of $M_f$ can be recovered from
			our barycentric model of the geometric decomposition graph $\mathfrak{G}_\gd(M_f)$
			by flattening out the torus-type geometric edge centers in the obvious way.
			Using the well-known correspondence between universal deck transformation groups
			and fundamental groups,
			a standard Bass--Serre tree for the induced graph-of-groups decomposition of $\pi_1(M_f)$
			can also be recovered from 
			our barycentric model of the universal geometric decomposition tree $\mathfrak{T}_\gd(M_f)$
			by flatting out the overlying geometric edge centers of the same type.
			Then the isomorphisms $\Psi_\sharp$ and $\overline{\Psi_\sharp}$ 
			are implied by \cite[Theorem 4.3]{WZ_decomposition}.
			Note that \cite[Theorem 4.3]{WZ_decomposition}
			also works for the compact case with tori boundary,
			which following from a well-known doubling trick,
			(for example, 
			see the argument of \cite[Proof of Corollary C]{WZ_decomposition}).
			
			The bijections $\conjorb_m(\Psi;i)$ are essentially implied by the proof
			of Theorem \ref{N_m_dialogue}.
			As we have argued at the end of Section \ref{Sec-indexed_orbit_numbers},
			$L_m(f_A;\gamma_A^*\chi_\mathbf{c})=L_m(f_B;\gamma_B^*\chi_\mathbf{c})$
			holds	for any $\Psi$--aligned finite quotient $\gamma_A\colon\pi_1(M_A)\to\Gamma$ 
			and $\gamma_B\colon\pi_1(M_B)\to\Gamma$, 
			and for any conjugation orbit $\mathbf{c}\in\conjorb(\Gamma)$,
			(Theorem \ref{L_m_dialogue} and Lemma \ref{L_chi_c}).
			By (\ref{L_m_chi}) and (\ref{chi_c}),
			this means that an conjugation orbit $\mathbf{c}\in\conjorb(\Gamma)$ 
			lies in the image of $\conjorb_m(\widehat{\pi_1(M_A)};i)$
			if and only if it lies in the image of $\conjorb_m(\widehat{\pi_1(M_B)};i)$,
			so the images coincide for all $m\in\Natural$ and $i\in\Integral$.
			Moreover, for any $\Gamma$ that satisfies the conclusion of Lemma \ref{N_m_detect},
			both $\conjorb_m(\widehat{\pi_1(M_A)};i)$ and $\conjorb_m(\widehat{\pi_1(M_B)};i)$
			must be mapped injectively into $\conjorb(\Gamma)$,
			for all $i\in\Integral\setminus\{0\}$.
			Note that $\conjorb(\widehat{\pi_1(M_A)})$ 
			can be naturally identified as the inverse limit of
			the inverse system of finite quotients
			$\conjorb(\gamma_A)\colon \conjorb(\pi_1(M_A))\to \conjorb(\Gamma)$.
			The same holds for $\conjorb(\widehat{\pi_1(M_B)})$
			with the $\Psi$--aligned inverse system
			$\conjorb(\gamma_B)\colon \conjorb(\pi_1(M_B))\to \conjorb(\Gamma)$.
			By the standard construction of inverse limits,
			we see that $\Psi$ induces bijective correspondences $\conjorb_m(\Psi;i)$
			between	$\conjorb_m(\widehat{\pi_1(M_A)};i)$ and $\conjorb_m(\widehat{\pi_1(M_B)};i)$,
			for all $m\in\Natural$ and $i\in\Integral\setminus\{0\}$,
			as asserted.
		\end{proof}

\section{Procongruent almost rigidity}\label{Sec-procongruent_almost_rigidity}
In this section, we prove Theorem \ref{main_procongruent_almost_rigidity}.
We restate it formally as Theorem \ref{procongruent_conjugacy_finiteness}.

\begin{theorem}\label{procongruent_conjugacy_finiteness}
	For any mapping class $[f]\in\mcg(S)$
	of an orientable connected compact surface $S$,
	there exist finitely many mapping classes 
	$[f_1],\cdots,[f_r]\in \mcg(S)$
	with the following property:
	For any mapping class $[f']\in\mcg(S)$ which is procongruently conjugate to $[f]$,
	there exists some $s\in\{1,\cdots,r\}$
	such that $[f']$ is conjugate to $[f_s]$ in $\mcg(S)$.
\end{theorem}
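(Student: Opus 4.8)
The plan is to deduce Theorem \ref{procongruent_conjugacy_finiteness} from Proposition \ref{bound_finiteness} by showing that the two complexity measures of Section \ref{Subsec-NT_complexity}, dilatation and deviation, are invariants of the procongruent conjugacy class. First dispose of the surfaces of nonnegative Euler characteristic: for the disk and the sphere the mapping class group is trivial, for the annulus it is infinite cyclic and the statement is immediate, and for the torus it follows from the arithmetic of $\mathrm{SL}(2,\Integral)$ recalled in Example \ref{Stebe_example}. So assume $S$ has negative Euler characteristic, fix $[f]\in\mcg(S)$, and set $K=\dilatation([f])$, $C=\deviation([f])$. By Proposition \ref{bound_finiteness} there are finitely many conjugacy classes $[f_1],\ldots,[f_r]\in\mcg(S)$ exhausting all mapping classes of dilatation $\le K$ and deviation $\le C$. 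It therefore suffices to show that if $[f_A],[f_B]\in\mcg(S)$ are procongruently conjugate then $\dilatation([f_A])=\dilatation([f_B])$ and $\deviation([f_A])=\deviation([f_B])$; we may take $f_A,f_B$ to be Nielsen--Thurston normal forms.

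The dilatation is immediate from the machinery already in place. Recall (Section \ref{Subsec-NT_complexity}, and \cite[Theorem 3.7]{Jiang_periodic}) that $\dilatation([f])=\limsup_{m\to\infty}N_m(f)^{1/m}$, the exponential growth rate of the orbit Nielsen numbers. By Theorem \ref{N_m_dialogue} together with (\ref{N_m_def}) we have $N_m(f_A)=N_m(f_B)$ for all $m\in\Natural$, hence $\dilatation([f_A])=\dilatation([f_B])$. (This also subsumes Theorem \ref{pA_dialogue} (1).)

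The deviation is the substantial point, and here the profinite geometric structures of Section \ref{Sec-profinite_structures} do the work. Fix an aligned isomorphism $\Psi\colon\widehat{\pi_1(M_A)}\to\widehat{\pi_1(M_B)}$ (Proposition \ref{characterization_mapping_tori}) and apply Theorem \ref{correspondence_profinite_objects} to obtain: an equivariant isomorphism $\Psi_\sharp$ of profinite geometric group-trees; the induced isomorphism $\overline{\Psi_\sharp}$ of geometric decomposition graphs; and, for every $m\in\Natural$ and $i\in\Integral\setminus\{0\}$, a bijection $\conjorb_m(\Psi;i)$ between the indexed profinite periodic trajectory classes. The plan is to express $\deviation$ entirely through this data. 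One first translates the topological description of the fractional Dehn twist coefficient into group theory: a reduction curve corresponds to a torus-type geometric edge center of $\mathfrak{G}_\gd(M_f)$; its underlying decomposition torus $T$ carries a subgroup conjugate to $\widehat{\pi_1(T)}\cong\widehat{\Integral}^2$ inside $\widehat{\pi_1(M_f)}$, which $\Psi_\sharp$ transports to the matching torus subgroup on the other side. Each geometric piece adjacent to $T$ determines a primitive slope of degeneracy in $\widehat{\pi_1(T)}$: for an $\mathbb{H}^2\times\mathbb{E}^1$ piece it is the regular Seifert fiber slope, which is profinitely canonical by the detection of Seifert pieces and their fibers; for an $\mathbb{H}^3$ piece it is the boundary slope of the invariant foliation near the reduction curve, which is recovered from the indexed periodic trajectory classes carried by $T$ (the index of a crown-circle class records the number of prongs meeting the reduction curve, cf.\ Example \ref{example_fpc_classification}), hence is preserved by the bijections $\conjorb_m(\Psi;i)$. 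By $3$-manifold topology the fractional Dehn twist coefficient along the reduction curve is an explicit function of the geometric intersection number of the two slopes of degeneracy on $T$, and inside $\widehat{\pi_1(T)}\cong\widehat{\Integral}^2$ this intersection number equals the index of the closed subgroup they generate, which is plainly preserved by $\Psi_\sharp$. Taking the maximum of the normalized coefficients over all reduction curves gives $\deviation([f_A])=\deviation([f_B])$, and Proposition \ref{bound_finiteness} then finishes the proof.

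The hard part is this last step, and within it two things need care. The first is the group-theoretic translation itself: one must show, along the lines of Lemmas \ref{carry_trajectory_universal} and \ref{carry_trajectory_profinite} and Lemma \ref{complexity_equal}, that a periodic trajectory class is ``carried'' by a decomposition torus exactly when its representative conjugacy class meets the corresponding (profinite) torus subgroup, that the slope of degeneracy is an intrinsic enough attribute of the incident geometric piece to be transported across $\Psi$, and that the normalization of the coefficient matches. The second is the handling of the degenerate configurations suppressed above: decomposition Klein bottles (where the stabilizing subgroup is not $\widehat{\Integral}^2$ but a profinite Klein-bottle group) and decomposition tori incident to a single geometric piece, or to the same piece on both sides. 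In these cases the fractional Dehn twist coefficient must be read off by a slightly different recipe, still built from the combinatorics of $\widehat{\mathfrak{T}_\gd(M_f)}$ and the profinite trajectory correspondence, so that the equality of deviations persists. With these points settled, the proof is complete.
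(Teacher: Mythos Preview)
Your overall strategy matches the paper's: reduce to Proposition \ref{bound_finiteness} by showing that dilatation and deviation are procongruent-conjugacy invariants, the former via the growth of $N_m$ and the latter via the profinite correspondences of Theorem \ref{correspondence_profinite_objects}. The dilatation argument is exactly as in the paper.

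Where you diverge is in attacking the deviation invariance directly in full generality. The paper instead inserts an intermediate reduction, Lemma \ref{simplification_lemma}: pass to the characteristic mod-$2$ cover and a suitable power so that (i) every reduction curve is homologically non-separating and (ii) $f$ is already split (identity on the periodic part and on $\partial S_\pA$). This is not merely cosmetic. Condition (i) is what makes Lemma \ref{unique_intersection} true---the edge-center stabilizer $\widehat\Pi_{\hat{\mathfrak e}}\cong\widehat{\Integral}^2$ injects into the abelianization of $\widehat\Pi$, so a conjugacy class can meet it in at most one element---and this uniqueness is the hinge on which Lemma \ref{carry_trajectory_profinite} turns. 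Condition (ii) guarantees that the boundary trajectories $\ell_1(f;\mathfrak b)$ are themselves essential periodic trajectory classes (arising from crown circles, annuli, or hyperbolic subsurfaces in Example \ref{example_fpc_classification}), so that both ``slopes of degeneracy'' on either side of a reduction annulus are transported by the bijections $\conjorb_m(\Psi;i)$ without any separate appeal to profinite detection of Seifert fibers. The shearing degree is then read off purely as the order of the finite cyclic quotient $Q_{\mathfrak e}$ of $\widehat\Pi_{\hat{\mathfrak e}}$ by the closure of the subgroup those two elements generate.

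By contrast, your sketch asks for the slope on the periodic side to be recovered as ``the regular Seifert fiber slope, profinitely canonical by detection of Seifert pieces and their fibers''; that is a different (and heavier) input than what the paper uses, and it is not obviously available in the form you need. More seriously, your final paragraph defers the Klein-bottle and self-adjacent cases to ``a slightly different recipe'' without saying what it is. In the paper's route these cases simply do not arise after the cover-and-power step, and the recipe is uniform. Without that reduction, the injectivity underlying Lemma \ref{unique_intersection} can fail (a reduction curve may be null-homologous), and then the mechanism for pinning down the carried trajectory element inside $\widehat\Pi_{\hat{\mathfrak e}}$ breaks. So the gap is precisely the missing simplification lemma: once you add it, your deviation argument collapses onto the paper's Lemma \ref{complexity_equal}, and the ``degenerate configurations'' evaporate rather than needing bespoke treatment.
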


The rest of this section is devoted to the proof of Theorem \ref{procongruent_conjugacy_finiteness}.
We start with some simplification of our situation.

\begin{lemma}\label{simplification_lemma}
	The statement of Theorem \ref{procongruent_conjugacy_finiteness} holds true in general if 
	it holds true assuming all the following additional conditions:
	\begin{itemize}
	\item The surface $S$ is of negative Euler characteristic,
	and the mapping class $[f]$ is represented by some $f$ of Nielsen--Thurston norm form,
	with respect to a Nielsen--Thurston decomposition $(S_\periodic,S_\pA,S_\reduction)$ of $S$.
	\item For every component $U$ of the reduction part $S_\reduction$, 
	the homomorphism $H_1(U;\Integral)\to H_1(S;\Integral)$ induced by inclusion is injective.
	\item The representative $f$ fixes the periodic part $S_\periodic$ and
	the pseudo-Anosov part boundary $\partial S_\pA$.
	In other words, $f$ is a commutative product of pseudo-Anosov factors supported on the components of $S_\pA$
	and integral Dehn twists supported on the components of $S_\reduction$.
	\end{itemize}
\end{lemma}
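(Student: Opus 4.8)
The plan is to prove Lemma \ref{simplification_lemma} by a sequence of reductions, each replacing the given mapping class $[f]$ by a better-behaved one while preserving the finiteness conclusion. The overarching principle is: if $[f]$ is procongruently conjugate to $[f']$, then some ``canonical modification'' of $[f]$ is procongruently conjugate to the corresponding modification of $[f']$, and the modification changes the ambient conjugacy class only in a controlled, finite-to-one way. So it suffices to establish the finiteness statement after modification.

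First I would dispose of the low-complexity surfaces. If $\chi(S)\ge 0$, then $S$ is a disk, annulus, sphere, torus, or M\"obius/Klein type surface (the latter excluded since $S$ is orientable); in each case $\mcg(S)$ is trivial, or virtually cyclic, or $\mathrm{SL}(2,\Integral)$, and the finiteness assertion is either vacuous or is the classical statement recalled in Example \ref{Stebe_example}. So I may assume $\chi(S)<0$, and by the Nielsen--Thurston theory (Section \ref{Subsec-NT_normal_form}) pick a representative $f$ of Nielsen--Thurston normal form with respect to a decomposition $(S_\periodic,S_\pA,S_\reduction)$. This is the first bullet.

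Next I would pass to a suitable power. Replace $[f]$ by $[f]^d$ where $d\in\Natural$ is the split order, so that $[f]^d$ fixes $S_\periodic$ and $\partial S_\pA$; this yields the third bullet. The point is that procongruent conjugacy is compatible with taking powers in the following sense: $[f_A]$ and $[f_B]$ procongruently conjugate implies $[f_A]^d$ and $[f_B]^d$ procongruently conjugate (apply the defining outer automorphism conjugacy to the $d$-th powers in $\outg(\widehat{\pi_1(S)})$). Conversely, a mapping class has only finitely many $d$-th roots in $\mcg(S)$ up to conjugacy --- this follows because a $d$-th root is determined up to finite ambiguity by its Nielsen--Thurston data, which is constrained by that of its $d$-th power, together with the Arnoux--Yoccoz--Ivanov finiteness used in the proof of Proposition \ref{bound_finiteness}; alternatively one can invoke root-finiteness results for mapping class groups directly. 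Hence finiteness of conjugacy classes in the procongruent class of $[f]^d$ implies the same for $[f]$, at the cost of a finite multiplicative factor. For the second bullet, I would pass to an appropriate finite characteristic cover $\widetilde S\to S$ on which every reduction-curve inclusion becomes $H_1$-injective: such a cover exists since surface groups are LERF and residually finite, so a reduction curve that is nullhomologous can be unwrapped or separated in a finite cover. One must then check that procongruent conjugacy of $[f_A],[f_B]$ lifts to procongruent conjugacy of canonical elevations $[\widetilde f_A],[\widetilde f_B]$ on $\widetilde S$ --- this is where the characteristic (indeed profinitely characteristic, cf.\ Remark \ref{profinitely_characteristic}) nature of the cover is used, together with Proposition \ref{characterization_mapping_tori}, which lets one transport the aligned isomorphism of mapping-torus completions to the finite cover. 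Since only finitely many mapping classes of $\widetilde S$ lie over a given mapping class of $S$, and conjugacy downstairs is controlled by conjugacy upstairs up to finite ambiguity, this reduction is again finite-to-one.

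The main obstacle is the $H_1$-injectivity reduction: one must verify that the finite cover can be chosen simultaneously characteristic (so that $[f]$, and every $[f']$ in its procongruent class, lifts canonically) and fine enough to kill all the homological degeneracies of reduction curves, and that the procongruent conjugacy descends and lifts correctly through this cover. The delicate point is bookkeeping the interaction between the aligned-isomorphism condition on mapping-torus completions and the passage to the cover, ensuring that the lifted mapping classes remain procongruently conjugate with a compatible alignment; here one leans on the fact that the deck group of $\widetilde S \to S$ is canonically visible inside $\widehat{\pi_1(S)}$ and hence inside $\widehat{\pi_1(M_f)}$, so any aligned isomorphism $\widehat{\pi_1(M_A)}\to\widehat{\pi_1(M_B)}$ restricts to one between the completions of the mapping-torus groups of the covers. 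Once these three reductions are in place, one invokes the (assumed) special case, and the finite multiplicities accumulated along the way combine to give the general finiteness statement.
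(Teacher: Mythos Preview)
Your overall architecture matches the paper's: dispose of the non-hyperbolic surfaces, then achieve the three bullets by passing to a characteristic finite cover and a suitable power. The substantive divergence is in how you descend finiteness back to $S$.

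The paper does \emph{not} attempt to pull conjugacy classes down from $\tilde S$ to $S$. Instead, having obtained the finite list $[\tilde F_1],\dots,[\tilde F_R]\in\mcg(\tilde S)$ from the special case, it extracts uniform bounds on $\dilatation$ and $\deviation$, pushes those bounds down to $S$ via the power formulas (\ref{dil_dev_power}), and then applies Proposition~\ref{bound_finiteness} directly on $S$ to produce $[f_1],\dots,[f_r]$. So the descent is numerical, not via any conjugacy-class correspondence.

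Your descent instead rests on two assertions: finiteness of $d$-th roots up to conjugacy (which is fine), and the claim that ``conjugacy downstairs is controlled by conjugacy upstairs up to finite ambiguity.'' This second claim is the gap. It is not a standard citable fact, and you give no argument for it. Concretely: given a conjugacy class in $\mcg(\tilde S)$, why should only finitely many $\mcg(S)$-conjugacy classes have a lift landing in it? The conjugating element in $\mcg(\tilde S)$ need not normalize the deck group, so it need not descend. The most direct way to prove your claim is to observe that a lift has the same dilatation and (up to a bounded factor) the same deviation as the original, and then invoke Proposition~\ref{bound_finiteness}---but that is exactly the paper's route, bypassing the conjugacy-descent statement altogether.

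A secondary point: your construction of the cover is imprecise. LERF gives separability of finitely generated subgroups, not $H_1$-injectivity of lifted curves, and in any case you need a single cover that works for the reduction curves of \emph{every} $[f']$ in the procongruent class, whose Nielsen--Thurston decomposition you do not know in advance. The paper solves this by taking the canonical mod-$2$ homology cover, which is characteristic and makes every lifted essential simple closed curve homologically nontrivial, independent of $[f']$.
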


\begin{proof}
	When $S$ is of zero or positive Euler characteristic, it is a disk, a sphere, an annulus, or a torus.
	Only the last case is nontrivial, and we have already explained it in Example \ref{Stebe_example}.
	Below we assume that $S$ is of negative Euler characteristic.

	We apply a familiar trick of taking characteristic abelian finite covers.
	Given any mapping class $[f]\in\mcg(S)$, 
	we take a representative $f\colon S\to S$
	of Nielsen--Thurston normal form with respect to some Nielsen--Thurston decomposition
	$(S_\periodic,S_\pA,S_\reduction)$ of $S$.
	Take $\tilde{S}\to S$ to be the characteristic finite cover 
	that corresponds to the kernel of the natural homomorphism 
	$\pi_1(S)\to H_1(S;\Integral/2\Integral)$.
	Take $\tilde{f}\colon \tilde{S}\to \tilde{S}$ to be any elevation of $f$ to $\tilde{S}$.
	It follows that $\tilde{f}$ is also of Nielsen--Thurston normal form,
	with respect to the induced Nielsen--Thurston decomposition
	$(\tilde{S}_\periodic,\tilde{S}_\pA,\tilde{S}_\reduction)$ of $\tilde{S}$.
	For some $m\in\Natural$, 
	the mapping torus of $\tilde{f}^m$ naturally projects onto the mapping torus of $f$
	as a regular finite cover.
	Take $\tilde{F}$ to be $\tilde{f}^m$ for some such $m$,
	and also require $m$ to be divisible by the split order of $\tilde{f}$.
	
	Observe that the monologue setting $(\tilde{S},\tilde{F})$ 
	satisfies all the listed conditions of Lemma \ref{simplification_lemma}.
	Having assumed Theorem \ref{procongruent_conjugacy_finiteness} for this case,
	we obtain finitely many mapping classes $[\tilde{F}_1],\cdots,[\tilde{F}_R]\in\mcg(\tilde{S})$.
	Apply Lemma \ref{bound_finiteness}	with $C=\max_{t\in\{1,\cdots,R\}}\,\dilatation([\tilde{F}_t])^{1/m}$
	and $K=\max_{t\in\{1,\cdots,R\}}\,\deviation([\tilde{F}_t])\cdot(2/m)$.
	We obtain finitely many mapping classes $[f_1],\cdots,[f_r]\in\mcg(S)$.		
	Since $[f_1],\cdots,[f_r]\in\mcg(S)$ are obtained for any $[f]\in\mcg(S)$
	based on the hypothsis of Lemma \ref{simplification_lemma},
	it remains to verify that they satisfy the asserted property
	of Theorem \ref{procongruent_conjugacy_finiteness}.
	Suppose that $[f']\in\mcg(S)$ is procongruently conjugate to $[f]$.
	Then for some elevation $[\tilde{f}']\in\mcg(\tilde{S})$ of $[f']$ to $\tilde{S}$,
	the mapping class $[\tilde{f}']^m\in\mcg(\tilde{S})$ is procongruently conjugate to $[\tilde{F}]$,
	by Proposition \ref{characterization_mapping_tori}.
	It follows that $[\tilde{f}']^m\in\mcg(\tilde{S})$ 
	is conjugate to $[\tilde{F}'_t]$ for some $t\in\{1,\cdots,R\}$,
	so we observe $\dilatation([f'])\leq C$ and $\deviation([f'])\leq K$.
	It follows that $[f']\in\mcg(S)$ is conjugate to  $[f_s]$
	for some $s\in\{1,\cdots,r\}$, as asserted.
\end{proof}

To simplify notations,
given any monologue setting $(S,f)$ that satisfies the additional conditions listed by Lemma \ref{simplification_lemma},
we denote by $\Pi$ the mapping torus group $\pi_1(M_f)$, and 
by $\widehat{\Pi}$ the profinite completion $\widehat{\pi_1(M_f)}$.
We denote by $\mathfrak{G}$ the geometric decomposition graph $\mathfrak{G}_\gd(M_f)$ of the mapping torus $M_f$,
and by $(\mathfrak{T},\Pi)$ the universal geometric group-tree $(\mathfrak{T}_\gd(M_f),\pi_1(M_f))$,
and by $(\widehat{\mathfrak{T}},\widehat{\Pi})$ the profinite geometric group-tree 
$(\widehat{\mathfrak{T}_\gd(M_f)},\widehat{\pi_1(M_f)})$.
We include $\Pi$ naturally into $\widehat{\Pi}$, 
and $\mathfrak{T}$ equivariantly into $\widehat{\mathfrak{T}}$,
as (\ref{TG_diagram}).
For any graph element $\tilde{\mathfrak{u}}$ of $\mathfrak{T}$,
we denote by $\Pi_{\tilde{\mathfrak{u}}}$ the stabilizer of $\tilde{\mathfrak{u}}$ in $\Pi$.
The notations $\widehat{\Pi}_{\hat{\mathfrak{u}}}$ for any graph element $\hat{\mathfrak{u}}$ of $\widehat{\mathfrak{T}}$
are understood likewise.
		
Under the listed conditions of Lemma \ref{simplification_lemma},
every geometric edge center  $\mathfrak{e}$ of $\mathfrak{G}$ 
corresponds to a thickened decomposition torus $M_{f,\mathfrak{e}}$ of $M_f$.
It is the suspension of 
a unique component $S_{\mathfrak{e}}$ of the reduction part $S_\reduction$.
There are exactly two geometric edge ends of $\mathfrak{G}$ departing from $\mathfrak{e}$.
We denote by $\partial_{\mathfrak{b}} M_{f,\mathfrak{e}}$ 
the boundary component of $M_{f,\mathfrak{e}}$
that corresponds to a geometric edge end $\mathfrak{b}$ departing from $\mathfrak{e}$.
Note that $\partial_{\mathfrak{b}} M_{f,\mathfrak{e}}$ 
is homeomorphic to a torus and 
is foliated by closed $1$--periodic trajectories of the suspension flow.
As these closed trajectories are mutuually parallel,
they represent one and the same free-homotopy loop of $M_f$,
and therefore a unique conjugacy orbit of $\Pi$,
denoted as
$\ell_1(f;\mathfrak{b})\in \conjorb(\Pi)$.
For any $m\in\Natural$, we denote by 
\begin{equation}\label{l_m_f_b}
\ell_m(f;\mathfrak{b})\in\conjorb(\Pi)
\end{equation}
the $m$--th power of $\ell_1(f;\mathfrak{b})$.
In other words, $\ell_m(f;\mathfrak{b})$ is represented by
the closed $m$--periodic trajectories of the suspension flow
immersed in $\partial_{\mathfrak{b}}M_{f,\mathfrak{e}}$.
We denote by
\begin{equation}\label{hat_l_m_f_b}
\hat{\ell}_m(f;\mathfrak{b})\in\conjorb(\widehat{\Pi})
\end{equation}
the unique conjugation orbit of $\widehat{\Pi}$ that contains $\ell_m(f;\mathfrak{b})$
under the natural inclusion of $\Pi$ into $\widehat{\Pi}$.

Our listed conditions of Lemma \ref{simplification_lemma} are convenient
mostly because they simplify the proof for the following lemma.
It should certainly hold in more generality,
but the current version suffices for our argument and also illustrates the point.

\begin{lemma}\label{unique_intersection}
	In a monologue setting $(S,f)$, 
	suppose that the listed conditions of Lemma \ref{simplification_lemma} are satisfied.
	Suppose that $\hat{\mathfrak{e}}$ 
	is any profinite geometric edge center of $\widehat{\mathfrak{T}}$.
	Then for any conjugation orbit $\hat{\mathbf{c}}\in\conjorb(\widehat{\Pi})$,
	there is at most one element $\hat{g}\in\hat{\mathbf{c}}$ which lies in $\widehat{\Pi}_{\hat{\mathfrak{e}}}$.	
\end{lemma}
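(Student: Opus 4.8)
The plan is to argue entirely inside the profinite geometric group-tree $(\widehat{\mathfrak{T}},\widehat{\Pi})$. First I would reduce to the case of a \emph{discrete} edge center. The asserted property is unchanged if $\hat{\mathfrak{e}}$ is replaced by any $\widehat{\Pi}$--conjugate, since conjugation fixes each conjugation orbit $\hat{\mathbf{c}}$ setwise and carries $\widehat{\Pi}_{\hat{\mathfrak{e}}}$ to the corresponding stabilizer; so by Proposition \ref{graph_element_stabilizers} we may assume $\hat{\mathfrak{e}}=\tilde{\mathfrak{e}}\in\mathfrak{T}$, with $\widehat{\Pi}_{\tilde{\mathfrak{e}}}$ equal to the profinite closure $\overline{P}$ of $P:=\Pi_{\tilde{\mathfrak{e}}}$. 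Under the hypotheses of Lemma \ref{simplification_lemma}, the thickened decomposition torus $M_{f,\mathfrak{e}}$ is the mapping torus of an annulus under an integral Dehn twist, hence homeomorphic to $T^{2}\times[0,1]$; thus $P\cong\Integral^{2}$, its closure $\overline{P}$ is abelian, and each boundary torus of $M_{f,\mathfrak{e}}$ realizes $P$ as the full stabilizer of the incident geometric edge end. Supposing $g_{1},g_{2}\in\overline{P}$ with $g_{1}=hg_{2}h^{-1}$ for some $h\in\widehat{\Pi}$, I want $g_{1}=g_{2}$. If $h\in\overline{P}$ this is immediate from abelianness of $\overline{P}$; so assume $h\notin\overline{P}$, equivalently $h^{-1}\tilde{\mathfrak{e}}\neq\tilde{\mathfrak{e}}$, and also $g_{2}\neq 1$ (otherwise $g_{1}=1$ as well).

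The heart of the argument uses that $g_{2}$, fixing the two distinct profinite geometric edge centers $\tilde{\mathfrak{e}}$ and $h^{-1}\tilde{\mathfrak{e}}$, must fix the geodesic between them in $\widehat{\mathfrak{T}}$, which therefore runs through some profinite geometric vertex $\tilde{\mathfrak{w}}$ at which two \emph{distinct} geometric edge ends of the geodesic meet. Let $W$ denote the geometric piece underlying $\tilde{\mathfrak{w}}$; then $g_{2}$ lies in two distinct peripheral (boundary-torus) subgroups of $\widehat{\Pi}_{\tilde{\mathfrak{w}}}$. If $W$ is a pseudo-Anosov piece, so hyperbolic, then the profinite closures of distinct peripheral subgroups of a cusped hyperbolic $3$--manifold group intersect trivially (profinite separation and almost malnormality of cusp subgroups, part of the machinery behind the profinite detection of the geometric decomposition \cite{WZ_decomposition,HWZ_conjugacy_separability}); this forces $g_{2}=1$, a contradiction. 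Hence $W$ is a periodic piece, with an $\mathbb{H}^{2}\times\mathbb{E}^{1}$ structure, and here the intersection of two distinct peripheral subgroups is exactly the central Seifert-fiber subgroup, the same being true after profinite completion; therefore $g_{2}$ lies in the closure of that central subgroup and so centralizes the whole of $\pi_{1}(W)$.

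It then remains to locate $h^{-1}\tilde{\mathfrak{e}}$ precisely. Since the Seifert fiber of $W$ lies in every peripheral subgroup of $\pi_{1}(W)$, while the Seifert fibers of distinct geometric pieces are never parallel across a decomposition torus (minimality of the geometric decomposition), the profinite closures of distinct fiber subgroups meet trivially, so the subtree fixed by $g_{2}$ does not propagate past the pieces adjacent to $\tilde{\mathfrak{w}}$; in particular every geometric edge center fixed by $g_{2}$ is adjacent to $\tilde{\mathfrak{w}}$. Thus both $\tilde{\mathfrak{e}}$ and $h^{-1}\tilde{\mathfrak{e}}$ are adjacent to $\tilde{\mathfrak{w}}$ and lie over the same geometric edge center of $\mathfrak{G}$. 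Here the simplification hypotheses are used essentially: as $W$ is a periodic piece, the last of the conditions in Lemma \ref{simplification_lemma} forces $f$ to preserve each component of the reduction part, so by the last clause of Definition \ref{NT_normal_form} no reduction annulus has both boundary circles on the periodic part; hence exactly one boundary torus of $W$ is glued to $M_{f,\mathfrak{e}}$, and the two geometric edge ends joining $\tilde{\mathfrak{w}}$ to $\tilde{\mathfrak{e}}$ and to $h^{-1}\tilde{\mathfrak{e}}$ lie in a single $\pi_{1}(W)$--orbit. Consequently $h^{-1}\tilde{\mathfrak{e}}=w\tilde{\mathfrak{e}}$ for some $w\in\pi_{1}(W)$, so $hw$ stabilizes $\tilde{\mathfrak{e}}$, i.e.\ $hw\in\overline{P}$; since $g_{2}$ centralizes $w$ this gives $g_{1}=hg_{2}h^{-1}=(hw)g_{2}(hw)^{-1}=g_{2}$, again by abelianness of $\overline{P}$.

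I expect the main obstacle to be the profinite-tree bookkeeping running through the two middle steps: identifying the $\widehat{\Pi}$--stabilizers of profinite vertices and edges with the profinite closures of the corresponding subgroups of $\Pi$, that an element fixing two points of $\widehat{\mathfrak{T}}$ fixes the geodesic joining them, and the two triviality-of-intersection statements (for closures of distinct cusp subgroups of a hyperbolic piece, and for closures of non-parallel Seifert-fiber subgroups). These depend on the efficiency of the profinite Bass--Serre tree of the geometric decomposition and on the (conjugacy and subgroup) separability of $3$--manifold groups, i.e.\ on the results of Wilton--Zalesskii \cite{WZ_decomposition} and Hamilton--Wilton--Zalesskii \cite{HWZ_conjugacy_separability} together with goodness of $3$--manifold groups. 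A second, more elementary point to pin down is that the hypotheses of Lemma \ref{simplification_lemma} really exclude the only dangerous configuration — a reduction annulus with both sides on one periodic piece — for otherwise a power of a Seifert fiber could be conjugated inside $\overline{P}$ to a genuinely different element.
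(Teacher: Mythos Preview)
Your outline is plausible but far more involved than needed, and you never invoke the second hypothesis of Lemma~\ref{simplification_lemma} --- which is placed there precisely to make this lemma a one-liner. The paper's argument is simply this: the condition that $H_1(U;\Integral)\hookrightarrow H_1(S;\Integral)$ for each reduction annulus $U$ forces the discrete edge-center stabilizer $\Pi_{\tilde{\mathfrak e}}\cong\Integral^2$ to inject into the abelianization $H_1(\Pi;\Integral)$; passing to profinite completions (via Proposition~\ref{graph_element_stabilizers}), $\widehat\Pi_{\hat{\mathfrak e}}$ then injects into $\widehat H_1(\widehat\Pi;\widehat\Integral)$. Since conjugation is trivial on any abelianization, two $\widehat\Pi$--conjugate elements of $\widehat\Pi_{\hat{\mathfrak e}}$ have the same image there and hence coincide. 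No profinite-tree geodesics, no malnormality of peripheral subgroups, no case split on the type of the adjacent geometric piece.

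Your route, by contrast, buys an argument that does not need the homological non-separating condition on the reduction curves, at the price of appealing to acylindricity-type facts about the profinite tree (trivial intersection of closures of distinct cusp subgroups in a hyperbolic piece, and of non-matching Seifert-fiber subgroups across a decomposition torus) and to geodesic connectedness in $\widehat{\mathfrak T}$. These are genuine ingredients of the Wilton--Zalesskii framework, but they are heavy machinery for what the paper handles with abelianization. A couple of your details also need tightening: the vertex $\tilde{\mathfrak w}$ you find on the geodesic is only a \emph{profinite} vertex, so the element $w$ you produce lies in $\widehat\Pi_{\tilde{\mathfrak w}}$ rather than in $\pi_1(W)$ (harmless, since $g_2$ is central in the whole closure); and your non-propagation step (``the fixed subtree does not extend past the adjacent pieces'') requires justifying that the closures of non-parallel fiber subgroups intersect trivially in $\widehat\Pi$, which you have only asserted.
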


\begin{proof}
	The additional conditions as listed by Lemma \ref{simplification_lemma}
	implies that for any universal geometric edge center $\tilde{\mathfrak{e}}$ of $\mathfrak{T}$,
	the stabilizer $\Pi_{\tilde{\mathfrak{e}}}$ is isomorphic to a free abelian group of rank $2$.
	Moreover, under the abelianization group homomorphism $\Pi\to H_1(\Pi;\Integral)$, 
	the stabilizer $\Pi_{\mathfrak{e}}$ is projected isomorphically onto its image.
	By Proposition \ref{graph_element_stabilizers}, 
	for any profinite geometric edge center $\hat{\mathfrak{e}}$ of $\widehat{\mathfrak{T}}$,
	the stabilizer $\widehat{\Pi}_{\hat{\mathfrak{e}}}$ is isomorphic to
	a profinite free abelian group of rank $2$.
	Moreover, under the abelianization group homomorphism $\widehat{\Pi}\to \widehat{H}_1(\widehat{\Pi};\widehat{\Integral})$,
	the stabilizer $\widehat{\Pi}_{\hat{\mathfrak{e}}}$ is projected isomorphically onto its image.
	In particular, the elements of $\widehat{\Pi}_{\hat{\mathfrak{e}}}$ 
	are mutually non-conjugate in $\widehat{\Pi}$.
	Therefore, any conjugation orbit $\hat{\mathbf{c}}\in\conjorb(\widehat{\Pi})$
	intersects $\widehat{\Pi}_{\hat{\mathfrak{e}}}$ in at most one element.
\end{proof}

To characterize deviation, the key observation is that the essential periodic trajectories of $M_f$ that can be
freely homotoped into a thickened decomposition torus are all freely homotopic to 
the periodic trajectories in the thickened decomposition torus.
This is the main content of the following Lemma \ref{carry_trajectory_universal}.
It is stated in group theoretic terms, so as to compare with 
its profinite analogue Lemma \ref{carry_trajectory_profinite}.

\begin{lemma}\label{carry_trajectory_universal}
	In a monologue setting $(S,f)$, 
	suppose that the listed conditions of Lemma \ref{simplification_lemma} are satisfied.
	Suppose that $\tilde{\mathfrak{e}}$ is a universal geometric edge center of $\mathfrak{T}$
	which lies over a geometric edge center $\mathfrak{e}$ of $\mathfrak{G}$.
	Then the following statements hold true for all $m\in\Natural$.
	\begin{enumerate}
	\item
	For any geometric edge end $\mathfrak{b}$ of $\mathfrak{G}$ departing from $\mathfrak{e}$,
	$\ell_m(f;\mathfrak{b})\in\conjorb(\Pi)$ is an essential $m$--periodic trajectory class.
	\item
	For any essential $m$--periodic orbit class $\mathbf{O}\in\POC_m(f)$,
	if $\ell_m(f;\mathbf{O})\in\conjorb(\Pi)$
	has nonempty intersection with $\Pi_{\tilde{\mathfrak{e}}}$,
	then $\ell_m(f;\mathbf{O})$ is equal to $\ell_m(f;\mathfrak{b})$
	for some geometric edge end $\mathfrak{b}$ departing from $\mathfrak{e}$.
	\end{enumerate}
\end{lemma}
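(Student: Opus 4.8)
The plan is to extract both statements from the Jiang--Guo classification of essential fixed point classes (Example~\ref{example_fpc_classification}), after first pinning down the surface structure under the hypotheses of Lemma~\ref{simplification_lemma}. I would begin by recording that, under those hypotheses, the periodic part $S_\periodic$ is empty (unless $f$ is isotopic to the identity, in which case there is nothing to prove): $f$ preserves every component of $S_\reduction$, being an integral Dehn twist there, so by the last clause of Definition~\ref{NT_normal_form} no component of $S_\reduction$ can abut $S_\periodic$; hence $\partial S_\periodic\subset\partial S$, and connectedness of $S$ leaves only $S_\periodic=\emptyset$ or $S_\periodic=S$. With $S_\periodic=\emptyset$ and $f$ in the special form, iterating $f$ and consulting Example~\ref{example_fpc_classification} shows that the essential $m$--periodic orbit classes fall into three families: (i) prong singularities or saddle points in the interior of $S_\pA$, whose belonging fixed sets are single points; (ii) crown circles carried by components of $\partial S$; and (iii) for each reduction annulus $A$ a ``crown annulus'' family, with belonging fixed set inside $A$ and with strictly negative index. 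In every case the class $\ell_m(f;\mathbf{O})\in\conjorb(\Pi)$ is read off directly from the normal form, and always has $\widehat{\phi}_f$--grading $m$, hence is nontrivial.

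For statement~(1): given an edge end $\mathfrak{b}$ departing from $\mathfrak{e}$ and writing $A=S_\mathfrak{e}$, the crown-annulus family attached to $A$ supplies an essential $m$--periodic orbit class $\mathbf{O}$ (negative, hence nonzero, index), and since its belonging fixed set lies in $A$ the suspension trajectory through it deformation retracts, within $M_{f,\mathfrak{e}}$, onto the closed $1$--periodic trajectory foliating $\partial_\mathfrak{b}M_{f,\mathfrak{e}}$ traversed $m$ times; that is, $\ell_m(f;\mathbf{O})=\ell_m(f;\mathfrak{b})$. When the two edge ends of $\mathfrak{e}$ yield different trajectory classes, one matches them with the crown-annulus classes attached to the two sides of $A$; this is the only place a small amount of case checking (tracking the Dehn-twist shear across $A$) is needed.

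For statement~(2): suppose $\ell_m(f;\mathbf{O})$ meets $\Pi_{\tilde{\mathfrak{e}}}$. Since $\Pi_{\tilde{\mathfrak{e}}}$ is the stabiliser of the chosen lift of the thickened decomposition torus $M_{f,\mathfrak{e}}$, this says the trajectory loop is freely homotopic into $M_{f,\mathfrak{e}}$, i.e.\ its conjugacy class lies in a conjugate of $\pi_1(M_{f,\mathfrak{e}})$. I would then rule out families (i) and (ii). A family-(ii) trajectory lies on a boundary torus of $M_f$ coming from $\partial S$, which is never isotopic to an interior decomposition torus, so it is not conjugate into any edge group. A family-(i) trajectory is the free-homotopy class of a closed orbit of the pseudo-Anosov suspension flow through an interior periodic point of the hyperbolic piece $N_W$ containing it; such an orbit is homotopically essential and not peripheral in $N_W$ (it is represented by a closed geodesic rather than a cusp curve), so it is not conjugate into any decomposition-torus edge group. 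Hence $\mathbf{O}$ lies in family~(iii), say for a reduction annulus $A'$, and by statement~(1) $\ell_m(f;\mathbf{O})=\ell_m(f;\mathfrak{b}')$ for an edge end $\mathfrak{b}'$ of the edge center of $A'$. Finally, $M_f$ is irreducible and its decomposition tori are essential and pairwise non-isotopic, so $\pi_1(M_{f,\mathfrak{e}})$ meets a conjugate of $\pi_1(\mathrm{susp}(A'))$ nontrivially only if $A'=S_\mathfrak{e}$; as $\ell_m(f;\mathbf{O})$ is nontrivial and lies (up to conjugacy) in both, we conclude $A'=S_\mathfrak{e}$, whence $\ell_m(f;\mathbf{O})=\ell_m(f;\mathfrak{b})$ for an edge end $\mathfrak{b}$ of $\mathfrak{e}$.

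The main obstacle is the dynamical input used in statement~(2): that an interior periodic orbit of a pseudo-Anosov suspension flow is never freely homotopic into a boundary torus of the hyperbolic mapping-torus piece it lies in. This is precisely where the pseudo-Anosov, rather than merely topological, character of the normal form is essential; it can be obtained either from the theory of pseudo-Anosov flows on cusped hyperbolic $3$--manifolds, or, more hands-on, by analysing the lifts of $f^m|_W$ to the universal cover of $W$ and their action on the Bass--Serre tree of the geometric decomposition, showing that a lift with an isolated interior fixed point cannot stabilise a lift of a reduction curve with an incompatible shear. The remaining ingredients --- the emptiness of $S_\periodic$, the Jiang--Guo enumeration, and the disjointness of distinct decomposition tori --- are routine.
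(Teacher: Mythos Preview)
Your argument that $S_\periodic=\emptyset$ is flawed, and this creates a genuine gap. The last clause of Definition~\ref{NT_normal_form} only applies to a reduction annulus $U$ whose boundary $\partial U$ is \emph{entirely} contained in $\partial S_\periodic$; it says nothing about an annulus with one boundary in $\partial S_\periodic$ and the other in $\partial S_\pA$. Under the hypotheses of Lemma~\ref{simplification_lemma} such mixed annuli are perfectly allowed (take, say, one periodic piece and one pseudo-Anosov piece joined by two non-separating reduction annuli), so $S_\periodic$ need not be empty. Consequently your three-family enumeration omits Example~\ref{example_fpc_classification}~(5), the crown hyperbolic subsurfaces, and your proof of statement~(2) does not address those orbit classes. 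Your proof of statement~(1) is also affected: when one side of $A=S_\mathfrak{e}$ is periodic, there is no crown-annulus class for $A$; instead $A$ is absorbed into a case~(5) component, so the phrase ``its belonging fixed set lies in $A$'' is inaccurate, though the deformation-retract conclusion survives once you use the case~(5) class instead.

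The paper's proof does not attempt to eliminate $S_\periodic$. For statement~(2) it runs the case analysis over all of Example~\ref{example_fpc_classification}~(2)--(5): cases~(3),~(4),~(5) all yield a trajectory carried by some Seifert piece or thickened decomposition torus, and then acylindricity of the JSJ decomposition forces $\mathfrak{e}$ to be incident to the relevant vertex, giving $\ell_m(f;\mathbf{O})=\ell_m(f;\mathfrak{b})$ directly. Your handling of the interior pseudo-Anosov case~(i) and the boundary crown circles~(ii) is essentially the same as the paper's (the paper cites \cite{Hoffoss} for the non-peripherality of interior suspension orbits, with \cite[Corollary~3.5]{Jiang--Guo} as an alternative). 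To repair your proof, simply drop the $S_\periodic=\emptyset$ claim and add the case~(5) analysis via acylindricity, parallel to what you already do for case~(iii).
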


\begin{proof}
	The first statement follows from Example \ref{example_fpc_classification} (3) and (4),
	directly for $m=1$ and by iteration for general $m\in\Natural$,
	(see definition (\ref{l_m_f_b}) and Section \ref{Subsec-classification_fpc}).
	
	The second statement follows from a case by case analysis according to the classification
	of essential periodic orbit classes (Section \ref{Subsec-classification_fpc}).
	To elaborate, for each essential $\mathbf{O}\in\POC_m(f)$,
	there is a corresponding component $K_\mathbf{O}$ of $\FPS(f^m)$, and 
	$\ell_m(f;\mathbf{O})$ is represented, as a free-homotopy loop of $M_f$,
	by the $m$--periodic trajectory of the suspension flow through any point $x\in K_{\mathbf{O}}$.
	Under the listed conditions of Lemma \ref{simplification_lemma},
	$K_\mathbf{O}$ may occur as Example \ref{example_fpc_classification} (2)--(5),
	with respect to $f^m$.
	Below we assume that $\ell_m(f;\mathbf{O})$, as a conjugation orbit of $\Pi$,
	has nonempty intersection with $\Pi_{\tilde{\mathfrak{e}}}$.
	This means that $\ell_m(f;\mathbf{O})$, as a free loop of $M_f$,
	can be homotoped to the thickened decomposition torus $M_{f,\mathfrak{e}}$.
	
	If $K_\mathbf{O}$ corresponds to an isolated interior fixed point of $S_\pA$ (Example \ref{example_fpc_classification} (2)),
	$\ell_m(f;\mathbf{O})$ is carried by an $\mathbb{H}^3$--geometric piece $M_{f,\mathfrak{v}}$.
	Then the acylindricity property of the JSJ decomposition
	implies that $\mathfrak{e}$ has to be incident to $\mathfrak{v}$, 
	and moreover, 
	there has to be some free homotopy within $M_{f,\mathfrak{v}}$ 
	that moves $\ell_m(f;\mathbf{O})$ into the boundary component adjacent to $M_{f,\mathfrak{e}}$.
	However, this case is impossible,
	because closed trajectories through interior fixed points are quasigeodesics of $M_{f,\mathfrak{v}}$,
	and they never represent parabolic elements under the holonomy representation
	of $\pi_1(M_{f,\mathfrak{v}})$ into $\mathrm{PSL}(2,\Complex)$,
	(see \cite{Hoffoss}).
	We point out that  alternatively, one may deduce the impossibility
	from \cite[Corollary 3.5]{Jiang--Guo},
	using a trick of collapsing $\partial S_{f,\mathfrak{v}}$ to points.

	If $K_\mathbf{O}$ contains a component of $S_\periodic$  (Example \ref{example_fpc_classification} (5)),
	$\ell_m(f;\mathbf{O})$ can be homotoped into an $\mathbb{H}^2\times\mathbb{E}^1$--geometric piece $M_{f,\mathfrak{v}}$.
	If $K_\mathbf{O}$ is a component of $S_\reduction$ or $\partial S_\reduction$ 
	(Example \ref{example_fpc_classification} (3) and (4)),
	$\ell_m(f;\mathbf{O})$ is carried by a thickened decomposition torus $M_{f,\mathfrak{e}'}$
	adjacent to at least one $\mathbb{H}^3$--geometric pieces.
	The acylindricity property of the JSJ decomposition again	implies 
	that $\mathfrak{e}$ is either incident to $\mathfrak{v}$,	or equal to $\mathfrak{e}'$.
	Then $\ell_m(f;\mathbf{O})$ already occurs
	as $\ell_m(f;\mathfrak{b})$, for some geometric edge end $\mathfrak{b}$ departing from $\mathfrak{e}$.
	This proves the second statement.
\end{proof}

\begin{lemma}\label{carry_trajectory_profinite}
	In a monologue setting $(S,f)$, 
	suppose that the listed conditions of Lemma \ref{simplification_lemma} are satisfied.
	Suppose that $\hat{\mathfrak{e}}$ is a profinite geometric edge center of $\widehat{\mathfrak{T}}$
	which lies over a geometric edge center $\mathfrak{e}$ of $\mathfrak{G}$.
	Then the following statements hold true for all $m\in\Natural$.
	\begin{enumerate}
	\item
	For any geometric edge end $\mathfrak{b}$ of $\mathfrak{G}$ departing from $\mathfrak{e}$,
	$\hat{\ell}_m(f;\mathfrak{b})\in\conjorb(\widehat{\Pi})$ is a profinite essential $m$--periodic trajectory class.
	\item
	For any essential $m$--periodic orbit class in $\mathbf{O}\in\POC_m(f)$,
	if $\hat{\ell}_m(f;\mathbf{O})\in\conjorb(\widehat{\Pi})$
	has nonempty intersection with $\widehat{\Pi}_{\hat{\mathfrak{e}}}$,
	then $\hat{\ell}_m(f;\mathbf{O})$ is equal to $\hat{\ell}_m(f;\mathfrak{b})$
	for some geometric edge end $\mathfrak{b}$ departing from $\mathfrak{e}$.
	\end{enumerate}
\end{lemma}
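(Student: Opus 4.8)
The plan is to derive the profinite statement from its abstract counterpart Lemma~\ref{carry_trajectory_universal}, using Proposition~\ref{graph_element_stabilizers} to transfer information between $\widehat{\mathfrak{T}}$ and $\mathfrak{T}$, and the profinite detection of the geometric decomposition of $M_f$ to control which elements of $\Pi$ are carried by a decomposition torus. Part~(1) is essentially formal. By Lemma~\ref{carry_trajectory_universal}(1), the conjugation orbit $\ell_1(f;\mathfrak{b})$ already coincides with $\ell_1(f;\mathbf{O})$ for some essential $1$--periodic orbit class $\mathbf{O}\in\POC_1(f)$, and taking $m$--th powers $\ell_m(f;\mathfrak{b})=\ell_m(f;\mathbf{O}')$ for a suitable essential $\mathbf{O}'\in\POC_m(f)$; applying the natural map $\conjorb(\Pi)\to\conjorb(\widehat{\Pi})$ gives $\hat{\ell}_m(f;\mathfrak{b})=\hat{\ell}_m(f;\mathbf{O}')$, which is by definition an essential profinite $m$--periodic trajectory class (see~(\ref{gamma_hat_m}), (\ref{hat_l_m_f_b}), and~(\ref{profinite_essential_orbit_set})).

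For part~(2), I would first normalize the profinite edge center. By Proposition~\ref{graph_element_stabilizers}, applying a suitable element of $\widehat{\Pi}$ to $\hat{\mathfrak{e}}$ --- which affects neither the hypothesis nor the conclusion, since $\hat{\ell}_m(f;\mathbf{O})$ is a conjugation orbit and this operation only replaces $\widehat{\Pi}_{\hat{\mathfrak{e}}}$ by a conjugate --- we may assume that $\hat{\mathfrak{e}}=\tilde{\mathfrak{e}}$ is a universal geometric edge center of $\mathfrak{T}$ lying over $\mathfrak{e}$, and that $\widehat{\Pi}_{\tilde{\mathfrak{e}}}$ is the profinite closure $\overline{\Pi_{\tilde{\mathfrak{e}}}}$ of the abstract edge group; under the conditions of Lemma~\ref{simplification_lemma}, $\Pi_{\tilde{\mathfrak{e}}}$ is free abelian of rank $2$ and $\overline{\Pi_{\tilde{\mathfrak{e}}}}\cong\widehat{\Integral}^2$. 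Fix a representative $g_0\in\Pi$ of $\ell_m(f;\mathbf{O})$. Then the hypothesis that $\hat{\ell}_m(f;\mathbf{O})$ meets $\widehat{\Pi}_{\hat{\mathfrak{e}}}$ says precisely that $g_0$ is conjugate in $\widehat{\Pi}$ into $\overline{\Pi_{\tilde{\mathfrak{e}}}}$. The heart of the matter is to upgrade this to: $g_0$ is conjugate in $\Pi$ into $\Pi_{\tilde{\mathfrak{e}}}$. Granting that, the conjugation orbit $\ell_m(f;\mathbf{O})$ meets $\Pi_{\tilde{\mathfrak{e}}}$, so Lemma~\ref{carry_trajectory_universal}(2) gives $\ell_m(f;\mathbf{O})=\ell_m(f;\mathfrak{b})$ for some geometric edge end $\mathfrak{b}$ departing from $\mathfrak{e}$, and applying $\conjorb(\Pi)\to\conjorb(\widehat{\Pi})$ once more yields $\hat{\ell}_m(f;\mathbf{O})=\hat{\ell}_m(f;\mathfrak{b})$, as desired.

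The implication ``$g_0$ conjugate in $\widehat{\Pi}$ into $\overline{\Pi_{\tilde{\mathfrak{e}}}}$'' $\Rightarrow$ ``$g_0$ conjugate in $\Pi$ into $\Pi_{\tilde{\mathfrak{e}}}$'' --- that is, that a geometric-decomposition edge group of $M_f$ is conjugacy distinguished in $\Pi=\pi_1(M_f)$ --- is where I expect the real work to lie. I would obtain it from the efficiency of the JSJ splitting of $M_f$ and the profinite $3$--manifold technology already invoked in this section: since the $\widehat{\Pi}$--action on the profinite Bass--Serre tree underlying $\widehat{\mathfrak{T}}$ is compatible with the $\Pi$--action on $\mathfrak{T}$ (Proposition~\ref{graph_element_stabilizers} together with separability of edge groups), ellipticity of $g_0$ in $\widehat{\Pi}$ forces $g_0$ to fix a vertex of $\mathfrak{T}$, and being in the profinite closure of the abstract edge subtree it must fix a vertex lying over $\mathfrak{e}$ or an adjacent geometric vertex. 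It then remains to rule out that $g_0$ fixes only geometric-piece vertices in $\mathfrak{T}$ while being conjugate in $\widehat{\Pi}$ into a profinite edge group --- this is exactly the obstructed configuration in the proof of Lemma~\ref{carry_trajectory_universal}, typically a closed trajectory through an interior fixed point of $S_\pA$, which is a loxodromic (non-peripheral) element of an $\mathbb{H}^3$--geometric piece. Such $g_0$ remains non-peripheral in the corresponding profinite piece group, by the profinite detection of peripheral subgroups in hyperbolic, Seifert, and graph pieces \cite{WZ_decomposition}, so it cannot be conjugated into the profinite peripheral torus $\overline{\Pi_{\tilde{\mathfrak{e}}}}$, a contradiction. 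Assembling these detection statements into a single uniform ``conjugacy distinguished'' conclusion valid across all piece types, while keeping track of which edge center is hit, is the step I anticipate to be the main obstacle.
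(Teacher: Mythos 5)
Your part (1) and the normalization step of part (2) agree with the paper, but the core of part (2) takes a genuinely different route that you acknowledge you cannot finish, and there is a cleaner mechanism in the paper's proof that you miss.

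You aim to prove that the edge group $\Pi_{\tilde{\mathfrak{e}}}$ is \emph{conjugacy distinguished} in $\Pi$, that is, that $g_0$ conjugate in $\widehat{\Pi}$ into $\overline{\Pi_{\tilde{\mathfrak{e}}}}$ implies $g_0$ conjugate in $\Pi$ into $\Pi_{\tilde{\mathfrak{e}}}$. The paper never establishes this; instead it exploits the homology-injectivity condition built into Lemma~\ref{simplification_lemma} (that $H_1(U;\Integral)\to H_1(S;\Integral)$ is injective for each reduction annulus $U$). After replacing $\hat{\mathfrak{e}}$ by a universal geometric edge center $\tilde{\mathfrak{e}}$ of $\mathfrak{T}$ via Proposition~\ref{graph_element_stabilizers}, the paper observes that $\Pi_{\tilde{\mathfrak{e}}}$, hence also $\widehat{\Pi}_{\tilde{\mathfrak{e}}}$, projects isomorphically onto its image under abelianization (Lemma~\ref{unique_intersection}). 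Consequently the hypothesis forces the intersection $\hat{\ell}_m(f;\mathbf{O})\cap\widehat{\Pi}_{\tilde{\mathfrak{e}}}$ to be a single specific element $g\in\Pi_{\tilde{\mathfrak{e}}}$ --- an honest element of the \emph{abstract} edge group, determined by the integral homology class of $\ell_m(f;\mathbf{O})$. If $g$ were not contained in any $\ell_m(f;\mathfrak{b})$, Lemma~\ref{carry_trajectory_universal}(2) would give $g\notin\ell_m(f;\mathbf{O})$, and conjugacy separability of the whole group $\Pi$ (Hamilton--Wilton--Zalesskii) would then separate $g$ from $\ell_m(f;\mathbf{O})$ in a finite quotient, contradicting $g\in\hat{\ell}_m(f;\mathbf{O})$. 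This is an \emph{element-level} application of conjugacy separability, not a \emph{subgroup-level} conjugacy-distinguished statement.

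Two concrete problems with your proposal. First, the conjugacy-distinguished property is strictly stronger than what is needed and is not available in the paper's toolkit; you rightly anticipate that assembling the profinite detection results uniformly across piece types is the obstacle, but the paper is engineered so that this obstacle never arises. Second, your sketch contains a slip: $g_0\in\Pi$ is conjugate in $\widehat{\Pi}$ to some element of $\overline{\Pi_{\tilde{\mathfrak{e}}}}$ that need not lie in $\Pi$ at all, so the step ``being in the profinite closure of the abstract edge subtree it must fix a vertex lying over $\mathfrak{e}$'' does not parse as written, and passing from ellipticity on $\widehat{\mathfrak{T}}$ to ellipticity on $\mathfrak{T}$ is itself a nontrivial claim you do not justify. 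The abelianization device is precisely what upgrades ``some profinite target in $\overline{\Pi_{\tilde{\mathfrak{e}}}}$'' to ``a specific $g\in\Pi_{\tilde{\mathfrak{e}}}$''; that is the missing idea.
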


\begin{proof}
	The first statement follows directly from Lemma \ref{carry_trajectory_universal} (1),
	(see (\ref{hat_l_m_f_b}) and (\ref{profinite_essential_orbit_set})).
		
	The second statement follows from Lemma \ref{carry_trajectory_universal} (2)
	and the relation between $\widehat{\Pi}_{\hat{\mathfrak{e}}}$ and $\Pi_{\tilde{\mathfrak{e}}}$
	(Proposition \ref{graph_element_stabilizers}).
	To elaborate, 
	suppose that $\mathbf{O}\in\POC_m(f)$ is an essential periodic orbit
	such that the $\widehat{\Pi}$--conjugacy orbit $\hat{\ell}_m(f;\mathbf{O})$
	intersects $\widehat{\Pi}_{\hat{\mathfrak{e}}}$ in a nonempty set.
	By Proposition \ref{graph_element_stabilizers},
	we may replace $\hat{\mathfrak{e}}$ with some $\tilde{\mathfrak{e}}$ in $\mathfrak{T}$,
	which lies over the same geometric edge center $\mathfrak{e}$.
	Observe that $\Pi_{\tilde{\mathfrak{e}}}$ is projected isomorphically onto its image
	under the abelianization of $\Pi$,
	so $\widehat{\Pi}_{\tilde{\mathfrak{e}}}$ 
	is also projected isomorphically onto its image
	under the abelianization of $\widehat{\Pi}$,
	(Proposition \ref{graph_element_stabilizers}).
	If the image of (any element of) $\ell_m(f;\mathbf{O})$
	was not contained in the image of $\Pi_{\tilde{\mathfrak{e}}}$ 
	under the abelianization of $\Pi$,
	the $\widehat{\Gamma}$--conjugation orbit $\hat{\ell}_m(f;\mathbf{O})$ 
	would have empty intersection with $\widehat{\Pi}_{\tilde{\mathfrak{e}}}$,
	contrary to our assumption.
	It follows that some unique element $g\in\Pi_{\tilde{\mathfrak{e}}}$
	has the same image as with $\ell_m(f;\mathbf{O})$,
	under the abelianization of $\Pi$.
	Moreover,
	$g$ has to be the unique element in the intersection
	between $\hat{\ell}_m(f;\mathbf{O})$ and $\widehat{\Pi}_{\tilde{\mathfrak{e}}}$,
	(see Lemma \ref{unique_intersection}).
	If $g$ was not contained in $\hat{\ell}(f;\mathfrak{b})$
	for any geometric edge end $\mathfrak{b}$ departing from $\mathfrak{e}$,
	it would be implied by Lemma \ref{carry_trajectory_universal} (2) 
	that $g$ does not lie in the $\Pi$--conjugation orbit $\ell_m(f;\mathbf{O})$.
	Then for some finite quotient $\Pi\to\Gamma$,
	the image of $g$ does not lie in the $\Gamma$--conjugation orbit
	projected by $\ell_m(f;\mathbf{O})$,
	because of the conjugacy separability of $\Pi$
	\cite{HWZ_conjugacy_separability},
	(see also the proof of Lemma \ref{N_m_detect} for elaboration).
	Then $g$ would not have laid in the $\widehat{\Pi}$--conjugation orbit
	$\hat{\ell}_m(f;\mathbf{O})$, 
	which is a contradiction.
	Therefore,
	$g$ must be contained in $\hat{\ell}(f;\mathfrak{b})$
	for some geometric edge end $\mathfrak{b}$ departing from $\mathfrak{e}$.
	In other words, $\hat{\ell}_m(f;\mathbf{O})$
	is equal to $\hat{\ell}_m(f;\mathfrak{b})$.
	This proves the second statement.	
\end{proof}

\begin{lemma}\label{complexity_equal}
	In a monologue setting $(S,f)$, 
	suppose that the listed conditions of Lemma \ref{simplification_lemma} are satisfied.
	If $[f']\in\mcg(S)$ is any mapping class which is procongruently conjugate to $[f]$,
	then the following equalities hold:
	$$\begin{array}{cc}\dilatation([f'])=\dilatation([f]),&\deviation([f'])=\deviation([f]).\end{array}$$
\end{lemma}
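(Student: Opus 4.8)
We prove the two equalities separately.

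\textbf{Dilatation.} Since $[f']$ is procongruently conjugate to $[f]$, Theorem~\ref{N_m_dialogue} gives $\nu_m(f';i)=\nu_m(f;i)$ for all $m\in\Natural$ and $i\in\Integral$, and hence $N_m(f')=N_m(f)$ for all $m\in\Natural$ by (\ref{N_m_def}). As the dilatation of a mapping class equals the limit superior of its $m$--th periodic Nielsen numbers raised to the power $1/m$ (see Section~\ref{Subsec-NT_complexity}; this holds for arbitrary mapping classes, not only pseudo-Anosov ones, since the essential periodic orbit classes carried by the periodic and reduction parts grow only polynomially), we conclude $\dilatation([f'])=\dilatation([f])$. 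This part uses nothing about the simplifying hypotheses.

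\textbf{Deviation.} Choose a representative $f'$ of $[f']$ in Nielsen--Thurston normal form, with decomposition $(S'_\periodic,S'_\pA,S'_\reduction)$, and apply Proposition~\ref{characterization_mapping_tori} to obtain an aligned isomorphism $\Psi\colon\widehat{\pi_1(M_f)}\to\widehat{\pi_1(M_{f'})}$. By Theorem~\ref{correspondence_profinite_objects}, $\Psi$ induces an equivariant isomorphism of profinite geometric group-trees, an isomorphism $\overline{\Psi_\sharp}\colon\mathfrak{G}_{\gd}(M_f)\to\mathfrak{G}_{\gd}(M_{f'})$ of geometric decomposition graphs, and, for all $m\in\Natural$ and $i\in\Integral\setminus\{0\}$, bijections $\conjorb_m(\Psi;i)$ between the indexed profinite periodic trajectory sets. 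Since $\Psi$ intertwines the distinguished cohomology classes and $f$ satisfies the hypotheses of Lemma~\ref{simplification_lemma}, the restriction of $\widehat{\phi}_f$ to the stabilizer of every profinite geometric edge center of $M_f$ is onto $\widehat{\pi_1(\Real/\Integral)}$; transporting across $\Psi$, the same holds for $M_{f'}$, which forces each component of $S'_\reduction$ to be $f'$--invariant and rules out Klein-bottle-type geometric edge centers for $M_{f'}$. Combining this with the abelianization of $\Psi$, one checks that the hypotheses of Lemma~\ref{simplification_lemma} -- or at least the consequences of them actually used below, such as the injectivity statement underlying Lemma~\ref{unique_intersection} -- are inherited by $f'$, possibly after passing to the split-order power of $f'$.

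The heart of the matter is to read off, from profinite data alone, the fractional Dehn twist coefficient along each reduction curve. Fix a geometric edge center $\mathfrak{e}$ of $\mathfrak{G}_{\gd}(M_f)$ and a profinite geometric edge center $\hat{\mathfrak{e}}$ above it; by Proposition~\ref{graph_element_stabilizers} we may take $\hat{\mathfrak{e}}$ inside the universal geometric tree, so that its stabilizer in $\widehat{\pi_1(M_f)}$ is the profinite closure of the corresponding $\pi_1(M_f)$--stabilizer -- a rank--$2$ free profinite abelian group injecting into the abelianization (Lemma~\ref{unique_intersection} and its proof) -- and the kernel of $\widehat{\phi}_f$ on it is the profinite closure of the cyclic subgroup generated by the reduction curve. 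By Lemmas~\ref{carry_trajectory_universal} and~\ref{carry_trajectory_profinite}, the essential profinite periodic trajectory classes meeting this stabilizer are exactly the one or two slopes of degeneracy attached to the geometric edge ends departing from $\mathfrak{e}$, and by Lemma~\ref{unique_intersection} each meets the stabilizer in a single element. A routine $3$--manifold computation then recovers the fractional Dehn twist coefficient along the reduction curve from these two slopes -- namely from their periods and from the power of the reduction-curve generator by which their representatives differ inside the rank--$2$ edge group (the coefficient being $0$ when the two slopes coincide). Every ingredient of this description is carried by $\Psi$ to the corresponding ingredient for $M_{f'}$, where the identical computation applies; the only subtlety is that $\Psi$ need only match the reduction-curve generators up to a profinite unit, but as the two candidate coefficients are ordinary rationals the identity $\Rational\cap\widehat{\Integral}^{\times}=\{\pm1\}$ forces their absolute values to coincide. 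Taking the maximum over all geometric edge centers recovers $\deviation([f])$ (the split order of $f$ being $1$) and, after normalizing by the split order of $f'$, recovers $\deviation([f'])$ as well; hence $\deviation([f'])=\deviation([f])$.

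The main obstacle is exactly this translation step: verifying that the topological picture of slopes of degeneracy on the decomposition tori and their geometric intersection numbers -- which govern the fractional Dehn twist coefficients -- is faithfully and losslessly encoded by the profinite completion $\widehat{\pi_1(M_f)}$ together with $\widehat{\phi}_f$, the profinite geometric group-tree, and the indexed profinite periodic trajectory classes, all of which Theorem~\ref{correspondence_profinite_objects} transports. Subsidiary care is needed for loop-type geometric edge centers, for the bookkeeping of periods and the normalization by split orders when $f'$ is not assumed to satisfy the simplifying hypotheses, and for confirming that the portions of those hypotheses invoked on the $f'$ side are genuinely inherited through the aligned isomorphism.
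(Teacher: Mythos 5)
Your proposal is correct and follows essentially the same route as the paper: the dilatation equality via Theorem \ref{N_m_dialogue}, (\ref{N_m_def}) and Jiang's asymptotic formula for Nielsen numbers, and the deviation equality by reducing to the split case via powers and (\ref{dil_dev_power}), then transporting the rank--$2$ profinite edge stabilizers and the (at most two) slope classes $\hat{\ell}_1(f;\mathfrak{b})$ through the aligned isomorphism using Theorem \ref{correspondence_profinite_objects} and Lemmas \ref{unique_intersection}, \ref{carry_trajectory_universal}, \ref{carry_trajectory_profinite}. The one point where the paper is cleaner than your write-up is the extraction of the shearing degree: rather than identifying a reduction-curve generator and then arguing away the $\widehat{\Integral}^\times$--ambiguity via $\Rational\cap\widehat{\Integral}^\times=\{\pm1\}$, the paper takes the order of the finite cyclic quotient of the profinite edge stabilizer by the closed subgroup generated by the two slope elements, which is manifestly intrinsic to the transported data; both versions are valid.
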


\begin{proof}
	By Theorem \ref{N_m_dialogue} and (\ref{N_m_def}), we see that the $m$--periodic Nielsen number $N_m(f)$
	depends only on the procongruent conjugacy class of $[f]\in\mcg(S)$.
	It is well-known that the asymptotic growth of $N_m(f)$ is governed by the dilatation of $f$, 
	as $m$ increases to $\infty$:
	$$\limsup_{m\to\infty} \max\{1,N_m(f)^{1/m}\}=\dilatation([f]),$$
	(see \cite[Theorem 3.7]{Jiang_periodic}).	
	Therefore, $\dilatation([f])$ depends only on the procongruent conjugacy class of $f$,
	and the asserted dilatation equality follows.
	
	To prove the asserted deviation equality,	
	we may assume without loss of generality that $(S,f')$ also satisfies the listed conditions
	of Lemma \ref{simplification_lemma}.
	Otherwise, we prove the equality for some suitable power $[f]^m,[f']^m\in\mcg(S)$, 
	which are still procongruently conjugate.
	Then the original equality will follow from (\ref{dil_dev_power}).
	By raising to a suitable power, 
	we can ensure the listed conditions of Lemma \ref{simplification_lemma}:
	In fact, $[f']$ can always be represented
	by a Nielsen--Thurston normal form, with respect to some Nielsen--Thurston decomposition
	$(S'_\periodic,S'_\pA,S'_\reduction)$ of $S$;
	the non-separation condition on the reduction curves follows from 
	the correspondence between the geometric decomposition graphs of the mapping tori
	(Theorem \ref{correspondence_profinite_objects} and Proposition \ref{characterization_mapping_tori});
	the split condition can be achieved by passing to the $d'$--th power of $[f']$ and $[f]$,
	where $d'$ stands for the split order $[f']$.
	(One may actually infer $d'=1$, 
	using Theorem \ref{correspondence_profinite_objects} and \cite[Lemma 3.6]{Jiang--Guo}.)
	
	Below we treat $(S,f,f')$ as a dialogue setting $(S,f_A,f_B)$.
	Obtain an aligned isomorphism $\Psi$ of $\widehat{\Pi}_A=\widehat{\pi_1(M_A)}$
	with $\widehat{\Pi}_B=\widehat{\pi_1(M_B)}$,
	by Proposition \ref{characterization_mapping_tori}.
	For any geometric edge center $\mathfrak{e}_A$ of the geometric decomposition graph $\mathfrak{G}_A$,
	denote by $\mathfrak{e}_B$ the corresponding geometric edge center of $\mathfrak{G}_B$,
	by Theorem \ref{correspondence_profinite_objects}.
	
	We claim that the restriction of $f_A$ to the reduction annulus $S_{\mathfrak{e}_A}$
	is a nontrivial integral Dehn twist if and only if
	the same holds with $f_B$ and $S_{\mathfrak{e}_B}$,
	and that the (unsigned) shearing degrees of $(S_{\mathfrak{e}_A},f_A)$ and $(S_{\mathfrak{e}_B},f_B)$
	are equal.
	
	To prove the claim,
	observe that $(S_{\mathfrak{e}_A},f_A)$ is nontrivial
	if and only if the two geometric edge ends $\mathfrak{b}_A,\mathfrak{b}^*_A$ departing from $\mathfrak{e}_A$
	give rise to two distinct conjugation orbits
	$\ell_1(f_A;\mathfrak{b}_A),\ell_1(f_A;\mathfrak{b}^*_A)\in\conjorb(\Pi_A)$,
	(see Lemma \ref{carry_trajectory_profinite} (1)).
	In this case, 
	$\hat{\ell}_1(f_A;\mathfrak{b}_A),\hat{\ell}_1(f_A;\mathfrak{b}^*_A)\in\conjorb(\widehat{\Pi}_A)$ 
	are the only essential profinite $1$--periodic trajectory classes 
	that have nonempty intersection with the stabilizer $\widehat{\Pi}_{\hat{\mathfrak{e}}_A}$,
	where $\hat{\mathfrak{e}}_A$ 
	stands for any profinite geometric edge center of $\widehat{\mathfrak{T}}_A$ 
	that lies over $\mathfrak{e}_A$,
	(Lemma \ref{carry_trajectory_profinite}).
	Denote by $g_A,g^*_A\in\widehat{\Pi}_{\hat{\mathfrak{e}}_A}$
	the unique elements of $\widehat{\Pi}_{\hat{\mathfrak{e}}_A}$
	intersecting with $\hat{\ell}_1(f_A;\mathfrak{b}_A),\hat{\ell}_1(f_A;\mathfrak{b}^*_A)$,
	respectively.
	Observe that $\widehat{\Pi}_{\hat{\mathfrak{e}}_A}$ is a profinite free abelian group of rank $2$.
	Moreover, the quotient of $\widehat{\Pi}_{\hat{\mathfrak{e}}_A}$
	by the profinite closure of the subgroup generated by $g_A$ and $g^*_A$ 
	is a finite cyclic group $Q_{\mathfrak{e}_A}$.
	By definition,
	$Q_{\mathfrak{e}_A}$
	can be naturally identified with the quotient of $H_1(M_{f,\mathfrak{e}_A};\Integral)$	by the abelian subgroup generated by the homology classes of 
	$\hat{\ell}_1(f_A;\mathfrak{b}_A)$ and $\hat{\ell}_1(f_A;\mathfrak{b}^*_A)$.
	(For example, one may see this by taking $\hat{\mathfrak{e}}_A$ as a universal geometric center $\tilde{\mathfrak{e}}_A$ of $\mathfrak{T}_A$
	and applying Proposition \ref{graph_element_stabilizers}.)
	Therefore,
	the shearing degree of the integral Dehn twist $(S_{\mathfrak{e}_A},f_A)$
	can be characterized as the order of $Q_{\mathfrak{e}_A}$.
	Denote by $\hat{\mathfrak{e}}_B$ the profinite geometric edge center of $\widehat{\mathfrak{T}}_B$
	that corresponds to $\hat{\mathfrak{e}}_A$, (Theorem \ref{correspondence_profinite_objects}).
	We construct similarly a finite quotient $Q_{\mathfrak{e}_B}$ of $\widehat{\Pi}_{\hat{\mathfrak{e}}_B}$,
	whose order equals the shearing degree of $(S_{\mathfrak{e}_B},f_B)$.
	Note that the finite cyclic groups $Q_{\mathfrak{e}_A}$ and $Q_{\mathfrak{e}_B}$ 
	are constructed only with objects on the profinite level, 
	and that the objects are in natural corespondence as induced by $\Psi$.
	Then there is also an induced isomorphism $Q_{\mathfrak{e}_A}\cong Q_{\mathfrak{e}_B}$, 
	(Theorem \ref{correspondence_profinite_objects}).
	In particular, $Q_{\mathfrak{e}_A}$ and $Q_{\mathfrak{e}_B}$ have the same order,
	so the claim is implied.
	
	By the above claim and the definition of deviation (see (\ref{dil_dev_def})),
	we obtain the asserted dilatation equality.
\end{proof}

Combining Lemmas \ref{simplification_lemma}, \ref{complexity_equal}, and Proposition \ref{bound_finiteness},
we complete the proof of Theorem \ref{procongruent_conjugacy_finiteness}.

\appendix

\section{Characteristic versus profinitely characteristic}\label{Sec-profinitely_characteristic}
In this appendix, we supply an example as mentioned in Remark \ref{profinitely_characteristic}.
We exhibit a finitely generated residually finite group $G$, 
such that some pair of outer automorphisms $[\varphi_A],[\varphi_B]\in \outg(G)$
are procongruently conjugate, 
but not congruently conjugate modulo some characteristic finite-index subgroup $K$.
In fact, our example group $G$ is the fundamental group
of the product of a pair of $3$-manifolds,
which are the torus bundles over circles 
with Anosov monodromies as given by Stebe's pair (Example \ref{Stebe_example}).

For simplicity, we rewrite the matrices $[f_A]$ and $[f_B]$ 
in Example \ref{Stebe_example} as $A$ and $B$ respectively.
Denote by $\pi_A$ the semidirect product group $\Integral\ltimes\Integral^2$
whose multiplication is explicitly $(m,x)(n,y)=(m+n, A^{-n}x+y)$.
Denote by $\nu_A\in\autg(\pi_A)$ the involution $\nu_A(m,x)=(m,-x)$.
Similarly, we denote by $\pi_B$ 
the group $\Integral\ltimes\Integral^2\colon (m,x)(n,y)=(m+n, B^{-n}x+y)$,
and by $\nu_B\in\autg(\pi_B)$ the involution $\nu_B(m,x)=(m,-x)$.
Take the direct product group
$$G=\pi_A\times \pi_B.$$
We obtain a pair of involutions 
\begin{equation*}
	\begin{array}{cc}
	{\varphi_A=\nu_A\times\mathrm{id}},&
	{\varphi_B=\mathrm{id}\times\nu_B}
	\end{array}
\end{equation*}
in $\autg(G)$.

Since the matrices $A,B\in\mathrm{GL}(2,\Integral)$ are conjugate in $\mathrm{GL}(2,\widehat{\Integral})$,
there is an aligned isomorphism $\Psi\colon \widehat{\pi}_A\to\widehat{\pi}_B$,
which takes the form $\Psi(m,x)=(m,Sx)$, 
such that $S\in \mathrm{GL}(2,\widehat{\Integral})$ satisfies $SA=BS$.
Observe $\Psi\circ\nu_A=\nu_B\circ\Psi$.
It follows that $\varphi_A,\varphi_B\in\autg(G)$ 
are conjugate in $\autg(\widehat{G})$,
(under the automorphism 
$(\alpha,\beta)\mapsto (\Psi^{-1}\beta,\Psi\alpha)$
of $\widehat{G}=\widehat{\pi}_A\times\widehat{\pi}_B$).
In particular, 
we see that $[\varphi_A],[\varphi_B]\in\outg(G)$ 
become conjugate in $\outg(\widehat{G})$.
In other words, they are procongruently conjugate.

\begin{lemma}\label{aut_split}
	With the above notations, $\autg(G)=\autg(\pi_A)\times\autg(\pi_B)$.
\end{lemma}

\begin{proof}
	We observe that the centralizer in $\pi_A$ of any nontrivial element $\alpha\in\pi_A$ is abelian.
	It follows that a nontrivial element $g=(\alpha,\beta)$ in $G=\pi_A\times\pi_B$
	has nonabelian centralizer if and only if $\alpha$ or $\beta$ is trivial.
	Therefore, any automorphism $\sigma$ of $G$ either preserves  
	each of the factor subgroups $\pi_A$ and $\pi_B$, or switches them.
	However, $\sigma$ cannot switch $\pi_A$ and $\pi_B$,
	since it is Stebe's discovery that these groups are not isomorphic.
	This forces the splitting $\autg(G)=\autg(\pi_A)\times\autg(\pi_B)$, as asserted.
\end{proof}

By Lemma \ref{aut_split}, 
the direct product of any characteristic subgroups of the factors
is a characteristic subgroup of $G$.
To obtain a useful subgroup of such form,
we observe that $[\nu_A]\in\outg(\pi_A)$ is nontrivial.
This is because $\pi_A$ is the fundamental group of a torus bundle over a circle
with Anosov monodromy, so any homotopically nontrivial loop in a fiber torus
is not freely homotopic in the $3$--manifold to its orientation reversal.
Moreover,
by the conjugacy separability of $\pi_A$ \cite{HWZ_conjugacy_separability},
there exists a characteristic finite-index subgroup $K_A$ of $\pi_A$,
such that $[\nu_A]$ is congruently nontrivial modulo $K_A$.

Take the characteristic finite-index subgroup $K=K_A\times\pi_B$ of $G$,
whose quotient group $G/K$ is isomorphic to $\pi_A/K_A$.
We see that $[\varphi_A]\in\outg(G)$ is congruently nontrivial modulo $K$,
while $[\varphi_B]\in\outg(G)$ is obviously congruently trivial modulo $K$.
Therefore, $[\varphi_A],[\varphi_B]\in\outg(G)$
are not congruently conjugate modulo $K$.

We also infer 
from Proposition \ref{characterization_congruent_conjugacy} and Remark \ref{profinitely_characteristic}
that $K$ is not profinitely characteristic in $G$.

\bibliographystyle{amsalpha}


\end{document}